 \newtheorem{theorem}{Theorem}[section]
 \newtheorem{corollary}[theorem]{Corollary}
 \newtheorem{lemma}[theorem]{Lemma}
  \newtheorem{definition}[theorem]{Definition}
 \newtheorem{question}[theorem]{Question}
 \numberwithin{equation}{section}
\title[Norm of a Fourier multiplier]
{When does the norm of a Fourier multiplier\\
dominate its $L^\infty$ norm?}
\author{Alexei Karlovich and Eugene Shargorodsky}
\begin{document}
\maketitle
\begin{abstract}
One can define Fourier multipliers on a Banach function space by using the
direct and inverse Fourier transforms on $L^2(\mathbb{R}^n)$ or by using
the direct Fourier transform on $S(\mathbb{R}^n)$ and the inverse one on
$S'(\mathbb{R}^n)$. In the former case, one assumes that the Fourier 
multipliers belong to $L^\infty(\mathbb{R}^n)$, while in the latter one this 
requirement may or may not be included  in the definition. We provide 
sufficient conditions for those definitions to coincide as well as examples 
when they differ. In particular, we prove that if a Banach function space 
$X(\mathbb{R}^n)$ satisfies a certain weak doubling property, then the space
of all Fourier multipliers $\mathcal{M}_{X(\mathbb{R}^n)}$ is continuously 
embedded into $L^\infty(\mathbb{R}^n)$ with the best possible embedding 
constant one. For weighted Lebesgue spaces $L^p(\mathbb{R}^n,w)$, the weak 
doubling property is much weaker than the requirement that $w$ is a 
Muckenhoupt weight, and our result implies that 
$\|a\|_{L^\infty(\mathbb{R}^n)}\le\|a\|_{\mathcal{M}_{L^p(\mathbb{R}^n,w)}}$
for such weights. This inequality extends the inequality for $n=1$ from  
\cite[Theorem~2.3]{BG98}, where it is attributed to J.~Bourgain. We show 
that although the weak doubling property is not necessary, it is quite sharp. 
It allows the weight $w$ in $L^p(\mathbb{R}^n,w)$ to grow at any 
subexponential rate. On the other hand, the space $L^p(\mathbb{R},e^x)$ has 
plenty of unbounded Fourier multipliers.
\end{abstract}
\section{Introduction}
Let $S(\mathbb{R}^n)$ and $S'(\mathbb{R}^n)$ denote the Schwartz spaces 
of rapidly
decreasing functions and of tempered distributions on $\mathbb{R}^n$,
respectively. The action of a distribution $a\in S'(\mathbb{R}^n)$ on a
function $u\in S(\mathbb{R}^n)$ is denoted by  $\langle a,u\rangle:=a(u)$.
A Fourier multiplier on $\mathbb{R}^n$ with symbol $a\in S'(\mathbb{R}^n)$
is defined as the operator $u \mapsto F^{-1}aFu$, where
\[
(Fu)(\xi):=\widehat{u}(\xi):=\int_{\mathbb{R}^n}f(x)e^{-ix\xi}\,dx
\]
is the Fourier transform of $u\in S(\mathbb{R}^n)$, $F^{-1}$ denotes the
inverse Fourier transform, and $x\xi$ denotes the scalar product of
$x,\xi\in\mathbb{R}^n$. We observe that since $u\in S(\mathbb{R}^n)$ and
$a\in S'(\mathbb{R}^n)$, the function $Fu$ belongs to the space
$S(\mathbb{R}^n)$ and $aFu$ is a tempered distribution. Thus $F^{-1}aFu$ is
well defined and it belongs to $S'(\mathbb{R}^n)$. In fact, we have
$F^{-1}aFu=(F^{-1}a)*u$, and therefore,
$F^{-1}aFu \in C_{\rm poly}^\infty(\mathbb{R}^n)$ (see, e.g.,
\cite[Theorem~2.3.20]{G14-classical} or \cite[Theorem~7.19(b)]{R91}).
Here and in what
follows $C_{\rm poly}^\infty(\mathbb{R}^n)$  denotes the set of all smooth
polynomially bounded functions, i.e., the set of all infinitely differentiable
functions $f:\mathbb{R}^n\to\mathbb{C}$ such that for every
$\alpha\in\mathbb{Z}_+^n$ there exist
$m_\alpha\in\mathbb{Z}_+:=\{0,1,2,\dots\}$ and $C_\alpha>0$ 
satisfying $|\partial_x^\alpha f(x)|\le C_\alpha(1+|x|)^{m_\alpha}$
for all $x\in\mathbb{R}^n$.
Thus, if $u\in S(\mathbb{R}^n)$ and $a\in S'(\mathbb{R}^n)$, then
$F^{-1}aFu$ is a regular tempered distribution, whose action on
$v\in S(\mathbb{R}^n)$ is evaluated as follows:
\[
\langle F^{-1}aFu,v\rangle=\int_{\mathbb{R}^n} (F^{-1}aFu)(x)v(x)\,dx
\quad\mbox{for all}\quad v\in S(\mathbb{R}^n).
\]

Let $C_0^\infty(\mathbb{R}^n)$ denote the space of all infinitely
differentiable functions on $\mathbb{R}^n$ with compact supports
and let $\mathcal{D}'(\mathbb{R}^n)$ be the space of distributions, that is,
the dual space of $C_0^\infty(\mathbb{R}^n)$. Suppose $X(\mathbb{R}^n)$ is a
Banach space continuously embedded into the space of distributions 
$\mathcal{D}'(\mathbb{R}^n)$. 
We say that a distribution $a \in S'(\mathbb{R}^n)$ belongs to the set
$\mathcal{M}_{X(\mathbb{R}^n)}$ of Fourier multipliers on $X(\mathbb{R}^n)$ if
\[
\|a\|_{\mathcal{M}_{X(\mathbb{R}^n)}}
:=
\sup\left\{\frac{\|F^{-1} aFu\|_{X(\mathbb{R}^n)}}{\|u\|_{X(\mathbb{R}^n)}}:
u \in \left(S(\mathbb{R}^n)\cap X(\mathbb{R}^n)\right) \setminus\{0\}\right\}
<\infty.
\]
Many authors adopt the following alternative definition of Fourier multipliers
(see, e.g., \cite[p.~368]{BK97}, \cite[p.~323]{BKS02}, \cite[p.~28]{D79}, 
\cite[p.~7]{EG77}, \cite[p.~199]{RSS11}). 
A function $a\in L^\infty(\mathbb{R}^n)$ is said to belong to the set
$\mathcal{M}^0_{X(\mathbb{R}^n)}$ of Fourier multipliers on $X(\mathbb{R}^n)$
if
\[
\|a\|_{\mathcal{M}^0_{X(\mathbb{R}^n)}}
:=
\sup\left\{
\frac{\|F^{-1} aFu\|_{X(\mathbb{R}^n)}}{\|u\|_{X(\mathbb{R}^n)}}:
u \in \left(L^2(\mathbb{R}^n)\cap X(\mathbb{R}^n)\right) \setminus\{0\}
\right\}<\infty.
\]
Here $F^{\pm 1}$ are understood as mappings on $L^2(\mathbb{R}^n)$.
Since $S(\mathbb{R}^n)\subset L^2(\mathbb{R}^n)$, it is clear that
\begin{equation}\label{eq:multiplier-embedding}
\mathcal{M}^0_{X(\mathbb{R}^n)} 
\subseteq 
\mathcal{M}_{X(\mathbb{R}^n)}\cap L^\infty(\mathbb{R}^n) 
\subseteq 
\mathcal{M}_{X(\mathbb{R}^n)}
\end{equation}
and
\begin{equation}\label{eq:multiplier-est}
\quad
\|a\|_{\mathcal{M}_{X(\mathbb{R}^n)}}
\le
\|a\|_{\mathcal{M}^0_{X(\mathbb{R}^n)}}.
\end{equation}
We feel that insufficient attention has been paid so far to the relationship
between the above classes of Fourier multipliers. In this paper, we confine
ourselves to  the Fourier multipliers acting on so-called
Banach function spaces, which are defined below, and provide sufficient 
conditions for equalities to hold in \eqref{eq:multiplier-embedding} 
(see Theorem~\ref{th:M0}, 
Subsection~\ref{subsec:bounded-L2-approximation}, 
and Theorem~\ref{th:main})
as well as examples when they do not
hold (see Theorem~\ref{th:two-classes-of-multipliers-are-different}).
We pay particular attention to the question of existence of a constant $D_X$
such that
$\|a\|_{L^\infty(\mathbb{R}^n)}
\le
D_X \|a\|_{\mathcal{M}_{X(\mathbb{R}^n)}}$.

Our initial motivation came from the following result that appeared in 
the paper by E.~Berkson and T.~A.~Gillespie \cite{BG98}, where it was attributed
to J.~Bourgain. A measurable function
$w:\mathbb{R}^n\to[0,\infty]$ is referred to as a weight if $0<w(x)<\infty$
a.e. on $\mathbb{R}^n$. The weighted Lebesgue space $L^p(\mathbb{R}^n,w)$,
$1\le p\le \infty$, is the set of all measurable complex-valued functions $f$ 
on $\mathbb{R}^n$ satisfying
\[
\|f\|_{L^p(\mathbb{R}^n,w)}:=\|fw\|_{L^p(\mathbb{R}^n)}<\infty.
\]
Recall that a weight $w:\mathbb{R}^n\to[0,\infty]$ belongs to the Muckenhoupt
class $A_p(\mathbb{R}^n)$, $1<p<\infty$, if
\[
\sup_Q
\left(\frac1{|Q|} \int_Q w^p(x)\, dx\right)^{1/p}
\left(\frac1{|Q|} \int_Q w^{-p'}(x)\, dx\right)^{1/p'} < \infty,
\quad
\frac{1}{p} +\frac{1}{p'} = 1,
\]
where the supremum is taken over all cubes $Q \subset \mathbb{R}^n$ with
sides parallel to the coordinate axes.  
\begin{theorem}[{(\cite[Theorem~2.3]{BG98})}]
\label{th:BGB}
Suppose that $1<p<\infty$ and $w\in A_p(\mathbb{R})$. Then there
exists a constant $D_{p,w}>0$ depending on $p$ and $w$ such that for all 
$a\in\mathcal{M}_{L^p(\mathbb{R},w)}\cap L^\infty(\mathbb{R)}$,
\[
\|a\|_{L^\infty(\mathbb{R})}\le D_{p,w}\|a\|_{\mathcal{M}_{L^p(\mathbb{R},w)}}.
\]
\end{theorem}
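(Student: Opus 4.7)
My plan is to prove $|a(\xi_0)| \le D_{p,w}\|a\|_{\mathcal{M}_{L^p(\mathbb{R},w)}}$ at every Lebesgue point $\xi_0 \in \mathbb{R}$ of $a$; since almost every point is Lebesgue, this gives $\|a\|_{L^\infty(\mathbb{R})}\le D_{p,w}\|a\|_{\mathcal{M}_{L^p(\mathbb{R},w)}}$. By a short Fourier computation, $F^{-1}aF(e^{i\xi_0 x}u) = e^{i\xi_0 x}F^{-1}a(\cdot + \xi_0)Fu$, so the pointwise isometry $u \mapsto e^{i\xi_0 x}u$ of $L^p(\mathbb{R},w)$ gives $\|a(\cdot + \xi_0)\|_{\mathcal{M}_{L^p(\mathbb{R},w)}} = \|a\|_{\mathcal{M}_{L^p(\mathbb{R},w)}}$. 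After replacing $a$ by $a(\cdot+\xi_0)$ it suffices to treat the case where $0$ is a Lebesgue point of $a$ and prove $|a(0)|\le D_{p,w}\|a\|_{\mathcal{M}_{L^p(\mathbb{R},w)}}$.

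Fix $\psi \in C_0^\infty(\mathbb{R})$ with $\mathrm{supp}\,\psi \subset [-1,1]$ and $\psi > 0$ on a set of positive measure, and set $\check\psi := F^{-1}\psi$. For small $\delta > 0$ put $u_\delta(x) := \sqrt{\delta}\,\check\psi(\delta x) \in S(\mathbb{R})$, so that $\widehat{u_\delta}(\xi) = \delta^{-1/2}\psi(\xi/\delta)$ is supported in $[-\delta,\delta]$. Parseval's formula gives
\[
\int_\mathbb{R}(F^{-1}aFu_\delta)(x)\,\overline{u_\delta(x)}\,dx = \frac{1}{2\pi}\int_\mathbb{R} a(\xi)|\widehat{u_\delta}(\xi)|^2\,d\xi = \frac{1}{2\pi}\int_\mathbb{R} a(\delta\eta)\psi(\eta)^2\,d\eta,
\]
and a standard mollifier argument at the Lebesgue point $0$ shows that the right-hand side tends to $a(0)\|\psi\|_{L^2(\mathbb{R})}^2/(2\pi)$ as $\delta \to 0$.

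The main step is to bound $\|u_\delta\|_{L^p(\mathbb{R},w)}\|u_\delta\|_{L^{p'}(\mathbb{R},w^{-1})}$ uniformly in $\delta$. Setting $W := w^p$ and $\sigma := w^{-p'}$ (classical $A_p$ and $A_{p'}$ weights, respectively), the rapid decay $|\check\psi(y)|\le C_N(1+|y|)^{-N}$, a dyadic annular decomposition outside $B(0,1/\delta)$, and the doubling of $W$ (with $N$ chosen above the doubling exponent) yield $\int_\mathbb{R}|\check\psi(\delta x)|^p W(x)\,dx \le C\,W(B(0,1/\delta))$, and similarly for $\sigma$. Combined with the scaling identity $\|u_\delta\|_{L^p(\mathbb{R},w)}^p = \delta^{p/2}\int_\mathbb{R}|\check\psi(\delta x)|^p W\,dx$ (and its $p'$ analogue), this gives
\[
\|u_\delta\|_{L^p(\mathbb{R},w)}\|u_\delta\|_{L^{p'}(\mathbb{R},w^{-1})} \le C\,\delta\,W(B(0,1/\delta))^{1/p}\,\sigma(B(0,1/\delta))^{1/p'}.
\]
The $A_p$ condition, rewritten on $Q = B(0,1/\delta)$ as $W(Q)^{1/p}\sigma(Q)^{1/p'} \le C_{A_p}|Q|$, precisely absorbs the factor $\delta$ and bounds the right-hand side by a constant $K_{p,w}$ independent of $\delta$.

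Hölder's inequality $|\int fg| \le \|f\|_{L^p(\mathbb{R},w)}\|g\|_{L^{p'}(\mathbb{R},w^{-1})}$ together with the definition of the multiplier norm then give
\[
\left|\int_\mathbb{R}(F^{-1}aFu_\delta)\,\overline{u_\delta}\,dx\right| \le \|a\|_{\mathcal{M}_{L^p(\mathbb{R},w)}}\,\|u_\delta\|_{L^p(\mathbb{R},w)}\|u_\delta\|_{L^{p'}(\mathbb{R},w^{-1})} \le K_{p,w}\|a\|_{\mathcal{M}_{L^p(\mathbb{R},w)}},
\]
and letting $\delta \to 0$ yields $|a(0)|\le D_{p,w}\|a\|_{\mathcal{M}_{L^p(\mathbb{R},w)}}$ with $D_{p,w} = 2\pi K_{p,w}/\|\psi\|_{L^2(\mathbb{R})}^2$. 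The main obstacle is the third-paragraph estimate: under the $\delta$-rescaling each factor $\|u_\delta\|_{L^p(\mathbb{R},w)}$ and $\|u_\delta\|_{L^{p'}(\mathbb{R},w^{-1})}$ can blow up, with rates governed by the doubling exponents of $w^p$ and $w^{-p'}$, and it is exactly the sharp $A_p$-Hölder inequality on balls that forces the two rates to cancel so that the product remains bounded as $\delta \to 0$.
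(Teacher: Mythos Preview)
Your argument is correct. The reduction to $\xi_0=0$ via modulation, the Plancherel identity, the Lebesgue-point convergence of $\int a(\delta\eta)\psi(\eta)^2\,d\eta$, the dyadic-tail estimate using the doubling of $W=w^p$ and $\sigma=w^{-p'}$ (both classical $A_p$-class measures, hence doubling), and the final absorption of the factor $\delta$ by the $A_p$ inequality $W(Q)^{1/p}\sigma(Q)^{1/p'}\le C|Q|$ all check out.

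Your route, however, is genuinely different from the paper's. The paper never pairs $F^{-1}aFu$ against $u$ via Plancherel and never invokes H\"older between $L^p(\mathbb{R},w)$ and its associate $L^{p'}(\mathbb{R},w^{-1})$. Instead it tests the multiplier on translated, modulated bump functions $f_{\delta,\eta,y}(x)=e^{-i\eta x}\varphi(\delta(x-y))$, proves a \emph{pointwise} approximation $F^{-1}aFf_{\delta,\eta,y}\approx a(\eta)f_{\delta,\eta,y}$ via a Lebesgue-point lemma, and then compares $\|\chi_{B(y,1/\delta)}\|_X$ with $\|\chi_{B(y,\varrho/\delta)}\|_X$ using only the weak doubling property of the space $X$. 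What the paper's approach buys is generality---it works for any Banach function space with the weak doubling property, not just $L^p(\mathbb{R},w)$---and, after Lemma~\ref{le:weak-doubling-property} (the infimum of doubling constants equals $1$), the optimal constant $D_{p,w}=1$. Your approach is shorter and more transparent in the $A_p$ setting, exploiting the explicit duality and the $A_p$ balance between $W$ and $\sigma$; the price is that the constant you obtain depends on the doubling exponents and the $A_p$ characteristic, and the method does not extend beyond weighted Lebesgue spaces without further work.
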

The proof of Theorem~\ref{th:BGB} relies on the deep result on a.e. convergence 
of Fourier integrals, that is, the transplanted version of the celebrated 
Carleson theorem on the a.e. convergence of Fourier series (see, e.g.,
\cite[Theorem~6.1.1]{G14-modern} or \cite{L04}). Theorem~\ref{th:BGB} was 
extended by the first author \cite[Theorem~1]{K15} to the case of weighted 
Banach function spaces $X(\mathbb{R},w)$, in which the Cauchy singular integral
operator (the Hilbert transform) is bounded.

In this paper, we provide a more elementary proof that the estimate
\[
\|a\|_ {L^\infty(\mathbb{R}^n)}\le \|a\|_{\mathcal{M}_{X(\mathbb{R}^n)}}
\]
holds with the (optimal) constant equal to $1$ for a large class of Banach 
function spaces $X(\mathbb{R}^n)$ and arbitrary $n\ge 1$. In particular, it holds 
for all weighted Lebesgue spaces $L^p(\mathbb{R}^n,w)$ with $1<p<\infty$ and
Muckenhoupt weights $w\in A_p(\mathbb{R}^n)$.

We need several definitions to state our main result.
The set of all Lebesgue measurable complex-valued functions on $\mathbb{R}^n$
is denoted by $\mathfrak{M}(\mathbb{R}^n)$. Let $\mathfrak{M}^+(\mathbb{R}^n)$
be the subset of functions  in $\mathfrak{M}(\mathbb{R}^n)$ whose values lie
in $[0,\infty]$. The characteristic function of a measurable set
$E\subset\mathbb{R}^n$ is denoted by $\chi_E$ and the Lebesgue measure of $E$
is denoted by $|E|$.

Following \cite[Chap.~1, Definition~1.1]{BS88}, a mapping
$\rho:\mathfrak{M}^+(\mathbb{R}^n)\to [0,\infty]$
is called a Banach function norm if, for all functions $f,g,f_j$
($j\in\mathbb{N}$) in $\mathfrak{M}^+(\mathbb{R}^n)$, for all constants
$a\ge 0$, and for all measurable subsets $E$ of $\mathbb{R}^n$, the
following properties hold:
\begin{eqnarray*}
{\rm (A1)} &\quad &
\rho(f)=0  \Leftrightarrow  f=0\ \mbox{a.e.},
\quad
\rho(af)=a\rho(f),
\quad
\rho(f+g) \le \rho(f)+\rho(g),\\
{\rm (A2)} &\quad &
0\le g \le f \ \mbox{a.e.} \ \Rightarrow \ \rho(g)
\le \rho(f)
\quad\mbox{(the lattice property)},\\
{\rm (A3)} &\quad &
0\le f_j \uparrow f \ \mbox{a.e.} \ \Rightarrow \
       \rho(f_j) \uparrow \rho(f)\quad\mbox{(the Fatou property)},\\
{\rm (A4)} &\quad &
|E|<\infty \Rightarrow \rho(\chi_E) <\infty,\\
{\rm (A5)} &\quad &
|E|<\infty \Rightarrow \int_E f(x)\,dx \le C_E\rho(f)
\end{eqnarray*}
with $C_E \in (0,\infty)$ that may depend on $E$ and $\rho$ but is
independent of $f$.

When functions differing only on a set of measure zero are identified, the
set $X(\mathbb{R}^n)$ of all functions $f\in\mathfrak{M}(\mathbb{R}^n)$ for
which $\rho(|f|)<\infty$ becomes a Banach space under the norm
\[
\|f\|_{X(\mathbb{R}^n)} :=\rho(|f|)
\]
and  under the natural linear space operations
(see \cite[Chap.~1, Theorems~1.4 and~1.6]{BS88}). It is  called a 
{\it Banach function space}. 

If $\rho$ is a Banach function norm, its associate norm $\rho'$ is defined on
$\mathfrak{M}^+(\mathbb{R}^n)$ by
\[
\rho'(g):=\sup\left\{
\int_{\mathbb{R}^n} f(x)g(x)\,dx \ : \
f\in \mathfrak{M}^+(\mathbb{R}^n), \ \rho(f) \le 1
\right\}.
\]
It is a Banach function norm itself \cite[Chap.~1, Theorem~2.2]{BS88}.
The Banach function space $X'(\mathbb{R}^n)$ determined by the Banach
function norm $\rho'$ is called the associate space (K\"othe dual) of
$X(\mathbb{R}^n)$. The Lebesgue space $L^p(\mathbb{R}^n)$, $1\le p\le\infty$,
is the archetypical example of Banach function spaces. Other classical
examples of Banach function spaces are Orlicz spaces, rearrangement-invariant
spaces, and variable Lebesgue spaces $L^{p(\cdot)}(\mathbb{R}^n)$.

Let $X(\mathbb{R}^n)$ be a Banach function space. We say that
$f\in X_{\rm loc}(\mathbb{R}^n)$ if $f\chi_E\in X(\mathbb{R}^n)$ for every
measurable set $E\subset\mathbb{R}^n$ of finite measure. 
If $w:\mathbb{R}^n\to[0,\infty]$ is a weight satisfying
$w\in X_{\rm loc}(\mathbb{R}^n)$ and
$1/w\in X_{\rm loc}'(\mathbb{R}^n)$, then
\[
X(\mathbb{R}^n,w):=
\{f\in\mathfrak{M}(\mathbb{R}^n)\ : \ fw\in X(\mathbb{R}^n)\}
\]
becomes a Banach function space when it is equipped with the norm
\[
\|f\|_{X(\mathbb{R}^n,w)}:=\|fw\|_{X(\mathbb{R}^n)},
\]
and $[X(\mathbb{R}^n,w)]'=X'(\mathbb{R}^n,w^{-1})$ (see
\cite[Lemma~2.4]{KS14}). It is clear that
if $w\in A_p(\mathbb{R}^n)$, then $w\in L_{\rm loc}^p(\mathbb{R}^n)$ and
$1/w\in L_{\rm loc}^{p'}(\mathbb{R}^n)$, whence $L^p(\mathbb{R}^n,w)$ is a 
Banach function space.

For $y\in\mathbb{R}^n$ and $R>0$, let
$B(y,R) := \{x \in \mathbb{R}^n : |x-y|< R\}$ be the open ball of radius
$R$ centered at $y$. 
\begin{definition}
We say that a Banach function space $X(\mathbb{R}^n)$ satisfies the weak 
doubling property if there exists a number $\tau > 1$  such that
\[
\liminf_{R \to \infty}
\left(\inf_{y \in \mathbb{R}^n}
\frac{\|\chi_{B(y,\tau R)}\|_{X(\mathbb{R}^n)}}
{\|\chi_{B(y, R)}\|_{X(\mathbb{R}^n)}}
\right)<\infty.
\]
\end{definition}
\begin{theorem}[(Main result)]\label{th:main}
Let $n\ge 1$ and $X(\mathbb{R}^n)$ be a Banach function space  satisfying the 
weak doubling property. If 
$a \in \mathcal{M}_{X(\mathbb{R}^n)}\subset S'(\mathbb{R}^n)$, then 
$a \in L^\infty(\mathbb{R}^n)$ and
\begin{equation}\label{eq:main}
\|a\|_{L^\infty(\mathbb{R}^n)}
\le
\|a\|_{\mathcal{M}_{X(\mathbb{R}^n)}}.
\end{equation}
The constant $1$ on the right-hand side of \eqref{eq:main} is best possible.
\end{theorem}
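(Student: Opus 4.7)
The plan is to test the multiplier inequality $\|F^{-1}aFu\|_{X(\mathbb{R}^n)} \le \|a\|_{\mathcal{M}_{X(\mathbb{R}^n)}}\|u\|_{X(\mathbb{R}^n)}$ against a family of modulated bumps concentrated where the weak doubling property can be exploited. Writing $\|\cdot\|_X := \|\cdot\|_{X(\mathbb{R}^n)}$ for brevity, let $\tau > 1$ be as in the weak doubling property, and pick $\phi \in C_c^\infty(\mathbb{R}^n)$ with $\mathrm{supp}\,\phi \subset B(0,\tau)$, $0 \le \phi \le 1$, and $\phi \ge c_0 > 0$ on $B(0,1)$. For $\xi_0, y \in \mathbb{R}^n$ and $R > 0$, I would set
\[
u_{y,R}(x) := e^{ix\xi_0}\,\phi\bigl((x-y)/R\bigr),
\]
so that $u_{y,R} \in C_c^\infty(B(y,\tau R)) \subset S(\mathbb{R}^n) \cap X(\mathbb{R}^n)$, with the pointwise bounds $c_0\chi_{B(y,R)} \le |u_{y,R}| \le \chi_{B(y,\tau R)}$.

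A direct computation gives, assuming for the moment that $a \in L^1_{\mathrm{loc}}(\mathbb{R}^n)$,
\[
(F^{-1}aFu_{y,R})(x) - a(\xi_0)u_{y,R}(x) = \frac{e^{ix\xi_0}}{(2\pi)^n}\int_{\mathbb{R}^n}\bigl[a(\xi_0 + \eta/R) - a(\xi_0)\bigr]\hat{\phi}(\eta)e^{i(x-y)\eta/R}\,d\eta.
\]
Since $|\hat{\phi}|$ is Schwartz (hence bounded by a radially decreasing function of rapid decay), $\zeta \mapsto R^n|\hat{\phi}(R\zeta)|$ is an approximate identity of standard type, so by the Lebesgue differentiation theorem, at every Lebesgue point $\xi_0$ of $a$ the left-hand side satisfies $\|F^{-1}aFu_{y,R} - a(\xi_0)u_{y,R}\|_{L^\infty(\mathbb{R}^n)} \to 0$ as $R \to \infty$, uniformly in $y$.

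Next I would invoke the weak doubling property to pick sequences $R_k \to \infty$ and $y_k \in \mathbb{R}^n$ with $\|\chi_{B(y_k,\tau R_k)}\|_X \le L\|\chi_{B(y_k,R_k)}\|_X$. Combined with the pointwise bounds on $u_{y_k,R_k}$ and the lattice property (A2) this yields
\[
c_0\|\chi_{B(y_k,R_k)}\|_X \le \|u_{y_k,R_k}\|_X \le \|\chi_{B(y_k,\tau R_k)}\|_X \le L\|\chi_{B(y_k,R_k)}\|_X.
\]
Since $a(\xi_0)u_{y_k,R_k}$ is supported in $B(y_k,\tau R_k)$, setting $r_k := F^{-1}aFu_{y_k,R_k} - a(\xi_0)u_{y_k,R_k}$ and combining the triangle inequality with (A2) gives
\begin{align*}
|a(\xi_0)|\|u_{y_k,R_k}\|_X
&= \|a(\xi_0)u_{y_k,R_k}\chi_{B(y_k,\tau R_k)}\|_X \\
&\le \|F^{-1}aFu_{y_k,R_k}\|_X + \|r_k\|_{L^\infty(\mathbb{R}^n)}\|\chi_{B(y_k,\tau R_k)}\|_X \\
&\le \|a\|_{\mathcal{M}_{X(\mathbb{R}^n)}}\|u_{y_k,R_k}\|_X + \tfrac{L}{c_0}\|r_k\|_{L^\infty(\mathbb{R}^n)}\|u_{y_k,R_k}\|_X.
\end{align*}
Dividing through and letting $k \to \infty$ yields $|a(\xi_0)|\le\|a\|_{\mathcal{M}_{X(\mathbb{R}^n)}}$ at every Lebesgue point, hence \eqref{eq:main}. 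Sharpness of the constant $1$ follows by taking $a \equiv 1$: then $F^{-1}aFu = u$ for every $u$, so $\|a\|_{\mathcal{M}_{X(\mathbb{R}^n)}} = 1 = \|a\|_{L^\infty(\mathbb{R}^n)}$.

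The main obstacle is the preliminary step establishing that $a \in L^1_{\mathrm{loc}}(\mathbb{R}^n)$ (so that Lebesgue points are defined) directly from the bare distributional hypothesis $a \in \mathcal{M}_{X(\mathbb{R}^n)} \subset S'(\mathbb{R}^n)$. I expect this to rely on a variant of the same test-function construction combined with duality, testing $|\langle a,\psi\rangle|$ against $\psi \in S(\mathbb{R}^n)$ written in a product form $\hat{u}\cdot F^{-1}v$ with $u \in S \cap X$ and $v \in S \cap X'$, so as to bound $|\langle a,\psi\rangle|$ by products $\|u\|_X\|v\|_{X'}$ via the multiplier inequality, and then using weak doubling to obtain an $L^1$-type bound on $\psi$ for an appropriate dense family.
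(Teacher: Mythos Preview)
Your test-function argument under the provisional integrability hypothesis on $a$ is essentially the paper's Theorem~\ref{th:symbol-in-weighted-L1}, and your streamlining---keeping $\|u_{y_k,R_k}\|_X$ on both sides of the key inequality rather than sandwiching between $\|\chi_{B(y_k,R_k)}\|_X$ and $\|\chi_{B(y_k,\tau R_k)}\|_X$---neatly bypasses the paper's separate Lemma~\ref{le:weak-doubling-property} on $\inf_{\tau>1}D_{X,\tau}=1$. One imprecision: for your displayed integral to converge absolutely and for the approximate-identity step to go through, mere $L^1_{\mathrm{loc}}$ is not enough; you need $\int_{\mathbb{R}^n} (1+|\xi|)^{-\sigma}|a(\xi)|\,d\xi<\infty$ for some $\sigma$, i.e.\ $a\in L_{1,\sigma}(\mathbb{R}^n)$ in the paper's notation, so that the Schwartz tail of $\widehat\phi$ absorbs the growth of $a$ at infinity (this is exactly the content of Lemma~\ref{le:Lebesgue}).

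The genuine gap is precisely where you place it: passing from $a\in S'(\mathbb{R}^n)$ to a locally integrable function. Your duality sketch---factoring $\psi=\widehat u\,\check v$ and bounding $|\langle a,\psi\rangle|\le\|a\|_{\mathcal{M}_X}\|u\|_X\|v\|_{X'}$---is the correct identity (it is \eqref{eq:main-proof-4}--\eqref{eq:main-proof-6} in the paper), but there is no evident way to convert such product bounds into an $L^1$-type estimate on a dense family of $\psi$, and weak doubling does not obviously enter. The paper takes a different route: mollify. One checks that $a\ast\psi_\varepsilon\in C^\infty_{\mathrm{poly}}(\mathbb{R}^n)\subset L_{1,\sigma}(\mathbb{R}^n)$ and proves $\|a\ast\psi_\varepsilon\|_{\mathcal{M}_X}\le\|a\|_{\mathcal{M}_X}$; Theorem~\ref{th:symbol-in-weighted-L1} then gives $\|a\ast\psi_\varepsilon\|_{L^\infty}\le\|a\|_{\mathcal{M}_X}$ uniformly in $\varepsilon$, and one lets $\varepsilon\to 0$ in $S'$. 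The multiplier-norm inequality for $a\ast\psi_\varepsilon$ is the substantial step: it rests on the identity \eqref{eq:main-proof-7}, namely
\[
\int_{\mathbb{R}^n} \big(F^{-1}(a\ast\psi_\varepsilon)Fu\big)(x)\,v(x)\,dx
=\int_{\mathbb{R}^n}\langle F^{-1}aF(e_{-\zeta}u),\,e_\zeta v\rangle\,\psi_\varepsilon(\zeta)\,d\zeta,
\]
followed by a limiting argument that replaces $v\in C_0^\infty(\mathbb{R}^n)$ by simple functions so that Lemma~\ref{le:NFP-X} recovers the $X$-norm of $F^{-1}(a\ast\psi_\varepsilon)Fu$. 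This is where most of the work for the general case lies, and your proposal does not yet contain an analogue of it.
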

The paper is organized as follows. 
Section~\ref{sec:auxiliary} contains auxiliary results. For a Banach 
function space, we introduce the bounded $L^2$-approximation property
and the norm fundamental property, study relations between them, and give 
examples of Banach function spaces, which do not satisfy these properties.
Further, we prove a variant of a well-known lemma on approximation at Lebesgue
points, which is an important ingredient in the proof of our main result.

The weak doubling property is discussed in  Section~\ref{sec:doubling}. In 
particular, we prove that a Muckenhoupt-type condition $A_X$ implies the weak 
doubling property and show that weighted Banach function spaces 
$X(\mathbb{R},w_j)$, built upon a translation-invariant 
Banach function space $X(\mathbb{R})$ and exponential weights $w_1(x)=e^{cx}$ 
and $w_2(x)=e^{c|x|}$ with $c>0$, fail to have the weak doubling property. 
On the other hand, we also show that $Y(\mathbb{R}^n,w)$ satisfies the weak 
doubling condition for weights $w$ that can grow at any subexponential rate.

Section~\ref{sec:proof} contains the proof of our main result. We divide it
into two parts. The first part of the proof is developed for Fourier 
multipliers belonging to a weighted Lebesgue space 
$L_{1,\sigma}(\mathbb{R}^n)$. Our arguments at this step are similar to those 
used in the proof of \cite[Theorem~2.3]{BG98} with the important difference 
that we substitute the application of the theorem on a.e. convergence of 
Fourier integrals by a simpler lemma on approximation at Lebesgue points 
proved in Section~\ref{sec:auxiliary}. Further, we approximate
an arbitrary Fourier multiplier $a\in\mathcal{M}_{X(\mathbb{R}^n)}$ by
$a\ast\psi_\varepsilon\in C_{\rm poly}^\infty(\mathbb{R}^n)$
with suitably chosen functions $\psi_\varepsilon\in C_0^\infty(\mathbb{R}^n)$.
Note that $C_{\rm poly}^\infty(\mathbb{R}^n)$ is contained in 
$L_{1,\sigma}(\mathbb{R}^n)$ for some $\sigma\in\mathbb{R}$, which allows us 
to complete the proof of Theorem~\ref{th:main}. We conclude this section 
the proof of a multi-dimensional analogue of Theorem~\ref{th:BGB}. 

In Section~\ref{sec:optimality-doubling}, we discuss the optimality of the 
requirement of the weak doubling property in Theorem~\ref{th:main}. 
In particular, we show that for an arbitrary translation-invariant Banach 
function space $Y(\mathbb{R})$ and the weight $w_1(x)=e^{cx}$  with any $c>0$,
the weighted Banach function space $Y(\mathbb{R},w_1)$ admits many 
unbounded Fourier  multipliers.

In Section~\ref{sec:alternative-definition}, we discuss the classes of Fourier
multipliers $\mathcal{M}^0_{X(\mathbb{R}^n)}$ and 
$\mathcal{M}_{X(\mathbb{R}^n)}\cap L^\infty(\mathbb{R}^n)$
and prove that they coincide if $X(\mathbb{R}^n)$ satisfies the bounded 
$L^2$-approximation property. We also construct 
an example showing that $\mathcal{M}^0_{X(\mathbb{R}^n)}$ and
$\mathcal{M}_{X(\mathbb{R}^n)} \cap L^\infty(\mathbb{R}^n)$ may differ.
We show that the classes $\mathcal{M}_{X(\mathbb{R}^n)}^0$ and
$\mathcal{M}_{X(\mathbb{R}^n)}\cap L^\infty(\mathbb{R}^n)$
are normed algebras and that the normed space $\mathcal{M}_{X(\mathbb{R}^n)}$
and the normed algebra $\mathcal{M}_{X(\mathbb{R}^n)}^0$ are not complete, 
in general.

The weak doubling property is of course by no means necessary for the 
conclusion of Theorem~\ref{th:main} to hold. Using duality and 
interpolation as in \cite{H60} (see also \cite[Lemma~6]{BT94}), one can prove 
the estimate \eqref{eq:main}
for arbitrary reflexive reflection-invariant Banach function spaces. This is 
done in Section~\ref{sec:reflection-invariant} with the help of the 
interpolation theorem for Calder\'on products  
$(X_0^{1-\theta}X_1^\theta)(\mathbb{R}^n)$ and Lozanovski{\u{\i}}'s formula
$(X^{1/2}(X')^{1/2})(\mathbb{R}^n)=L^2(\mathbb{R}^n)$.
Here we do not assume that the space $X(\mathbb{R}^n)$ satisfies the weak 
doubling property. We also show that the estimate 
\[
\|a\|_{L^\infty(\mathbb{R}^n)}\le\|a\|_{\mathcal{M}^0_{X(\mathbb{R}^n)}}
\quad\mbox{for all}\quad
a\in\mathcal{M}_{X(\mathbb{R}^n)}^0
\]
holds if $X(\mathbb{R}^n)$ is an arbitrary,  not necessarily reflexive,
reflection-invariant Banach function space.

In Section~\ref{subsec:Lofstrom}, we extend J. L\"ofstr\"om's result \cite{L83}
and show that there are no non-trivial Fourier multipliers on the weighted 
Banach function space $Y(\mathbb{R}^n, w)$ built upon a translation-invariant 
Banach function space $Y(\mathbb{R}^n)$ in the case of a weight $w$ growing 
superexponentially in all directions: 
$\mathcal{M}_{Y(\mathbb{R}^n, w)} = \mathbb{C}$. 
On the other hand, we show that there are non-trivial Fourier multipliers 
in $\mathcal{M}_{Y(\mathbb{R}^n,w)}$ in the case of (sub)exponentially 
growing weights like $w(x)=\exp(c|x|^\alpha)$ for $x\in\mathbb{R}^n$ with 
some constants $c>0$ and $\alpha\in(0,1]$.
\section{Auxiliary results}
\label{sec:auxiliary}
\subsection{Translation-invariant Banach function spaces}
We say that a Banach function space $X(\mathbb{R}^n)$ is 
{\it translation-invariant} if 
for all $y \in \mathbb{R}^n$ and 
for all functions $u \in X(\mathbb{R}^n)$, one has
\[
\|\tau_y u\|_{X(\mathbb{R}^n)} = \|u\|_{X(\mathbb{R}^n)}, 
\]
where the translation operator $\tau_y$ is defined by
$(\tau_y u)(x) := u(x - y)$ for all $x \in \mathbb{R}^n$.
\begin{lemma}\label{le:TI-X-and-Xprime}
Let $X(\mathbb{R}^n)$ be a Banach function space and $X'(\mathbb{R}^n)$ be
its associate space. Then $X(\mathbb{R}^n)$ is translation-invariant if and 
only  if $X'(\mathbb{R}^n)$ is translation-invariant.
\end{lemma}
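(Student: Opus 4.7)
The plan is to prove the ``only if'' direction by a direct change-of-variables argument in the integral defining the associate norm $\rho'$, and then to deduce the converse from the Lorentz--Luxemburg theorem $X''(\mathbb{R}^n)=X(\mathbb{R}^n)$ with equality of norms (\cite[Chap.~1, Theorem~2.7]{BS88}).

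For the forward direction, fix $y \in \mathbb{R}^n$ and a function $g \in \mathfrak{M}^+(\mathbb{R}^n)$. Starting from
\[
\rho'(\tau_y g)
=
\sup\left\{\int_{\mathbb{R}^n} f(x)\,g(x-y)\,dx \ : \ f \in \mathfrak{M}^+(\mathbb{R}^n),\ \rho(f) \le 1\right\},
\]
I would perform the substitution $z = x-y$ and rewrite the integrand as $(\tau_{-y}f)(z)\,g(z)$. Reindexing the supremum by $h := \tau_{-y}f$, so that $f = \tau_y h$, turns the condition $\rho(f) \le 1$ into $\rho(\tau_y h) \le 1$. If $X(\mathbb{R}^n)$ is translation-invariant, then $\rho(\tau_y h) = \rho(h)$ (applied to $|h|$, noting that $|\tau_y h| = \tau_y |h|$), so the constraint simplifies to $\rho(h) \le 1$, and the supremum becomes $\rho'(g)$. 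This yields $\rho'(\tau_y g) = \rho'(g)$, i.e.\ translation-invariance of $X'(\mathbb{R}^n)$.

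For the converse, assume $X'(\mathbb{R}^n)$ is translation-invariant. Applying the argument just given to the Banach function norm $\rho'$ in place of $\rho$ shows that its associate norm $\rho''$ is translation-invariant as well. By the Lorentz--Luxemburg theorem cited above, $\rho'' = \rho$, so $X(\mathbb{R}^n)$ is translation-invariant.

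There is no real obstacle here: the only point that requires a small amount of care is the bookkeeping with the nonnegative-function formalism of \cite[Chap.~1]{BS88}, in particular the fact that translation commutes with taking absolute values and preserves membership in $\mathfrak{M}^+(\mathbb{R}^n)$, so that the substitution in the defining supremum is legitimate. Once this is observed, the proof reduces to the translation-invariance of Lebesgue measure and the duality $X'' = X$.
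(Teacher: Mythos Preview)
Your proposal is correct and follows essentially the same approach as the paper: a change of variables in the defining supremum for the associate norm, combined with the Lorentz--Luxemburg theorem for the converse. The only cosmetic difference is that the paper works with the formulation of the associate norm from \cite[Chap.~1, Lemma~2.8]{BS88} (sup over $f\in X(\mathbb{R}^n)$ with $\|f\|_{X(\mathbb{R}^n)}\le 1$), while you work directly with the nonnegative-function definition of $\rho'$; the substance is identical.
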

\begin{proof}
Suppose $X(\mathbb{R}^n)$ is translation-invariant, $g\in X'(\mathbb{R}^n)$,
and $y\in\mathbb{R}^n$. Then for every $f\in X(\mathbb{R}^n)$ with
$\|f\|_{X(\mathbb{R}^n)}\le 1$, we have 
$\|\tau_{-y}f\|_{X(\mathbb{R}^n)}=\|f\|_{X(\mathbb{R}^n)}\le 1$. 
By H\"older's inequality (see \cite[Chap.~1, Theorem~2.4]{BS88}),
$g(\tau_{-y}f)\in L^1(\mathbb{R}^n)$. Changing variables, we get
\[
\int_{\mathbb{R}^n}g(x)(\tau_{-y}f)(x)\,dx
=
\int_{\mathbb{R}^n}(\tau_y g)(x)f(x)\,dx.
\]
Then, in view of \cite[Chap.~1, Lemma~2.8]{BS88},
\begin{align*}
\|\tau_y g\|_{X'(\mathbb{R}^n)}
&=
\sup\left\{
\left|\int_{\mathbb{R}^n}(\tau_y g)(x)f(x)\,dx\right|
\ :\ f\in X(\mathbb{R}^n),\
\|f\|_{X(\mathbb{R}^n)}\le 1
\right\}
\\
&=
\sup\left\{
\left|\int_{\mathbb{R}^n}g(x)f(x)\,dx\right|
\ :\ f\in X(\mathbb{R}^n),\
\|f\|_{X(\mathbb{R}^n)}\le 1
\right\}
=
\|g\|_{X'(\mathbb{R}^n)},
\end{align*}
that is, $X'(\mathbb{R}^n)$ is translation-invariant. The reverse implication
follows from what was proved above and the Lorentz-Luxemburg theorem
(see \cite[Chap.~1, Theorem~2.7]{BS88}).
\end{proof}
\subsection{The bounded $L^2$-approximation property}
\label{subsec:bounded-L2-approximation}
\begin{definition}
We will say that a Banach function space $X(\mathbb{R}^n)$ satisfies
the bounded $L^2$-approximation property if for every function
$u \in L^2(\mathbb{R}^n)\cap X(\mathbb{R}^n)$, there 
exists a sequence $\{u_j\}_{j\in\mathbb{N}} \subset C^\infty_0(\mathbb{R}^n)$
such that
\begin{equation}\label{eq:bounded-L2-approximation}
\lim_{j \to \infty}\|u - u_j\|_{L^2(\mathbb{R}^n)} = 0 , 
\quad
\limsup_{j \to \infty}\|u_j\|_{X(\mathbb{R}^n)} \le  \|u\|_{X(\mathbb{R}^n)}.
\end{equation}
\end{definition}
Following \cite[Chap.~1, Definition~3.1]{BS88}, a
function $f$ in a Banach function space $X(\mathbb{R}^n)$ is said to
have {\it absolutely continuous norm} in $X(\mathbb{R}^n)$ if 
$\|f\chi_{E_j}\|_{X(\mathbb{R}^n)}\to 0$ as $j\to\infty$ for every
sequence $\{E_j\}_{j\in\mathbb{N}}$ of measurable sets in $\mathbb{R}^n$
satisfying $\chi_{E_j}\to 0$ a.e. on $\mathbb{R}^n$ as $j\to\infty$.
The set of all functions of absolutely continuous norm in $X(\mathbb{R}^n)$
is denoted by $X_a(\mathbb{R}^n)$. If $X_a(\mathbb{R}^n)=X(\mathbb{R}^n)$,
then the space $X(\mathbb{R}^n)$ itself is said to have absolutely continuous
norm.
\begin{theorem}\label{th:AC-implies-bounded-L2-approximation}
Let $X(\mathbb{R}^n)$ be a Banach function space with absolutely continuous
norm. Then $X(\mathbb{R}^n)$ has the bounded $L^2$-approximation property.
\end{theorem}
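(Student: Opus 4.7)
The plan is to construct the $C_0^\infty$ approximants in two stages and to arrange that both stages converge simultaneously in $L^2(\mathbb{R}^n)$ and in $X(\mathbb{R}^n)$; this will in fact yield $\|u_j\|_{X(\mathbb{R}^n)}\to\|u\|_{X(\mathbb{R}^n)}$, which is stronger than the $\limsup$ bound demanded by \eqref{eq:bounded-L2-approximation}.

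First I would truncate. Given $u\in L^2(\mathbb{R}^n)\cap X(\mathbb{R}^n)$, set $E_R:=B(0,R)\cap\{|u|\le R\}$ and $v_R:=u\chi_{E_R}$, so that $v_R$ is bounded and compactly supported. Since $\chi_{E_R^c}\to 0$ a.e.\ as $R\to\infty$, the assumed absolute continuity of the norm of $X(\mathbb{R}^n)$ gives $\|u-v_R\|_{X(\mathbb{R}^n)}=\|u\chi_{E_R^c}\|_{X(\mathbb{R}^n)}\to 0$, while $|v_R|\le|u|$ together with $u\in L^2(\mathbb{R}^n)$ and pointwise convergence give $\|u-v_R\|_{L^2(\mathbb{R}^n)}\to 0$ by ordinary dominated convergence.

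Next I would mollify. Fix such a $v_R$ (say $|v_R|\le R$ and $\operatorname{supp} v_R\subset B(0,R)$) and let $\phi_\delta\in C_0^\infty(\mathbb{R}^n)$ be a standard non-negative mollifier with $\int\phi_\delta=1$ and $\operatorname{supp}\phi_\delta\subset B(0,\delta)$. Then $w_\delta:=v_R*\phi_\delta\in C_0^\infty(\mathbb{R}^n)$, and $w_\delta\to v_R$ both in $L^2(\mathbb{R}^n)$ and pointwise at every Lebesgue point of $v_R\in L^1(\mathbb{R}^n)$, hence a.e. Since $|w_\delta-v_R|\le 2R\chi_{B(0,R+1)}$ whenever $\delta\le 1$, and this dominating function belongs to $X(\mathbb{R}^n)=X_a(\mathbb{R}^n)$, the dominated convergence theorem for Banach function spaces (\cite[Chap.~1, Proposition~3.6]{BS88}) yields $\|w_\delta-v_R\|_{X(\mathbb{R}^n)}\to 0$. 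A diagonal extraction then produces a sequence $u_j\in C_0^\infty(\mathbb{R}^n)$ with $u_j\to u$ simultaneously in $L^2(\mathbb{R}^n)$ and $X(\mathbb{R}^n)$.

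The main obstacle is the $X(\mathbb{R}^n)$-convergence, not the $L^2$-convergence. Since we do not assume that $X(\mathbb{R}^n)$ is translation-invariant, no continuity-of-translation estimate is available to bound $\|v_R*\phi_\delta-v_R\|_{X(\mathbb{R}^n)}$ directly, so the argument has to be driven purely by dominated convergence in $X(\mathbb{R}^n)$. This is precisely where the hypothesis $X(\mathbb{R}^n)=X_a(\mathbb{R}^n)$ enters essentially---at each stage one needs the dominating function (the tail $u\chi_{E_R^c}$ in the truncation step and the constant $2R\chi_{B(0,R+1)}$ in the mollification step) to have absolutely continuous norm.
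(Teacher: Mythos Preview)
Your proof is correct. The truncation step and the mollification step are both sound, and your use of the dominated convergence theorem in $X(\mathbb{R}^n)$ (via \cite[Chap.~1, Proposition~3.6]{BS88}) is exactly the right tool; the dominating functions you identify are indeed in $X_a(\mathbb{R}^n)$ by axiom~(A4) together with the hypothesis $X(\mathbb{R}^n)=X_a(\mathbb{R}^n)$.

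The paper's argument is essentially the same mechanism but packaged more abstractly: it observes that $L^2(\mathbb{R}^n)\cap X(\mathbb{R}^n)$, equipped with the norm $\max\{\|\cdot\|_{L^2},\|\cdot\|_X\}$, is itself a Banach function space with absolutely continuous norm, and then cites a known density result (\cite[Lemma~2.10(b)]{KS14}) to conclude that $C_0^\infty(\mathbb{R}^n)$ is dense in this intersection space. What you have written is, in effect, a direct proof of that density result carried out with the two norms handled separately rather than jointly. The paper's route is shorter because it outsources the truncation--mollification--dominated-convergence work to a citation; yours is more self-contained and makes explicit where the absolute continuity hypothesis is used at each step. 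Neither approach gains real generality over the other.
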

\begin{proof}
It is clear that $L^2(\mathbb{R}^n)\cap X(\mathbb{R}^n)$ 
is a Banach function space when it is equipped with the norm 
\[
\|f\|_{L^2(\mathbb{R}^n)\cap X(\mathbb{R}^n)}
=
\max\left\{
\|f\|_{L^2(\mathbb{R}^n)},\|f\|_{X(\mathbb{R}^n)}
\right\}.
\]
It is easy to see that it has absolutely continuous norm. 
Then for every $u \in L^2(\mathbb{R}^n)\cap X(\mathbb{R}^n)$, there 
exists a sequence $\{u_j\}_{j\in\mathbb{N}} \subset C^\infty_0(\mathbb{R}^n)$
such that 
\[
\lim_{j \to \infty}\|u - u_j\|_{L^2(\mathbb{R}^n)\cap X(\mathbb{R}^n)} = 0 
\]
(a proof for the case $n=1$ can be found in \cite[Lemma~2.10(b)]{KS14}, it 
can be easily extended to $n\in\mathbb{N}$).  Hence 
\eqref{eq:bounded-L2-approximation} holds.
\end{proof}
Now let $\varrho(x)=e^{1/(|x|^2-1)}$ if $|x|<1$ and $\varrho(x)=0$
if $|x|\ge 1$. Consider the sequence 
\begin{equation}\label{eq:mollification}
\varrho_j(x)
:=
\frac{j^n\varrho(xj)}{\int_{\mathbb{R}^n}\varrho(x)\,dx},
\quad j\in\mathbb{N}.
\end{equation}
As usual, let $\operatorname{supp}u$ denote the support of a function
$u\in\mathfrak{M}(\mathbb{R}^n)$.
\begin{theorem}\label{th:TI-implies-bounded-L2-approximation}
Let $Y(\mathbb{R}^n)$ be a translation-invariant Banach function space 
and let $w$ be a continuous function such that $w(x)>0$ for all 
$x\in\mathbb{R}^n$. Then $Y(\mathbb{R}^n, w)$ has the bounded 
$L^2$-approximation property.
\end{theorem}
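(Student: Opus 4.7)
The plan is to approximate $u \in L^2(\mathbb{R}^n) \cap Y(\mathbb{R}^n, w)$ by compactly supported mollifications, exploiting the fact that the unweighted space $Y(\mathbb{R}^n)$ is translation-invariant (even though $Y(\mathbb{R}^n,w)$ typically is not). First I would truncate: set $v_R := u \chi_{B(0,R)}$. The lattice property gives $\|v_R\|_{Y(\mathbb{R}^n,w)} \le \|u\|_{Y(\mathbb{R}^n,w)}$, and $v_R \to u$ in $L^2(\mathbb{R}^n)$ by dominated convergence. Then I would mollify, $v_{R,j} := v_R * \varrho_j$ with $\varrho_j$ as in \eqref{eq:mollification}. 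Since $v_R \in L^1(\mathbb{R}^n)$ by the Cauchy--Schwarz inequality, each $v_{R,j}$ lies in $C_0^\infty(\mathbb{R}^n)$ with support contained in the fixed compact set $\overline{B(0, R+1)}$ for all $j \ge 1$, and $v_{R,j} \to v_R$ in $L^2(\mathbb{R}^n)$ as $j \to \infty$.

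The subtle point is controlling $\|v_{R,j}\|_{Y(\mathbb{R}^n,w)}$. The weight $w$ does not commute with convolution, so one cannot apply Minkowski's integral inequality directly to $v_R w$. Instead, I would write
\[
v_{R,j}(x)\, w(x) = \int_{\mathbb{R}^n} v_R(x-y)\, w(x-y) \cdot \frac{w(x)}{w(x-y)}\, \varrho_j(y)\, dy
\]
and use the uniform continuity and strict positivity of $w$ on the compact set $\overline{B(0, R+2)}$ to set
\[
C_{R,j} := \sup\left\{\frac{w(x)}{w(x-y)} : x \in \overline{B(0,R+1)},\ |y| \le 1/j\right\}.
\]
Then $C_{R,j} < \infty$ and $C_{R,j} \to 1$ as $j \to \infty$, and I obtain the pointwise estimate $|v_{R,j}(x)\, w(x)| \le C_{R,j}\,\bigl((|v_R|w) * \varrho_j\bigr)(x)$. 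Now Minkowski's integral inequality, which holds in any Banach function space, combined with the translation invariance of $Y(\mathbb{R}^n)$, yields
\[
\bigl\|(|v_R|w)*\varrho_j\bigr\|_{Y(\mathbb{R}^n)} \le \int_{\mathbb{R}^n}\|\tau_y(|v_R|w)\|_{Y(\mathbb{R}^n)}\,\varrho_j(y)\,dy = \|v_R w\|_{Y(\mathbb{R}^n)},
\]
so that $\|v_{R,j}\|_{Y(\mathbb{R}^n,w)} \le C_{R,j}\,\|u\|_{Y(\mathbb{R}^n,w)}$.

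Finally, I would extract a diagonal subsequence: for each $k \in \mathbb{N}$, choose $R_k = k$ and $j_k$ so large that both $C_{R_k,j_k} \le 1 + 1/k$ and $\|v_{R_k,j_k} - v_{R_k}\|_{L^2(\mathbb{R}^n)} \le 1/k$. Setting $u_k := v_{R_k,j_k}$ then produces a sequence in $C_0^\infty(\mathbb{R}^n)$ with $u_k \to u$ in $L^2(\mathbb{R}^n)$ and $\limsup_k \|u_k\|_{Y(\mathbb{R}^n,w)} \le \|u\|_{Y(\mathbb{R}^n,w)}$, as required. The main obstacle is precisely the weight-ratio step: it is what forces the truncation to be performed \emph{before} the mollification, since one cannot expect a global comparability of $w(x)$ and $w(x-y)$ without further hypotheses on $w$. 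Continuity and positivity of $w$ give this comparability on any bounded region, and that is exactly enough once the support has been compactified.
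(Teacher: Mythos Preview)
Your proof is correct and follows essentially the same strategy as the paper: truncate to compact support, mollify, and exploit the local uniform continuity and positivity of $w$ to control the weight ratio $w(x)/w(x-y)$ on the relevant compact set, together with the translation invariance of $Y(\mathbb{R}^n)$. The only cosmetic difference is that the paper carries out the norm estimate on $v_j$ by testing against $g\in Y'(\mathbb{R}^n,w^{-1})$ and invoking H\"older's inequality (i.e., it proves the needed instance of Minkowski's integral inequality by duality on the spot), whereas you first pass to the pointwise bound $|v_{R,j}\,w|\le C_{R,j}\,(|v_R|w)\ast\varrho_j$ and then cite Minkowski's integral inequality in $Y(\mathbb{R}^n)$ as a black box; the diagonal extraction at the end is likewise just a repackaging of the paper's ``for every $\varepsilon>0$'' argument.
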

\begin{proof}
Take any function $u \in L^2(\mathbb{R}^n)\cap Y(\mathbb{R}^n, w)$ and any 
$\varepsilon > 0$. There exists $R > 0$ such that the function
$v := \chi_{B(0, R)} u$ satisfies 
\[
\|u - v\|_{L^2(\mathbb{R}^n)}  < \varepsilon/2 .
\]
It is clear that $\operatorname{supp} v \subseteq B(0, R)$ and 
$\|v\|_{Y(\mathbb{R}^n, w)} \le \|u\|_{Y(\mathbb{R}^n, w)}$
in view of axiom (A2).

Since $w > 0$ is continuous, there exists $j_0 \in \mathbb{N}$ such that
for all $x \in B(0, R)$ and $y \in B(0,1/j_0)$,
\[
\frac{(\tau_{-y} w)(x)}{w(x)} = \frac{w(x + y)}{w(x)} \le 
1 + \varepsilon.
\]
Then taking into account that $Y(\mathbb{R}^n)$ is translation-invariant, 
one gets for all $y \in B(0, 1/j_0)$,
\begin{align}
\left\|\tau_y v\right\|_{Y(\mathbb{R}^n, w)} 
&=
\left\|w\tau_y v\right\|_{Y(\mathbb{R}^n)} 
= 
\left\|\tau_y\Big((\tau_{-y} w) v\Big)\right\|_{Y(\mathbb{R}^n)} 
=
\|(\tau_{-y} w) v\|_{Y(\mathbb{R}^n)} 
\nonumber \\
&\le 
(1 + \varepsilon)\|wv\|_{Y(\mathbb{R}^n)} 
= 
(1 + \varepsilon)\|v\|_{Y(\mathbb{R}^n, w)}.
\label{eq:TI-implies-bounded-L2-approximation}
\end{align}
Let  $v_j := \varrho_j\ast v$, where $\varrho_j \in C_0^\infty(\mathbb{R}^n)$
are the functions defined by \eqref{eq:mollification}. Then
$v_j \in C_0^\infty(\mathbb{R}^n)$ and one can choose $j_1 \ge j_0$ such that 
for all $j \ge j_1$,
\[
\|v - v_j\|_{L^2(\mathbb{R}^n)}  < \varepsilon/2, 
\]
(see, e.g., \cite[Theorem~4.22]{B11}). Hence, for all $j \ge j_1$,
\[
\|u - v_j\|_{L^2(\mathbb{R}^n)}  < \varepsilon.
\]
Since $w \in Y_{\mathrm{loc}}(\mathbb{R}^n)$ and 
$1/w  \in Y'_{\mathrm{loc}}(\mathbb{R}^n)$, \
$Y(\mathbb{R}^n,w)$ is a Banach function space and $Y'(\mathbb{R}^n,w^{-1})$
is its associate space in view of \cite[Lemma~2.4]{KS14}.
Using  H\"older's inequality for Banach function spaces
(see \cite[Chap. 1, Theorem 2.4]{BS88}) and 
\eqref{th:TI-implies-bounded-L2-approximation}, one gets
for all  $g \in Y'(\mathbb{R}^n, w^{-1})$ and all $j \ge j_1$,
\begin{align*}
\left|\int_{\mathbb{R}^n} v_j(x) g(x)\, dx\right| 
&\le 
\int_{\mathbb{R}^n}
\left(\int_{\mathbb{R}^n} |\varrho_j(y)| |v(x - y)|\, dy\right) 
|g(x)|\, dx 
\\
&=
\int_{\mathbb{R}^n} |\varrho_j(y)|
\left(\int_{\mathbb{R}^n}  |v(x - y)| |g(x)|\, dx\right)\, dy 
\\
&\le 
\int_{B(0, 1/j)} |\varrho_j(y)|
\left\|\tau_y v\right\|_{Y(\mathbb{R}^n, w)} 
\left\|g\right\|_{Y'(\mathbb{R}^n, w^{-1})}\, dy
\\
&\le  
(1 + \varepsilon) \left\|v\right\|_{Y(\mathbb{R}^n, w)} 
\left\|g\right\|_{Y'(\mathbb{R}^n, w^{-1})}
\int_{\mathbb{R}^n} |\varrho_j(y)| \, dy \\
& = 
(1 + \varepsilon) \left\|v\right\|_{Y(\mathbb{R}^n, w)} 
\left\|g\right\|_{Y'(\mathbb{R}^n, w^{-1})}.
\end{align*}
By \cite[Chap.~1, Theorem~2.7 and Lemma~2.8]{BS88},
the above inequality implies that for all $j \ge j_1$,
\begin{align*}
\|v_j\|_{Y(\mathbb{R}^n,w)}
&=
\sup\left\{\left|\int_{\mathbb{R}^n} v_j(x) g(x)\, dx\right| : \
g\in Y'(\mathbb{R}^n,  w^{-1}), \
\|g\|_{Y'(\mathbb{R}^n,  w^{-1})}\le 1\right\}  
\\
&\le
(1 + \varepsilon) \left\|v\right\|_{Y(\mathbb{R}^n, w)},
\end{align*}
which completes the proof, since $\varepsilon > 0$ is arbitrary.
\end{proof}
Theorem~\ref{th:L-infinity-with-exotic-weight-fails-bounded-L2-approximation} 
below shows that one cannot drop the requirement of continuity of the weight 
$w$ in  Theorem~\ref{th:TI-implies-bounded-L2-approximation} 
(as well as in \cite[Lemma~2]{BT94}).
The construction of our counterexample is based on the fact that there exist
compact sets of positive Lebesgue measure with empty interior 
(see, e.g., \cite[Example~1.7.6]{B07} or 
\cite[Chap.~12, Exercise~9]{D12}).
\begin{lemma} \label{le:exotic-weight}
Let $G \subset \mathbb{R}^n$ be a compact set of positive measure with empty
interior and let
\begin{equation}\label{eq:exotic-weight}
w_G(x) := \left\{\begin{array}{cl}
    1,  & x \in G ,  \\
     2, &   x \in \mathbb{R}^n\setminus G .
\end{array}\right.
\end{equation}
Suppose $\psi \in C(\mathbb{R}^n)$ and 
$\|\psi\|_{L^\infty(\mathbb{R}^n, w_G)} \le 1$. Then 
for all $x \in \mathbb{R}^n$,
\begin{equation}\label{eq:exotic-weight-2}
|\psi(x)| \le 1/2.
\end{equation}
\end{lemma}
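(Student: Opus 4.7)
The plan is to unwind the definition of the weighted $L^\infty$ norm, observe that it forces $|\psi|\le 1/2$ on a dense subset of $\mathbb{R}^n$, and then invoke continuity of $\psi$ to get the pointwise bound everywhere.

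First, by definition of $L^\infty(\mathbb{R}^n,w_G)$,
\[
\|\psi\|_{L^\infty(\mathbb{R}^n,w_G)}=\|\psi w_G\|_{L^\infty(\mathbb{R}^n)}\le 1,
\]
so there exists a set $N\subset\mathbb{R}^n$ of Lebesgue measure zero such that $|\psi(y)w_G(y)|\le 1$ for every $y\in\mathbb{R}^n\setminus N$. From the explicit form \eqref{eq:exotic-weight} of $w_G$, this yields in particular $|\psi(y)|\le 1/2$ for every $y\in(\mathbb{R}^n\setminus G)\setminus N$.

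Next I would show that the set $E:=\mathbb{R}^n\setminus(G\cup N)$ on which we have the strong bound is dense in $\mathbb{R}^n$. Fix any $x\in\mathbb{R}^n$ and any open ball $B(x,\varepsilon)$. Since $G$ is closed with empty interior, $B(x,\varepsilon)\setminus G$ is a nonempty open set, hence of positive Lebesgue measure. As $N$ has measure zero, $B(x,\varepsilon)\setminus(G\cup N)$ is still of positive measure, and in particular nonempty. Thus every ball in $\mathbb{R}^n$ meets $E$, so $E$ is dense.

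Finally, fix an arbitrary $x\in\mathbb{R}^n$ and choose a sequence $\{x_k\}_{k\in\mathbb{N}}\subset E$ with $x_k\to x$. Then $|\psi(x_k)|\le 1/2$ for every $k$, and continuity of $\psi$ gives $|\psi(x)|=\lim_{k\to\infty}|\psi(x_k)|\le 1/2$, proving \eqref{eq:exotic-weight-2}. The only mildly delicate point is the density argument, and it is precisely where the assumption that $G$ has empty interior enters in an essential way; without that, $\psi$ could be identically $1$ on the interior of $G$ and would not be forced to be small.
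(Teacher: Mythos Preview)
Your proof is correct and follows essentially the same approach as the paper: both arguments use that $G$ having empty interior makes $\mathbb{R}^n\setminus G$ open and dense, deduce $|\psi|\le 1/2$ almost everywhere on this complement from the weighted norm bound, and then invoke continuity of $\psi$ to get the bound at every point. The paper phrases the last step as an $\varepsilon$--$\delta$ argument while you use sequences, but this is purely cosmetic.
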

\begin{proof} 
For every point $x \in \mathbb{R}^n$ and $\varepsilon > 0$
there exists $\delta > 0$ such that $|\psi(y)| \ge |\psi(x)| - \varepsilon$
for each $y \in B(x, \delta)$.
Since $G$ is a closed set with empty interior, $B(x, \delta)\setminus G$ is a 
nonempty open set. It follows from \eqref{eq:exotic-weight} and the condition 
$\|\psi\|_{L^\infty(\mathbb{R}^n, w_G)} \le 1$ that 
$2|\psi(y)| \le 1$ for almost all $y \in B(x, \delta)\setminus G$.
Hence $|\psi(x)| - \varepsilon \le 1/2$  for all $\varepsilon > 0$. 
Passing in this inequality to the limit as $\varepsilon \to 0$, we arrive at 
\eqref{eq:exotic-weight-2}.
\end{proof}
\begin{theorem} 
\label{th:L-infinity-with-exotic-weight-fails-bounded-L2-approximation}
Let $G \subset \mathbb{R}^n$ be a compact set of positive measure with empty
interior and let the weight $w_G$ be defined by \eqref{eq:exotic-weight}. Then 
the Banach function space $L^\infty(\mathbb{R}^n,w_G)$ does not 
satisfy the bounded $L^2$-approximation property. 
\end{theorem}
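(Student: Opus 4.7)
The plan is to exhibit a single function $u \in L^2(\mathbb{R}^n) \cap L^\infty(\mathbb{R}^n, w_G)$ whose $L^\infty(\mathbb{R}^n, w_G)$-norm cannot be respected in the limit by any $L^2$-approximating sequence from $C_0^\infty(\mathbb{R}^n)$, and then invoke Lemma~\ref{le:exotic-weight} to derive a contradiction from the pointwise bound it forces on continuous competitors.

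First I would take $u := \chi_G$. Since $|G| < \infty$, we have $u \in L^2(\mathbb{R}^n)$, and a direct computation gives $\|u\|_{L^\infty(\mathbb{R}^n, w_G)} = \|\chi_G w_G\|_{L^\infty(\mathbb{R}^n)} = 1$, so $u \in L^2(\mathbb{R}^n) \cap L^\infty(\mathbb{R}^n, w_G)$.

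Next, I would argue by contradiction: assume there exists a sequence $\{u_j\}_{j \in \mathbb{N}} \subset C_0^\infty(\mathbb{R}^n)$ satisfying \eqref{eq:bounded-L2-approximation} with $X = L^\infty(\cdot, w_G)$ and this choice of $u$. Fix any $\varepsilon \in (0, 1)$. By $\limsup_j \|u_j\|_{L^\infty(\mathbb{R}^n, w_G)} \le 1$, there exists $j_\varepsilon$ such that $\|u_j / (1 + \varepsilon)\|_{L^\infty(\mathbb{R}^n, w_G)} \le 1$ for all $j \ge j_\varepsilon$. Since each $u_j / (1+\varepsilon)$ lies in $C_0^\infty(\mathbb{R}^n) \subset C(\mathbb{R}^n)$, Lemma~\ref{le:exotic-weight} yields
\[
|u_j(x)| \le (1 + \varepsilon)/2 \quad \text{for all } x \in \mathbb{R}^n \text{ and all } j \ge j_\varepsilon.
\]

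Finally, since $u_j \to \chi_G$ in $L^2(\mathbb{R}^n)$, some subsequence converges a.e.\ to $\chi_G$; passing to the limit in the above pointwise bound gives $\chi_G(x) \le (1+\varepsilon)/2 < 1$ for almost every $x \in \mathbb{R}^n$. This contradicts $\chi_G \equiv 1$ on $G$ with $|G| > 0$, so no such sequence $\{u_j\}$ exists and $L^\infty(\mathbb{R}^n, w_G)$ fails the bounded $L^2$-approximation property.

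There is essentially no obstacle here once Lemma~\ref{le:exotic-weight} is in hand; the only subtlety is that the hypothesis only gives $\limsup_j \|u_j\|_{L^\infty(\mathbb{R}^n, w_G)} \le 1$ rather than $\|u_j\|_{L^\infty(\mathbb{R}^n, w_G)} \le 1$, which is why the rescaling by $1+\varepsilon$ and the subsequent choice $\varepsilon < 1$ are needed.
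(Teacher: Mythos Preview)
Your proof is correct and follows essentially the same approach as the paper: both use Lemma~\ref{le:exotic-weight} to show that any continuous function with weighted $L^\infty$-norm at most $1$ is pointwise bounded by $1/2$, hence cannot approximate (a multiple of) $\chi_G$. The paper picks $u=\tfrac34\chi_G$ so that $\|u\|_{L^\infty(\mathbb{R}^n,w_G)}\le\tfrac34<1$, which lets it dispense with your $(1+\varepsilon)$-rescaling and replace the subsequence a.e.\ argument by a direct lower bound $\|u-\psi\|_{L^2}\ge\tfrac14|G|^{1/2}$; but this is only a cosmetic simplification of your argument.
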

\begin{proof} 
Let  $u := \frac34\, \chi_G$. Then 
$u \in L^2(\mathbb{R}^n)\cap L^\infty(\mathbb{R}^n, w_G)$
and $\|u\|_{L^\infty(\mathbb{R}^n, w_G)} \le \frac34$. It follows 
from Lemma~\ref{le:exotic-weight} that if $\psi \in C(\mathbb{R}^n)$ and 
$\|\psi\|_{L^\infty(\mathbb{R}^n, w_G)} \le 1$, then
\[
\|u - \psi\|_{L^2(\mathbb{R}^n)} 
\ge 
\left(\int_{G} |u(x) - \psi(x)|^2\, dx\right)^{1/2} 
\ge 
\left(\int_{G} \left(\frac34 - \frac12\right)^2\, dx\right)^{1/2}
\ge 
\frac14\, |G|^{1/2} .
\]
This inequality implies that there is no sequence 
$\{u_j\}_{j\in\mathbb{N}}\subset C_0^\infty(\mathbb{R}^n)$ such that
\eqref{eq:bounded-L2-approximation} is fulfilled for 
$X(\mathbb{R}^n)=L^\infty(\mathbb{R}^n,w_G)$.
\end{proof}
For a set $F\subset\mathbb{R}^n$, we denote by $F^\ast$ the closure of the set
$\{x+y\in\mathbb{R}^n\ :\ x\in F,\ y\in B(0,1)\}$.

The next result is well known. its proof is included for the reader's
convenience.
\begin{lemma}\label{le:approximation-L-infinity}
For every function $f\in L^\infty(\mathbb{R}^n)$ with compact support
there exists a sequence 
$\{v_j\}_{j\in\mathbb{N}}\subset C_0^\infty(\mathbb{R}^n)$ such that
$\operatorname{supp}v_j\subseteq(\operatorname{supp}f)^*$,
$\|v_j\|_{L^\infty(\mathbb{R}^n)}\le\|f\|_{L^\infty(\mathbb{R}^n)}$ for all
$j\in\mathbb{N}$, and
$v_j\to f$ a.e. on $\mathbb{R}^n$ as $j\to\infty$.
\end{lemma}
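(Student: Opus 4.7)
The plan is to take the standard mollification $v_j := \varrho_j \ast f$, where $\{\varrho_j\}$ is the sequence defined in \eqref{eq:mollification}. Since $f\in L^\infty(\mathbb{R}^n)$ has compact support, $f\in L^1(\mathbb{R}^n)$ and the convolution is well defined. Smoothness of $v_j$ follows from $\varrho_j\in C_0^\infty(\mathbb{R}^n)$ by differentiation under the integral sign, while $\operatorname{supp}\varrho_j\subseteq \overline{B(0,1/j)}$ gives
\[
\operatorname{supp} v_j \subseteq \operatorname{supp}\varrho_j + \operatorname{supp} f \subseteq \overline{B(0,1/j)}+\operatorname{supp} f,
\]
which is compact as the sum of two compact sets; hence $v_j\in C_0^\infty(\mathbb{R}^n)$.

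For the required support inclusion, for every $j\ge 1$ the right-hand side above lies in $\operatorname{supp} f+\overline{B(0,1)}$. Since $\operatorname{supp} f$ is compact, this sum is closed and a straightforward verification shows that it coincides with $\overline{\operatorname{supp} f+B(0,1)} = (\operatorname{supp} f)^*$. Thus $\operatorname{supp} v_j\subseteq (\operatorname{supp} f)^*$ for all $j\ge 1$. The $L^\infty$ bound is a one-line Young-type estimate: since $\varrho_j\ge 0$ and $\int_{\mathbb{R}^n}\varrho_j(y)\,dy=1$,
\[
|v_j(x)|\le \int_{\mathbb{R}^n}\varrho_j(y)\,|f(x-y)|\,dy \le \|f\|_{L^\infty(\mathbb{R}^n)},
\]
so $\|v_j\|_{L^\infty(\mathbb{R}^n)}\le \|f\|_{L^\infty(\mathbb{R}^n)}$ for every $j$.

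For pointwise a.e.\ convergence, at any Lebesgue point $x$ of $f$ one uses the pointwise bound $\varrho_j(y)\le C\,j^n\chi_{B(0,1/j)}(y)$, with $C=\varrho(0)/\int_{\mathbb{R}^n}\varrho$, to estimate
\[
|v_j(x)-f(x)| \le \int_{\mathbb{R}^n}\varrho_j(y)\,|f(x-y)-f(x)|\,dy \le \frac{C'}{|B(0,1/j)|}\int_{B(0,1/j)}|f(x-y)-f(x)|\,dy,
\]
and the right-hand side tends to $0$ by the defining property of a Lebesgue point. Since $f\in L^1_{\mathrm{loc}}(\mathbb{R}^n)$, the Lebesgue differentiation theorem guarantees that almost every point of $\mathbb{R}^n$ is a Lebesgue point of $f$, so $v_j\to f$ a.e.\ on $\mathbb{R}^n$. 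I do not foresee any serious obstacle; the only mildly delicate points are the identity $\operatorname{supp} f+\overline{B(0,1)} = (\operatorname{supp} f)^*$ and the Lebesgue-point estimate, both of which are entirely routine and can be dispatched in a few lines.
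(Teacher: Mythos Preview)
Your proof is correct and uses the same mollification $v_j=\varrho_j\ast f$ as the paper, with identical arguments for smoothness, the support inclusion, and the $L^\infty$ bound. The only difference is in how almost-everywhere convergence is obtained: the paper appeals to $L^1$-convergence of $\varrho_k\ast f$ to $f$ (citing \cite[Theorem~4.22]{B11}) and then passes to a subsequence that converges a.e., whereas you invoke the Lebesgue differentiation theorem directly via the pointwise bound $\varrho_j\le C\,j^n\chi_{B(0,1/j)}$. Your route is marginally cleaner in that it yields a.e.\ convergence of the full mollification sequence without extracting a subsequence; the paper's route has the advantage of quoting a single off-the-shelf approximation theorem. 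Either way the argument is entirely standard, and your version is perfectly acceptable.
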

\begin{proof}
Let $\{\varrho_k\}_{k\in\mathbb{N}}$ be the sequence defined by 
\eqref{eq:mollification}. By \cite[Theorem~4.22]{B11}, the sequence
$\nu_k:=\varrho_k\ast f$ converges to $f$ in $L^1(\mathbb{R}^n)$
as $k\to\infty$. In view of \cite[Proposition~4.18]{B11}, we have
$\operatorname{supp}\nu_k\subseteq(\operatorname{supp}f)^*$.
By the Young inequality for convolutions (see, e.g., \cite[Theorem~4.15]{B11}),
one has 
$\|\nu_k\|_{L^\infty(\mathbb{R}^n)}
\le
\|\varrho_k\|_{L^1(\mathbb{R}^n)}
\|f\|_{L^\infty(\mathbb{R}^n)}
=
\|f\|_{L^\infty(\mathbb{R}^n)}$
for all $k\in\mathbb{N}$. Since $\|\nu_k-f\|_{L^1(\mathbb{R}^n)}\to 0$
as $k\to\infty$, there exists a subsequence $\{\nu_{k_j}\}_{j\in\mathbb{N}}$
of the sequence $\{\nu_k\}_{k\in\mathbb{N}}$ such that $\nu_{k_j}\to f$
a.e. on $\mathbb{R}^n$ as $j\to\infty$. Then the required sequence
$\{v_j\}_{j\in\mathbb{N}}$ is defined by $v_j:=\nu_{k_j}$ for $j\in\mathbb{N}$.
\end{proof}
We finish this subsection with a result that we will use in the next one.
\begin{lemma}\label{le:bounded-L2-approximation-necessity}
Suppose a Banach function space $X(\mathbb{R}^n)$ has the bounded 
$L^2$-approximation property. Then for every function
$f\in L^\infty(\mathbb{R}^n)$ with compact support there exists a sequence 
$\{v_j\}_{j\in\mathbb{N}} \subset C^\infty_0(\mathbb{R}^n)$ such that
\[
\operatorname{supp} v_j \subseteq \left(\operatorname{supp} f\right)^\ast , 
\quad
\|v_j\|_{L^\infty(\mathbb{R}^n)} \le  \|f\|_{L^\infty(\mathbb{R}^n)} + 1 ,
\quad 
\limsup_{j \to \infty}\|v_j\|_{X(\mathbb{R}^n)} \le  \|f\|_{X(\mathbb{R}^n)},
\]
and $v_j \to f$ a.e. on $\mathbb{R}^n$ as $j \to \infty$.
\end{lemma}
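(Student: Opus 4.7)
The plan is to start from a smooth $L^2$-approximation of $f$ supplied by the bounded $L^2$-approximation property, then enforce the support condition via a smooth cutoff and the $L^\infty$ bound via a smooth radial truncation, using the lattice axiom (A2) throughout to keep the $X(\mathbb{R}^n)$-norm under control. First, $f\in L^2(\mathbb{R}^n)\cap X(\mathbb{R}^n)$: boundedness and compact support of $f$ give $f\in L^2(\mathbb{R}^n)$, while $|f|\le\|f\|_{L^\infty(\mathbb{R}^n)}\chi_{\operatorname{supp}f}$ together with (A2) and (A4) gives $f\in X(\mathbb{R}^n)$. The bounded $L^2$-approximation property then yields a sequence $\{u_j\}\subset C_0^\infty(\mathbb{R}^n)$ with $u_j\to f$ in $L^2(\mathbb{R}^n)$ and $\limsup_j\|u_j\|_{X(\mathbb{R}^n)}\le\|f\|_{X(\mathbb{R}^n)}$; passing to a subsequence I may also assume $u_j\to f$ a.e. I would then fix a cutoff $\eta\in C_0^\infty(\mathbb{R}^n)$ with $0\le\eta\le 1$, $\eta\equiv 1$ on $\operatorname{supp}f$, and $\operatorname{supp}\eta\subset(\operatorname{supp}f)^\ast$, and set $\tilde u_j:=\eta u_j$. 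This gives $\tilde u_j\in C_0^\infty(\mathbb{R}^n)$, $\operatorname{supp}\tilde u_j\subset(\operatorname{supp}f)^\ast$, $|\tilde u_j|\le|u_j|$ pointwise, and $\tilde u_j\to\eta f=f$ a.e.

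To enforce the bound $\|v_j\|_{L^\infty(\mathbb{R}^n)}\le\|f\|_{L^\infty(\mathbb{R}^n)}+1$ without spoiling either the $X$-norm control or the a.e.\ convergence, I would precompose $\tilde u_j$ with a smooth radial damping. With $M:=\|f\|_{L^\infty(\mathbb{R}^n)}$, choose a smooth non-increasing $h\colon[0,\infty)\to[0,1]$ with $h(t)=1$ on $[0,M^2]$ and $\sqrt{t}\,h(t)\le M+1$ for all $t\ge 0$, and set $\phi(z):=z\,h(|z|^2)$ for $z\in\mathbb{C}$. Then $\phi$ is smooth as a map $\mathbb{R}^2\to\mathbb{R}^2$, $\phi(0)=0$, $\phi(z)=z$ for $|z|\le M$, $|\phi(z)|\le M+1$, and $|\phi(z)|\le|z|$ for all $z$. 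Setting $v_j:=\phi\circ\tilde u_j$ produces elements of $C_0^\infty(\mathbb{R}^n)$ with $\operatorname{supp}v_j\subset(\operatorname{supp}f)^\ast$ and $\|v_j\|_{L^\infty(\mathbb{R}^n)}\le M+1$; the chain $|v_j|\le|\tilde u_j|\le|u_j|$ together with (A2) yields $\limsup_j\|v_j\|_{X(\mathbb{R}^n)}\le\|f\|_{X(\mathbb{R}^n)}$; and $v_j(x)\to\phi(f(x))=f(x)$ a.e.\ since $|f|\le M$ (so $\phi$ is the identity on the range of $f$) and $\phi$ is continuous.

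The only nonobvious point is the last step: the three requirements on $v_j$—smoothness, the uniform bound $M+1$, and lattice-dominance by $u_j$—pull in different directions, and a naive pointwise clipping of $\tilde u_j$ at level $M$ would destroy smoothness. The smooth radial damping $z\mapsto z\,h(|z|^2)$ is the natural simultaneous fix: $h\le 1$ delivers the lattice dominance, $\sqrt{t}\,h(t)\le M+1$ gives the $L^\infty$ bound, and $h\equiv 1$ on $[0,M^2]$ keeps $\phi$ equal to the identity on the range of $f$, so that a.e.\ convergence survives the composition.
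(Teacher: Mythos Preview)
Your proof is correct and follows essentially the same approach as the paper: take the approximating sequence from the bounded $L^2$-approximation property, multiply by a smooth cutoff supported in $(\operatorname{supp} f)^\ast$ that equals $1$ on $\operatorname{supp} f$, and then compose with a smooth radial truncation $\phi$ on $\mathbb{C}$ satisfying $\phi(z)=z$ for $|z|\le M$, $|\phi(z)|\le|z|$, and $|\phi(z)|\le M+1$. The only cosmetic difference is that the paper first establishes $L^2$-convergence of the modified sequence (via a Lipschitz estimate for $\phi$) and then extracts an a.e.\ convergent subsequence, whereas you extract the a.e.\ convergent subsequence of $\{u_j\}$ at the outset and then invoke continuity of $\phi$; both routes are equally valid.
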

\begin{proof}
Let $\{u_k\}_{k\in\mathbb{N}} \subset C^\infty_0(\mathbb{R}^n)$ be such that
\[
\lim_{k \to \infty}\|f - u_k\|_{L^2(\mathbb{R}^n)} = 0 , 
\quad
\limsup_{k \to \infty}\|u_k\|_{X(\mathbb{R}^n)} \le  \|f\|_{X(\mathbb{R}^n)}.
\]
Consider a function $\zeta \in C^\infty_0(\mathbb{R}^n)$ 
such that $0 \le \zeta(x) \le 1$ for $x\in\mathbb{R}^n$, 
$\zeta(x) =1$ for  $x \in \operatorname{supp} f$, and 
$\operatorname{supp}\zeta\subseteq \left(\operatorname{supp} f\right)^\ast$. 
Then $\zeta f = f$. Further, let $\eta : \mathbb{C} \to \mathbb{C}$ be a 
function, which can be represented for $z=y_1+iy_2\in\mathbb{C}$ as
$\eta(z)=\eta(y_1+iy_2)=U(y_1,y_2)+iV(y_1,y_2)$ with real-valued functions
$U,V \in C^\infty_0(\mathbb{R}^2)$. Assume that
$\eta(z) = z$ for all $z \in \mathbb{C}$ with  
$|z| \le \|f\|_{L^\infty(\mathbb{R}^n)}$, $ |\eta(z)| \le |z|$ for all 
$z \in \mathbb{C}$, and 
\[
\max_{z \in \mathbb{C}} |\eta(z)| \le \|f\|_{L^\infty(\mathbb{R}^n)} + 1.
\]
Then $\eta\circ(\zeta f) = \eta\circ f = f$. 

Set $\nu_k := \eta\circ(\zeta u_k)$ for $k\in\mathbb{N}$. Then 
$\nu_k \in C^\infty_0(\mathbb{R}^n)$,
$\operatorname{supp} \nu_k 
\subseteq 
\operatorname{supp} \zeta 
\subseteq \left(\operatorname{supp} f\right)^\ast$,
\[
\|\nu_k\|_{L^\infty(\mathbb{R}^n)} 
\le 
\max_{z \in \mathbb{C}} |\eta(z)| 
\le 
\|f\|_{L^\infty(\mathbb{R}^n)} + 1,
\]
and 
\[
|\nu_k(x)| 
= 
|\eta(\zeta(x) u_k(x))| 
\le 
|\zeta(x) u_k(x)| 
\le 
|u_k(x)|
\quad\mbox{for all}\quad x\in\mathbb{R}^n.
\]
Hence
\[
\limsup_{k \to \infty}\|\nu_k\|_{X(\mathbb{R}^n)} 
\le  
\limsup_{k \to \infty}\|u_k\|_{X(\mathbb{R}^n)} 
\le 
\|f\|_{X(\mathbb{R}^n)}.
\]
Further, by the mean value theorem,
there exists a constant $C_\eta$ depending
on the maxima of the partial derivatives of 
the functions $U$ and $V$ such that
\begin{align*}
\|f - \nu_k\|_{L^2(\mathbb{R}^n)} 
&= 
\|\eta\circ(\zeta f) - \eta\circ(\zeta u_k)\|_{L^2(\mathbb{R}^n)} 
\le 
C_\eta
\|\zeta f - \zeta u_k\|_{L^2(\mathbb{R}^n)}
\\
&\le 
C_\eta
\|f - u_k\|_{L^2(\mathbb{R}^n)} \to 0 \ \mathrm{as} \ k \to \infty .
\end{align*}
Hence the sequence $\{\nu_k\}_{k \in \mathbb{N}}$ has a subsequence 
$\{\nu_{k_j}\}_{j\in\mathbb{N}}$ such that $\nu_{k_j} \to f$ 
a.e. on $\mathbb{R}^n$ as $j \to \infty$
(see, e.g., \cite[Chap.~1, Theorem~1.7(vi)]{BS88}). 
Then the required sequence $\{v_j\}_{j\in\mathbb{N}}$ is defined by
$v_j:=\nu_{k_j}$ for $j\in\mathbb{N}$.
\end{proof}
\subsection{The norm fundamental property}
\begin{definition}\label{def:NFP}
We say that a Banach function space $X(\mathbb{R}^n)$ satisfies
the norm fun\-da\-men\-tal property if for every $f\in X(\mathbb{R}^n)$,
\[
\|f\|_{X(\mathbb{R}^n)} =
\sup\left\{
\left|\int_{\mathbb{R}^n} f(x) \psi(x)\, dx\right|\ : \
\psi \in C^\infty_0(\mathbb{R}^n),  \
\|\psi\|_{X'(\mathbb{R}^n)} \le 1
\right\}.
\]
\end{definition}
Let $S_0(\mathbb{R}^n)$ denote the set of all simple compactly supported
functions.
\begin{lemma}\label{le:NFP-X}
Let $X(\mathbb{R}^n)$ be a Banach function space and $X'(\mathbb{R}^n)$
be its associate space.
For every $f\in X(\mathbb{R}^n)$,
\begin{equation}\label{eq:NFP-X-1}
\|f\|_{X(\mathbb{R}^n)}=\sup\left\{
\left|\int_{\mathbb{R}^n}f(x)s(x)\,dx\right| \ : \
s\in S_0(\mathbb{R}^n),\
\|s\|_{X'(\mathbb{R}^n)}\le 1
\right\}.
\end{equation}
\end{lemma}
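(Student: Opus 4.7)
The plan is to derive \eqref{eq:NFP-X-1} from the standard duality formula
\[
\|f\|_{X(\mathbb{R}^n)}
=
\sup\left\{\left|\int_{\mathbb{R}^n} f(x) g(x)\, dx\right| :
g \in X'(\mathbb{R}^n),\ \|g\|_{X'(\mathbb{R}^n)} \le 1\right\},
\]
which follows from \cite[Chap.~1, Theorem~2.7 and Lemma~2.8]{BS88} (the complex-valued version being obtained from the nonnegative one by multiplying $g\ge 0$ by $\overline{\operatorname{sgn} f}$, which preserves $|g|$ and hence $\|g\|_{X'(\mathbb{R}^n)}$). The task then reduces to showing that simple compactly supported test functions suffice in this supremum.

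The inequality ``$\le$'' in \eqref{eq:NFP-X-1} is immediate: since $S_0(\mathbb{R}^n) \subset X'(\mathbb{R}^n)$ (each simple function with support of finite measure lies in $X'(\mathbb{R}^n)$ by axiom (A4) applied to $X'$), H\"older's inequality \cite[Chap.~1, Theorem~2.4]{BS88} yields
\[
\left|\int_{\mathbb{R}^n} f(x) s(x)\, dx\right|
\le \|f\|_{X(\mathbb{R}^n)} \|s\|_{X'(\mathbb{R}^n)}
\le \|f\|_{X(\mathbb{R}^n)}
\]
for any admissible $s$.

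For the reverse inequality, I would fix an arbitrary $g \in X'(\mathbb{R}^n)$ with $\|g\|_{X'(\mathbb{R}^n)} \le 1$ and construct an approximating sequence $\{s_j\}_{j\in\mathbb{N}} \subset S_0(\mathbb{R}^n)$ such that $|s_j| \le |g|$ pointwise and $s_j \to g$ a.e. on $\mathbb{R}^n$ as $j\to\infty$. This is a standard measure-theoretic fact: pick simple functions $g_j$ with $|g_j| \le |g|$ and $g_j \to g$ pointwise (splitting $g$ into real/imaginary and positive/negative parts), and set $s_j := g_j \chi_{B(0,j)}$. By the lattice property (A2) applied to $X'$, one has $\|s_j\|_{X'(\mathbb{R}^n)} \le \|g\|_{X'(\mathbb{R}^n)} \le 1$ for all $j$, so each $s_j$ is admissible in the right-hand side of \eqref{eq:NFP-X-1}.

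The remaining step is to pass to the limit. Since $f \in X(\mathbb{R}^n)$ and $g \in X'(\mathbb{R}^n)$, H\"older's inequality gives $fg \in L^1(\mathbb{R}^n)$; moreover $|f s_j| \le |f g|$ and $f s_j \to f g$ a.e., so by Lebesgue's dominated convergence theorem,
\[
\int_{\mathbb{R}^n} f(x) s_j(x)\, dx \to \int_{\mathbb{R}^n} f(x) g(x)\, dx
\quad \text{as } j\to\infty.
\]
Hence $\big|\int f g\big|$ does not exceed the supremum in \eqref{eq:NFP-X-1}, and taking the supremum over $g$ completes the proof. There is no real obstacle here; the only point that requires a moment of care is the construction of the complex-valued simple approximants $s_j$ with $|s_j|\le|g|$, but this is a routine measure-theoretic device and the lattice axiom (A2) then does all the work on the norm side.
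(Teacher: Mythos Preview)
Your proof is correct and follows essentially the same approach as the paper's: both start from the duality formula in \cite[Chap.~1, Theorem~2.7 and Lemma~2.8]{BS88}, approximate an arbitrary $g\in X'(\mathbb{R}^n)$ by simple compactly supported functions $s_j$ with $|s_j|\le|g|$, and pass to the limit via dominated convergence. One small slip: the labels ``$\le$'' and ``reverse inequality'' are swapped relative to what you actually prove in each paragraph, but the mathematics itself is fine.
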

\begin{proof}
By \cite[Theorem~2.7 and Lemma~2.8]{BS88}, for every 
$f\in X(\mathbb{R}^n)$,
\begin{equation}\label{eq:NFP-X-3}
\|f\|_{X(\mathbb{R}^n)}=\sup\left\{
\left|\int_{\mathbb{R}^n}f(x)g(x)\,dx\right| \ : \
g\in X'(\mathbb{R}^n),\
\|g\|_{X'(\mathbb{R}^n)}\le 1
\right\}.
\end{equation}
It follows from the inclusion $S_0(\mathbb{R}^n)\subset X'(\mathbb{R}^n)$
and equality \eqref{eq:NFP-X-3} that
\begin{equation}\label{eq:NFP-X-4}
\|f\|_{X(\mathbb{R}^n)}\ge \sup\left\{
\left|\int_{\mathbb{R}^n}f(x)s(x)\,dx\right| \ : \
s\in S_0(\mathbb{R}^n),\
\|s\|_{X'(\mathbb{R}^n)}\le 1
\right\}.
\end{equation}
Fix $g\in X'(\mathbb{R}^n)$ such that $\|g\|_{X'(\mathbb{R}^n)}\le 1$.
Then there exists a sequence 
$\{s_j\}_{j\in\mathbb{N}}\subset S_0(\mathbb{R}^n)$ such that
$0\le |s_1|\le|s_2|\le\dots\le|g|$ and $s_j\to g$ a.e. on $\mathbb{R}^n$
as $j\to\infty$. Therefore, $fs_j\to fg$ as $j\to\infty$ and
$|fs_j|\le|fg|$ for all $j\in\mathbb{N}$ a.e. on $\mathbb{R}^n$.
By H\"older's inequality (see \cite[Chap.~1, Theorem~2.4]{BS88}),
$fg\in L^1(\mathbb{R}^n)$. Hence, in view of the Lebesgue
dominated convergence theorem,
\[
\lim_{j\to\infty}\int_{\mathbb{R}^n}f(x)s_j(x)\,dx
=
\int_{\mathbb{R}^n}f(x)g(x)\,dx.
\]
On the other hand, inequality $|s_j|\le |g|$ implies that
$\|s_j\|_{X'(\mathbb{R}^n)}\le\|g\|_{X'(\mathbb{R}^n)}\le 1$ for all
$j\in\mathbb{N}$. Thus, for all
$g\in X'(\mathbb{R}^n)$ satisfying $\|g\|_{X'(\mathbb{R}^n)}\le 1$, we have
\begin{align*}
\left|\int_{\mathbb{R}^n}f(x)g(x)\,dx\right|
&=
\lim_{j\to\infty}\left|\int_{\mathbb{R}^n}f(x)s_j(x)\,dx\right|
\le 
\sup_{j\in\mathbb{N}}\left|\int_{\mathbb{R}^n}f(x)s_j(x)\,dx\right|
\\
&\le 
\sup\left\{
\left|\int_{\mathbb{R}^n}f(x)s(x)\,dx\right| \ : \
s \in S_ 0(\mathbb{R}^n),\
\|s\|_{X'(\mathbb{R}^n)}\le 1
\right\}.
\end{align*}
This inequality and equality \eqref{eq:NFP-X-3} imply that
\begin{equation}\label{eq:NFP-X-5}
\|f\|_{X(\mathbb{R}^n)}\le \sup\left\{
\left|\int_{\mathbb{R}^n}f(x)s(x)\,dx\right| \ : \
s\in S_0(\mathbb{R}^n),\
\|s\|_{X'(\mathbb{R}^n)}\le 1
\right\}.
\end{equation}
Combining inequalities \eqref{eq:NFP-X-4} and \eqref{eq:NFP-X-5}, we arrive
at equality \eqref{eq:NFP-X-1}.
\end{proof}
\begin{theorem}\label{th:bounded-L2-approximation-implies-NFP}
If $X'(\mathbb{R}^n)$ satisfies the bounded $L^2$-approximation property, 
then $X(\mathbb{R}^n)$ has the norm fundamental property.
\end{theorem}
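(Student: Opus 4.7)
The plan is to pivot off Lemma~\ref{le:NFP-X}, which already expresses $\|f\|_{X(\mathbb{R}^n)}$ as the supremum of $|\int fs|$ over $s \in S_0(\mathbb{R}^n)$ with $\|s\|_{X'(\mathbb{R}^n)} \le 1$, and then to show that each such simple compactly supported $s$ can be replaced, with negligible loss, by a test function from $C_0^\infty(\mathbb{R}^n)$. The $\ge$ direction in Definition~\ref{def:NFP} is immediate since $C_0^\infty(\mathbb{R}^n)\subset X'(\mathbb{R}^n)$ (a compactly supported bounded function lies in any Banach function space by axioms (A2) and (A4)) and the dual pairing is bounded by H\"older's inequality \cite[Chap.~1, Theorem~2.4]{BS88}. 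Set
\[
M_f := \sup\left\{\left|\int_{\mathbb{R}^n} f(x)\psi(x)\,dx\right| : \psi\in C_0^\infty(\mathbb{R}^n),\ \|\psi\|_{X'(\mathbb{R}^n)}\le 1\right\};
\]
we must prove $\|f\|_{X(\mathbb{R}^n)}\le M_f$.

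Fix $s\in S_0(\mathbb{R}^n)$ with $\|s\|_{X'(\mathbb{R}^n)}\le 1$. Since $s\in L^\infty(\mathbb{R}^n)$ has compact support, and $X'(\mathbb{R}^n)$ enjoys the bounded $L^2$-approximation property by hypothesis, Lemma~\ref{le:bounded-L2-approximation-necessity} (applied to $X'(\mathbb{R}^n)$) furnishes a sequence $\{v_j\}_{j\in\mathbb{N}}\subset C_0^\infty(\mathbb{R}^n)$ with $\operatorname{supp} v_j\subseteq K:=(\operatorname{supp} s)^*$, $\|v_j\|_{L^\infty(\mathbb{R}^n)}\le \|s\|_{L^\infty(\mathbb{R}^n)}+1$, $\limsup_j\|v_j\|_{X'(\mathbb{R}^n)}\le\|s\|_{X'(\mathbb{R}^n)}\le 1$, and $v_j\to s$ a.e.\ on $\mathbb{R}^n$. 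The uniform bound $|fv_j|\le (\|s\|_{L^\infty(\mathbb{R}^n)}+1)|f|\chi_K$ furnishes an $L^1$-dominant, since axiom (A5) guarantees that $f$ is integrable on the compact set $K$. Hence Lebesgue's dominated convergence theorem yields $\int_{\mathbb{R}^n} fv_j\,dx\to\int_{\mathbb{R}^n} fs\,dx$.

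Given $\varepsilon>0$, we have $\|v_j\|_{X'(\mathbb{R}^n)}\le 1+\varepsilon$ for all sufficiently large $j$, so $|\int fv_j\,dx|\le(1+\varepsilon)M_f$ by the very definition of $M_f$. Passing to the limit gives $|\int fs\,dx|\le(1+\varepsilon)M_f$, and letting $\varepsilon\to 0$ yields $|\int fs\,dx|\le M_f$. Taking the supremum over admissible $s$ and invoking Lemma~\ref{le:NFP-X} produces $\|f\|_{X(\mathbb{R}^n)}\le M_f$, completing the proof. The one delicate step is the passage to the limit $\int fv_j\to\int fs$: bare $L^2$-convergence of $v_j$ to $s$ would not suffice because $f$ need not be in $L^2(\mathbb{R}^n)$, and this is precisely why the stronger uniform support and $L^\infty$-control delivered by Lemma~\ref{le:bounded-L2-approximation-necessity}, rather than the raw bounded $L^2$-approximation property, is invoked.
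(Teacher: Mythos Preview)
Your proof is correct and follows essentially the same approach as the paper: reduce to simple compactly supported $s$ via Lemma~\ref{le:NFP-X}, invoke Lemma~\ref{le:bounded-L2-approximation-necessity} to obtain the approximating $C_0^\infty$ sequence with uniform support and $L^\infty$ bounds, and pass to the limit using axiom~(A5) and dominated convergence. The only cosmetic difference is that the paper normalizes by multiplying $\psi_j$ by $(1-\varepsilon)$ to force $\|(1-\varepsilon)\psi_j\|_{X'(\mathbb{R}^n)}\le 1$, whereas you absorb the factor $(1+\varepsilon)$ into the bound on $M_f$; these are equivalent.
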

\begin{proof} 
Lemma \ref{le:NFP-X} implies that it is sufficient to prove the inequality
\begin{equation}\label{eq:bounded-L2-approximation-implies-NFP}
\left|\int_{\mathbb{R}^n} \varphi(x) s(x)\, dx\right| 
\le   
\sup\left\{
\left|\int_{\mathbb{R}^n} \varphi(x)\psi(x)\, dx\right| 
: \ \psi \in C^\infty_0(\mathbb{R}^n), \ 
\|\psi\|_{X'(\mathbb{R}^n)} \le 1
\right\}
\end{equation}
for any $\varphi \in X(\mathbb{R}^n)$ and any $s \in S_0(\mathbb{R}^n)$ with
$\|s\|_{X'(\mathbb{R}^n)} \le 1$. According to 
Lemma~\ref{le:bounded-L2-approximation-necessity}, there exists a sequence 
$\{\psi_j\}_{j\in\mathbb{N}} \subset C^\infty_0(\mathbb{R}^n)$
such that
\[
\operatorname{supp} \psi_j \subseteq \left(\operatorname{supp} s\right)^\ast, 
\quad 
\|\psi_j\|_{L^\infty(\mathbb{R}^n)} \le  \|s\|_{L^\infty(\mathbb{R}^n)} + 1 ,
\quad
\limsup_{j \to \infty}\|\psi_j\|_{X'(\mathbb{R}^n)}\le\|s\|_{X'(\mathbb{R}^n)},
\]
and $\psi_j \to s$ a.e. on $\mathbb{R}^n$ as $j \to \infty$. Take an arbitrary 
$\varepsilon  \in (0, 1)$. Then 
$(1 - \varepsilon) \|\psi_j\|_{X'(\mathbb{R}^n)} \le 1$ for all sufficiently 
large $j \in \mathbb{N}$. Axiom (A5) and the Lebesgue dominated convergence 
theorem imply that
\begin{align*}
(1 - \varepsilon) \left|\int_{\mathbb{R}^n} \varphi(x) s(x)\, dx\right| 
&= 
(1 - \varepsilon) \lim_{j \to \infty} 
\left|\int_{\mathbb{R}^n} \varphi(x) \psi_j(x)\, dx\right| 
\\
&\le 
\sup\left\{
\left|\int_{\mathbb{R}^n} \varphi(x)\psi(x)\, dx\right|
\ : \ \psi \in C^\infty_0(\mathbb{R}^n), \ 
\|\psi\|_{X'(\mathbb{R}^n)} \le 1\right\}.
\end{align*}
Since  $\varepsilon  \in (0, 1)$ is arbitrary, 
\eqref{eq:bounded-L2-approximation-implies-NFP} follows.
\end{proof}
\begin{corollary}\label{co:NFP-associate-separable}
If $X(\mathbb{R}^n)$ is a Banach function space such that its associate
space $X'(\mathbb{R}^n)$ has absolutely continuous norm, then 
$X(\mathbb{R}^n)$ satisfies the norm fundamental property.
\end{corollary}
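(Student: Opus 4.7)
The plan is essentially a two-line composition of the two preceding theorems, so the proposal is short. Suppose $X'(\mathbb{R}^n)$ has absolutely continuous norm. Since $X'(\mathbb{R}^n)$ is itself a Banach function space (as noted in the paper, by \cite[Chap.~1, Theorem~2.2]{BS88}), I can apply Theorem~\ref{th:AC-implies-bounded-L2-approximation} to $X'(\mathbb{R}^n)$ in place of $X(\mathbb{R}^n)$. This gives that $X'(\mathbb{R}^n)$ satisfies the bounded $L^2$-approximation property.

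Having this, I invoke Theorem~\ref{th:bounded-L2-approximation-implies-NFP}, whose hypothesis is precisely the bounded $L^2$-approximation property of the associate space $X'(\mathbb{R}^n)$, and whose conclusion is the norm fundamental property of $X(\mathbb{R}^n)$. This completes the proof.

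There is no real obstacle here, since both ingredients have already been established in the preceding subsections; the only thing to verify is that the two theorems compose correctly, i.e.\ that the roles of $X$ and $X'$ are aligned. Theorem~\ref{th:AC-implies-bounded-L2-approximation} is stated for a generic Banach function space, so applying it to $X'$ is legitimate, and Theorem~\ref{th:bounded-L2-approximation-implies-NFP} takes the bounded $L^2$-approximation property on $X'$ as input, which matches exactly what we just produced. Hence the corollary follows immediately.
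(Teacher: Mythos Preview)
Your proof is correct and matches the paper's approach exactly: the paper's proof simply cites Theorems~\ref{th:AC-implies-bounded-L2-approximation} and~\ref{th:bounded-L2-approximation-implies-NFP}, which is precisely the composition you spell out.
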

\begin{proof}
This follows from Theorems~\ref{th:bounded-L2-approximation-implies-NFP} 
and~\ref{th:AC-implies-bounded-L2-approximation}.
\end{proof}
Note that a Banach function space $X(\mathbb{R}^n)$ may satisfy the norm
fundamental property even if $(X')_a(\mathbb{R}^n)=\{0\}$. For instance,
if $X(\mathbb{R}^n)=L^1(\mathbb{R}^n)$, then 
$(X')_a(\mathbb{R}^n)=(L^\infty)_a(\mathbb{R}^n)=\{0\}$ in view of
\cite[Chap.~3, Theorem~5.5(b)]{BS88}. However, the following result is true.
\begin{corollary}\label{co:NFP-weighted-TI}
Let $Y(\mathbb{R}^n)$ be a translation-invariant Banach function space and let 
$w\in C(\mathbb{R}^n)$ be a function such that $w(x)>0$ for all 
$x\in\mathbb{R}^n$.  Then $X(\mathbb{R}^n)=Y(\mathbb{R}^n, w)$
has the norm fundamental property.
\end{corollary}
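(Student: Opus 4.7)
The plan is to derive the corollary by chaining together the two main results of this subsection. The key observation is that the norm fundamental property of $X(\mathbb{R}^n) = Y(\mathbb{R}^n,w)$ follows, via Theorem~\ref{th:bounded-L2-approximation-implies-NFP}, from the bounded $L^2$-approximation property of the associate space $X'(\mathbb{R}^n)$, and this associate space is again of the form handled by Theorem~\ref{th:TI-implies-bounded-L2-approximation}.

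First I would identify the associate space explicitly. Since $w\in C(\mathbb{R}^n)$ is strictly positive, $w$ is locally bounded and locally bounded away from zero; combined with axiom (A4) for $Y(\mathbb{R}^n)$ and $Y'(\mathbb{R}^n)$, this yields $w\in Y_{\rm loc}(\mathbb{R}^n)$ and $1/w\in Y'_{\rm loc}(\mathbb{R}^n)$. Therefore $Y(\mathbb{R}^n,w)$ is indeed a Banach function space, and by \cite[Lemma~2.4]{KS14} quoted in the introduction,
\[
X'(\mathbb{R}^n) = [Y(\mathbb{R}^n,w)]' = Y'(\mathbb{R}^n,w^{-1}).
\]

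Next I would verify that this associate space satisfies the hypotheses of Theorem~\ref{th:TI-implies-bounded-L2-approximation}. By Lemma~\ref{le:TI-X-and-Xprime}, since $Y(\mathbb{R}^n)$ is translation-invariant, so is $Y'(\mathbb{R}^n)$. Moreover, $w^{-1}$ is continuous and strictly positive on $\mathbb{R}^n$ because $w$ is. Applying Theorem~\ref{th:TI-implies-bounded-L2-approximation} with $Y'$ in place of $Y$ and $w^{-1}$ in place of $w$, I conclude that $Y'(\mathbb{R}^n,w^{-1})$ has the bounded $L^2$-approximation property. Finally, Theorem~\ref{th:bounded-L2-approximation-implies-NFP} applied to $X(\mathbb{R}^n)=Y(\mathbb{R}^n,w)$ yields the norm fundamental property.

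There is no real obstacle here: the argument is a short two-step composition of the preceding theorems, the only mildly subtle point being the use of Lemma~\ref{le:TI-X-and-Xprime} to transfer translation-invariance from $Y$ to $Y'$, together with the trivial remark that reciprocation preserves continuity and positivity of the weight.
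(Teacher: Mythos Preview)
Your argument is correct and matches the paper's proof essentially line for line: identify $X'(\mathbb{R}^n)=Y'(\mathbb{R}^n,w^{-1})$, use Lemma~\ref{le:TI-X-and-Xprime} to get translation-invariance of $Y'(\mathbb{R}^n)$, apply Theorem~\ref{th:TI-implies-bounded-L2-approximation} to $Y'(\mathbb{R}^n,w^{-1})$, and finish with Theorem~\ref{th:bounded-L2-approximation-implies-NFP}. The only addition you make is the (harmless) explicit check that $w\in Y_{\rm loc}$ and $1/w\in Y'_{\rm loc}$.
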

\begin{proof}
In view of Lemma~\ref{le:TI-X-and-Xprime}, the space $Y'(\mathbb{R}^n)$ is
translation-invariant. On the other hand, $w^{-1} \in C(\mathbb{R}^n)$ and 
$w^{-1} > 0$. It follows from \cite[Lemma~2.4(c)]{KS14} that
$X'(\mathbb{R}^n) = Y'(\mathbb{R}^n, w^{-1})$. By 
Theorem~\ref{th:TI-implies-bounded-L2-approximation}, the space
$Y'(\mathbb{R}^n,w^{-1})$ satisfies the bounded $L^2$-approximation
property. Then the space $Y(\mathbb{R}^n,w)$ has the norm fundamental
property due to Theorem~\ref{th:bounded-L2-approximation-implies-NFP}.
\end{proof}
Similarly to Theorem \ref{th:TI-implies-bounded-L2-approximation}, one cannot 
drop the requirement of continuity of the weight $w$ in 
Corollary~\ref{co:NFP-weighted-TI}.
\begin{lemma} 
Let $G \subset \mathbb{R}^n$ be a compact set of positive measure with empty
interior and let the weight $w_G$ be defined by \eqref{eq:exotic-weight}. 
Suppose $f \in L^1(\mathbb{R}^n, w_G^{-1})$, $f \ge 0$, and 
$\operatorname{supp} f \cap G$ has positive measure. Then
\[
\|f\|_{L^1(\mathbb{R}^n, w_G^{-1})} 
> 
\sup\left\{\left|\int_{\mathbb{R}^n} f(x)\psi(x)\, dx\right|
\ : \ \psi \in C(\mathbb{R}^n), \ 
\|\psi\|_{L^\infty(\mathbb{R}^n, w_G)} \le 1\right\} .
\]
\end{lemma}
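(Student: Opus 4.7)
The plan is to compute both sides of the asserted strict inequality explicitly and exploit the pointwise bound from Lemma~\ref{le:exotic-weight}. Since $w_G^{-1}$ equals $1$ on $G$ and $1/2$ on $\mathbb{R}^n\setminus G$, the left-hand side unfolds as
\[
\|f\|_{L^1(\mathbb{R}^n,w_G^{-1})}
=\int_G f(x)\,dx+\frac12\int_{\mathbb{R}^n\setminus G}f(x)\,dx.
\]
On the right-hand side, I will use the key point from Lemma~\ref{le:exotic-weight}: any $\psi\in C(\mathbb{R}^n)$ with $\|\psi\|_{L^\infty(\mathbb{R}^n,w_G)}\le 1$ satisfies $|\psi(x)|\le 1/2$ \emph{for every} $x\in\mathbb{R}^n$, even though the weighted $L^\infty$ norm only forces $|\psi|\le 1$ a.e.\ on $G$. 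This is where the empty-interior hypothesis on $G$ does its work.

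Combining these two observations, for any such $\psi$ one gets
\[
\left|\int_{\mathbb{R}^n}f(x)\psi(x)\,dx\right|
\le \frac12\int_{\mathbb{R}^n}f(x)\,dx
=\frac12\int_G f(x)\,dx+\frac12\int_{\mathbb{R}^n\setminus G}f(x)\,dx,
\]
so subtracting this from the expression for $\|f\|_{L^1(\mathbb{R}^n,w_G^{-1})}$ above gives a difference of exactly $\tfrac12\int_G f(x)\,dx$. Taking the supremum over admissible $\psi$ only tightens the bound by a factor that stays $\le 1$, so the difference between the two sides of the claimed inequality is at least $\tfrac12\int_G f(x)\,dx$.

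Finally, the hypothesis that $f\ge 0$ and that $\operatorname{supp}f\cap G$ has positive Lebesgue measure forces $\int_G f(x)\,dx>0$, yielding the strict inequality. There is essentially no obstacle here: the lemma is a direct corollary of Lemma~\ref{le:exotic-weight}, and the conceptual content is that continuity of $\psi$ cannot compensate for the fact that the weight $w_G$ drops to $1$ on the nowhere-dense set $G$, so test functions in $C(\mathbb{R}^n)$ lose a full factor of two relative to what the associate norm would allow with general measurable $\psi$.
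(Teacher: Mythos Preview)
Your argument is correct and is essentially identical to the paper's own proof: both invoke Lemma~\ref{le:exotic-weight} to bound $\left|\int f\psi\right|\le \tfrac12\int f$ uniformly over admissible $\psi$, compute $\|f\|_{L^1(\mathbb{R}^n,w_G^{-1})}=\int_G f+\tfrac12\int_{\mathbb{R}^n\setminus G}f$, and conclude strictness from $\int_G f>0$. The only cosmetic difference is that you display the gap $\tfrac12\int_G f$ explicitly, whereas the paper compares both sides to $\tfrac12\int_{\mathbb{R}^n} f$.
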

\begin{proof} 
It follows from Lemma~\ref{le:exotic-weight} that
\[
\left|\int_{\mathbb{R}^n} f(x)\psi(x)\, dx\right| 
\le 
\frac12 \int_{\mathbb{R}^n} f(x)\, dx 
\]
for any function $\psi \in C(\mathbb{R}^n)$ with 
$\|\psi\|_{L^\infty(\mathbb{R}^n, w_G)} \le 1$. 
On the other hand,
\begin{align*}
\|f\|_{L^1(\mathbb{R}^n, w_G^{-1})} 
&= 
\int_{G} f(x) w_G^{-1}(x)\, dx 
+ 
\int_{\mathbb{R}^n\setminus G} f(x) w_G^{-1}(x)\, dx 
\\
&= 
\int_{G} f(x)\, dx + \frac12 \int_{\mathbb{R}^n\setminus G} f(x)\, dx 
> 
\frac12 \int_{\mathbb{R}^n} f(x)\, dx ,
\end{align*}
since $\operatorname{supp} f \cap G$ has positive measure.
\end{proof}
\begin{corollary}\label{co:L1-with-exotic-weight-fails-NFP}
Let $G \subset \mathbb{R}^n$ be a compact set of positive measure with empty
interior and let the weight $w_G$  be defined by \eqref{eq:exotic-weight}. 
Then the Banach function space $L^1(\mathbb{R}^n, w_G^{-1})$ does not have 
the norm fundamental property. 
\end{corollary}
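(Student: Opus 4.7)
The plan is to exhibit a single concrete function $f$ for which the defining equality of Definition~\ref{def:NFP} fails in the space $X(\mathbb{R}^n)=L^1(\mathbb{R}^n,w_G^{-1})$, and to derive the failure directly from the preceding lemma.

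First, I would identify the associate space. Since $L^1(\mathbb{R}^n)$ has $[L^1(\mathbb{R}^n)]'=L^\infty(\mathbb{R}^n)$, the formula $[X(\mathbb{R}^n,w)]'=X'(\mathbb{R}^n,w^{-1})$ cited earlier (from \cite[Lemma~2.4]{KS14}) gives
\[
[L^1(\mathbb{R}^n,w_G^{-1})]'=L^\infty(\mathbb{R}^n,w_G),
\]
so the norm fundamental property for $X=L^1(\mathbb{R}^n,w_G^{-1})$ asks whether, for every $f\in X$,
\[
\|f\|_{L^1(\mathbb{R}^n,w_G^{-1})}
=\sup\!\left\{\left|\int_{\mathbb{R}^n} f(x)\psi(x)\,dx\right|:\psi\in C_0^\infty(\mathbb{R}^n),\ \|\psi\|_{L^\infty(\mathbb{R}^n,w_G)}\le 1\right\}.
\]

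Next, I would choose the test function $f:=\chi_G$. Then $f\in L^1(\mathbb{R}^n,w_G^{-1})$ because $w_G^{-1}\le 1$ and $|G|<\infty$; moreover $f\ge 0$ and $\operatorname{supp} f\cap G=G$ has positive measure, so the hypotheses of the preceding (unnumbered) lemma are met.

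Finally, using the inclusion $C_0^\infty(\mathbb{R}^n)\subset C(\mathbb{R}^n)$, I would bound the $C_0^\infty$-supremum above by the corresponding $C(\mathbb{R}^n)$-supremum, and then invoke the preceding lemma to obtain
\[
\sup\!\left\{\left|\int_{\mathbb{R}^n} f(x)\psi(x)\,dx\right|:\psi\in C_0^\infty(\mathbb{R}^n),\ \|\psi\|_{L^\infty(\mathbb{R}^n,w_G)}\le 1\right\}
<\|f\|_{L^1(\mathbb{R}^n,w_G^{-1})},
\]
which contradicts the norm fundamental property. There is no serious obstacle here: the genuine work has already been done in Lemma~\ref{le:exotic-weight} and the preceding lemma, which together force any continuous candidate $\psi$ admissible in $L^\infty(\mathbb{R}^n,w_G)$ to satisfy $|\psi|\le 1/2$ pointwise; the role of this corollary is simply to repackage that strict inequality into the language of Definition~\ref{def:NFP}.
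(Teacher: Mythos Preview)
Your argument is correct and is exactly the intended one: the paper states this corollary without proof, as an immediate consequence of the preceding lemma, and you have simply made that deduction explicit by identifying the associate space, choosing a concrete $f=\chi_G$, and noting $C_0^\infty(\mathbb{R}^n)\subset C(\mathbb{R}^n)$.
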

Corollary~\ref{co:L1-with-exotic-weight-fails-NFP} and 
Theorem~\ref{th:bounded-L2-approximation-implies-NFP} provide 
an alternative proof of 
Theorem~\ref{th:L-infinity-with-exotic-weight-fails-bounded-L2-approximation}.
\subsection{Lemma on approximation at Lebesgue points}
Given $\delta>0$ and a function $\psi$ on $\mathbb{R}^n$, we define
the function $\psi_\delta$ by
\[
\psi_\delta(\xi) := \delta^{-n} \psi(\xi/\delta),
\quad
\xi\in\mathbb{R}^n.
\]
Recall that a point $x\in\mathbb{R}^n$ is said to be a Lebesgue point of
a function $f\in L_{\rm loc}^1(\mathbb{R}^n)$ if
\[
\lim_{R\to 0^+}\frac{1}{|B(x,R)|}\int_{B(x,R)}|f(y)-f(x)|\,dy=0.
\]
For $\sigma\in\mathbb{R}$, we will say that a measurable function $f$
belongs to the space $L_{1,\sigma}(\mathbb{R}^n)$ if
\[
\int_{\mathbb{R}^n} (1+|\xi|)^{-\sigma} |f(\xi)|\, d\xi < \infty.
\]
\begin{lemma}\label{le:Lebesgue}
Let $\sigma_1,\sigma_2\in\mathbb{R}$ be such that $\sigma_2\ge\sigma_1$ and
$\sigma_2>n$. Suppose $\psi$ is a measurable function on $\mathbb{R}^n$
satisfying
\begin{equation}\label{eq:Lebesgue-point-1}
|\psi(\xi)| \le C (1 + |\xi|)^{-\sigma_2}
\quad\mbox{for almost all}\quad
\xi \in \mathbb{R}^n
\end{equation}
with some constant $C\in(0,\infty)$. Then for every Lebesgue point
$\eta\in\mathbb{R}^n$ of a function $a$ belonging to the space
$L_{1,\sigma_1}(\mathbb{R}^n)$, one has
\[
\int_{\mathbb{R}^n} |a(\xi)-a(\eta)|\,
|\psi_\delta(\eta-\xi)|\,
d\xi \to 0
\quad \mbox{ as } \quad \delta \to 0.
\]
\end{lemma}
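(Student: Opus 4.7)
The plan is to split the integral
\[
I(\delta) := \int_{\mathbb{R}^n}|a(\xi)-a(\eta)||\psi_\delta(\eta-\xi)|\,d\xi
\]
into a local piece $I_1(\delta)$ over a fixed ball $B(\eta, R_0)$ and a global piece $I_2(\delta)$ over its complement, where, given $\varepsilon > 0$, the radius $R_0 > 0$ is chosen (using that $\eta$ is a Lebesgue point of $a$) so small that the averages of $|a - a(\eta)|$ on $B(\eta, r)$ are at most $\varepsilon$ for all $r \le R_0$. The standing pointwise bound from \eqref{eq:Lebesgue-point-1} is
\[
|\psi_\delta(\eta-\xi)| \le C\delta^{-n}(1+|\eta-\xi|/\delta)^{-\sigma_2},
\]
which simplifies to $C\delta^{\sigma_2-n}|\eta-\xi|^{-\sigma_2}$ whenever $|\eta-\xi| \ge \delta$.

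For the local piece, after the change of variables $z = \xi - \eta$ I would split further at $|z| = \delta$. On $\{|z| \le \delta\}$ the trivial bound $|\psi_\delta| \le C\delta^{-n}$ combines directly with the Lebesgue point property to give a contribution of order $\varepsilon$. On $\{\delta < |z| \le R_0\}$ I would decompose dyadically into annuli $A_k = \{2^k\delta < |z| \le 2^{k+1}\delta\}$, bound $|z|^{-\sigma_2}$ by $(2^k\delta)^{-\sigma_2}$ on $A_k$, and apply the Lebesgue point estimate on the enclosing ball $B(0, 2^{k+1}\delta)$. The contribution of each $A_k$ is then of order $\varepsilon\cdot 2^{k(n-\sigma_2)}$, and summing the geometric series in $k$ requires precisely the hypothesis $\sigma_2 > n$, yielding $I_1(\delta) \le C'\varepsilon$ uniformly in sufficiently small $\delta$.

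For the global piece, the decay estimate above gives
\[
I_2(\delta) \le C\delta^{\sigma_2-n}\int_{|\eta-\xi|>R_0}\bigl(|a(\xi)| + |a(\eta)|\bigr)|\eta-\xi|^{-\sigma_2}\,d\xi.
\]
The $|a(\eta)|$-term is finite because $\sigma_2 > n$ (polar coordinates). For the $|a(\xi)|$-term I would use the pointwise comparison $|\eta-\xi|^{-\sigma_2} \le K(1+|\xi|)^{-\sigma_1}$, valid on $\{|\eta-\xi| > R_0\}$ for some constant $K = K(\eta, R_0, \sigma_1, \sigma_2)$; this inequality is checked separately on bounded $\xi$ (where $|\eta-\xi|\ge R_0$ keeps the left side bounded) and on large $\xi$ (where $|\eta-\xi| \sim |\xi|$ and $\sigma_2 \ge \sigma_1$). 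The remaining integral is then bounded by $K\|a\|_{L_{1,\sigma_1}} < \infty$, so $I_2(\delta) = O(\delta^{\sigma_2-n}) \to 0$ as $\delta \to 0$.

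The main technical obstacle is the dyadic estimate on $I_1(\delta)$, where the Lebesgue averaging (which scales like $r^n$) must absorb the singular growth of $|\psi_\delta|$ in the intermediate zone $\delta < |z| \le R_0$; the hypothesis $\sigma_2 > n$ is indispensable precisely to make the resulting geometric sum convergent. The hypothesis $\sigma_2 \ge \sigma_1$ plays a more modest role, being used only to pass from the $|\eta-\xi|$-decay of the kernel to the $|\xi|$-weight defining $L_{1,\sigma_1}$. Combining $I_1(\delta) \le C'\varepsilon$ with $I_2(\delta) \to 0$ and letting $\varepsilon \to 0$ completes the proof.
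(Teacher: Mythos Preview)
Your proposal is correct and follows the same overall strategy as the paper: split at a fixed radius chosen via the Lebesgue point condition, bound the near part by a multiple of $\varepsilon$ uniformly in $\delta$, and show the far part is $O(\delta^{\sigma_2-n})$ using $a\in L_{1,\sigma_1}(\mathbb{R}^n)$ together with $\sigma_2\ge\sigma_1$.

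The only noteworthy difference is in the treatment of the local piece $I_1(\delta)$. You use a dyadic annular decomposition, bounding $|\psi_\delta|$ by $C\delta^{-n}2^{-k\sigma_2}$ on the $k$-th annulus and invoking the Lebesgue-point average on the enclosing ball; the resulting geometric series in $2^{k(n-\sigma_2)}$ converges precisely because $\sigma_2>n$. The paper instead passes to polar coordinates, sets $G(r)=\int_0^r s^{n-1}g(s)\,ds$ where $g$ is the spherical average of $|a(\eta-\cdot)-a(\eta)|$, and integrates by parts twice against the radial majorant $\phi_{(\delta)}(r)=C\delta^{-n}(1+r/\delta)^{-\sigma_2}$; the Lebesgue-point bound $G(r)\le\varepsilon r^n$ then yields $I_1(\delta)\le \frac{Cn}{\sigma_2-n}\,\varepsilon$ with an explicit constant. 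Both arguments are standard and equivalent in strength; your dyadic version is perhaps more transparent, while the paper's integration-by-parts gives a cleaner constant and mirrors the classical proof in Stein--Weiss. The handling of $I_2(\delta)$ is essentially identical in the two proofs.
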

\begin{proof}
The proof is similar to the proof of \cite[Chap.~I, Theorem~1.25]{SW71}
(see also \cite[Chap.~II, Lemma~1]{CZ52}). We give it here for the
convenience of the reader.

Take an arbitrary $\varepsilon>0$. Since $\eta\in\mathbb{R}^n$ is a Lebesgue
point of $a$, there exists a $\rho>0$ such that
\begin{equation}\label{eq:Lebesgue-point-2}
r^{-n}\int_{|\zeta|<r}|a(\eta-\zeta)-a(\eta)|\,d\zeta \le \varepsilon
\quad
\mbox{for all}
\quad
r \in(0,\rho].
\end{equation}
Substituting $\eta-\xi$ with $\zeta$ and splitting the integral, we get for
any $\delta>0$,
\begin{align}
\int_{\mathbb{R}^n} |a(\xi)-a(\eta)|\,|\psi_\delta(\eta-\xi)|\,d\xi
=&
\int_{|\zeta|<\rho}|a(\eta-\zeta)-a(\eta)|\,|\psi_\delta(\zeta)|\,d\zeta
\nonumber \\
&+
\int_{|\zeta|\ge\rho}|a(\eta-\zeta)-a(\eta)|\,|\psi_\delta(\zeta)|\,d\zeta
\nonumber\\
=: &
I_1(\delta)+I_2(\delta).
\label{eq:Lebesgue-point-3}
\end{align}
Let $\mathbb{S}^{n-1}=\{\vartheta\in\mathbb{R}^n:|\vartheta|=1\}$ be the unit
sphere in $\mathbb{R}^n$ and let
\[
g(r):=\int_{\mathbb{S}^{n-1}}|a(\eta-r\vartheta)-a(\eta)|\,d\vartheta,
\]
where $d\vartheta$ is an element of the surface area on $\mathbb{S}^{n-1}$. Then
condition \eqref{eq:Lebesgue-point-2} is equivalent to
\begin{equation}\label{eq:Lebesgue-point-4}
G(r):=\int_0^r s^{n-1}g(s)\,ds\le \varepsilon r^n
\quad
\mbox{for all}\quad r
\in (0, \rho]
\end{equation}
(see, e.g., \cite[Theorem~2.49]{F99}). Let
\[
\phi(r) :=  C (1 + r)^{-\sigma_2},
\quad
\phi_{(\delta)}(r) := \delta^{-n} \phi(r/\delta) ,
\quad
r \ge 0.
\]
Then \eqref{eq:Lebesgue-point-1} implies that
\[
I_1(\delta)
\le
\int_{|\zeta|<\rho}|a(\eta-\zeta)-a(\eta)| \phi_{(\delta)}(|\zeta|) \,d\zeta
=
\int_0^\rho r^{n-1}g(r)\phi_{(\delta)}(r)\,dr.
\]
Integrating by parts twice and taking into account \eqref{eq:Lebesgue-point-4}
and the inequalities $\phi'_{(\delta)}<0$ and $\sigma_2>n$,
we obtain
\begin{align}
I_1(\delta)
& \le
\big[G(r)\phi_{(\delta)}(r)\big]_0^\rho
-
\int_0^\rho G(r)\phi'_{(\delta)}(r)\,dr
 \le
\varepsilon \rho^n \phi_{(\delta)}(\rho)
-
\int_0^\rho \varepsilon r^n \phi'_{(\delta)}(r)\,dr
\nonumber\\
&=
n \varepsilon \int_0^\rho  r^{n - 1} \phi_{(\delta)}(r)\,dr
 \le
n \varepsilon \int_0^\infty  r^{n - 1} \phi_{(\delta)}(r)\,dr
=
n \varepsilon \int_0^\infty  s^{n - 1} \phi(s)\,ds
\nonumber\\
&\le
C n \varepsilon \int_0^\infty  (1 + s)^{n - 1 - \sigma_2}\,ds
=
\frac{C n}{\sigma_2 - n}\, \varepsilon
=:
A \varepsilon .
\label{eq:Lebesgue-point-5}
\end{align}
Since $a\in L_{1,\sigma_1}(\mathbb{R}^n)$, we get for all $\delta>0$,
\begin{align}
I_2(\delta)
\le &
\int_{|\zeta|\ge\rho}|a(\eta-\zeta)-a(\eta)| \phi_{(\delta)}(|\zeta|) \,d\zeta
\nonumber\\
\le&
\left(
\int_{\mathbb{R}^n}|a(\eta-\zeta)|(1+|\eta-\zeta|)^{-\sigma_1}\,d\zeta
\right)
\sup_{|\zeta|\ge\rho}
\left((1+|\eta-\zeta|)^{\sigma_1}\phi_{(\delta)}(|\zeta|)\right)
\nonumber\\
&
+|a(\eta)|\int_{|\zeta|\ge\rho} \phi_{(\delta)}(|\zeta|) \,d\zeta
\nonumber\\
\le&
\|a\|_{1,\sigma_1}\sup_{|\zeta|\ge\rho}
\left((1+|\eta-\zeta|)^{\sigma_2}\phi_{(\delta)}(|\zeta|)\right)
+
|a(\eta)|\, \omega_n \int_\rho^\infty r^{n - 1}\phi_{(\delta)}(r) \,dr ,
\label{eq:Lebesgue-point-6}
\end{align}
where $\omega_n$ is the surface area of $\mathbb{S}^{n-1}$.
It is clear that for $|\zeta|\ge\rho$,
\begin{align}
(1+|\eta-\zeta|)^{\sigma_2}\phi_{(\delta)}(|\zeta|)
&\le
C(1+|\eta|+|\zeta|)^{\sigma_2} \delta^{-n}(1+|\zeta|/\delta)^{-\sigma_2}
\nonumber\\
&=
C\frac{(1+|\eta|+|\zeta|)^{\sigma_2}}{(\delta+|\zeta|)^{\sigma_2}}
\delta^{\sigma_2-n} <
C\left(\frac{1+|\eta|}{|\zeta|}+1\right)^{\sigma_2}\delta^{\sigma_2-n}
\nonumber\\
&\le
C\left(\frac{1+|\eta|}{\rho}+1\right)^{\sigma_2}\delta^{\sigma_2-n}.
\label{eq:Lebesgue-point-7}
\end{align}
Further,
\begin{align}
\int_\rho^\infty r^{n - 1}\phi_{(\delta)}(r) \,dr
&=
\int_{\rho/\delta}^\infty s^{n - 1} \phi(s)\,ds \le
C\int_{\rho/\delta}^\infty (1 + s)^{n - 1 - \sigma_2}\,ds
\nonumber\\
&=
\frac{C}{\sigma_2 - n}\, (1 + \rho/\delta)^{n - \sigma_2}
\le
\frac{C}{(\sigma_2 - n) \rho^{\sigma_2 - n}}\, \delta^{\sigma_2 - n}.
\label{eq:Lebesgue-point-8}
\end{align}
It follows from \eqref{eq:Lebesgue-point-6}--\eqref{eq:Lebesgue-point-8} that
\[
I_2(\delta) \le C\left(\|a\|_{1,\sigma_1} 
\left(\frac{1+|\eta|}{\rho}+1\right)^{\sigma_2} +
\frac{\omega_n |a(\eta)|}{(\sigma_2 - n) \rho^{\sigma_2 - n}}\right) 
\delta^{\sigma_2 - n}.
\]
Hence there exists a $\delta_0=\delta_0(\varepsilon)>0$ such that
\[
I_2(\delta) < \varepsilon
\quad\mbox{for all}\quad \delta \in (0, \delta_0),
\]
and inequality \eqref{eq:Lebesgue-point-5} implies that
\[
I_1(\delta)+I_2(\delta)<(A+1)\varepsilon
\quad\mbox{for all}\quad\delta \in (0, \delta_0).
\]
Combining this estimate with \eqref{eq:Lebesgue-point-3}, we arrive at the 
desired result.
\end{proof}
\section{Weak doubling property}
\label{sec:doubling}
\subsection{The infimum of the doubling constants}
For a Banach function space $X(\mathbb{R}^n)$ and $\tau>1$, 
consider the doubling constant
\begin{equation}\label{eq:def-D}
D_{X,\tau}:=\liminf_{R \to \infty}
\left(\inf_{y \in \mathbb{R}^n}
\frac{\|\chi_{B(y,\tau R)}\|_{X(\mathbb{R}^n)}}
{\|\chi_{B(y, R)}\|_{X(\mathbb{R}^n)}}
\right).
\end{equation}
We immediately deduce from the lattice property (Axiom (A2) in the definition 
of a Banach function space) that $1 \le D_{X, \tau_1} \le D_{X, \tau_2}$
for all $1<\tau_1 \le \tau_2$. Therefore,
\[
\inf_{\tau>1} D_{X,\tau}\ge 1.
\]
\begin{lemma}\label{le:weak-doubling-property}
If a Banach function space $X(\mathbb{R}^n)$ satisfies the weak doubling
property, then
\[
\inf_{\tau > 1} D_{X, \tau} = 1.
\]
\end{lemma}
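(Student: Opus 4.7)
The plan is to exploit the weak doubling property via a telescoping argument: the ratio between $\chi_{B(y,\tau R)}$ and $\chi_{B(y,R)}$ controls a product of $m$ consecutive ratios with dilation factor $\tau^{1/m}$, one of which must be small.

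First, pick $\tau_0 > 1$ witnessing the weak doubling property, so that $D_{X,\tau_0} = C < \infty$. Fix $\varepsilon > 0$ and a positive integer $m$. By the definition of $\liminf$ there is a sequence $R_k \to \infty$ and points $y_k \in \mathbb{R}^n$ such that
\[
\frac{\|\chi_{B(y_k, \tau_0 R_k)}\|_{X(\mathbb{R}^n)}}{\|\chi_{B(y_k, R_k)}\|_{X(\mathbb{R}^n)}} < C + \varepsilon.
\]
Writing $\tau_0 = (\tau_0^{1/m})^m$ and inserting the intermediate radii $\tau_0^{j/m} R_k$ for $j = 0, 1, \dots, m$, one can telescope:
\[
\prod_{j=0}^{m-1}
\frac{\|\chi_{B(y_k, \tau_0^{(j+1)/m} R_k)}\|_{X(\mathbb{R}^n)}}
{\|\chi_{B(y_k, \tau_0^{j/m} R_k)}\|_{X(\mathbb{R}^n)}}
=
\frac{\|\chi_{B(y_k, \tau_0 R_k)}\|_{X(\mathbb{R}^n)}}{\|\chi_{B(y_k, R_k)}\|_{X(\mathbb{R}^n)}} < C + \varepsilon.
\]
By axiom (A2), each factor in the product is at least $1$, so at least one factor, indexed say by $j_k \in \{0, \dots, m-1\}$, is bounded above by $(C + \varepsilon)^{1/m}$.

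Next I set $R_k' := \tau_0^{j_k/m} R_k$ and $y_k' := y_k$. Then $R_k' \ge R_k \to \infty$, and the chosen factor reads
\[
\frac{\|\chi_{B(y_k', \tau_0^{1/m} R_k')}\|_{X(\mathbb{R}^n)}}
{\|\chi_{B(y_k', R_k')}\|_{X(\mathbb{R}^n)}} \le (C + \varepsilon)^{1/m},
\]
so $\inf_{y \in \mathbb{R}^n} \frac{\|\chi_{B(y, \tau_0^{1/m} R_k')}\|_{X(\mathbb{R}^n)}}{\|\chi_{B(y, R_k')}\|_{X(\mathbb{R}^n)}} \le (C + \varepsilon)^{1/m}$ for every $k$. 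Hence $D_{X, \tau_0^{1/m}} \le (C + \varepsilon)^{1/m}$, and letting $\varepsilon \to 0^+$ gives $D_{X, \tau_0^{1/m}} \le C^{1/m}$.

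Finally, since $\tau_0^{1/m} > 1$ for every $m \in \mathbb{N}$, the inequality $\inf_{\tau > 1} D_{X,\tau} \le D_{X, \tau_0^{1/m}} \le C^{1/m}$ together with $C^{1/m} \to 1$ as $m \to \infty$ and the trivial lower bound $D_{X,\tau} \ge 1$ (which follows from axiom (A2)) yields $\inf_{\tau > 1} D_{X,\tau} = 1$. There is no serious obstacle here; the only mildly delicate point is to confirm that the index $j_k$ selecting the small factor does not spoil the condition $R_k' \to \infty$, which is immediate because $\tau_0^{j_k/m} \ge 1$.
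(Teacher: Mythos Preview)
Your proof is correct and uses essentially the same telescoping idea as the paper: writing the ratio for $\tau_0$ as a product of $m$ ratios for $\tau_0^{1/m}$. The only difference is that you argue directly (a bounded product of factors $\ge 1$ forces one factor to be at most the $m$-th root), whereas the paper argues by contradiction (if every factor were uniformly $\ge D_0>1$, the product would exceed any bound); these are contrapositive versions of the same observation.
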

\begin{proof}
Since $X(\mathbb{R}^n)$ satisfies the weak doubling property, there exists a
number $\varrho>1$ such that $D_{X,\varrho}<\infty$. Assume, contrary to
the hypothesis, that
\[
D := \inf_{\tau > 1} D_{X, \tau} > 1.
\]
Take an arbitrary $N \in \mathbb{N}$ and consider $\tau = \varrho^{1/N}$.
Since
\[
D_{X, \tau} \ge D > D_0 := \frac{D + 1}{2} > 1,
\]
it follows from the definition of $D_{X, \tau}$ that there exists
a number $R_0 > 0$ such that for all $R\ge R_0$,
\[
\inf_{y \in \mathbb{R}^n}
\frac{\|\chi_{B(y, \tau R)}\|_{X(\mathbb{R}^n)}}
{\|\chi_{B(y, R)}\|_{X(\mathbb{R}^n)}}
\ge D_0.
\]
Hence, for all $y \in \mathbb{R}^n$  and all $R \ge R_0$,
\[
\frac{\|\chi_{B(y, \varrho R)}\|_{X(\mathbb{R}^n)}}
{\|\chi_{B(y, R)}\|_{X(\mathbb{R}^n)}}
=
\prod_{j = 1}^N
\frac{\|\chi_{B(y, \tau^j R)}\|_{X(\mathbb{R}^n)}}
{\|\chi_{B(y, \tau^{j - 1} R)}\|_{X(\mathbb{R}^n)}}
\ge
D_0^N .
\]
Therefore, $D_{X, \varrho} \ge D_0^N$ for all $N \in \mathbb{N}$,
which is impossible since $D_0 > 1$ and $D_{X, \varrho} < \infty$.
The obtained contradiction completes the proof.
\end{proof}
\subsection{The doubling property and the $A_X$-condition}
\begin{definition}
We say that a Banach function space $X(\mathbb{R}^n)$ satisfies the (strong) 
doubling property if there exist  a number $\tau > 1$ and a constant 
$C_\tau > 0$ such that for all $R > 0$ and $y \in \mathbb{R}^n$,
\begin{equation}\label{eq:strong-doubling}
\frac{\|\chi_{B(y,\tau R)}\|_{X(\mathbb{R}^n)}}
{\|\chi_{B(y, R)}\|_{X(\mathbb{R}^n)}} 
\le 
C_\tau.
\end{equation}
\end{definition}
The doubling property is considerably stronger than the weak doubling property.
Indeed, it is easy to see that a Banach function space $X(\mathbb{R}^n)$ 
satisfies the weak doubling property if and only if there exist a number 
$\tau>1$, a constant $C_\tau >0$, a sequence 
$\{R_j\}_{j\in\mathbb{N}}\subset(0,\infty)$ satisfying $R_j\to \infty$ as 
$j\to\infty$, and a sequence $\{y_j\}_{j\in\mathbb{N}}$ in $\mathbb{R}^n$ 
such that
\begin{equation}\label{eq:weak-doubling}
\frac{\|\chi_{B(y_j, \tau R_j)}\|_{X(\mathbb{R}^n)}}
{\displaystyle
\|\chi_{B(y_j, R_j)}\|_{X(\mathbb{R}^n)}}
\le C_\tau
\quad\mbox{for all}\quad j\in\mathbb{N}.
\end{equation}
So, the difference between the doubling property and the weak doubling 
property is that the former requires estimate \eqref{eq:strong-doubling} 
to hold for all balls, while the latter requires it to hold only for some 
sequence of balls with radii going to infinity. We will return to this 
comparison in Subsection~\ref{subsec:examples-doubling}.
 
Now we give a sufficient condition guaranteeing that a Banach function
space $X(\mathbb{R}^n)$ satisfies the doubling property.
We say that a Banach function space $X(\mathbb{R}^n)$ satisfies the 
$A_X$-condition if
\begin{equation}\label{eq:AX}
\sup_Q\frac{1}{|Q|}
\|\chi_Q\|_{X(\mathbb{R}^n)}\|\chi_Q\|_{X'(\mathbb{R}^n)}<\infty,
\end{equation}
where the supremum is taken over all cubes with sides parallel to the 
coordinate axes. This condition goes back to E.~I.~Berezhnoi \cite{B99}.
\begin{lemma}\label{le:AX-implies-doubling}
If $X(\mathbb{R}^n)$ is a Banach function space satisfying the $A_X$-condition,
then $X(\mathbb{R}^n)$ satisfies the doubling property.
\end{lemma}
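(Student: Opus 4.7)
The plan is to prove the doubling property by combining the $A_X$-condition with the converse Hölder-type inequality $|E|\le\|\chi_E\|_{X(\mathbb{R}^n)}\|\chi_E\|_{X'(\mathbb{R}^n)}$ (which follows from applying Hölder's inequality for Banach function spaces to $f=g=\chi_E$), using a cube that sandwiches the balls involved. Any fixed $\tau>1$ should work; the constant $C_\tau$ will depend only on $\tau$, $n$, and the supremum in \eqref{eq:AX}.

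Fix $\tau>1$, $y\in\mathbb{R}^n$, and $R>0$. Let $Q=Q(y,\tau R)$ be the cube centered at $y$ with sides parallel to the coordinate axes of side length $2\tau R$, so that $B(y,R)\subset B(y,\tau R)\subset Q$ and $|Q|=(2\tau R)^n$. By the lattice property (A2), we have both
\[
\|\chi_{B(y,\tau R)}\|_{X(\mathbb{R}^n)}\le\|\chi_Q\|_{X(\mathbb{R}^n)}
\quad\text{and}\quad
\|\chi_{B(y,R)}\|_{X'(\mathbb{R}^n)}\le\|\chi_Q\|_{X'(\mathbb{R}^n)}.
\]
Let $A_0$ denote the supremum in \eqref{eq:AX}. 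Then $\|\chi_Q\|_{X(\mathbb{R}^n)}\|\chi_Q\|_{X'(\mathbb{R}^n)}\le A_0|Q|$, which, combined with the second inequality above, gives
\[
\|\chi_Q\|_{X(\mathbb{R}^n)}\le\frac{A_0|Q|}{\|\chi_{B(y,R)}\|_{X'(\mathbb{R}^n)}}.
\]

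Next, apply Hölder's inequality for Banach function spaces (\cite[Chap.~1, Theorem~2.4]{BS88}) to $f=g=\chi_{B(y,R)}$ to obtain $|B(y,R)|\le\|\chi_{B(y,R)}\|_{X(\mathbb{R}^n)}\|\chi_{B(y,R)}\|_{X'(\mathbb{R}^n)}$, equivalently
\[
\frac{1}{\|\chi_{B(y,R)}\|_{X'(\mathbb{R}^n)}}\le\frac{\|\chi_{B(y,R)}\|_{X(\mathbb{R}^n)}}{|B(y,R)|}.
\]
Chaining the estimates yields
\[
\|\chi_{B(y,\tau R)}\|_{X(\mathbb{R}^n)}
\le\|\chi_Q\|_{X(\mathbb{R}^n)}
\le\frac{A_0|Q|}{|B(y,R)|}\,\|\chi_{B(y,R)}\|_{X(\mathbb{R}^n)}.
\]
Since $|Q|/|B(y,R)|=(2\tau)^n/v_n$, where $v_n$ is the volume of the unit ball in $\mathbb{R}^n$, setting $C_\tau:=A_0(2\tau)^n/v_n$ gives \eqref{eq:strong-doubling} for all $y$ and $R$, proving the doubling property.

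There is no serious obstacle: the argument is a short sandwich computation, and the only subtlety is remembering to invoke Hölder's inequality in both directions (once to bound $\|\chi_Q\|_X\|\chi_Q\|_{X'}$ from above via $A_X$, and once via the trivial $|E|\le\|\chi_E\|_X\|\chi_E\|_{X'}$ to invert the $X'$-norm of $\chi_{B(y,R)}$ into a lower bound for $\|\chi_{B(y,R)}\|_X$). Note also that $\chi_Q\in X(\mathbb{R}^n)\cap X'(\mathbb{R}^n)$ by axiom (A4), so every term appearing in the computation is finite.
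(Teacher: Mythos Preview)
Your proof is correct and follows essentially the same approach as the paper: both combine the $A_X$-bound with H\"older's inequality $|E|\le\|\chi_E\|_X\|\chi_E\|_{X'}$ and monotonicity of the norm to get the doubling estimate. The only cosmetic difference is that the paper first converts the $A_X$-condition from cubes to balls and then works with the balls $B(y,\tau R)\supset B(y,R)$ directly, whereas you circumscribe a cube $Q\supset B(y,\tau R)$ and apply $A_X$ to $Q$; this yields a slightly larger constant $C_\tau=A_0(2\tau)^n/v_n$ versus the paper's $C_X\tau^n$, but is otherwise the same argument.
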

\begin{proof}
It is well-known that there exist constants $0<m_n<M_n<\infty$ such that for
every ball $B$ in $\mathbb{R}^n$ and the corresponding inscribed and
circumscribed cubes $Q$ and $P$ one has
\[
m_n|P|\le |B|\le M_n|Q|.
\]
Then it follows from Axiom (A2) in the definition of a Banach function 
function norm that condition \eqref{eq:AX} is equivalent to the condition
\[
C_X:=\sup_B\frac{1}{|B|}
\|\chi_B\|_{X(\mathbb{R}^n)}
\|\chi_B\|_{X'(\mathbb{R}^n)}<\infty,
\]
where the supremum is taken over all balls in $\mathbb{R}^n$. Then, for
every $y\in\mathbb{R}^n$, $\tau>1$ and $R>0$,
\[
\|\chi_{B(y,\tau R)}\|_{X(\mathbb{R}^n)}
\le
C_X\frac{|B(y,\tau R)|}{\|\chi_{B(y,\tau R)}\|_{X'(\mathbb{R}^n)}}
\le 
C_X\frac{|B(y,\tau R)|}{\|\chi_{B(y,R)}\|_{X'(\mathbb{R}^n)}}
=
C_X\tau^n\frac{|B(y,R)|}{\|\chi_{B(y,R)}\|_{X'(\mathbb{R}^n)}}.
\]
It follows from the above inequality and H\"older's inequality for 
Banach function spaces (see \cite[Chap.~1, Theorem~2.4]{BS88}) that
\[
\|\chi_{B(y,\tau R)}\|_{X(\mathbb{R}^n)}
\leq C_X\tau^n
\frac{
\|\chi_{B(y,R)}\|_{X(\mathbb{R}^n)}
\|\chi_{B(y,R)}\|_{X'(\mathbb{R}^n)}
}
{
\|\chi_{B(y,R)}\|_{X'(\mathbb{R}^n)}
}
=
C_X\tau^n\|\chi_{B(y,R)}\|_{X(\mathbb{R}^n)}.
\]
Applying this inequality, we immediately get \eqref{eq:strong-doubling} with 
$C_\tau = C_X\tau^n$ for every $\tau>1$, which completes the proof.
\end{proof}
\subsection{Translation-invariant Banach function spaces satisfy the 
doubling property}
\begin{lemma}\label{le:TI-balls}
Let $X(\mathbb{R}^n)$ be a translation-invariant Banach function space. 
\begin{enumerate}
\item[(a)]
There exist constants $C_1, C_2 > 0$ such that 
for all $R > 0$ and $y \in \mathbb{R}^n$,
\begin{equation}\label{eq:TI-balls-1}
C_1 \min\left\{1, R^n\right\} 
\le 
\|\chi_{B(y, R)}\|_{X(\mathbb{R}^n)} 
\le 
C_2 \max\left\{1, R^n\right\}.
\end{equation}

\item[(b)]
For all $\tau > 1$, $R > 0$, and $y \in \mathbb{R}^n $,
\begin{equation}\label{eq:TI-balls-2}
\frac{\|\chi_{B(y, \tau R)}\|_{X(\mathbb{R}^n)}}
{\|\chi_{B(y, R)}\|_{X(\mathbb{R}^n)}} 
\le 
\left(4\sqrt{n}\, \tau\right)^n .
\end{equation}
\end{enumerate}
\end{lemma}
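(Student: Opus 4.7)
The plan is to reduce both parts to an elementary cube-subdivision covering argument. By translation invariance we always have $\|\chi_{B(y,R)}\|_{X(\mathbb{R}^n)}=\|\chi_{B(0,R)}\|_{X(\mathbb{R}^n)}$, and we set $c_0:=\|\chi_{B(0,1)}\|_{X(\mathbb{R}^n)}$, which by axiom (A4) is finite and, since $|B(0,1)|>0$, strictly positive by axiom (A1).

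The sublemma I will use is the following: for any $y\in\mathbb{R}^n$ and $r,\rho>0$, the ball $B(y,r)$ sits inside the cube $y+[-r,r]^n$, which is covered by $N\le\lceil r\sqrt{n}/\rho\rceil^n$ axis-aligned subcubes of side length $2\rho/\sqrt{n}$. Each such subcube has diameter $2\rho$ and is therefore contained in the ball of radius $\rho$ centred at its midpoint; calling these midpoints $z_1,\dots,z_N$, one has $\chi_{B(y,r)}\le\sum_{i=1}^N\chi_{B(z_i,\rho)}$ pointwise. Combining this with finite subadditivity from axiom (A1), the lattice property (A2), and translation invariance yields
\[
\|\chi_{B(y,r)}\|_{X(\mathbb{R}^n)}\le\lceil r\sqrt{n}/\rho\rceil^n\,\|\chi_{B(0,\rho)}\|_{X(\mathbb{R}^n)}.
\]

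For part (a), the upper bound in \eqref{eq:TI-balls-1} is handled in two cases: when $R\ge 1$, the sublemma with $r=R$ and $\rho=1$ gives $\|\chi_{B(0,R)}\|_{X(\mathbb{R}^n)}\le\lceil R\sqrt{n}\rceil^n c_0\le(2\sqrt{n})^n c_0\,R^n$; when $R<1$, the inclusion $B(0,R)\subseteq B(0,1)$ together with (A2) gives $\|\chi_{B(0,R)}\|_{X(\mathbb{R}^n)}\le c_0$. For the lower bound, when $R\ge 1$ the inclusion $B(0,1)\subseteq B(0,R)$ with (A2) gives $\|\chi_{B(0,R)}\|_{X(\mathbb{R}^n)}\ge c_0$; when $R<1$, the sublemma applied with $r=1$ and $\rho=R$ gives $c_0\le\lceil\sqrt{n}/R\rceil^n\|\chi_{B(0,R)}\|_{X(\mathbb{R}^n)}\le(2\sqrt{n})^n R^{-n}\|\chi_{B(0,R)}\|_{X(\mathbb{R}^n)}$, from which $\|\chi_{B(0,R)}\|_{X(\mathbb{R}^n)}\ge c_0(2\sqrt{n})^{-n}R^n$. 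Collecting the constants produces \eqref{eq:TI-balls-1}.

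For part (b), I apply the sublemma with $r=\tau R$ and $\rho=R$: since $\tau>1$ and $\sqrt{n}\ge 1$ force $\tau\sqrt{n}\ge 1$, one has $\lceil\tau\sqrt{n}\rceil\le\tau\sqrt{n}+1\le 2\tau\sqrt{n}\le 4\sqrt{n}\tau$, so
\[
\|\chi_{B(y,\tau R)}\|_{X(\mathbb{R}^n)}\le(4\sqrt{n}\tau)^n\|\chi_{B(y,R)}\|_{X(\mathbb{R}^n)},
\]
which is \eqref{eq:TI-balls-2}. The only real work is in setting up the cube-to-ball covering with explicit constants; once that is in place, both parts are mechanical consequences of axioms (A1), (A2), (A4) together with translation invariance, so no genuine obstacle arises.
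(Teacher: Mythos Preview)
Your proof is correct and follows essentially the same idea as the paper's: cover a large ball by translates of a smaller set and combine subadditivity (A1), the lattice property (A2), and translation invariance. The paper routes this through a dyadic cube-doubling inequality $\|\chi_{Q_{2a}}\|_{X}\le 2^n\|\chi_{Q_a}\|_{X}$ iterated $m$ times, whereas your single ``sublemma'' covers $B(y,r)$ directly by $\lceil r\sqrt{n}/\rho\rceil^n$ balls of radius $\rho$; this is a slightly more streamlined packaging of the same covering argument and in fact yields the sharper constant $(2\sqrt{n}\,\tau)^n$ in part~(b).
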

\begin{proof}
(a) All cubes in this proof are assumed to be closed and to have sides 
parallel to the coordinate axes. 
Let $Q(x,a)$ denote the cube centered at $x$ of side length $a$. Since the 
space $X(\mathbb{R}^n)$ is translation-invariant, we have
$\|\chi_{B(x,R)}\|_{X(\mathbb{R}^n)}=\|\chi_{B(y,R)}\|_{X(\mathbb{R}^n)}$
and $\|\chi_{Q(x,a)}\|_{X(\mathbb{R}^n)}=\|\chi_{Q(y,a)}\|_{X(\mathbb{R}^n)}$
for all $x,y\in\mathbb{R}^n$ and $a,R>0$. Therefore, we may simply write
$B_R$ and $Q_a$ for arbitrary open balls of radius $R$ and arbitrary 
cubes of side length $a$, respectively.

Let $a>0$ and $\mathcal{F}$ be the family of $2^n$ cubes $Q_a$ with pairwise 
disjoint interiors obtained from a fixed cube $Q_{2a}$ by dividing each its 
side in two segments of equal length: $Q_{2a}=\cup_{Q_a\in\mathcal{F}}Q_a$.
Then
\begin{equation}\label{eq:TI-balls-3}
\|\chi_{Q_{2a}}\|_{X(\mathbb{R}^n)} 
= 
\left\|\sum_{Q_a\in\mathcal{F}} \chi_{Q_a}\right\|_{X(\mathbb{R}^n)} 
\le 
\sum_{Q_a\in\mathcal{F}} \left\|\chi_{Q_a}\right\|_{X(\mathbb{R}^n)} 
= 
2^n \left\|\chi_{Q_a}\right\|_{X(\mathbb{R}^n)} .
\end{equation}
Using inequality \eqref{eq:TI-balls-3} $m$ times, one gets for all 
$m\in\mathbb{N}$,
\begin{equation}\label{eq:TI-balls-4}
\|\chi_{Q_{2^m}}\|_{X(\mathbb{R}^n)} 
\le 
2^{mn} \left\|\chi_{Q_1}\right\|_{X(\mathbb{R}^n)} ,
\quad
\|\chi_{Q_1}\|_{X(\mathbb{R}^n)} 
\le 
2^{mn} \left\|\chi_{Q_{2^{-m}}}\right\|_{X(\mathbb{R}^n)} , 
\end{equation}
and hence
\begin{equation}\label{eq:TI-balls-5}
\left\|\chi_{Q_{2^{-m}}}\right\|_{X(\mathbb{R}^n)} 
\ge 
2^{-mn}  \|\chi_{Q_1}\|_{X(\mathbb{R}^n)}.
\end{equation}

If $R \ge 1$, there exists $m \in \mathbb{N}$ such that 
$2^{m - 2} < R \le 2^{m -1}$. Then $B_R$ is contained in a cube $Q_{2^m}$
of side length $2^m$ and it follows from the first inequality in
\eqref{eq:TI-balls-4} that
\begin{align}
\|\chi_{B_1}\|_{X(\mathbb{R}^n)} 
&\le
\|\chi_{B_R}\|_{X(\mathbb{R}^n)} 
\le 
\|\chi_{Q_{2^m}}\|_{X(\mathbb{R}^n)}
\nonumber \\
&\le
2^{mn} \left\|\chi_{Q_1}\right\|_{X(\mathbb{R}^n)} 
< 
\left(4^n \left\|\chi_{Q_1}\right\|_{X(\mathbb{R}^n)}\right) R^n .
\label{eq:TI-balls-6}
\end{align}
If $R \le 1$, there exists $m \in \mathbb{N}\cup\{0\}$ such that 
$2^{-m - 1} \le R/\sqrt{n} < 2^{-m}$. Then it is easy to see that 
$B_R$ contains a cube $Q_{2^{-m}}$ of side length $2^{-m}$ and it follows from 
\eqref{eq:TI-balls-5} that
\begin{align}
\|\chi_{B_1}\|_{X(\mathbb{R}^n)} 
&\ge 
\|\chi_{B_R}\|_{X(\mathbb{R}^n)} 
\ge 
\left\|\chi_{Q_{2^{-m}}}\right\|_{X(\mathbb{R}^n)} 
\nonumber \\
&\ge
2^{-mn}  \|\chi_{Q_1}\|_{X(\mathbb{R}^n)} 
> 
\left(n^{-n/2} \left\|\chi_{Q_1}\right\|_{X(\mathbb{R}^n)}\right) R^n .
\label{eq:TI-balls-7}
\end{align}
Estimates \eqref{eq:TI-balls-6} and \eqref{eq:TI-balls-7} imply 
\eqref{eq:TI-balls-1} with 
\begin{align*}
C_1 = \min\left\{
\|\chi_{B_1}\|_{X(\mathbb{R}^n)},\,
 n^{-n/2} \left\|\chi_{Q_1}\right\|_{X(\mathbb{R}^n)}
\right\} , 
\quad
C_2 = \max\left\{
\|\chi_{B_1}\|_{X(\mathbb{R}^n)},\, 
4^n \left\|\chi_{Q_1}\right\|_{X(\mathbb{R}^n)}
\right\} .
\end{align*}
Part (a) is proved.

(b) For any $R > 0$, there exists $m \in \mathbb{Z}$ such that 
$2^{m - 2} < \tau R \le 2^{m - 1}$. Then any ball $B_{\tau R}$
is contained in a cube $Q_{2^m}$ of side length $2^m$. Let 
$m_0 := \left[\log_2 \left(\tau \sqrt{n}\right)\right] + 1$. It is easy to 
see  that $2^{m - m_0 - 2} < R/\sqrt{n}$ and $B_R$
contains a cube  $Q_{2^{m- m_0 - 1}}$ of side length $2^{m- m_0 - 1}$. Hence
\[
\frac{
\|\chi_{B_{\tau R}}\|_{X(\mathbb{R}^n)}
}
{
\|\chi_{B_R}\|_{X(\mathbb{R}^n)}
} 
\le 
\frac{\|\chi_{Q_{2^m}}\|_{X(\mathbb{R}^n)}}
{\|\chi_{Q_{2^{m- m_0 - 1}}}\|_{X(\mathbb{R}^n)}} \le 2^{(m_0 + 1)n} 
\le
\left(4\sqrt{n}\, \tau\right)^n , 
\] 
where the second inequality is obtained by applying 
\eqref{eq:TI-balls-3} $m_0 + 1$ times.
\end{proof}
Lemma~\ref{le:TI-balls}(b) immediately yields the following.
\begin{corollary}
If $X(\mathbb{R}^n)$ is a translation-invariant Banach function space, then
it satisfies the doubling property.
\end{corollary}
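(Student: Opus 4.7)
The plan is essentially to read off the conclusion from Lemma~\ref{le:TI-balls}(b), which is the substantive work. That lemma already asserts, uniformly in $R > 0$ and $y \in \mathbb{R}^n$, the bound
\[
\frac{\|\chi_{B(y, \tau R)}\|_{X(\mathbb{R}^n)}}{\|\chi_{B(y, R)}\|_{X(\mathbb{R}^n)}} \le \left(4\sqrt{n}\,\tau\right)^n
\]
for every $\tau > 1$. This is exactly inequality \eqref{eq:strong-doubling} in the definition of the (strong) doubling property, with explicit constant $C_\tau := (4\sqrt{n}\,\tau)^n$.

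So I would simply fix any $\tau > 1$ (for instance $\tau = 2$), invoke Lemma~\ref{le:TI-balls}(b) to obtain \eqref{eq:strong-doubling} with $C_\tau = (4\sqrt{n}\,\tau)^n$, and observe that this is uniform in $R$ and $y$, hence $X(\mathbb{R}^n)$ satisfies the doubling property by definition. There is no obstacle here: all the real content, including the dyadic subdivision argument comparing balls to cubes of appropriate size, was already carried out in the proof of Lemma~\ref{le:TI-balls}. The corollary is a purely formal restatement of part~(b) of that lemma in the language of the doubling definition, which is why the author phrases it as an immediate consequence.
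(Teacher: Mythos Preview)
Your proposal is correct and matches the paper's approach exactly: the paper states that the corollary is immediately yielded by Lemma~\ref{le:TI-balls}(b), and you have spelled out precisely how that lemma's uniform bound on the ratio coincides with inequality~\eqref{eq:strong-doubling} in the definition of the doubling property.
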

\subsection{Translation-invariant spaces with exponential weights fail 
the weak doubling property}
\begin{theorem}\label{th:example-nondoubling-exponent}
Suppose that $X(\mathbb{R})$ is a translation-invariant Banach function space. 
If $w(x) := e^{c x}$ for $x\in\mathbb{R}$ with a constant $c > 0$, then
the weighted Banach function space $X(\mathbb{R}, w)$ does not satisfy the 
weak doubling property.  
\end{theorem}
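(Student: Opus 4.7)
The plan is to show that $D_{X(\mathbb{R},w),\tau} = \infty$ for every $\tau > 1$, which by the very definition of the weak doubling property denies it. The idea is to use translation invariance of $X(\mathbb{R})$ together with the multiplicative structure of the exponential weight in order to reduce to a one-parameter question about $\|e^{c\cdot}\chi_{(-R,R)}\|_{X(\mathbb{R})}$ as $R\to\infty$, and then exploit the fact that $e^{ct}$ grows exponentially while $\|\chi_{(-R,R)}\|_{X(\mathbb{R})}$ grows at most polynomially by Lemma~\ref{le:TI-balls}(a).

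First I would eliminate the dependence on $y$. Setting $g_R(t) := e^{ct}\chi_{(-R,R)}(t)$, the identity
\[
w(x)\chi_{B(y,R)}(x) = e^{cy} e^{c(x-y)}\chi_{(y-R,y+R)}(x) = e^{cy}(\tau_y g_R)(x)
\]
combined with translation invariance yields $\|\chi_{B(y,R)}\|_{X(\mathbb{R},w)} = e^{cy}\|g_R\|_{X(\mathbb{R})}$. Consequently,
\[
\frac{\|\chi_{B(y,\tau R)}\|_{X(\mathbb{R},w)}}{\|\chi_{B(y,R)}\|_{X(\mathbb{R},w)}} = \frac{\|g_{\tau R}\|_{X(\mathbb{R})}}{\|g_R\|_{X(\mathbb{R})}}
\]
does not depend on $y$, so it is enough to show the right-hand side tends to infinity as $R\to\infty$.

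Next I would estimate the numerator from below and the denominator from above. On the sub-interval $(\tau R - 1, \tau R)$ one has $e^{ct} \geq e^{c(\tau R - 1)}$; the lattice property (A2) combined with translation invariance then gives
\[
\|g_{\tau R}\|_{X(\mathbb{R})} \geq e^{c(\tau R - 1)}\|\chi_{(\tau R - 1,\tau R)}\|_{X(\mathbb{R})} = e^{c(\tau R - 1)}\|\chi_{(0,1)}\|_{X(\mathbb{R})},
\]
where $\|\chi_{(0,1)}\|_{X(\mathbb{R})}$ is a positive finite constant by (A1) and (A4). For the denominator, the pointwise bound $e^{ct} \leq e^{cR}$ on $(-R,R)$ together with Lemma~\ref{le:TI-balls}(a) yields
\[
\|g_R\|_{X(\mathbb{R})} \leq e^{cR}\|\chi_{(-R,R)}\|_{X(\mathbb{R})} \leq C_2\, e^{cR}\max\{1,R\}.
\]

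Dividing the two estimates, the ratio is bounded below by a positive constant multiple of $e^{c(\tau - 1)R}/\max\{1,R\}$, which tends to $\infty$ as $R\to\infty$ for every $\tau > 1$ since $c(\tau - 1) > 0$. Hence $D_{X(\mathbb{R},w),\tau} = \infty$ for every $\tau > 1$, and $X(\mathbb{R},w)$ fails the weak doubling property. The only conceptual step is the translation-invariance reduction in the first paragraph; once the $y$-dependence is removed, the remainder is a routine comparison of exponential against polynomial growth made possible by Lemma~\ref{le:TI-balls}(a).
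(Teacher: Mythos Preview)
Your proof is correct and follows essentially the same strategy as the paper: bound the denominator above via $e^{ct}\le e^{cR}$ together with the polynomial estimate from Lemma~\ref{le:TI-balls}(a), bound the numerator below by restricting to a sub-interval near the right endpoint, and compare exponential against polynomial growth. Your preliminary observation that the ratio $\|\chi_{B(y,\tau R)}\|_{X(\mathbb{R},w)}/\|\chi_{B(y,R)}\|_{X(\mathbb{R},w)}$ is actually independent of $y$ (thanks to the multiplicative identity $e^{cx}=e^{cy}e^{c(x-y)}$ and translation invariance) is a tidy simplification the paper does not make explicit; the paper instead carries an arbitrary $y$ through the estimates and obtains a lower bound uniform in $y$, using a sub-ball of radius $\frac{\tau-1}{4}R$ rather than your fixed unit interval, which yields $e^{c(\tau-1)R/2}$ in place of your $e^{c(\tau-1)R}$.
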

\begin{proof}
Let $\tau > 1$. By the second inequality in \eqref{eq:TI-balls-1}, for
every $y \in \mathbb{R}$ and every $R \ge 1$, one has
\[
\|\chi_{B(y, R)}\|_{X(\mathbb{R}, w)} 
\le  
e^{c (y + R)} \|\chi_{B(y, R)}\|_{X(\mathbb{R})} 
\le 
C_2 e^{c (y + R)} R.
\]
Set
\[
r:= \frac{\tau - 1}{4}\, R , \ \ \ \
z := y + \left(\frac34\, \tau + \frac14\right) R .
\] 
It is easy to see that $B(z, r) \subset B(y, \tau R)$ and 
$x \ge y + \frac{\tau + 1}{2}\, R$ 
for all $x \in B(z, r)$. Then  these observations and the first inequality in 
\eqref{eq:TI-balls-1} imply that
\begin{align*}
\|\chi_{B(y, \tau R)}\|_{X(\mathbb{R}, w)} 
&\ge 
\|\chi_{B(z, r)}\|_{X(\mathbb{R}, w)} 
\ge 
e^{c \left(y + \frac{\tau + 1}{2}\, R\right)} 
\|\chi_{B(z, r)}\|_{X(\mathbb{R})} 
\\
& \ge 
e^{c \left(y + \frac{\tau + 1}{2}\, R\right)} C_1 \min\left\{1, r\right\} 
= 
C_1 e^{c \left(y + \frac{\tau + 1}{2}\, R\right)} 
\min\left\{1, \frac{\tau - 1}{4}\, R\right\} .
\end{align*}
Hence
\[
\inf_{y\in\mathbb{R}^n}
\frac{\|\chi_{B(y, \tau R)}\|_{X(\mathbb{R},w)}}
{\|\chi_{B(y, R)}\|_{X(\mathbb{R}, w)}} 
\ge 
\frac{C_1}{C_2}\, \frac{e^{c\,\frac{\tau - 1}{2}\, R}}{R}
\to \infty \ \mbox{ as } \ R \to \infty, 
\]
and $X(\mathbb{R}, w)$ 
does not satisfy the weak doubling property.
\end{proof}
\begin{theorem}\label{th:example-nondoubling-exponent-modulus}
Suppose that $X(\mathbb{R}^n)$ is a translation-invariant Banach function 
space. If $w(x) := e^{c |x|}$ for $x\in\mathbb{R}^n$ with a constant $c > 0$, 
then the weighted Banach function space $X(\mathbb{R}^n, w)$ does not satisfy 
the weak doubling property.  
\end{theorem}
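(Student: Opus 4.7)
The plan is to imitate the one–dimensional argument used for Theorem~\ref{th:example-nondoubling-exponent}, replacing the coordinate direction by the radial direction determined by $y$. Given $\tau > 1$, $R > 0$, and $y \in \mathbb{R}^n$, I pick a unit vector $e = y/|y|$ when $y \ne 0$ and an arbitrary fixed unit vector $e$ when $y = 0$, and set
\[
z := y + \frac{3\tau + 1}{4}\, R\, e, \qquad r := \frac{\tau - 1}{4}\, R.
\]
A direct computation shows $B(z,r) \subset B(y,\tau R)$ and, since $z$ is collinear with $y$ and the origin and points away from the origin, $|x| \ge |z| - r = |y| + \frac{\tau+1}{2}\,R$ for every $x \in B(z,r)$. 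This is the crucial geometric observation that replaces the one-dimensional inequality $x \ge y + \frac{\tau+1}{2}R$.

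From this, I would estimate numerator and denominator separately. On $B(y,R)$, the triangle inequality gives $|x| \le |y| + R$, so $w(x) \le e^{c(|y|+R)}$ and, combined with translation invariance and the upper bound in Lemma~\ref{le:TI-balls}(a),
\[
\|\chi_{B(y,R)}\|_{X(\mathbb{R}^n,w)}
\le
e^{c(|y|+R)}\,\|\chi_{B(y,R)}\|_{X(\mathbb{R}^n)}
\le
C_2\, e^{c(|y|+R)}\max\{1,R^n\}.
\]
For the numerator, monotonicity (axiom~(A2)) together with the lower bound $w(x) \ge e^{c(|y| + \frac{\tau+1}{2}R)}$ on $B(z,r)$, translation invariance, and the lower bound in Lemma~\ref{le:TI-balls}(a) give
\[
\|\chi_{B(y,\tau R)}\|_{X(\mathbb{R}^n,w)}
\ge
\|\chi_{B(z,r)}\|_{X(\mathbb{R}^n,w)}
\ge
C_1\, e^{c(|y| + \frac{\tau+1}{2}R)}\min\!\Bigl\{1, \Bigl(\tfrac{\tau-1}{4}R\Bigr)^{\!n}\Bigr\}.
\]

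Taking the quotient, the factors $e^{c|y|}$ cancel and
\[
\frac{\|\chi_{B(y,\tau R)}\|_{X(\mathbb{R}^n,w)}}{\|\chi_{B(y,R)}\|_{X(\mathbb{R}^n,w)}}
\ge
\frac{C_1}{C_2}\cdot \frac{e^{c\frac{\tau-1}{2}R}\min\!\bigl\{1,(\tfrac{\tau-1}{4}R)^{n}\bigr\}}{\max\{1,R^n\}}.
\]
Crucially, the lower bound is uniform in $y$, so the same bound holds for $\inf_{y\in\mathbb{R}^n}$ of the ratio. For $R$ large, the numerator is $1$ and the denominator is $R^n$, and the remaining factor $e^{c(\tau-1)R/2}/R^n \to \infty$ as $R \to \infty$. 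Hence the corresponding $\liminf$ is infinite for every $\tau > 1$, which contradicts the weak doubling property.

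The only real subtlety—and the step that might superficially look like an obstacle—is arranging the small ball $B(z,r)$ so that $|x|$ (rather than a coordinate $x_j$) is uniformly large on it; that is resolved by moving radially outward from $y$. After that, the argument is essentially a mechanical reproduction of the 1D proof, using Lemma~\ref{le:TI-balls}(a) in place of the 1D size estimates for characteristic functions.
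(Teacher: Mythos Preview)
Your proposal is correct and follows essentially the same approach as the paper: the paper makes exactly the same choice of $z = y + \bigl(\tfrac{3}{4}\tau + \tfrac{1}{4}\bigr)R\, y/|y|$ (with $e_1$ replacing $y/|y|$ when $y=0$) and $r = \tfrac{\tau-1}{4}R$, derives the same inclusion $B(z,r)\subset B(y,\tau R)$ and the same lower bound $|x|\ge |y|+\tfrac{\tau+1}{2}R$ on $B(z,r)$, and then combines the two estimates from Lemma~\ref{le:TI-balls}(a) to get the ratio bounded below by $\tfrac{C_1}{C_2}\,e^{c(\tau-1)R/2}/R^n$ for large $R$. The only cosmetic difference is that the paper restricts to $R\ge 1$ up front (so $\max\{1,R^n\}=R^n$), whereas you carry the $\max/\min$ and simplify at the end.
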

\begin{proof}
The proof is similar to that of Theorem~\ref{th:example-nondoubling-exponent}.
Let $\tau > 1$. It follows from the second inequality in \eqref{eq:TI-balls-1} 
that for every $y \in \mathbb{R}^n$ and every $R \ge 1$, one has
\[
\|\chi_{B(y, R)}\|_{X(\mathbb{R}^n, w)} 
\le  
e^{c (|y| + R)} \|\chi_{B(y, R)}\|_{X(\mathbb{R}^n)} 
\le 
C_2 e^{c (|y| + R)} R^n.
\]
Set
\[
r:= \frac{\tau - 1}{4}\, R , 
\quad
z := \left\{\begin{array}{cl}
 y + \left(\frac34\, \tau + \frac14\right) R \frac{y}{|y|}\, ,   
&  y \not= 0 ,  \\[3mm]
\left(\frac34\, \tau + \frac14\right) R e_1,     
&   y = 0 , 
\end{array}\right.
\] 
where $e_1 := (1, 0, \dots, 0) \in \mathbb{R}^n$. It is not difficult to see that 
$B(z, r) \subset B(y, \tau R)$ and 
\[
|x| \ge |y| + \frac{\tau + 1}{2}\, R
\quad\mbox{for all}\quad x \in B(z, r). 
\]
Hence, taking into account the first inequality in \eqref{eq:TI-balls-1},
we obtain
\begin{align*}
\|\chi_{B(y, \tau R)}\|_{X(\mathbb{R}^n, w)} 
&\ge 
\|\chi_{B(z, r)}\|_{X(\mathbb{R}^n, w)} 
\ge 
e^{c \left(|y| + \frac{\tau + 1}{2}\, R\right)}  
\|\chi_{B(z, r)}\|_{X(\mathbb{R}^n)} 
\\
&\ge 
e^{c \left(|y| + \frac{\tau + 1}{2}\, R\right)} C_1 \min\left\{1, r^n\right\} 
= 
C_1 e^{c \left(|y| + \frac{\tau + 1}{2}\, R\right)} 
\min\left\{1, \left(\frac{\tau - 1}{4}\, R\right)^n\right\}.
\end{align*}
Thus,
\[
\inf_{y\in\mathbb{R}^n}
\frac{\|\chi_{B(y, \tau R)}\|_{X(\mathbb{R}^n, w)}}
{\|\chi_{B(y, R)}\|_{X(\mathbb{R}^n, w)}} 
\ge 
\frac{C_1}{C_2}\, \frac{e^{c\,\frac{\tau - 1}{2}\, R}}{R^n}
\to \infty \ \mbox{ as } \ R \to \infty ,
\]
and $X(\mathbb{R}^n, w)$ does not satisfy the weak doubling property.
\end{proof}
\subsection{Comparison of the doubling property and the weak doubling property}
\label{subsec:examples-doubling}
\begin{lemma}
If $X(\mathbb{R}^n)$ is a Banach function space satisfying the doubling 
property, then the function 
\begin{equation}\label{eq:fundamental-function}
f_X(R):=\|\chi_{B(0,R)}\|_{X(\mathbb{R}^n)},
\quad R\in(0,\infty),
\end{equation}
cannot grow faster than polynomially as $R\to+\infty$.
\end{lemma}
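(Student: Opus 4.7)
The plan is to iterate the doubling inequality centered at $y=0$ and then interpolate monotonically between the resulting discrete radii. By the doubling property there is a $\tau>1$ and a constant $C_\tau>0$ such that $\|\chi_{B(0,\tau R)}\|_{X(\mathbb{R}^n)}\le C_\tau\|\chi_{B(0,R)}\|_{X(\mathbb{R}^n)}$ for every $R>0$, which in the notation \eqref{eq:fundamental-function} reads $f_X(\tau R)\le C_\tau f_X(R)$.

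First, I would fix an arbitrary reference radius $R_0>0$ and iterate this inequality $k$ times to obtain
\[
f_X(\tau^k R_0)\le C_\tau^k f_X(R_0)\quad\text{for all } k\in\mathbb{N}.
\]
Next, given an arbitrary $R\ge R_0$, I would pick the unique integer $k=k(R)\in\mathbb{N}$ with $\tau^{k-1}R_0\le R<\tau^k R_0$, so that $k\le 1+\log_\tau(R/R_0)$. Since $B(0,R)\subseteq B(0,\tau^k R_0)$, axiom (A2) (the lattice property) gives $f_X(R)\le f_X(\tau^k R_0)$, hence
\[
f_X(R)\le C_\tau^k f_X(R_0)\le C_\tau\, f_X(R_0)\left(\frac{R}{R_0}\right)^{\log_\tau C_\tau}.
\]
Setting $\alpha:=\log_\tau C_\tau\ge 0$ and absorbing the constants into a single $C>0$, this yields $f_X(R)\le C R^\alpha$ for all $R\ge R_0$, which is exactly polynomial growth of degree $\alpha$.

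There is no real obstacle here; the argument is an elementary iteration combined with the monotonicity of $f_X$ furnished by axiom (A2). The only point worth highlighting is that the exponent of polynomial growth is controlled explicitly by $\log_\tau C_\tau$, which shows that the growth rate is directly tied to the doubling constant rather than to any finer structural property of $X(\mathbb{R}^n)$.
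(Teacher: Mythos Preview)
Your proof is correct and is essentially the same as the paper's: both iterate the doubling inequality at $y=0$, sandwich an arbitrary $R$ between consecutive powers of $\tau$, and invoke the lattice property (A2) to obtain the polynomial bound with exponent $\log_\tau C_\tau$. The only cosmetic difference is that the paper fixes the reference radius $R_0=1$ while you allow a general $R_0$.
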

\begin{proof}
The proof is analogous to the proof of \cite[Lemma~5.2.4]{SC02}.
Suppose there exist $\tau>1$ and $C_\tau>0$ such that 
\eqref{eq:strong-doubling} holds for $y=0$ and any $R > 1$. Then
there exists $m \in \mathbb{N}$ such that $\tau^{m - 1} < R \le \tau^m$. 
Applying~\eqref{eq:strong-doubling} $m$ times, one gets
\begin{align*}
f_X(R)
\le
f_X(\tau^m) 
\le 
C_\tau^m 
f_X(1)
=
\tau^{m \log_\tau C_\tau} 
f_X(1)
<
R^{\log_\tau C_\tau} \tau^{\log_\tau C_\tau} 
f_X(1)
= 
C_\tau f_X(1) R^{\log_\tau C_\tau},
\end{align*}
which completes the proof.
\end{proof}
On the other hand, we will show that the weak doubling property of
a Banach function space $X(\mathbb{R}^n)$ allows the function $f_X$ given 
by \eqref{eq:fundamental-function} to grow at any subexponential rate as 
$R\to+\infty$. In fact, we will show that if a weight $w$ grows at a 
subexponential rate in an open cone and $Y(\mathbb{R}^n)$ is a 
translation-invariant Banach function space, then the weighted Banach 
function space $X(\mathbb{R}^n)=Y(\mathbb{R}^n,w)$ satisfies the weak 
doubling property.

Inequalities \eqref{eq:weak-doubling} and \eqref{eq:TI-balls-2} yield the 
following.
\begin{lemma}\label{le:subexponential-growth}
Let $Y(\mathbb{R}^n)$ be a translation-invariant Banach function space.
Suppose that $w:\mathbb{R}^n\to[0,\infty]$ is a weight satisfying
$w,1/w\in L_{\rm loc}^\infty(\mathbb{R}^n)$. If there exist
a number $\tau>1$, a constant $c_\tau>0$, a sequence 
$\{R_j\}_{j\in\mathbb{N}}\subset(0,\infty)$ satisfying $R_j\to\infty$ 
as $j\to\infty$, and a sequence $\{y_j\}_{j\in\mathbb{N}}\subset\mathbb{R}^n$
such that
\begin{equation}\label{eq:subexponential-growth}
\frac
{\displaystyle\operatornamewithlimits{ess\,sup}_{x \in B(y_j, \tau R_j)} w(x)}
{\displaystyle\operatornamewithlimits{ess\,inf}_{x \in B(y_j, R_j)} w(x)}
\le c_\tau\quad\mbox{for all}\quad j\in\mathbb{N},
\end{equation}
then the weighted Banach function space $X(\mathbb{R}^n)=Y(\mathbb{R}^n,w)$
satisfies the weak doubling property.
\end{lemma}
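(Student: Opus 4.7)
The plan is to reuse the same sequences $\{R_j\}_{j\in\mathbb{N}}$ and $\{y_j\}_{j\in\mathbb{N}}$ provided by hypothesis and to verify the reformulation \eqref{eq:weak-doubling} of the weak doubling property with the same number $\tau$ and an explicit constant $C_\tau$ depending only on $c_\tau$, $n$ and $\tau$. The underlying idea is to decouple the contribution of the weight from that of the underlying translation-invariant norm in the quotient
$\|\chi_{B(y_j,\tau R_j)}\|_{X(\mathbb{R}^n)}/\|\chi_{B(y_j,R_j)}\|_{X(\mathbb{R}^n)}$.

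Since $X(\mathbb{R}^n)=Y(\mathbb{R}^n,w)$, the definition gives $\|\chi_{B}\|_{X(\mathbb{R}^n)}=\|w\chi_{B}\|_{Y(\mathbb{R}^n)}$ for every measurable set $B\subset\mathbb{R}^n$. The assumption $w,1/w\in L^\infty_{\rm loc}(\mathbb{R}^n)$ ensures that the quantities $\operatornamewithlimits{ess\,sup}_{B(y_j,\tau R_j)} w$ and $\operatornamewithlimits{ess\,inf}_{B(y_j,R_j)} w$ are both finite and strictly positive. Applying the lattice axiom (A2) to the pointwise bounds
\[
w\chi_{B(y_j,\tau R_j)}\le\Bigl(\operatornamewithlimits{ess\,sup}_{B(y_j,\tau R_j)}w\Bigr)\chi_{B(y_j,\tau R_j)},
\quad
w\chi_{B(y_j,R_j)}\ge\Bigl(\operatornamewithlimits{ess\,inf}_{B(y_j,R_j)}w\Bigr)\chi_{B(y_j,R_j)}
\]
then yields matching upper and lower estimates for $\|\chi_{B(y_j,\tau R_j)}\|_{X(\mathbb{R}^n)}$ and $\|\chi_{B(y_j,R_j)}\|_{X(\mathbb{R}^n)}$, respectively, expressed as products of a weight factor and the corresponding $Y(\mathbb{R}^n)$-norm of the same characteristic function.

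Dividing these two estimates and using the weight hypothesis \eqref{eq:subexponential-growth} to bound the resulting quotient of essential supremum and infimum of $w$ by $c_\tau$, together with \eqref{eq:TI-balls-2} from Lemma~\ref{le:TI-balls}(b) applied to the translation-invariant space $Y(\mathbb{R}^n)$ to bound the remaining $Y$-norm quotient by $(4\sqrt{n}\,\tau)^n$, I obtain
\[
\frac{\|\chi_{B(y_j,\tau R_j)}\|_{X(\mathbb{R}^n)}}{\|\chi_{B(y_j,R_j)}\|_{X(\mathbb{R}^n)}}\le c_\tau(4\sqrt{n}\,\tau)^n
\]
for every $j\in\mathbb{N}$. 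This is exactly \eqref{eq:weak-doubling} with $C_\tau=c_\tau(4\sqrt{n}\,\tau)^n$, so $X(\mathbb{R}^n)=Y(\mathbb{R}^n,w)$ satisfies the weak doubling property. There is no real obstacle: the statement is a bookkeeping lemma whose content is that the factorization $\|\cdot\|_X=\|w\cdot\|_Y$ allows one to estimate the weight contribution (by hypothesis) and the unweighted contribution (by translation-invariance of $Y$ via Lemma~\ref{le:TI-balls}) separately.
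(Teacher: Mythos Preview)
Your proof is correct and follows exactly the approach the paper indicates: the paper simply states that the lemma is yielded by inequalities \eqref{eq:weak-doubling} and \eqref{eq:TI-balls-2}, and you have correctly filled in the details by using the lattice axiom (A2) to separate the weight contribution (bounded via \eqref{eq:subexponential-growth}) from the $Y$-norm contribution (bounded via Lemma~\ref{le:TI-balls}(b)).
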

\begin{lemma}\label{le:subexponential-growth-2}
Let $Y(\mathbb{R}^n)$ be a translation-invariant Banach function space and 
$\varphi$ be a nonincreasing  function such that $\varphi(r) \to 0$ as 
$r \to +\infty$ and $r\varphi(r)$ is nondecreasing for $r \ge 1$. Suppose  
$w:\mathbb{R}^n\to[0,\infty]$ is a weight satisfying
$w,1/w\in L_{\rm loc}^\infty(\mathbb{R}^n)$ and there exist $C_0, C_1 > 0$, 
$\gamma > 0$,  $\theta \in \mathbb{R}^n$ with $|\theta| = 1$ such that
\begin{equation}\label{2sided}
C_0\, \exp(|x| \varphi(|x|)) 
\le 
w(x) 
\le 
C_1\, \exp(|x| \varphi(|x|)) 
\ \mbox{ when } \ 
\left|\frac{x}{|x|} - \theta\right| < \gamma , \
|x| \ge 1 .
\end{equation}
Then $X(\mathbb{R}^n)=Y(\mathbb{R}^n,w)$ satisfies the weak doubling property 
and there exists a constant $C > 0$ such that
\begin{equation}\label{eq:subexponential-growth-2}
f_X(R) \ge C \exp\big(R\varphi(R)\big)
\end{equation}
for all sufficiently large $R$, where the function 
$f_X:(0,\infty)\to(0,\infty)$ is given by \eqref{eq:fundamental-function}.
\end{lemma}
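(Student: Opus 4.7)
The plan is to prove both conclusions by exploiting the two-sided bound \eqref{2sided} on balls lying inside the cone $\Gamma_\gamma := \{x \in \mathbb{R}^n : |x/|x| - \theta| < \gamma\}$. For the weak doubling property I will apply the preceding Lemma~\ref{le:subexponential-growth} with balls centered at $y_j = s_j \theta$, choosing $s_j \to \infty$ carefully as a function of $R_j$. For the lower bound \eqref{eq:subexponential-growth-2} I will compare $f_X(R)$ with the norm of the characteristic function of a unit ball located near the sphere $\{|x| = R\}$ in the direction $\theta$, and use the translation invariance of $Y(\mathbb{R}^n)$.

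For the weak doubling step I first record the geometric estimate $|x/|x| - \theta| \le 2\tau R/(s - \tau R)$ valid for every $x \in B(s\theta, \tau R)$, which forces $B(y_j, \tau R_j) \subset \Gamma_\gamma \cap \{|x| \ge 1\}$ once $s_j$ dominates a fixed multiple of $R_j$ depending on $\gamma$ and $\tau$. Writing $g(r) := r\varphi(r)$, the monotonicity of $\varphi$ yields the key bound
\[
g(s + \tau R) - g(s - R) = (s + \tau R)[\varphi(s + \tau R) - \varphi(s - R)] + (\tau + 1) R \varphi(s - R) \le (\tau + 1) R \varphi(s - R).
\]
Since $\varphi(r) \to 0$ at infinity, for every sequence $R_j \to \infty$ one can pick $s_j$ simultaneously so large that cone containment holds and $(\tau + 1) R_j \varphi(s_j - R_j) \le 1$. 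The essential ratio appearing in \eqref{eq:subexponential-growth} is then dominated by $(C_1/C_0) e$, and Lemma~\ref{le:subexponential-growth} delivers the weak doubling property for $X(\mathbb{R}^n) = Y(\mathbb{R}^n, w)$.

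For \eqref{eq:subexponential-growth-2}, I take $z_R := (R - 1)\theta$ and consider $B(z_R, 1)$. The triangle inequality gives $R - 2 < |x| < R$ for every $x \in B(z_R, 1)$, and the same cone computation shows $B(z_R, 1) \subset \Gamma_\gamma \cap \{|x| \ge 1\}$ for all $R$ sufficiently large. Combining the lower bound in \eqref{2sided} with the elementary inequality
\[
g(R) - g(R - 2) = R[\varphi(R) - \varphi(R - 2)] + 2\varphi(R - 2) \le 2\varphi(1),
\]
I obtain $w(x) \ge C_0 e^{-2\varphi(1)} \exp(R\varphi(R))$ almost everywhere on $B(z_R, 1)$. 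The lattice property~(A2) together with the translation invariance of $Y(\mathbb{R}^n)$ then yields
\[
f_X(R) \ge \|w \chi_{B(z_R, 1)}\|_{Y(\mathbb{R}^n)} \ge C_0 e^{-2\varphi(1)} \|\chi_{B(0, 1)}\|_{Y(\mathbb{R}^n)} \exp(R\varphi(R)),
\]
which is \eqref{eq:subexponential-growth-2}.

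The main delicacy is the simultaneous choice of $s_j$ in the weak doubling step: cone containment requires $s_j$ of order $R_j$, while boundedness of the weight ratio requires $\varphi(s_j - R_j) = O(1/R_j)$, forcing $s_j - R_j$ to grow at a rate dictated by the decay of $\varphi$. The assumption $\varphi(r) \to 0$ is precisely what makes both constraints compatible; thereafter everything reduces to routine manipulation of the nondecreasing function $g(r) = r\varphi(r)$ and the nonincreasing $\varphi$.
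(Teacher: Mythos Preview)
Your proof is correct and follows essentially the same strategy as the paper: apply Lemma~\ref{le:subexponential-growth} to balls centred along the ray $\mathbb{R}_{+}\theta$ inside the cone, control the weight ratio via the monotonicity of $\varphi$ and of $r\varphi(r)$, and obtain \eqref{eq:subexponential-growth-2} from the unit ball $B((R-1)\theta,1)$ together with translation invariance of $Y(\mathbb{R}^n)$. The only difference is in the specific parameters: the paper fixes $R_j=\varphi(j)^{-1/2}$ and $y_j=(j+m)\theta$, whereas you allow an arbitrary sequence $R_j\to\infty$ and then choose $s_j$ large enough to satisfy both the cone-containment constraint and $(\tau+1)R_j\,\varphi(s_j-R_j)\le 1$; your telescoping identity for $g(s+\tau R)-g(s-R)$ and the bound $g(R)-g(R-2)\le 2\varphi(1)$ make the dependence on the hypotheses a bit more transparent than the paper's computation, but the underlying argument is the same.
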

\begin{proof}
Let us show that  \eqref{eq:subexponential-growth} is satisfied for 
$R_j=\varphi^{-1/2}(j)$ and  $y_j = (j +m) \theta$ with a sufficiently large 
$m > 0$. Indeed, since 
\[
\frac{R_j}{j} 
= 
\frac{\varphi^{1/2}(j)}{j\varphi(j)} 
\le 
\frac{\varphi^{1/2}(j)}{\varphi(1)} \to 0 
\ \mbox{ as } \  j \to \infty ,
\]
the balls $B(y_j, \tau R_j)$ lie in the cone 
$\left|\frac{x}{|x|} - \theta\right| < \gamma$ 
provided $m$ is sufficiently large. Then for all $j\in\mathbb{N}$,
\begin{align*}
\frac
{\displaystyle\operatornamewithlimits{ess\,sup}_{x \in B(y_j, \tau R_j)} w(x)}
{\displaystyle\operatornamewithlimits{ess\,inf}_{x \in B(y_j, R_j)} w(x)} 
&\le
\frac{C_1 \exp\left[
\left(j + m + \tau\varphi^{-1/2}(j)\right) 
\varphi\left(j + m + \tau\varphi^{-1/2}(j)\right)
\right]}
{C_0 \exp\left[
\left(j + m - \varphi^{-1/2}(j)\right) 
\varphi\left(j + m - \varphi^{-1/2}(j)\right)
\right]} 
\\
&\le 
\frac
{C_1 \exp\left[
\left(j + m + \tau\varphi^{-1/2}(j)\right) 
\varphi(j + m)
\right]}
{C_0  \exp\left[
\left(j + m - \varphi^{-1/2}(j)\right) \varphi(j + m)
\right] }
\\
&= 
\frac{C_1}{C_0}\, 
\exp\left[(\tau + 1)\omega^{-1/2}(j) \varphi(j + m)\right] 
\\
&\le 
\frac{C_1}{C_0}\, \exp\left[(\tau + 1)\varphi^{1/2}(j)\right]
\le 
\frac{C_1}{C_0}\, \exp\left[(\tau + 1)\varphi^{1/2}(1)\right] .
\end{align*}
By Lemma~\ref{le:subexponential-growth}, $X(\mathbb{R}^n)=Y(\mathbb{R}^n,w)$ 
satisfies the weak doubling property. Since $Y(\mathbb{R}^n)$ is
translation-invariant and $r\varphi(r)$ is nonincreasing,
it follows from \eqref{2sided} that
\begin{align*}
f_X(R)
&
\ge 
\|\chi_{B((R - 1)\theta, 1)}\|_{Y(\mathbb{R}^n, w)} 
\ge 
C_0 \exp((R -2) \varphi(R - 2))
\|\chi_{B((R - 1)\theta, 1)}\|_{Y(\mathbb{R}^n)} 
\\
&= C_0 \exp((R -2) \varphi(R - 2))
\|\chi_{B(0, 1)}\|_{Y(\mathbb{R}^n)}
\ge C_0\exp(R\varphi(R))\|\chi_{B(0, 1)}\|_{Y(\mathbb{R}^n)}
\end{align*}
for all sufficiently large $R$, i.e. \eqref{eq:subexponential-growth-2} is 
satisfied with $C = C_0 \|\chi_{B(0, 1)}\|_{Y(\mathbb{R}^n)}$.
\end{proof}
Let $1 \le p \le \infty$. It follows from Lemma~\ref{le:subexponential-growth}
that $X(\mathbb{R})=L^p(\mathbb{R},w)$ has the weak doubling 
property if $w:\mathbb{R}\to[0,\infty]$ satisfies
$w,1/w\in L_{\rm loc}^\infty(\mathbb{R})$ and $w(x)$ is equal to, e.g., 
$(1 + x)^\alpha$, $\alpha > 0$; $\exp\left(x^\beta\right)$, $\beta \in (0, 1)$;  
or  $\exp\left(\frac{x}{\log\log(3 + x)}\right)$ for $x>0$.
On the other hand, Theorems \ref{th:example-nondoubling-exponent} and 
\ref{th:example-nondoubling-exponent-modulus} imply that $L^p(\mathbb{R},w)$
does not satisfy the weak doubling property 
for $w(x) = e^{cx}$  or $w(x) = e^{c|x|}$, $x \in \mathbb{R}$, 
with any $c>0$.
\section{Proof of the main result}
\label{sec:proof}
\subsection{The case of $a\in L_{1,\sigma}(\mathbb{R}^n)$ for some
$\sigma\in\mathbb{R}$}
\begin{theorem}\label{th:symbol-in-weighted-L1}
Let $X(\mathbb{R}^n)$ be a Banach function space satisfying the weak doubling
property. If $\sigma \in \mathbb{R}$ and
$a \in\mathcal{M}_{X(\mathbb{R}^n)}\cap L_{1,\sigma}(\mathbb{R}^n)$, then
$a \in L^\infty(\mathbb{R}^n)$ and
\begin{equation}\label{eq:symbol-in-weighted-L1-0}
\|a\|_{L^\infty(\mathbb{R}^n)}
\le
\|a\|_{\mathcal{M}_{X(\mathbb{R}^n)}}.
\end{equation}
\end{theorem}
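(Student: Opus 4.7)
The plan is to prove the pointwise inequality $|a(\eta)| \le \|a\|_{\mathcal{M}_{X(\mathbb{R}^n)}}$ at every Lebesgue point $\eta \in \mathbb{R}^n$ of $a$; this is enough, because $L_{1,\sigma}(\mathbb{R}^n) \subset L^1_{\mathrm{loc}}(\mathbb{R}^n)$ (since $(1+|\xi|)^{-\sigma}$ is bounded above and below on every compact set), so Lebesgue points form a set of full measure. The idea is the classical one: construct test functions whose Fourier transform is concentrated near $\eta$, and use the Fourier multiplier inequality together with a good approximation $F^{-1}aFu \approx a(\eta) u$. The weak doubling property of $X(\mathbb{R}^n)$ will then absorb the error left over after applying Lemma~\ref{le:Lebesgue}, substituting here for the a.e.\ convergence of Fourier integrals used in \cite{BG98}.

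Fix a number $\tau>1$ witnessing the weak doubling property of $X(\mathbb{R}^n)$, and fix once and for all a function $\phi \in C_0^\infty(\mathbb{R}^n)$ with $\operatorname{supp}\phi \subseteq B(0,\tau)$ and $\phi \ge 1$ on $B(0,1)$. For each $R>0$ and $y \in \mathbb{R}^n$, I would work with the modulated test function
\[
u_{R,y}(x) := e^{i\eta x}\,\phi\!\left(\tfrac{x-y}{R}\right) \in C_0^\infty(\mathbb{R}^n) \subset S(\mathbb{R}^n)\cap X(\mathbb{R}^n),
\]
which is supported in $B(y,\tau R)$ and satisfies $|u_{R,y}|\ge 1$ on $B(y,R)$; in particular $\|u_{R,y}\|_{X(\mathbb{R}^n)} \ge \|\chi_{B(y,R)}\|_{X(\mathbb{R}^n)}$. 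A direct computation using $\widehat{u_{R,y}}(\xi) = e^{-i(\xi-\eta)y}R^n\widehat{\phi}(R(\xi-\eta))$ gives
\[
F^{-1}aFu_{R,y}(x) - a(\eta)u_{R,y}(x) = \frac{e^{i\eta x}}{(2\pi)^n}\int_{\mathbb{R}^n}\bigl(a(\eta+\zeta/R)-a(\eta)\bigr)\widehat{\phi}(\zeta)\,e^{i\zeta(x-y)/R}\,d\zeta.
\]
The substitution $\xi = \eta + \zeta/R$ (with $\delta = 1/R$, $\psi = \widehat{\phi}$) bounds the absolute value of this difference, uniformly in $x$ and $y$, by
\[
\varepsilon_R := \frac{1}{(2\pi)^n}\int_{\mathbb{R}^n}|a(\xi)-a(\eta)|\,|\psi_\delta(\xi-\eta)|\,d\xi.
\]
Since $\widehat{\phi}$ is Schwartz, it satisfies \eqref{eq:Lebesgue-point-1} with any $\sigma_2 > \max(\sigma,n)$, so Lemma~\ref{le:Lebesgue} yields $\varepsilon_R \to 0$ as $R \to \infty$.

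To conclude, I would restrict the identity $F^{-1}aFu_{R,y} = a(\eta)u_{R,y} + e_{R,y}$ (where $e_{R,y}$ is the error above) to the ball $B(y,\tau R)$, which contains $\operatorname{supp} u_{R,y}$. Since $u_{R,y}\chi_{B(y,\tau R)} = u_{R,y}$, axiom (A2), the reverse triangle inequality, and the multiplier inequality together yield
\[
|a(\eta)|\,\|u_{R,y}\|_{X(\mathbb{R}^n)} \le \|a\|_{\mathcal{M}_{X(\mathbb{R}^n)}}\,\|u_{R,y}\|_{X(\mathbb{R}^n)} + \varepsilon_R\,\|\chi_{B(y,\tau R)}\|_{X(\mathbb{R}^n)},
\]
and therefore
\[
|a(\eta)| \le \|a\|_{\mathcal{M}_{X(\mathbb{R}^n)}} + \varepsilon_R \cdot \frac{\|\chi_{B(y,\tau R)}\|_{X(\mathbb{R}^n)}}{\|\chi_{B(y,R)}\|_{X(\mathbb{R}^n)}}.
\]
The weak doubling property now delivers a sequence $(y_j,R_j)$ with $R_j\to\infty$ along which the ratio on the right is uniformly bounded, while $\varepsilon_{R_j}\to 0$; passing to the limit proves \eqref{eq:symbol-in-weighted-L1-0}.

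The main obstacle is the handling of the error term $e_{R,y}$: it is only uniformly small in $L^\infty$ and need not belong to $X(\mathbb{R}^n)$ at all, so a direct norm estimate is unavailable. Cutting it off on the ball $B(y,\tau R)$ converts its $L^\infty$-smallness into a bound $\varepsilon_R\|\chi_{B(y,\tau R)}\|_X$, and the weak doubling property is precisely what is needed to compare this factor against the lower bound $\|u_{R,y}\|_X \ge \|\chi_{B(y,R)}\|_X$ along a suitable sequence of radii and centres.
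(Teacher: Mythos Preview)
Your argument is correct and follows essentially the same route as the paper: modulated compactly supported bump functions as test functions, the Lebesgue-point approximation Lemma~\ref{le:Lebesgue} to control the error $e_{R,y}$ uniformly in $x,y$, and the weak doubling property to compare $\|\chi_{B(y,\tau R)}\|_X$ with $\|\chi_{B(y,R)}\|_X$ along a suitable sequence. One trivial point: Lemma~\ref{le:Lebesgue} is stated with $\psi_\delta(\eta-\xi)$ rather than $\psi_\delta(\xi-\eta)$, but since hypothesis \eqref{eq:Lebesgue-point-1} is radial you can apply it to $\widetilde{\psi}=\widehat{\phi}(-\cdot)$ (in the paper this issue does not arise because their bump $\varphi$ is chosen even).

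There is, however, one organisational difference worth highlighting. The paper restricts to the small ball $B(y,1/\delta)$ where $|f_{\delta,\eta,y}|=1$ and then bounds $\|f_{\delta,\eta,y}\|_X\le\|\chi_{B(y,\varrho/\delta)}\|_X$; this places the doubling ratio as a factor in front of $\|a\|_{\mathcal{M}_X}$, giving only $|a(\eta)|\le D_{X,\varrho}\,\|a\|_{\mathcal{M}_X}$, and the optimal constant~$1$ is then recovered via the separate Lemma~\ref{le:weak-doubling-property} ($\inf_{\tau>1}D_{X,\tau}=1$). You instead restrict the error to the larger ball $B(y,\tau R)$ and divide by $\|u_{R,y}\|_X$ itself; the doubling ratio then multiplies the vanishing term $\varepsilon_R$, and any finite bound on it suffices. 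This is a small but genuine streamlining: your version reaches the sharp constant~$1$ directly and does not need Lemma~\ref{le:weak-doubling-property} at all.
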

\begin{proof}
Let $D_{X,\varrho}$ be defined for all $\varrho>1$ by \eqref{eq:def-D}.
If, for some $\varrho>1$, the quantity $D_{X,\varrho}$ is infinite, then it is
obvious that
\begin{equation}\label{eq:symbol-in-weighted-L1-1}
\|a\|_{L^\infty(\mathbb{R}^n)}
\le
D_{X, \varrho} \|a\|_{\mathcal{M}_{X(\mathbb{R}^n)}}.
\end{equation}
Since $X(\mathbb{R}^n)$ satisfies the weak doubling property, there 
exists $\varrho>1$ such that $D_{X,\varrho}<\infty$. Take an arbitrary
Lebesgue point $\eta \in \mathbb{R}^n$ of the function $a$. Let an even
function $\varphi \in C^\infty_0(\mathbb{R}^n)$ satisfy the following
conditions:
\[
0 \le \varphi \le 1,
\quad
\varphi(x) = 1
\mbox{ for }
|x| \le 1,
\quad
\varphi(x) = 0
\mbox{ for }
|x| \ge \varrho.
\]
Let
\[
f_{\delta, \eta}(x)
:=
e^{-i\eta x} \varphi(\delta x),
\quad
x \in \mathbb{R}^n,
\quad
\delta > 0,
\]
and
\[
f_{\delta, \eta, y}(x)
:=
f_{\delta, \eta}(x - y),
\quad
y \in \mathbb{R}^n.
\]
Then
\begin{align*}
(F f_{\delta, \eta, y})(\xi)
&=
e^{i\xi y} (F f_{\delta, \eta})(\xi)
=
e^{i\xi y} \delta^{-n} (F\varphi)\left(\frac{\xi - \eta}{\delta}\right)
=
e^{i\xi y} (F\varphi)_\delta(\xi - \eta)
=
e^{i\xi y} (F\varphi)_\delta(\eta - \xi)
\end{align*}
and
\begin{align*}
\left(F^{-1} aF f_{\delta, \eta, y}\right)(x)
&=
\frac{1}{(2\pi)^n} \int_{\mathbb{R}^n} e^{i(y - x)\xi} a(\xi)
(F\varphi)_\delta(\eta - \xi)\,d\xi ,
\\
a(\eta) f_{\delta, \eta, y}(x)
&=
\frac{1}{(2\pi)^n} \int_{\mathbb{R}^n}
e^{i(y - x)\xi} a(\eta) (F\varphi)_\delta(\eta - \xi)\,d\xi.
\end{align*}
Hence, for all $x,y\in\mathbb{R}^n$ and $\delta>0$,
\begin{align*}
\left|
\left(F^{-1}aFf_{\delta,\eta,y}\right)(x) - a(\eta) f_{\delta,\eta, y}(x)
\right|
& =
\frac{1}{(2\pi)^n}
\left|
\int_{\mathbb{R}^n}
e^{i(y - x)\xi} (a(\xi) -  a(\eta)) (F\varphi)_\delta(\eta - \xi)\, d\xi
\right|
\\
&\le
\frac{1}{(2\pi)^n} \int_{\mathbb{R}^n}
|a(\xi) -  a(\eta)|\,|(F\varphi)_\delta(\eta - \xi)|\,d\xi.
\end{align*}
Since $F\varphi \in S(\mathbb{R}^n)$ and $\eta$ is a Lebesgue point of $a$,
it follows from Lemma~\ref{le:Lebesgue} that for any $\varepsilon > 0$
there exists $\delta_\varepsilon > 0$ such that for all
$x, y \in \mathbb{R}^n$ and all $\delta \in (0, \delta_\varepsilon)$,
\[
\left|
\left(F^{-1} aF f_{\delta,\eta,y}\right)(x) - a(\eta) f_{\delta,\eta,y}(x)
\right| < \varepsilon.
\]
It is clear that
$|f_{\delta, \eta, y}|  \chi_{B(y, 1/\delta)} = \chi_{B(y, 1/\delta)}$.
Then the above inequality implies that for all $y \in \mathbb{R}^n$ and
$\delta \in (0, \delta_\varepsilon)$,
\[
|a(\eta)| \chi_{B(y, 1/\delta)}
\le
\left|F^{-1} aF f_{\delta,\eta,y}\right| + \varepsilon \chi_{B(y, 1/\delta)}.
\]
Hence
\begin{align}
|a(\eta)| \left\|\chi_{B(y, 1/\delta)} \right\|_{X(\mathbb{R}^n)}
&\le
\left\|F^{-1} aF f_{\delta, \eta, y}\right\|_{X(\mathbb{R}^n)}
+
\varepsilon \left\|\chi_{B(y, 1/\delta)}\right\|_{X(\mathbb{R}^n)}
\nonumber \\
&\le
\|a\|_{\mathcal{M}_{X(\mathbb{R}^n)}}
\left\|f_{\delta, \eta, y}\right\|_{X(\mathbb{R}^n)}
+
\varepsilon \left\|\chi_{B(y, 1/\delta)}\right\|_{X(\mathbb{R}^n)}
\nonumber \\
&\le
\|a\|_{\mathcal{M}_{X(\mathbb{R}^n)}}
\left\|\chi_{B(y, \varrho/\delta)}\right\|_{X(\mathbb{R}^n)}
+
\varepsilon \|\chi_{B(y, 1/\delta)}\|_{X(\mathbb{R}^n)}.
\label{eq:symbol-in-weighted-L1-2}
\end{align}
Since $D_{X,\varrho}<\infty$, the definition of $D_{X,\varrho}$
given in \eqref{eq:def-D} implies that there exist
$\delta \in (0, \delta_\varepsilon)$ and $y \in \mathbb{R}^n$ such that
\[
\frac{\left\|\chi_{B(y, \varrho/\delta)}\right\|_{X(\mathbb{R}^n)}}
{\left\|\chi_{B(y, 1/\delta)}\right\|_{X(\mathbb{R}^n)}}
\le
D_{X, \varrho} + \varepsilon.
\]
Choosing these $\delta$ and $y$, and dividing both sides of inequality
\eqref{eq:symbol-in-weighted-L1-2} by
$\left\|\chi_{B(y, 1/\delta)}\right\|_{X(\mathbb{R}^n)}$, we get
\[
|a(\eta)| \le  (D_{X, \varrho} + \varepsilon)
\|a\|_{\mathcal{M}_{X(\mathbb{R}^n)}} + \varepsilon
\quad\mbox{for all}\quad \varepsilon > 0.
\]
Hence, for all Lebesgue points $\eta \in \mathbb{R}^n$ of the function $a$,
we have
\[
|a(\eta)| \le  D_{X, \varrho} \|a\|_{\mathcal{M}_{X(\mathbb{R}^n)}}.
\]
Since $a\in L_{1,\sigma}(\mathbb{R}^n)\subset L_{\rm loc}^1(\mathbb{R}^n)$,
almost all points $\eta\in\mathbb{R}^n$ are Lebesgue points of the function
$a$ in view of the Lebesgue differentiation theorem (see, e.g.,
\cite[Corollary~2.1.16 and Exercise~2.1.10]{G14-classical}).
Therefore $a \in L^\infty(\mathbb{R}^n)$ and inequality
\eqref{eq:symbol-in-weighted-L1-1} is fulfilled for all $\varrho>1$.
It is now left to apply Lemma~\ref{le:weak-doubling-property}.
\end{proof}
\subsection{Proof of Theorem~\ref{th:main} for arbitrary
$a\in\mathcal{M}_{X(\mathbb{R}^n)}\subset S'(\mathbb{R}^n)$}
For a function $w\in S(\mathbb{R}^n)$, we will use the following notation
\[
\widehat{w} := Fw, 
\quad 
\check{w} := F^{-1} w, 
\quad
\widetilde{w}(\xi) := w(-\xi),
\quad 
(\tau_\zeta w)(\xi) := w(\xi - \zeta), 
\quad  e_\zeta(x) := e^{i \zeta x}.
\]
Let a nonnegative even function $\psi\in C^\infty_0(\mathbb{R}^n)$ satisfy 
the condition
\[
\int_{\mathbb{R}^n} \psi(\xi)\, d\xi = 1, 
\]
and let 
\[
\psi_\varepsilon(\xi) := \varepsilon^{-n} \psi(\xi/\varepsilon),
\quad
\varepsilon>0.
\] 
Fix $\varepsilon>0$ and take arbitrary functions 
$u \in S(\mathbb{R}^n)\cap X(\mathbb{R}^n)$ 
and $v \in C^\infty_0(\mathbb{R}^n)$. Then we have
$\widehat{u},\widehat{v}\in S(\mathbb{R}^n)$. In view of
\cite[Theorem~7.19(b)]{R91} or \cite[Theorem~2.3.20]{G14-classical},
we observe that 
\begin{equation}\label{eq:main-proof-1}
a\ast\psi_\varepsilon\in 
C_{\rm poly}^\infty(\mathbb{R}^n)\subset S'(\mathbb{R}^n)
\end{equation}
because $a\in S'(\mathbb{R}^n)$ and $\psi_\varepsilon\in S(\mathbb{R}^n)$.
Then $(a\ast\psi_\varepsilon)\widehat{u}\in S(\mathbb{R}^n)$.
By \cite[Theorem~2.2.14]{G14-classical},
\begin{align}
\int_{\mathbb{R}^n}\left(F^{-1}(a\ast\psi_\varepsilon)Fu\right)(x) v(x)\,dx
&=
\int_{\mathbb{R}^n}
\left(F^{-1}(a\ast\psi_\varepsilon)\widehat{u}\right)
\widehat{\hspace{2mm}}(\xi) \check{v}(\xi)\,d\xi
\nonumber\\
&=
\int_{\mathbb{R}^n}(a\ast\psi_\varepsilon)(\xi)\widehat{u}(\xi)\check{v}(\xi)\,d\xi .
\label{eq:main-proof-2}
\end{align}
Observe that $\widehat{u}\check{v}\in S(\mathbb{R}^n)$ in view of
\cite[Proposition~2.2.7]{G14-classical}. Then, taking into account
that $a\in S'(\mathbb{R}^n)$ and $\psi_\varepsilon\in S(\mathbb{R}^n)$,
it follows from \cite[Theorem~2.2.14]{G14-classical} 
or \cite[Theorem~7.19(d)]{R91}
that
\begin{align}
\int_{\mathbb{R}^n}(a\ast\psi_\varepsilon)(\xi)\widehat{u}(\xi)\check{v}(\xi)\,d\xi
&= 
\big((a\ast\psi_\varepsilon)\ast(\widehat{u} \check{v})^{\sim}\big) (0) 
= 
\big(
\left(a\ast(\widehat{u} \check{v})^{\sim}\right)\ast\psi_\varepsilon
\big)(0)
\nonumber\\
&= 
\int_{\mathbb{R}^n} 
\left(a\ast(\widehat{u} \check{v})^{\sim}\right)(\zeta)
\widetilde{\psi}_\varepsilon(\zeta)\,d\zeta
=
\int_{\mathbb{R}^n} 
\left\langle a, \tau_\zeta (\widehat{u} \check{v})\right\rangle 
\psi_\varepsilon(\zeta)\, 
d\zeta .
\label{eq:main-proof-3}
\end{align}
Since $\tau_\zeta\widehat{u}\in S(\mathbb{R}^n)
\subset C_{\rm poly}^\infty(\mathbb{R}^n)$, by the definition of 
multiplication of $a\in S'(\mathbb{R}^n)$ by a function in
$C_{\rm poly}^\infty(\mathbb{R}^n)$, we have
\begin{equation}\label{eq:main-proof-4}
\langle a,\tau_\zeta(\widehat{u}\check{v})\rangle
=
\langle a,\tau_\zeta\widehat{u}\cdot\tau_\zeta\check{v}\rangle
=
\langle a\tau_\zeta\widehat{u},\tau_\zeta\check{v}\rangle.
\end{equation}
It is easy to see that $\tau_\zeta\widehat{u}=F(e_{-\zeta}u)$ and
$\tau_\zeta\check{v}=F^{-1}(e_\zeta v)$. Then
\begin{equation}\label{eq:main-proof-5}
\langle a,\tau_\zeta(\widehat{u}\check{v})\rangle
=
\langle aF(e_{-\zeta}u),F^{-1}(e_\zeta v)\rangle.
\end{equation}
By the definition of the inverse Fourier transform of 
$aF(e_{-\zeta}u)\in S'(\mathbb{R}^n)$, we have
\begin{equation}\label{eq:main-proof-6}
\langle aF(e_{-\zeta}u),F^{-1}(e_\zeta v)\rangle
=
\langle F^{-1}aF(e_{-\zeta}u),e_\zeta v\rangle.
\end{equation}
Combining \eqref{eq:main-proof-2}--\eqref{eq:main-proof-6}, we arrive
at the following equality:
\begin{equation}\label{eq:main-proof-7}
\int_{\mathbb{R}^n}\big(F^{-1}(a\ast\psi_\varepsilon)Fu\big)(x)v(x)\,dx
=
\int_{\mathbb{R}^n} \langle F^{-1}aF(e_{-\zeta}u),e_\zeta v\rangle
\psi_\varepsilon(\zeta)\,d\zeta.
\end{equation}

Let $s \in S_0(\mathbb{R}^n)$ be such that $\|s\|_{X'(\mathbb{R}^n)}\le 1$.
Put $K := \operatorname{supp}s$ and consider a function
$\phi\in C_0^\infty(\mathbb{R}^n)$ such that  $0\le \phi\le 1$.
By Lemma~\ref{le:approximation-L-infinity}, there exists a sequence
$\{v_j\}_{j\in\mathbb{N}}\subset C_0^\infty(\mathbb{R}^n)$ such that
$\operatorname{supp}v_j\subseteq K^*$ and 
$\|v_j\|_{L^\infty(\mathbb{R}^n)}\le\|s\|_{L^\infty(\mathbb{R}^n)}$ for all
$j\in\mathbb{N}$ and $v_j\to s$ a.e. on $\mathbb{R}^n$ as $j\to\infty$.
Since $(a\ast\psi_\varepsilon)Fu$ belongs to $S(\mathbb{R}^n)$, we have
$F^{-1}(a\ast\psi_\varepsilon)Fu \in S(\mathbb{R}^n)\subset L^1(\mathbb{R}^n)$.
Therefore $\phi\cdot F^{-1}(a\ast\psi_\varepsilon)Fu \cdot s$
also belongs to $L^1(\mathbb{R}^n)$ and it follows from the Lebesgue dominated 
convergence theorem that
\begin{equation}\label{eq:main-proof-8}
\lim_{j \to \infty}\int_{\mathbb{R}^n}
\phi(x)\big(F^{-1}(a\ast\psi_\varepsilon)Fu\big)(x)v_j(x)\,dx 
= 
\int_{\mathbb{R}^n}
\phi(x)\big(F^{-1}(a\ast\psi_\varepsilon)Fu\big)(x)s(x)\,dx.
\end{equation}
Further, $|v_j| \le \|s\|_{L^\infty(\mathbb{R}^n)}\chi_{K^*}$ 
for all $j\in\mathbb{N}$. Since $a\in\mathcal{M}_{X(\mathbb{R}^n)}$, one has 
$F^{-1}aF(e_{-\zeta}u) \in X(\mathbb{R}^n)$, and it follows from axiom (A5) 
that $\left(F^{-1}aF(e_{-\zeta}u)\right) \chi_{K^*} \in L^1(\mathbb{R}^n)$. 
Hence, in view of the Lebesgue dominated convergence theorem,
for all $\zeta \in \mathbb{R}^n$,
\begin{equation}\label{eq:main-proof-9}
\lim_{j \to \infty}
\langle \phi F^{-1}aF(e_{-\zeta}u),e_\zeta v_j\rangle 
= 
\int_{\mathbb{R}^n}
\phi(x)\left(F^{-1}aF(e_{-\zeta}u)\right)(x) \left(e_\zeta s\right)(x)\,dx.
\end{equation}
H\"older's inequality for $X(\mathbb{R}^n)$ (see 
\cite[Chap.~1, Theorem~2.4]{BS88}) implies for all $\zeta\in\mathbb{R}^n$,
\begin{align}
\left|\langle \phi F^{-1} aF(e_{-\zeta}u),e_\zeta v_j\rangle\right|
&\le  
\|s\|_{L^\infty(\mathbb{R}^n)} 
\int_{\mathbb{R}^n}
\left|\left(F^{-1}aF(e_{-\zeta}u)\right)(x)\right| 
\chi_{K^*}(x)\,dx
\nonumber\\
&\le 
\|s\|_{L^\infty(\mathbb{R}^n)} 
\|F^{-1}aF(e_{-\zeta}u)\|_{X(\mathbb{R}^n)} \|\chi_{K^*}\|_{X'(\mathbb{R}^n)}
\nonumber\\
&\le 
\|s\|_{L^\infty(\mathbb{R}^n)} \|a\|_{\mathcal{M}_{X}(\mathbb{R}^n)} 
\|e_{-\zeta}u\|_{X(\mathbb{R}^n)} \|\chi_{K^*}\|_{X'(\mathbb{R}^n)}
\nonumber\\
&=
\|a\|_{\mathcal{M}_{X}(\mathbb{R}^n)} 
\|u\|_{X(\mathbb{R}^n)} \|s\|_{L^\infty(\mathbb{R}^n)} 
\|\chi_{K^*}\|_{X'(\mathbb{R}^n)}.
\label{eq:main-proof-10}
\end{align}
Taking into account \eqref{eq:main-proof-9}--\eqref{eq:main-proof-10} and
using the Lebesgue dominated convergence theorem again, one gets
\begin{align}
\lim_{j \to \infty} & \int_{\mathbb{R}^n} 
\langle \phi F^{-1}aF(e_{-\zeta}u),e_\zeta v_j\rangle
\psi_\varepsilon(\zeta)\,d\zeta 
\notag\\
&=
\int_{\mathbb{R}^n} \left( \int_{\mathbb{R}^n}
\phi(x)\left(F^{-1}aF(e_{-\zeta}u\right)(x) \left(e_\zeta s\right)(x)\,dx
\right) \psi_\varepsilon(\zeta)\,d\zeta.
\label{eq:main-proof-11}
\end{align}
It follows from  \eqref{eq:main-proof-8}, \eqref{eq:main-proof-11}, and 
\eqref{eq:main-proof-7} with $\phi v_j$ in place of $v$ that
\begin{align}
\int_{\mathbb{R}^n}\phi(x)
&
\big(F^{-1}(a\ast\psi_\varepsilon)Fu\big)(x)s(x)\,dx
\notag\\
&=
\int_{\mathbb{R}^n} \left(\int_{\mathbb{R}^n}
\phi(x)
\left(F^{-1}aF(e_{-\zeta}u\right)(x) \left(e_\zeta s\right)(x)\,dx\right)
\psi_\varepsilon(\zeta)\,d\zeta.
\label{eq:main-proof-12}
\end{align}
Similarly to \eqref{eq:main-proof-10} one gets
\begin{align}
\left|
\int_{\mathbb{R}^n}\phi(x)
\left(F^{-1}aF(e_{-\zeta}u\right)(x) \left(e_\zeta s\right)(x)\,dx
\right|
&\le
\|F^{-1}aF(e_{-\zeta}u)\|_{X(\mathbb{R}^n)} \|e_\zeta s\|_{X'(\mathbb{R}^n)}
\nonumber\\
&\le 
\|a\|_{\mathcal{M}_{X}(\mathbb{R}^n)} 
\|u\|_{X(\mathbb{R}^n)} \|s\|_{X'(\mathbb{R}^n)}.
\label{eq:main-proof-13}
\end{align}
Equality \eqref{eq:main-proof-12} and inequality \eqref{eq:main-proof-13}
immediately yield that for all $u\in S(\mathbb{R}^n)\cap X(\mathbb{R}^n)$,
all $s \in S_0(\mathbb{R}^n)$ satisfying $\|s\|_{X'(\mathbb{R}^n)}\le 1$,
and all $\phi\in C_0^\infty(\mathbb{R}^n)$ satisfying $0\le\phi\le 1$, one has
\begin{equation}\label{eq:main-proof-14}
\left|\int_{\mathbb{R}^n}\phi(x)
\big(F^{-1}(a\ast\psi_\varepsilon)Fu\big)(x)s(x)\,dx
\right|
\le 
\|a\|_{\mathcal{M}_{X}(\mathbb{R}^n)} 
\|u\|_{X(\mathbb{R}^n)} \|s\|_{X'(\mathbb{R}^n)}.
\end{equation}
Now, take functions $\phi_j \in C^\infty_0(\mathbb{R}^n)$ such that 
$0 \le \phi_j \le 1$ and $\phi_j(x) = 1$ for all $|x| \le j$
and all $j \in \mathbb{N}$. 
Since $F^{-1}(a*\psi_\varepsilon)Fu \in S(\mathbb{R}^n)$, we have
$\phi_j F^{-1} (a\ast\psi_\varepsilon)Fu \in C^\infty_0(\mathbb{R}^n)$
for all $j\in\mathbb{N}$. Inequality \eqref{eq:main-proof-14} implies that
for all $s \in S_0(\mathbb{R}^n)$ satisfying $\|s\|_{X'(\mathbb{R}^n)}\le 1$
and all $j\in\mathbb{N}$, one has
\[
\left|
\int_{\mathbb{R}^n}\phi_j(x) F^{-1} (a\ast\psi_\varepsilon)Fu(x) s(x)\, dx
\right|
\le 
\|a\|_{\mathcal{M}_{X(\mathbb{R}^n)}} 
\|u\|_{X(\mathbb{R}^n)} \|s\|_{X'(\mathbb{R}^n)}.
\]
Hence it follows from Lemma~\ref{le:NFP-X} that for all $j\in\mathbb{N}$,
\[
\|\phi_j F^{-1} (a\ast\psi_\varepsilon)Fu\|_{X(\mathbb{R}^n)}
\le
\|a\|_{\mathcal{M}_{X(\mathbb{R}^n)}} \|u\|_{X(\mathbb{R}^n)} .
\]
Since the functions $\phi_j F^{-1} (a\ast\psi_\varepsilon)Fu$ converge to
$F^{-1} (a\ast\psi_\varepsilon)Fu$ everywhere as $j \to \infty$,
Fatou's lemma (see \cite[Chap. 1, Lemma 1.5]{BS88}) implies that
$F^{-1} (a\ast\psi_\varepsilon)Fu \in X(\mathbb{R}^n)$ and
\[
\|F^{-1} (a\ast\psi_\varepsilon)Fu\|_{X(\mathbb{R}^n)}
\le
\|a\|_{\mathcal{M}_{X(\mathbb{R}^n)}} \|u\|_{X(\mathbb{R}^n)}
\]
for all $u \in S(\mathbb{R}^n)\cap X(\mathbb{R}^n)$. Thus
\[
\|a\ast\psi_\varepsilon\|_{\mathcal{M}_{X(\mathbb{R}^n)}}
\le
\|a\|_{\mathcal{M}_{X(\mathbb{R}^n)}}.
\]
Since
$C_{\rm poly}^\infty(\mathbb{R}^n)\subset 
L_{1, \sigma}(\mathbb{R}^n)$ for some
$\sigma \in \mathbb{R}$ and 
$a\ast\psi_\varepsilon\in C_{\rm poly}^\infty(\mathbb{R}^n)$,
it follows from Theorem~\ref{th:symbol-in-weighted-L1} that
$a\ast\psi_\varepsilon \in L^\infty(\mathbb{R}^n)$ and
\[
\|a\ast\psi_\varepsilon\|_{L^\infty(\mathbb{R}^n)}
\le
\|a\ast\psi_\varepsilon\|_{\mathcal{M}_{X(\mathbb{R}^n)}}
\le
\|a\|_{\mathcal{M}_{X(\mathbb{R}^n)}}
\quad\mbox{for all}\quad
\varepsilon>0.
\]
It is not difficult to see that $w\ast\psi_\varepsilon$ converges to $w$ in 
$S(\mathbb{R}^n)$ as $\varepsilon \to 0$ for every $w\in S(\mathbb{R}^n)$
(see, e.g., \cite[Exercise~2.3.2]{G14-classical}). Then,
for all $w \in S(\mathbb{R}^n)$,
\begin{align*}
|\langle a, w\rangle|
&=
\lim_{\varepsilon\to 0}|\langle a,w\ast\psi_\varepsilon\rangle|
=
\lim_{\varepsilon \to 0} |\langle a\ast\psi_\varepsilon, w\rangle| 
=
\lim_{\varepsilon\to 0}\left|
\int_{\mathbb{R}^n} (a\ast \psi_\varepsilon)(x)w(x)\,dx
\right|
\\
&\le
\limsup_{\varepsilon\to 0}
\|a\ast\psi_\varepsilon\|_{L^\infty(\mathbb{R}^n)} \|w\|_{L^1(\mathbb{R}^n)}
\le 
\|a\|_{\mathcal{M}_{X(\mathbb{R}^n)}} \|w\|_{L^1(\mathbb{R}^n)} .
\end{align*}
Hence $a$ can be extended to a bounded linear functional on 
$L^1(\mathbb{R}^n)$, i.e., it can be identified with a function in 
$L^\infty(\mathbb{R}^n)$ and 
$\|a\|_{L^\infty}\le\|a\|_{\mathcal{M}_{X(\mathbb{R}^n)}}$ holds. 

Suppose there exists a constant $D_X > 0$ such that
$\|a\|_{L^\infty(\mathbb{R}^n)} \le D_X \|a\|_{\mathcal{M}_{X(\mathbb{R}^n)}}$
for all $a \in\mathcal{M}_{X(\mathbb{R}^n)}$.
Then taking $a \equiv 1$, one gets $D_X \ge 1$. So, the constant 
$D_X = 1$ in the estimate 
$\|a\|_{L^\infty}\le\|a\|_{\mathcal{M}_{X(\mathbb{R}^n)}}$
is best possible, which completes the proof of Theorem~\ref{th:main}.
\subsection{Multidimensional analogue of Theorem~\ref{th:BGB}}
\begin{corollary}\label{co:main-reflexive-X-with-AX}
Suppose $n\ge 1$ and $X(\mathbb{R}^n)$ is a Banach function space
satisfying the $A_X$-condition. If
$a\in\mathcal{M}_{X(\mathbb{R}^n)} \subset S'(\mathbb{R}^n)$,
then $a\in L^\infty(\mathbb{R}^n)$ and
\[
\|a\|_{L^\infty(\mathbb{R}^n)}\le \|a\|_{\mathcal{M}_{X(\mathbb{R}^n)}}.
\]
The constant $1$ on the right-hand side in the above inequality is best
possible.
\end{corollary}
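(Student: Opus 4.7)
The proof is essentially a chain of implications assembled from results already established in the excerpt. The plan is to use Lemma~\ref{le:AX-implies-doubling} to upgrade the $A_X$-condition to the (strong) doubling property, observe that the strong doubling property trivially implies the weak doubling property, and then invoke Theorem~\ref{th:main}.

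In detail, first I would note that if $X(\mathbb{R}^n)$ satisfies the $A_X$-condition, then by Lemma~\ref{le:AX-implies-doubling} there exist $\tau > 1$ and a constant $C_\tau > 0$ such that
\[
\frac{\|\chi_{B(y,\tau R)}\|_{X(\mathbb{R}^n)}}{\|\chi_{B(y,R)}\|_{X(\mathbb{R}^n)}} \le C_\tau
\]
for all $y \in \mathbb{R}^n$ and all $R > 0$. Taking the infimum over $y \in \mathbb{R}^n$ and then the $\liminf$ as $R \to \infty$, we obtain $D_{X,\tau} \le C_\tau < \infty$, where $D_{X,\tau}$ is the constant defined in \eqref{eq:def-D}. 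Hence $X(\mathbb{R}^n)$ satisfies the weak doubling property.

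Next I would apply Theorem~\ref{th:main}: since $X(\mathbb{R}^n)$ satisfies the weak doubling property, every $a \in \mathcal{M}_{X(\mathbb{R}^n)} \subset S'(\mathbb{R}^n)$ actually lies in $L^\infty(\mathbb{R}^n)$ and satisfies
\[
\|a\|_{L^\infty(\mathbb{R}^n)} \le \|a\|_{\mathcal{M}_{X(\mathbb{R}^n)}}.
\]
Finally, for the optimality of the constant $1$, I would take $a \equiv 1$, for which $F^{-1}aFu = u$ for all $u \in S(\mathbb{R}^n) \cap X(\mathbb{R}^n)$, giving $\|a\|_{\mathcal{M}_{X(\mathbb{R}^n)}} = 1 = \|a\|_{L^\infty(\mathbb{R}^n)}$; this shows that no smaller constant can replace $1$ on the right-hand side.

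There is no real obstacle in this argument since the two ingredients (Lemma~\ref{le:AX-implies-doubling} and Theorem~\ref{th:main}) have already done all the work; the corollary is essentially a packaging statement that makes explicit the applicability of the main result to the important class of spaces satisfying a Muckenhoupt-type condition, and in particular to weighted Lebesgue spaces $L^p(\mathbb{R}^n,w)$ with $w \in A_p(\mathbb{R}^n)$, thereby providing the promised multidimensional extension of Theorem~\ref{th:BGB}.
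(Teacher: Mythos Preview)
Your proposal is correct and follows essentially the same approach as the paper's proof, which simply states that the result follows from Theorem~\ref{th:main} and Lemma~\ref{le:AX-implies-doubling}. You have merely spelled out the obvious intermediate step (that the strong doubling property implies the weak doubling property) and repeated the optimality argument for the constant~$1$ already given at the end of the proof of Theorem~\ref{th:main}.
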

\begin{proof}
This statement follows from Theorem~\ref{th:main} and
Lemma~\ref{le:AX-implies-doubling}.
\end{proof}
We conclude this section with the proof of the following multidimensional
analogue of Theorem~\ref{th:BGB}.
\begin{corollary}
Let $n\ge 1$ and $1<p<\infty$. If $w\in A_p(\mathbb{R}^n)$ and
$a\in\mathcal{M}_{L^p(\mathbb{R}^n,w)} \subset S'(\mathbb{R}^n)$,
then $a\in L^\infty(\mathbb{R}^n)$ and
\[
\|a\|_{L^\infty(\mathbb{R}^n)}\le \|a\|_{\mathcal{M}_{L^p(\mathbb{R}^n,w)}}.
\]
The constant $1$ on the right-hand side in the above inequality is best
possible.
\end{corollary}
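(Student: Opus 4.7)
The plan is to reduce this statement to Corollary~\ref{co:main-reflexive-X-with-AX} by verifying that the weighted Lebesgue space $X(\mathbb{R}^n)=L^p(\mathbb{R}^n,w)$ with a Muckenhoupt weight $w\in A_p(\mathbb{R}^n)$ satisfies the $A_X$-condition. First I would recall from the excerpt that if $w\in A_p(\mathbb{R}^n)$, then $w\in L^p_{\mathrm{loc}}(\mathbb{R}^n)$ and $1/w\in L^{p'}_{\mathrm{loc}}(\mathbb{R}^n)$, so $L^p(\mathbb{R}^n,w)$ is indeed a Banach function space with associate space $L^{p'}(\mathbb{R}^n,w^{-1})$ by \cite[Lemma~2.4]{KS14}.

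Next I would compute the norms of characteristic functions of cubes explicitly: for any cube $Q\subset\mathbb{R}^n$ with sides parallel to the coordinate axes,
\[
\|\chi_Q\|_{L^p(\mathbb{R}^n,w)}=\left(\int_Q w^p(x)\,dx\right)^{1/p},\qquad
\|\chi_Q\|_{L^{p'}(\mathbb{R}^n,w^{-1})}=\left(\int_Q w^{-p'}(x)\,dx\right)^{1/p'}.
\]
Multiplying the two and dividing by $|Q|$ gives precisely the quantity whose supremum over cubes $Q$ defines the Muckenhoupt $A_p$-constant of $w$. Hence the assumption $w\in A_p(\mathbb{R}^n)$ is equivalent to the $A_X$-condition \eqref{eq:AX} for $X(\mathbb{R}^n)=L^p(\mathbb{R}^n,w)$.

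Once this identification is established, the first part of the statement is immediate from Corollary~\ref{co:main-reflexive-X-with-AX}: any $a\in\mathcal{M}_{L^p(\mathbb{R}^n,w)}\subset S'(\mathbb{R}^n)$ belongs to $L^\infty(\mathbb{R}^n)$ and satisfies $\|a\|_{L^\infty(\mathbb{R}^n)}\le\|a\|_{\mathcal{M}_{L^p(\mathbb{R}^n,w)}}$. For the optimality of the constant $1$, I would take $a\equiv 1$: then $F^{-1}aFu=u$ for every $u\in S(\mathbb{R}^n)\cap L^p(\mathbb{R}^n,w)$, so $\|a\|_{\mathcal{M}_{L^p(\mathbb{R}^n,w)}}=1=\|a\|_{L^\infty(\mathbb{R}^n)}$, which prevents any constant smaller than $1$ from working. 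There is no real obstacle here; the work has all been done upstream, and this corollary is essentially an observation that the classical Muckenhoupt condition is a special case of the more general $A_X$-condition of Berezhno\u{\i}.
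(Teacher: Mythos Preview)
Your proposal is correct and follows essentially the same approach as the paper: both verify that the Muckenhoupt $A_p$-condition for $w$ is precisely the $A_X$-condition \eqref{eq:AX} for $X(\mathbb{R}^n)=L^p(\mathbb{R}^n,w)$ (via the explicit computation of $\|\chi_Q\|_{L^p(\mathbb{R}^n,w)}$ and $\|\chi_Q\|_{L^{p'}(\mathbb{R}^n,w^{-1})}$), and then invoke Corollary~\ref{co:main-reflexive-X-with-AX}. Your additional remarks on the associate space and the explicit check of optimality via $a\equiv 1$ are fine but not strictly needed, since the optimality claim is already part of Corollary~\ref{co:main-reflexive-X-with-AX}.
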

\begin{proof}
Since $w\in A_p(\mathbb{R}^n)$, we have
\begin{align*}
\|w\|_{A_p(\mathbb{R}^n)}
&=
\sup_{Q}\frac{1}{|Q|}
\|\chi_Q\|_{L^p(\mathbb{R}^n,w)}\|\chi_Q\|_{L^{p'}(\mathbb{R}^n,w^{-1})}
\\
&=
\sup_Q
\left(\frac1{|Q|} \int_Q w^p(x)\, dx\right)^{1/p}
\left(\frac1{|Q|} \int_Q w^{-p'}(x)\, dx\right)^{1/p'} 
< \infty,
\end{align*}
where $1/p+1/p'=1$ and the supremum is taken over all cubes with sides 
parallel to the axes. Thus, the Banach function space 
$X(\mathbb{R}^n)=L^p(\mathbb{R}^n,w)$  satisfies  the $A_X$-condition. 
It remains to apply Corollary~\ref{co:main-reflexive-X-with-AX}.
\end{proof}
We would like to stress again that the $A_p$ condition implies the doubling 
property (see Lemma \ref{le:AX-implies-doubling}), which places much stronger 
restrictions on the behaviour of the weight $w$ at infinity than the weak 
doubling property (see Subsection \ref{subsec:examples-doubling}). Note also 
that the latter puts no restrictions on the local behaviour of the weight 
$w$. When dealing with weighted function spaces $Y(\mathbb{R}^n, w)$, we 
usually assume that $w \in Y_{\mathrm{loc}}(\mathbb{R}^n)$ and 
$1/w  \in Y'_{\mathrm{loc}}(\mathbb{R}^n)$. It is instructive to compare the 
latter conditions with $w\in A_p(\mathbb{R}^n)$ in the case $n = 1$,
$Y(\mathbb{R}) = L^p(\mathbb{R})$ and
\[
w(x) =\left\{\begin{array}{cr}
|x|^\alpha ,   &  -1 \le x < 0 ,  
\\
 x^\beta ,   &   0 \le  x \le 1,
\\
1 ,   & |x|  > 1,
\end{array}\right. 
\quad\ \alpha, \beta \in \mathbb{R} .
\]
It is easy to see that $w \in L^p_{\mathrm{loc}}(\mathbb{R})$ and 
$1/w  \in L^{p'}_{\mathrm{loc}}(\mathbb{R})$ if and only if 
$-1/p < \alpha, \beta < 1 - 1/p$. On the other hand, 
$w \not\in A_p(\mathbb{R})$ if $\alpha \not= \beta$
(see \cite[Example~2.6]{BK97}). 
\section{On optimality of the requirement of the weak doubling property in 
Theorem~\ref{th:main}}
\label{sec:optimality-doubling}
\subsection{Estimates for convolutions}
\begin{theorem}[{(Cf. \cite[p. 91]{L83})}]\label{th:Lofstrom}
Let $w^*$ be a weight such that $w^*\in L^1_\mathrm{loc}(\mathbb{R}^n)$,
let $\Omega\subseteq\mathbb{R}^n$ be a set of positive measure, and let
\[
L^1_\Omega(\mathbb{R}, w^*) := 
\left\{
h \in L^1(\mathbb{R}, w^*)\ : \ \operatorname{supp} h \subseteq\Omega 
\right\}.
\]
Suppose $Y(\mathbb{R}^n)$ is a translation-invariant Banach function space and 
$w$ is a weight satisfying $w \in Y_{\mathrm{loc}}(\mathbb{R}^n)$ and
$1/w  \in Y'_{\mathrm{loc}}(\mathbb{R}^n)$. If
\begin{equation}\label{eq:Lofstrom-1}
w^*(y) w(x - y) w^{-1}(x) \ge 1 
\quad\mbox{for all}\quad
x \in \mathbb{R}^n,\ 
y \in \Omega,
\end{equation}
then 
\[
\|\kappa\ast f\|_{Y(\mathbb{R}^n, w)} 
\le 
\|\kappa\|_{L^1(\mathbb{R}^n, w^*)} 
\|f\|_{Y(\mathbb{R}^n, w)}
\quad\mbox{for all}\quad
f \in Y(\mathbb{R}^n, w),\
\kappa \in L^1_\Omega(\mathbb{R}, w^*).
\]
\end{theorem}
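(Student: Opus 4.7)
The plan is to reduce everything to a pointwise/Minkowski-type estimate that uses the support constraint on $\kappa$, the hypothesis on the three weights, and the translation invariance of $Y(\mathbb{R}^n)$. By definition, $\|\kappa\ast f\|_{Y(\mathbb{R}^n,w)} = \|w\cdot(\kappa\ast f)\|_{Y(\mathbb{R}^n)}$, and since $\operatorname{supp}\kappa\subseteq\Omega$, we have the pointwise bound
\[
w(x)|(\kappa\ast f)(x)|
\le \int_\Omega |\kappa(y)|\, w(x)\, |f(x-y)|\, dy
\le \int_\Omega |\kappa(y)|\, w^*(y)\, w(x-y)\, |f(x-y)|\, dy ,
\]
where the last inequality uses hypothesis \eqref{eq:Lofstrom-1}. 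Call the right-hand side $H(x)$.

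Next, I would estimate $\|H\|_{Y(\mathbb{R}^n)}$ by a Minkowski-type integral inequality justified via duality. Using \cite[Chap.~1, Theorem~2.7 and Lemma~2.8]{BS88}, we have
\[
\|H\|_{Y(\mathbb{R}^n)} = \sup\left\{\int_{\mathbb{R}^n} H(x)\, |g(x)|\, dx \ : \ g\in Y'(\mathbb{R}^n),\ \|g\|_{Y'(\mathbb{R}^n)}\le 1\right\}.
\]
For any such $g$, Tonelli's theorem and H\"older's inequality for Banach function spaces (see \cite[Chap.~1, Theorem~2.4]{BS88}) give
\begin{align*}
\int_{\mathbb{R}^n} H(x)|g(x)|\,dx
&= \int_\Omega |\kappa(y)| w^*(y) \int_{\mathbb{R}^n} |f(x-y)| w(x-y) |g(x)|\,dx\, dy \\
&\le \int_\Omega |\kappa(y)| w^*(y) \bigl\|(fw)(\cdot-y)\bigr\|_{Y(\mathbb{R}^n)} \|g\|_{Y'(\mathbb{R}^n)}\, dy .
\end{align*}
By Lemma~\ref{le:TI-X-and-Xprime}, $Y'(\mathbb{R}^n)$ is translation-invariant (and we also use translation invariance of $Y(\mathbb{R}^n)$ itself here), so $\|(fw)(\cdot-y)\|_{Y(\mathbb{R}^n)} = \|fw\|_{Y(\mathbb{R}^n)} = \|f\|_{Y(\mathbb{R}^n,w)}$, yielding
\[
\int_{\mathbb{R}^n} H(x)|g(x)|\,dx \le \|\kappa\|_{L^1(\mathbb{R}^n,w^*)}\, \|f\|_{Y(\mathbb{R}^n,w)} .
\]
Taking the supremum over admissible $g$ and combining with the pointwise estimate gives the desired inequality.

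There is essentially no single hard step; the main thing to be careful about is that each quantity that appears is well defined and that the applications of Tonelli/H\"older are legitimate. Implicitly one first handles $\kappa$ and $f$ for which the relevant integrals converge (e.g.\ approximate $\kappa$ by truncations $\kappa\chi_{\Omega\cap B(0,R)}$, which lie in $L^1_\Omega(\mathbb{R}^n,w^*)$ and converge to $\kappa$ in $L^1(\mathbb{R}^n,w^*)$), derive the inequality, and then pass to the limit using Fatou's lemma for $Y(\mathbb{R}^n)$. This turns the formal computation above into a rigorous argument without any deep new ingredients beyond the duality for Banach function spaces and translation invariance.
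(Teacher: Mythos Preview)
Your argument is correct and is essentially the same as the paper's: both proofs test against a dual element, apply Tonelli and H\"older, insert the weight inequality \eqref{eq:Lofstrom-1}, and use translation invariance of $Y(\mathbb{R}^n)$ to conclude. The only cosmetic difference is that you first absorb the weight $w$ into the integrand and then pair in $Y(\mathbb{R}^n)$ versus $Y'(\mathbb{R}^n)$, whereas the paper pairs directly in $Y(\mathbb{R}^n,w)$ versus $Y'(\mathbb{R}^n,w^{-1})$; your appeal to Lemma~\ref{le:TI-X-and-Xprime} is harmless but unnecessary, since only translation invariance of $Y(\mathbb{R}^n)$ itself is used.
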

\begin{proof}
Since $w \in Y_{\mathrm{loc}}(\mathbb{R}^n)$ and 
$1/w  \in Y'_{\mathrm{loc}}(\mathbb{R}^n)$, we recall that
$Y(\mathbb{R}^n,w)$ is a Banach function space and $Y'(\mathbb{R}^n,w^{-1})$
is its associate space in view of \cite[Lemma~2.4]{KS14}.
Using \eqref{eq:Lofstrom-1} and H\"older's inequality for Banach function 
spaces (see \cite[Chap. 1, Theorem 2.4]{BS88})
and taking into account that $Y(\mathbb{R}^n)$ is translation-invariant, 
one gets for all  $g \in Y'(\mathbb{R}^n, w^{-1})$,
\begin{align*}
\left|\int_{\mathbb{R}^n} (\kappa\ast f)(x) g(x)\, dx\right| 
&\le 
\int_{\mathbb{R}^n} 
\left(\int_\Omega |\kappa(y)|\, |f(x - y)|\, dy\right) |g(x)|\, dx 
\\
&\le
\int_\Omega w^*(y) |\kappa(y)| 
\left(
\int_{\mathbb{R}^n} w(x - y) |f(x - y)|\cdot w^{-1}(x) |g(x)|\, dx
\right)\, dy 
\end{align*}
\begin{align*}
&\le 
\int_{\mathbb{R}^n} w^*(y) |\kappa(y)| 
\|\tau_y (w f)\|_{Y(\mathbb{R}^n)} 
\left\|w^{-1} g\right\|_{Y'(\mathbb{R}^n)}\, dy
\\
&=  
\|w f\|_{Y(\mathbb{R}^n)} 
\left\|w^{-1} g\right\|_{Y'(\mathbb{R}^n)}
\int_{\mathbb{R}^n}  w^*(y) |\kappa(y)|\, dy 
\\
&= 
\|\kappa\|_{L^1(\mathbb{R}^n, w^*)} 
\|f\|_{Y(\mathbb{R}^n, w)} \|g\|_{Y'(\mathbb{R}^n, w^{-1})}.
\end{align*}
By \cite[Chap.~1, Theorem~2.7 and Lemma~2.8]{BS88},
the above inequality implies that
\begin{align*}
\|\kappa\ast f\|_{Y(\mathbb{R}^n,w)}
&=
\sup\left\{\left|\int_{\mathbb{R}^n} (\kappa\ast f)(x) g(x)\, dx\right|\ :\
g\in Y'(\mathbb{R}^n,w^{-1}), \
\|g\|_{Y'(\mathbb{R}^n,w^{-1})}\le 1
\right\}
\\
&\le 
\|\kappa\|_{L^1(\mathbb{R}^n, w^*)} 
\|f\|_{Y(\mathbb{R}^n, w)},
\end{align*}
which completes the proof.
\end{proof}
\begin{corollary}\label{co:w1w2}
Suppose $Y(\mathbb{R})$ is a translation-invariant Banach function space. 
\begin{enumerate}
\item[{\rm(a)}] 
Let $w_1(x) = e^{cx}$ for $x \in \mathbb{R}$, with some constant $c > 0$.
Then for all $\kappa \in L^1(\mathbb{R}, w_1)$ and all 
$f \in Y(\mathbb{R}, w_1)$,
\[
\|\kappa\ast f\|_{Y(\mathbb{R}, w_1)} 
\le 
\|\kappa\|_{L^1(\mathbb{R}, w_1)} \|f\|_{Y(\mathbb{R}, w_1)}.
\]

\item[{\rm(b)}] Suppose $\varphi:\mathbb{R}\to\mathbb{R}$
is a function such that $\varphi(x) = x$ for $x \le 0$, and
\begin{equation}\label{eq:w1w2-1}
\frac{\varphi(z) - \varphi(x)}{z - x} \ge 1 
\quad\mbox{for all}\quad
z > x \ge 0 .
\end{equation}
Let $w_2(x) := e^{c\varphi(x)}$ for $x \in \mathbb{R}$, with some constant 
$c > 0$. Then for all $\kappa \in L^1_{(-\infty, 0]}(\mathbb{R}, w_2)$
and all $f \in Y(\mathbb{R}, w_2)$,
\[
\|\kappa\ast f\|_{Y(\mathbb{R}, w_2)} 
\le 
\|\kappa\|_{L^1(\mathbb{R}, w_2)} 
\|f\|_{Y(\mathbb{R}, w_2)}.
\]
\end{enumerate}
\end{corollary}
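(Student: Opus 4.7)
The plan is to derive both parts directly from Theorem~\ref{th:Lofstrom}. In each case we take $w^* = w$ (so that $\|\kappa\|_{L^1(\mathbb{R},w^*)} = \|\kappa\|_{L^1(\mathbb{R},w)}$) and simply verify the pointwise inequality \eqref{eq:Lofstrom-1}, namely $w^*(y)\,w(x-y)\,w^{-1}(x) \ge 1$ for all $x \in \mathbb{R}$ and $y \in \Omega$. Once this is in hand, Theorem~\ref{th:Lofstrom} delivers the norm bound immediately. The hypothesis $w \in Y_{\mathrm{loc}}(\mathbb{R})$, $1/w \in Y'_{\mathrm{loc}}(\mathbb{R})$ needed in that theorem is satisfied for $w_1$ and $w_2$ because both are bounded and bounded away from zero on every compact set.

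For part~(a) I would set $\Omega = \mathbb{R}$ and $w^* = w = w_1$. Then
\[
w_1(y)\,w_1(x-y)\,w_1^{-1}(x) = e^{cy}\,e^{c(x-y)}\,e^{-cx} = 1,
\]
so \eqref{eq:Lofstrom-1} holds with equality for every $x,y \in \mathbb{R}$, and the conclusion follows.

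For part~(b) I would take $\Omega = (-\infty, 0]$ and $w^* = w = w_2$, in which case \eqref{eq:Lofstrom-1} reduces to
\[
\varphi(y) + \varphi(x-y) - \varphi(x) \ge 0 \qquad \text{for all } x \in \mathbb{R},\ y \le 0.
\]
Since $y \le 0$ we have $\varphi(y) = y$, and I would handle the remaining cases according to the signs of $x$ and $x - y$. If $x \le 0$ and $x - y \le 0$, both $\varphi$-values are linear and the expression vanishes identically. If $x \le 0$ and $x - y > 0$, applying \eqref{eq:w1w2-1} with $z = x - y$ and the point $0$ (and using $\varphi(0) = 0$) gives $\varphi(x-y) \ge x - y$, so $y + \varphi(x-y) - x \ge y + (x-y) - x = 0$. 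The remaining case $x > 0$ (which forces $x - y > x \ge 0$) is the one I expect to be slightly more delicate: here I apply \eqref{eq:w1w2-1} directly to the pair $z = x-y > x \ge 0$, obtaining $\varphi(x-y) - \varphi(x) \ge (x-y) - x = -y$, which rearranges to the desired inequality.

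The main (and only) real obstacle is the bookkeeping in this last case, since one must choose the right pair of arguments in \eqref{eq:w1w2-1}; but once the substitution $z = x - y$, $x' = x$ is made, the condition \eqref{eq:Lofstrom-1} drops out. With \eqref{eq:Lofstrom-1} confirmed, Theorem~\ref{th:Lofstrom} applied to $f \in Y(\mathbb{R}, w_2)$ and $\kappa \in L^1_{(-\infty,0]}(\mathbb{R}, w_2)$ yields the stated bound.
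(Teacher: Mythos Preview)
Your proposal is correct and follows essentially the same route as the paper: in both parts you apply Theorem~\ref{th:Lofstrom} with $w^*=w$ and verify the pointwise inequality \eqref{eq:Lofstrom-1}, and your case analysis for part~(b) matches the paper's (up to a harmless reorganisation of the three sign cases). The only cosmetic slip is that ``$x>0$ forces $x-y>x$'' is strict only when $y<0$, but the case $y=0$ is trivial anyway.
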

\begin{proof}
(a) Since
\[
w_1(y) w_1(x - y) w_1^{-1}(x) = e^{c(y + (x - y) -x)} = 1 
\quad\mbox{for all}\quad 
x, y \in \mathbb{R},
\]
the conditions of Theorem~\ref{th:Lofstrom} are satisfied for 
$w = w^* = w_1$ and $\Omega = \mathbb{R}$.

(b) Take any $y < 0$ and any $x \in \mathbb{R}$. If $x \ge 0$, then it 
follows from \eqref{eq:w1w2-1} that
\[
\frac{\varphi(x - y)-\varphi(x)}{(x - y) - x} \ge 1 
\ \Longrightarrow \  
y + \varphi(x - y) - \varphi(x) \ge 0 .
\]
If $y < x < 0$, then 
\[
\frac{\varphi(x - y) - \varphi(0)}{(x - y) - 0} \ge 1 
\ \Longrightarrow \  
y + \varphi(x - y) - x \ge 0 .
\]
Finally, if $x \le y < 0$, then
\[
\varphi(y) + \varphi(x - y) - \varphi(x) 
= 
y + (x - y) - x = 0 .
\]
Hence
\begin{equation}\label{eq:w1w2-2}
w_2(y) w_2(x - y) w_2^{-1}(x) 
= 
e^{c(
\varphi(y)+\varphi(x - y) - \varphi(x)
)} \ge 1 
\quad\mbox{for all}\quad x \in \mathbb{R} 
\end{equation}
and all $y < 0$. It is clear that \eqref{eq:w1w2-2} holds for $y = 0$ as well.

Since $w_2$ and $1/w_2$ are locally bounded, we have
$w_2 \in Y_{\mathrm{loc}}(\mathbb{R})\cap L_{\mathrm{loc}}^1(\mathbb{R})$ and 
$1/w_2\in Y'_{\mathrm{loc}}(\mathbb{R})$. Hence the conditions of 
Theorem~\ref{th:Lofstrom} are satisfied for $w = w^* = w_2$ and 
$\Omega = (-\infty, 0]$.
\end{proof}
\subsection{Banach function spaces  with unbounded Fourier multipliers}
\label{subsec:unbounded-Fourier-multipliers}
Let $Y(\mathbb{R})$ be a translation-invariant Banach function space and $w$ 
be a weight. We know that the weak doubling property of the space 
$X(\mathbb{R}) = Y(\mathbb{R}, w)$ allows the weight $w$ to grow at any 
subexponential rate (see Subsection~\ref{subsec:examples-doubling}). It is 
natural to ask whether Theorem~\ref{th:main} still holds for 
$X(\mathbb{R}) = Y(\mathbb{R}, w)$ with the exponential weight $w(x) = e^{cx}$ 
with $c>0$. We show in this subsection that the answer is negative and that 
$\mathcal{M}_{Y(\mathbb{R},w)}$ contains many unbounded functions in this 
case. This means that the weak doubling property is optimal in a sense.  
(This also provides an alternative  indirect proof of  
Theorem~\ref{th:example-nondoubling-exponent}).
\begin{theorem}\label{th:unbounded-multipliers}
Let $Y(\mathbb{R})$ be a translation-invariant Banach function
space and the weights $w_1$ and $w_2$ be the same as in 
Corollary~\ref{co:w1w2}. Then
\begin{equation}
F\left(S'(\mathbb{R})\cap L^1(\mathbb{R},w_1)\right)
\subseteq \mathcal{M}_{Y(\mathbb{R},w_1)} , 
\quad
F\big(S'(\mathbb{R})\cap L^1_{(-\infty, 0]}(\mathbb{R},w_2)\big)
\subseteq \mathcal{M}_{Y(\mathbb{R},w_2)}.
\label{eq:unbounded-multipliers-1}
\end{equation}
\end{theorem}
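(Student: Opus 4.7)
The plan is to reduce both inclusions to the convolution estimates in Corollary~\ref{co:w1w2}. Given $\kappa \in S'(\mathbb{R}) \cap L^1(\mathbb{R}, w_1)$, I would set $a := F\kappa \in S'(\mathbb{R})$ and aim to show that $a \in \mathcal{M}_{Y(\mathbb{R}, w_1)}$. The introduction records the identity $F^{-1} a F u = (F^{-1} a) * u$ for $a \in S'(\mathbb{R})$ and $u \in S(\mathbb{R})$, so taking $a = F\kappa$ gives $F^{-1} a F u = \kappa * u$. Since $\kappa$ is locally integrable and defines a tempered distribution, while $u(x - \cdot)$ is Schwartz, this distributional convolution agrees with the absolutely convergent pointwise integral $(\kappa * u)(x) = \int_{\mathbb{R}} \kappa(y) u(x - y)\, dy$.

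Next, for any $u \in S(\mathbb{R}) \cap Y(\mathbb{R}, w_1)$, Corollary~\ref{co:w1w2}(a) yields
\[
\|F^{-1} a F u\|_{Y(\mathbb{R}, w_1)}
=
\|\kappa * u\|_{Y(\mathbb{R}, w_1)}
\le
\|\kappa\|_{L^1(\mathbb{R}, w_1)}\, \|u\|_{Y(\mathbb{R}, w_1)},
\]
so $a \in \mathcal{M}_{Y(\mathbb{R}, w_1)}$ with $\|a\|_{\mathcal{M}_{Y(\mathbb{R}, w_1)}} \le \|\kappa\|_{L^1(\mathbb{R}, w_1)}$, which proves the first inclusion in \eqref{eq:unbounded-multipliers-1}. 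The second inclusion follows by the same argument with $w_2$ in place of $w_1$ and Corollary~\ref{co:w1w2}(b) in place of Corollary~\ref{co:w1w2}(a); the support condition $\operatorname{supp} \kappa \subseteq (-\infty, 0]$ built into $L^1_{(-\infty, 0]}(\mathbb{R}, w_2)$ is precisely what is needed to invoke that part of the corollary.

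There is no genuine obstacle here: the theorem is essentially a repackaging of Corollary~\ref{co:w1w2} in Fourier multiplier language. The only item worth confirming is the identification of the distributional $F^{-1} a F u \in S'(\mathbb{R})$ with the locally integrable function $\kappa * u$, so that the weighted norm in Corollary~\ref{co:w1w2} can legitimately be applied. This follows from standard facts: $\kappa * u \in C_{\rm poly}^\infty(\mathbb{R})$ when $\kappa \in S'(\mathbb{R})$ and $u \in S(\mathbb{R})$, and the distributional and pointwise definitions of the convolution coincide whenever $\kappa$ is itself a function.
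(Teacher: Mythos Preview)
Your proposal is correct and follows essentially the same route as the paper: both set $\kappa = F^{-1}a$, invoke the identity $F^{-1}aFu = \kappa * u$, and apply the convolution bound from Corollary~\ref{co:w1w2} to conclude $\|a\|_{\mathcal{M}_{Y(\mathbb{R},w_j)}} \le \|\kappa\|_{L^1(\mathbb{R},w_j)}$. You add a bit more care about identifying the distributional convolution with the pointwise integral, which the paper leaves implicit, but the argument is otherwise identical.
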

\begin{proof}
If $a\in F\left(S'(\mathbb{R})\cap L^1(\mathbb{R},w_1)\right)$, then
$F^{-1}a\in L^1(\mathbb{R},w_1)$. It follows from 
Corollary \ref{co:w1w2} that for every function
$u\in S(\mathbb{R})\cap Y(\mathbb{R},w_1)$,
\begin{equation}\label{eq:unbounded-multipliers-3}
\|F^{-1}aFu\|_{Y(\mathbb{R},w_1)}=\|(F^{-1}a)\ast u\|_{Y(\mathbb{R},w_1)}
\le \|F^{-1}a\|_{L^1(\mathbb{R},w_1)}\|u\|_{Y(\mathbb{R},w_1)}.
\end{equation}
Hence 
\begin{equation}\label{eq:unbounded-multipliers-4}
\|a\|_{\mathcal{M}_{Y(\mathbb{R},w_1)}}\le\|F^{-1}a\|_{L^1(\mathbb{R},w_1)}.
\end{equation}
The same argument allows one to show that if 
$a\in F\big(S'(\mathbb{R})\cap L^1_{(-\infty, 0]}(\mathbb{R},w_2)\big)$, then
\begin{equation}\label{eq:unbounded-multipliers-5}
\|a\|_{\mathcal{M}_{Y(\mathbb{R},w_2)}}\le\|F^{-1}a\|_{L^1(\mathbb{R},w_2)}.
\end{equation}
Inequalities 
\eqref{eq:unbounded-multipliers-4}--\eqref{eq:unbounded-multipliers-5}
imply embeddings \eqref{eq:unbounded-multipliers-1}.
\end{proof}
\begin{lemma}
Let $Y(\mathbb{R})$ be a translation-invariant Banach function
space and the weights $w_1$ and $w_2$ be the same as in 
Corollary~\ref{co:w1w2}.
\begin{enumerate}
\item[(a)]
If  $a\in F\left(L^1(\mathbb{R})\cap L^1(\mathbb{R},w_1)\right)$, then
\begin{equation}\label{eq:unbounded-multipliers-6}
\|a\|_{\mathcal{M}^0_{Y(\mathbb{R},w_1)}}\le\|F^{-1}a\|_{L^1(\mathbb{R},w_1)}. 
\end{equation}
\item[(b)]
If $a \in F\big(L^1_{(-\infty,0]}(\mathbb{R})\big)=
F\big(L^1(\mathbb{R})\cap L^1_{(-\infty, 0]}(\mathbb{R},w_2)\big)$, 
then
\begin{equation}\label{eq:unbounded-multipliers-7}
\|a\|_{\mathcal{M}^0_{Y(\mathbb{R},w_2)}}\le\|F^{-1}a\|_{L^1(\mathbb{R},w_2)}.
\end{equation}
\end{enumerate}
\end{lemma}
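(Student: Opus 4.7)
The plan is to mirror the argument used in the proof of Theorem~\ref{th:unbounded-multipliers}, replacing the Schwartz test function $u\in S(\mathbb{R})\cap Y(\mathbb{R},w_j)$ by a function $u\in L^2(\mathbb{R})\cap Y(\mathbb{R},w_j)$ and interpreting $F^{\pm 1}$ as the Fourier transform on $L^2(\mathbb{R})$, as required by the definition of $\mathcal{M}^0_{Y(\mathbb{R},w_j)}$. The key device remains Corollary~\ref{co:w1w2}; the only new issue is that the convolution identity $F^{-1}aFu=(F^{-1}a)\ast u$ must be justified within the $L^2$ framework rather than via distributional calculus in $S(\mathbb{R})$.

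For part~(a), I would set $\kappa:=F^{-1}a$, which by hypothesis belongs to $L^1(\mathbb{R})\cap L^1(\mathbb{R},w_1)$; in particular $a=F\kappa\in L^\infty(\mathbb{R})$. For any $u\in L^2(\mathbb{R})\cap Y(\mathbb{R},w_1)$, Young's inequality gives $\kappa\ast u\in L^2(\mathbb{R})$, and the standard convolution formula $F(\kappa\ast u)=(F\kappa)(Fu)=aFu$, valid for $\kappa\in L^1(\mathbb{R})$ and $u\in L^2(\mathbb{R})$, yields $F^{-1}aFu=\kappa\ast u$ as elements of $L^2(\mathbb{R})$. An application of Corollary~\ref{co:w1w2}(a) then gives
\[
\|F^{-1}aFu\|_{Y(\mathbb{R},w_1)}
=\|\kappa\ast u\|_{Y(\mathbb{R},w_1)}
\le\|\kappa\|_{L^1(\mathbb{R},w_1)}\|u\|_{Y(\mathbb{R},w_1)},
\]
and dividing by $\|u\|_{Y(\mathbb{R},w_1)}$ and taking the supremum over all admissible $u$ yields \eqref{eq:unbounded-multipliers-6}.

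For part~(b), I would first verify the equality of the two domains asserted in the hypothesis. Since $\varphi(x)=x$ for $x\le 0$, the weight satisfies $w_2(x)=e^{cx}\le 1$ on $(-\infty,0]$, so a measurable function supported in $(-\infty,0]$ lies in $L^1(\mathbb{R})$ if and only if it lies in $L^1(\mathbb{R},w_2)$; hence $L^1_{(-\infty,0]}(\mathbb{R})=L^1(\mathbb{R})\cap L^1_{(-\infty,0]}(\mathbb{R},w_2)$. Setting $\kappa:=F^{-1}a$, we then have $\kappa\in L^1(\mathbb{R})\cap L^1_{(-\infty,0]}(\mathbb{R},w_2)$, and the same $L^2$-convolution identity as in part~(a) applies. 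Corollary~\ref{co:w1w2}(b) in place of~(a) then produces \eqref{eq:unbounded-multipliers-7}.

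The only step that requires comment is the identification $F^{-1}aFu=\kappa\ast u$ in the $L^2$ setting, which I expect to be the main (if minor) obstacle; however, it is a direct consequence of the Plancherel theorem together with the standard $L^1\times L^2$ convolution theorem, so no genuine difficulty is anticipated. Apart from this, the argument is a straightforward transposition of the proof of Theorem~\ref{th:unbounded-multipliers} from the $S(\mathbb{R})$-based definition to the $L^2(\mathbb{R})$-based definition $\mathcal{M}^0$.
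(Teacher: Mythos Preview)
Your proposal is correct and follows exactly the paper's approach: the paper's proof simply states that inequality~\eqref{eq:unbounded-multipliers-3} holds for every $u\in L^2(\mathbb{R})\cap Y(\mathbb{R},w_1)$ (respectively $w_2$), which is precisely what you spell out via the $L^1\times L^2$ convolution identity and Corollary~\ref{co:w1w2}. One small slip: your ``if and only if'' in part~(b) is an overstatement (e.g.\ $h(x)=e^{-cx/2}\chi_{(-\infty,0]}$ lies in $L^1(\mathbb{R},w_2)$ but not in $L^1(\mathbb{R})$); only the implication $L^1_{(-\infty,0]}(\mathbb{R})\subseteq L^1_{(-\infty,0]}(\mathbb{R},w_2)$ is true, but that is all you need for the stated set equality.
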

\begin{proof}
If $a\in F\left(L^1(\mathbb{R})\cap L^1(\mathbb{R},w_1)\right)$, then
\eqref{eq:unbounded-multipliers-3} holds for every 
$u\in L^2(\mathbb{R})\cap Y(\mathbb{R},w_1)$, which implies 
\eqref{eq:unbounded-multipliers-6} and completes the proof of part (a).
The proof of part (b) is analogous.
\end{proof}
In view of Theorem~\ref{th:unbounded-multipliers},
$\mathcal{M}_{Y(\mathbb{R},w_j)}$, $j = 1,2$ 
contain many unbounded functions. Let us give some concrete examples.

Let $\mathbb{C}_+:=\{z=x+iy\in\mathbb{C}:y>0\}$ and $H^2(\mathbb{C}_+)$
be the Hardy space of all functions $f$ analytic in $\mathbb{C}_+$ such that
$|f(x+iy)|^2$ is integrable for each $y>0$ and
\[
\sup_{y>0}\int_{\mathbb{R}}|f(x+iy)|^2\,dx<\infty.
\]
If $f\in H^2(\mathbb{C}_+)$, 
then the boundary function
\[
f(x)=\lim_{y\to 0+}f(x+iy)
\]
exists a.e. on $\mathbb{R}$ and belongs to $L^2(\mathbb{R})$ (see, e.g., 
the corollary of \cite[Theorem~1.1]{D70}). Let $H_+^2(\mathbb{R})$ be the 
Hardy space of the boundary functions of the functions 
$f\in H^2(\mathbb{C}_+)$.
\begin{corollary}\label{co:examples-of-unbounded-multipliers}
Let $Y(\mathbb{R})$ be a translation-invariant Banach function space and $w$ 
be one of the weights $w_1$ and $w_2$ in Corollary~\ref{co:w1w2}. 
\begin{enumerate}
\item[{\rm (a)}]
The Hardy space $H^2_+(\mathbb{R})$ corresponding to the upper complex 
half-plane is a subset of the space $\mathcal{M}_{Y(\mathbb{R}, w)}$.

\item[{\rm (b)}]
Let
\[
a_{-\alpha}(\xi) := \lim_{\varepsilon \to 0+} (\xi + i\varepsilon)^{-\alpha}, 
\quad
\xi \in \mathbb{R}\setminus\{0\} , \ 0 < \alpha < 1 ,
\]
where $z^{-\alpha}$ denotes a branch analytic in the complex plane cut along 
the negative half-line. Then $a_{-\alpha} \in \mathcal{M}_{Y(\mathbb{R}, w)}$.
\end{enumerate}
\end{corollary}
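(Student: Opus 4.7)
The plan is to deduce both parts directly from Theorem~\ref{th:unbounded-multipliers}. That theorem reduces the problem to producing, for each candidate symbol $a$, the inverse Fourier transform $h = F^{-1}a$ and checking that $h \in S'(\mathbb{R}) \cap L^1(\mathbb{R}, w_1)$ (for the case $w = w_1$) or $h \in S'(\mathbb{R}) \cap L^1_{(-\infty, 0]}(\mathbb{R}, w_2)$ (for the case $w = w_2$). Since $\varphi(x) = x$ for $x \le 0$, the weights $w_1$ and $w_2$ coincide with $e^{cx}$ on the left half-line, so once we know $\operatorname{supp} h \subseteq (-\infty, 0]$, both inclusions reduce to the single estimate $\int_{-\infty}^{0} |h(x)|\, e^{cx}\, dx < \infty$.

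For part (a), I would invoke the Paley-Wiener theorem for $H^2_+(\mathbb{R})$: with the Fourier convention of the paper, any $a \in H^2_+(\mathbb{R})$ satisfies $a \in L^2(\mathbb{R}) \subset S'(\mathbb{R})$ and $F^{-1}a \in L^2(\mathbb{R})$ with $\operatorname{supp} F^{-1}a \subseteq (-\infty, 0]$, because $a$ is the boundary value of an analytic function in $\mathbb{C}_+$ and its inverse Fourier transform must vanish on $(0, \infty)$. The Cauchy-Schwarz inequality then gives
\[
\int_{-\infty}^{0} |F^{-1}a(x)|\, e^{cx}\, dx
\le \|F^{-1}a\|_{L^2(\mathbb{R})} \left(\int_{-\infty}^{0} e^{2cx}\, dx\right)^{1/2}
< \infty,
\]
and Theorem~\ref{th:unbounded-multipliers} finishes the argument.

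For part (b), the function $a_{-\alpha}$ satisfies $|a_{-\alpha}(\xi)| = |\xi|^{-\alpha}$, which is locally integrable (since $\alpha < 1$) and decays at infinity, so $a_{-\alpha} \in S'(\mathbb{R})$. The heart of the proof is to identify
\[
F^{-1}a_{-\alpha}(x) = \frac{e^{-i\pi\alpha/2}}{\Gamma(\alpha)}\, (-x)^{\alpha - 1}\, \chi_{(-\infty, 0)}(x).
\]
I plan to derive this by regularizing $a_{-\alpha, \varepsilon}(\xi) := (\xi + i\varepsilon)^{-\alpha}$ and passing to the limit $\varepsilon \to 0^+$ in $S'(\mathbb{R})$, using the Mellin-type identity $\int_{0}^{\infty} y^{\alpha - 1} e^{iy\xi}\, dy = \Gamma(\alpha)\, e^{i\pi\alpha/2}\, \xi^{-\alpha}$ for $\xi > 0$ (justified by rotating the integration contour into the ray where the integrand decays exponentially) and its analog for $\xi < 0$ to match the two branches of $a_{-\alpha}$. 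With this formula in hand, a direct computation gives $\int_{-\infty}^{0} |F^{-1}a_{-\alpha}(x)|\, e^{cx}\, dx = \frac{1}{\Gamma(\alpha)} \int_{0}^{\infty} y^{\alpha - 1} e^{-cy}\, dy = c^{-\alpha} < \infty$, and Theorem~\ref{th:unbounded-multipliers} concludes.

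The main obstacle is the explicit identification of $F^{-1}a_{-\alpha}$ in part (b): the integral defining it is only conditionally convergent, so both the contour rotation and the limit $\varepsilon \to 0^+$ must be handled carefully, and one must verify that the two sides of $\xi = 0$ reassemble precisely to the prescribed boundary value $(\xi + i0)^{-\alpha}$. Once this identification is made, everything else is routine.
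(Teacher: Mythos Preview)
Your proposal is correct and follows essentially the same route as the paper: both parts are reduced to Theorem~\ref{th:unbounded-multipliers} by showing that $F^{-1}a$ is supported on $(-\infty,0]$ and lies in $L^1(\mathbb{R},w)$, using the Paley--Wiener theorem for~(a) and the explicit form of $F^{-1}a_{-\alpha}$ as a constant times $|x|^{\alpha-1}\chi_{(-\infty,0)}$ for~(b). The only difference is that the paper simply cites \cite[Example~2.3, formula~$(2.41')$]{E81} for the identity $a_{-\alpha}=k_\alpha F f_{\alpha-1}^-$, whereas you plan to rederive it via the regularization $(\xi+i\varepsilon)^{-\alpha}$ and contour rotation; this makes your argument more self-contained but is otherwise the same computation.
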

\begin{proof}
(a) It follows from H\"older's inequality that any function in 
$L^2(\mathbb{R})$, which vanishes on $(0, \infty)$, belongs to 
$L^1(\mathbb{R},w)$. By the Paley-Wiener theorem
(see, e.g., \cite[Corollary to Theorem~11.9]{D70}), if 
$u\in H_+^2(\mathbb{R})$, then $Fu(\xi)$ vanishes for almost all $\xi<0$.
Hence $F^{-1}u(\xi)$ vanishes for almost all $\xi>0$. Therefore 
$F^{-1}u\in L^1(\mathbb{R},w)$ and $u\in FL^1(\mathbb{R},w)$.
Now we can apply Theorem~\ref{th:unbounded-multipliers} to complete
the proof of part (a).

(b) Since $0<\alpha<1$, it follows from 
\cite[Example 2.3, formula $(2.41')$]{E81}
that
\[
a_{-\alpha}=k_\alpha F^{-1}f_{\alpha-1}^+=k_\alpha Ff_{\alpha-1}^-,
\]
where $k_\alpha\in\mathbb{C}$ is some constant depending on $\alpha$ and
\[
f_{\alpha-1}^+(x):=\left\{\begin{array}{ccc}
x^{\alpha-1} &\mbox{if} & x>0,
\\
0 &\mbox{if}& x<0,
\end{array}\right.
\quad
f_{\alpha-1}^-(x):=\left\{\begin{array}{ccc}
0 &\mbox{if} & x>0,
\\
|x|^{\alpha-1} &\mbox{if}& x<0,
\end{array}\right.
\]
define regular distributions in $S'(\mathbb{R})$ (see \cite[Example~1.6]{E81}).
It is clear that $f_{\alpha-1}^-\in L^1(\mathbb{R},w)$, whence
$a_{-\alpha}\in F\left(S'(\mathbb{R})\cap L^1(\mathbb{R},w)\right)$.
It remains to apply Theorem~\ref{th:unbounded-multipliers}.
\end{proof}
\section{Classes $\mathcal{M}^0_{X(\mathbb{R}^n)}$ and 
$\mathcal{M}_{X(\mathbb{R}^n)}\cap L^\infty(\mathbb{R}^n)$}
\label{sec:alternative-definition}
\subsection{Two classes of Fourier multipliers coincide in the case of a nice 
underlying space}
\begin{theorem}\label{th:M0}
If a Banach function space $X(\mathbb{R}^n)$ satisfies the bounded 
$L^2$-approximation property, then
\begin{equation}\label{eq:M0-1}
\mathcal{M}^0_{X(\mathbb{R}^n)}
=
\mathcal{M}_{X(\mathbb{R}^n)}
\cap L^\infty(\mathbb{R}^n)
\quad\mbox{ and }\quad
\|a\|_{\mathcal{M}_{X(\mathbb{R}^n)}}
=
\|a\|_{\mathcal{M}^0_{X(\mathbb{R}^n)}} .
\end{equation}
\end{theorem}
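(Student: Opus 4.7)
The plan is to establish the reverse inclusion and the reverse norm inequality to those already given in \eqref{eq:multiplier-embedding} and \eqref{eq:multiplier-est}. Specifically, we already know that $\mathcal{M}^0_{X(\mathbb{R}^n)} \subseteq \mathcal{M}_{X(\mathbb{R}^n)} \cap L^\infty(\mathbb{R}^n)$ and that $\|a\|_{\mathcal{M}_{X(\mathbb{R}^n)}} \le \|a\|_{\mathcal{M}^0_{X(\mathbb{R}^n)}}$, so it suffices to fix $a \in \mathcal{M}_{X(\mathbb{R}^n)} \cap L^\infty(\mathbb{R}^n)$ and show that
\[
\|F^{-1}aFu\|_{X(\mathbb{R}^n)} \le \|a\|_{\mathcal{M}_{X(\mathbb{R}^n)}}\|u\|_{X(\mathbb{R}^n)}
\quad\mbox{for every}\quad u \in L^2(\mathbb{R}^n) \cap X(\mathbb{R}^n).
\]
This will simultaneously prove $a \in \mathcal{M}^0_{X(\mathbb{R}^n)}$ and the missing norm inequality.

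Given such a $u$, I would invoke the bounded $L^2$-approximation property to produce a sequence $\{u_j\}_{j\in\mathbb{N}} \subset C_0^\infty(\mathbb{R}^n)$ satisfying \eqref{eq:bounded-L2-approximation}. Since each $u_j$ lies in $S(\mathbb{R}^n) \cap X(\mathbb{R}^n)$, the definition of $\mathcal{M}_{X(\mathbb{R}^n)}$ gives the uniform estimate
\[
\|F^{-1}aFu_j\|_{X(\mathbb{R}^n)} \le \|a\|_{\mathcal{M}_{X(\mathbb{R}^n)}} \|u_j\|_{X(\mathbb{R}^n)},
\]
and hence $\limsup_{j \to \infty}\|F^{-1}aFu_j\|_{X(\mathbb{R}^n)} \le \|a\|_{\mathcal{M}_{X(\mathbb{R}^n)}}\|u\|_{X(\mathbb{R}^n)}$ by the second condition in \eqref{eq:bounded-L2-approximation}.

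Next, because $a \in L^\infty(\mathbb{R}^n)$ and $F^{\pm 1}$ are isometries on $L^2(\mathbb{R}^n)$ (Plancherel), multiplication by $a$ is bounded on $L^2(\mathbb{R}^n)$, so $F^{-1}aFu_j \to F^{-1}aFu$ in $L^2(\mathbb{R}^n)$. Extracting an a.e.\ convergent subsequence $\{F^{-1}aFu_{j_k}\}$, I would then apply the Fatou lemma for Banach function spaces (\cite[Chap.~1, Lemma~1.5]{BS88}), which is a consequence of axiom (A3), to conclude that $F^{-1}aFu \in X(\mathbb{R}^n)$ with
\[
\|F^{-1}aFu\|_{X(\mathbb{R}^n)}
\le
\liminf_{k \to \infty}\|F^{-1}aFu_{j_k}\|_{X(\mathbb{R}^n)}
\le
\|a\|_{\mathcal{M}_{X(\mathbb{R}^n)}}\|u\|_{X(\mathbb{R}^n)}.
\]
Taking the supremum over $u \in (L^2(\mathbb{R}^n) \cap X(\mathbb{R}^n))\setminus\{0\}$ yields both the inclusion $\mathcal{M}_{X(\mathbb{R}^n)} \cap L^\infty(\mathbb{R}^n) \subseteq \mathcal{M}^0_{X(\mathbb{R}^n)}$ and the reverse norm inequality, which together with \eqref{eq:multiplier-embedding} and \eqref{eq:multiplier-est} give \eqref{eq:M0-1}.

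There is no real obstacle here once the bounded $L^2$-approximation property is in hand; the only delicate point to keep track of is the transition from $L^2$-convergence of $F^{-1}aFu_j$ to a pointwise a.e.\ convergent subsequence, which is needed because the Fatou-type property in a Banach function space is a pointwise statement rather than an $L^2$ one. The role of the assumption $a \in L^\infty(\mathbb{R}^n)$ is precisely to guarantee that the operator $u \mapsto F^{-1}aFu$ makes sense and is continuous on $L^2(\mathbb{R}^n)$, so that the $L^2$-convergence $u_j \to u$ is transmitted to $F^{-1}aFu_j \to F^{-1}aFu$.
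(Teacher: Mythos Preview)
Your proposal is correct and follows essentially the same approach as the paper's own proof: invoke the bounded $L^2$-approximation property, use $a\in L^\infty(\mathbb{R}^n)$ to get $L^2$-convergence of $F^{-1}aFu_j$ to $F^{-1}aFu$, pass to an a.e.\ convergent subsequence, and apply Fatou's lemma for Banach function spaces together with the $\limsup$ bound on $\|u_j\|_{X(\mathbb{R}^n)}$. The paper proceeds in exactly this way, with only trivial differences in the order of presentation.
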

\begin{proof}
According to \eqref{eq:multiplier-embedding} and \eqref{eq:multiplier-est}, we 
only need to prove that
\[
\mathcal{M}^0_{X(\mathbb{R}^n)}
\supseteq
\mathcal{M}_{X(\mathbb{R}^n)} \cap L^\infty(\mathbb{R}^n)
\quad \mbox{ and } \quad
\|a\|_{\mathcal{M}_{X(\mathbb{R}^n)}}
\ge
\|a\|_{\mathcal{M}^0_{X(\mathbb{R}^n)}}.
\]
Choose any function 
$a \in \mathcal{M}_{X(\mathbb{R}^n)}\cap L^\infty(\mathbb{R}^n)$.
Take any function $u \in L^2(\mathbb{R}^n)\cap X(\mathbb{R}^n)$ and consider
a sequence $\{u_j\}_{j\in\mathbb{N}} \subset C^\infty_0(\mathbb{R}^n)$
satisfying \eqref{eq:bounded-L2-approximation}. Since 
$a \in L^\infty(\mathbb{R}^n)$ and 
$\mathcal{M}_{L^2(\mathbb{R}^n)}^0=L^\infty(\mathbb{R}^n)$
(see, e.g., \cite[Theorem~2.5.10]{G14-classical}), we have
\[
\lim_{j\to\infty}\|F^{-1} aFu - F^{-1} aFu_j\|_{L^2(\mathbb{R}^n)}=0. 
\]
Then there exists a subsequence $\{F^{-1} aFu_{j_k}\}_{k \in \mathbb{N}}$
of the sequence $\{F^{-1} aFu_j\}_{j \in \mathbb{N}}$ that
converges to $F^{-1} aFu$ almost everywhere. Since 
$a\in\mathcal{M}_{X(\mathbb{R}^n)}$, from the inequality in
\eqref{eq:bounded-L2-approximation} we get
\begin{align*}
\liminf_{k \to \infty} \|F^{-1} aFu_{j_k}\|_{X(\mathbb{R}^n)}
&\le 
\|a\|_{\mathcal{M}_{X(\mathbb{R}^n)}}
\liminf_{k \to \infty} \|u_{j_k}\|_{X(\mathbb{R}^n)}
\\
&\le 
\|a\|_{\mathcal{M}_{X(\mathbb{R}^n)}}
\limsup_{j \to \infty} \|u_j\|_{X(\mathbb{R}^n)}
\le
\|a\|_{\mathcal{M}_{X(\mathbb{R}^n)}} \|u\|_{X(\mathbb{R}^n)}.
\end{align*}
Fatou's lemma (see \cite[Chap. 1, Lemma 1.5]{BS88}) 
and the above inequality imply that the function
$F^{-1} aFu$  belongs to $X(\mathbb{R}^n)$ and
for $u \in L^2(\mathbb{R}^n)\cap X(\mathbb{R}^n)$,
\[
\|F^{-1} aFu\|_{X(\mathbb{R}^n)}
\le 
\liminf_{k \to \infty} \|F^{-1} aFu_{j_k}\|_{X(\mathbb{R}^n)}
\le
\|a\|_{\mathcal{M}_{X(\mathbb{R}^n)}} \|u\|_{X(\mathbb{R}^n)}.
\]
Hence
$a \in \mathcal{M}^0_{X}(\mathbb{R}^n)$  and
$\|a\|_{\mathcal{M}^0_{X(\mathbb{R}^n)}} \le 
\|a\|_{\mathcal{M}_{X(\mathbb{R}^n)}}$.
\end{proof}
\subsection{Two classes of Fourier multipliers are different in general}
The following theorem shows that equalities in \eqref{eq:M0-1} do not always 
hold. 
\begin{theorem}\label{th:two-classes-of-multipliers-are-different}
For a compact set $G \subset [0, 1]^n$ of positive measure with empty 
interior and a sequence $b=\{b_m\}_{m\in\mathbb{N}} \subset (0, 1)$
satisfying 
\begin{equation}\label{eq:two-classes-of-multipliers-are-different-0}
\lim_{m \to \infty} b_m = 0,
\end{equation}
consider the sequence
\begin{align}
G_m 
&:= 
(2m, 0, \dots, 0) + G 
\nonumber\\
&= 
\left\{(y_1 + 2m, y_2, \dots, y_n) \in \mathbb{R}^n\ : \ 
y = (y_1, y_2, \dots, y_n) \in G\right\} , \quad m \in \mathbb{N}, 
\label{eq:two-classes-of-multipliers-are-different-1}
\end{align}
and define the weight $w_{G,b}$ for $y=(y_1,\dots, y_n)\in\mathbb{R}^n$
satisfying $y_1\ge 0$ by
\begin{equation}\label{eq:two-classes-of-multipliers-are-different-2}
w_{G,b}(y) 
:= 
\left\{\begin{array}{cl}
    b_m,  & y \in G_m,\, m \in \mathbb{N}  ,  \\
    1, &   y \not\in \bigcup_{m \in \mathbb{N}} G_m ,
\end{array}\right.  
\end{equation}
and for $y=(y_1,\dots,y_n)\in\mathbb{R}^n$ satisfying $y_1<0$ by
\begin{equation}\label{eq:two-classes-of-multipliers-are-different-3}
w_{G,b}(y):=w_{G,b}(-y). 
\end{equation}
Then there
exists $a \in S(\mathbb{R}^n)$ such that 
$a \in \mathcal{M}_{L^\infty(\mathbb{R}^n,w_{G,b})}\setminus 
\mathcal{M}^0_{L^\infty(\mathbb{R}^n,w_{G,b})}$.
\end{theorem}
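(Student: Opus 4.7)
The plan is to exhibit a Schwartz function $a$---concretely the Gaussian $a(\xi)=e^{-|\xi|^2}$---that lies trivially in $\mathcal{M}_{X(\mathbb{R}^n)}$, where $X(\mathbb{R}^n):=L^\infty(\mathbb{R}^n,w_{G,b})$, but fails to lie in $\mathcal{M}^0_{X(\mathbb{R}^n)}$ because its action on the indicator functions $u_m:=\chi_{G_m}$ blows up. The guiding principle is that in the continuous regime the weight $w_{G,b}$ is essentially invisible, while for the discontinuous test functions $\chi_{G_m}$ the weight can make the $X(\mathbb{R}^n)$-norm arbitrarily small as $m\to\infty$.

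The first step is the reduction that for every bounded continuous $v$ on $\mathbb{R}^n$,
\[
\|vw_{G,b}\|_{L^\infty(\mathbb{R}^n)}=\|v\|_{L^\infty(\mathbb{R}^n)}.
\]
The inequality $\le$ is immediate from $w_{G,b}\le 1$. For $\ge$, observe that the sets $G_m\cup(-G_m)$, $m\in\mathbb{N}$, sit in pairwise disjoint bounded slabs receding to infinity, so their union $E$ is closed; since each $G_m$ has empty interior, $\mathbb{R}^n\setminus E$ is open and dense, and $w_{G,b}\equiv 1$ there. Continuity of $v$ then yields $\operatorname{ess\,sup}_{\mathbb{R}^n\setminus E}|v|=\sup_{\mathbb{R}^n}|v|$, and the claim follows.

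With $\phi:=F^{-1}a$ a strictly positive Schwartz function, for every $u\in S(\mathbb{R}^n)\cap X(\mathbb{R}^n)$ the image $F^{-1}aFu=\phi\ast u$ again lies in $S(\mathbb{R}^n)$, so the first step combined with Young's inequality gives
\[
\|F^{-1}aFu\|_{X(\mathbb{R}^n)}=\|\phi\ast u\|_{L^\infty(\mathbb{R}^n)}\le\|\phi\|_{L^1(\mathbb{R}^n)}\|u\|_{L^\infty(\mathbb{R}^n)}=\|\phi\|_{L^1(\mathbb{R}^n)}\|u\|_{X(\mathbb{R}^n)},
\]
so $a\in\mathcal{M}_{X(\mathbb{R}^n)}$. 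To show $a\notin\mathcal{M}^0_{X(\mathbb{R}^n)}$, I would test on $u_m:=\chi_{G_m}\in L^2(\mathbb{R}^n)\cap X(\mathbb{R}^n)$. The definition of $w_{G,b}$ gives $\|u_m\|_{X(\mathbb{R}^n)}=b_m$ directly. On the other hand, the change of variables $y=z+(2m,0,\ldots,0)$ in the convolution shows that $\phi\ast\chi_{G_m}$ is the translate of $\phi\ast\chi_G$ by $(2m,0,\ldots,0)$, hence continuous with $L^\infty$-norm equal to $C_G:=\|\phi\ast\chi_G\|_{L^\infty(\mathbb{R}^n)}$, which is strictly positive because $\phi>0$ everywhere and $|G|>0$. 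Applying the first step once more,
\[
\frac{\|F^{-1}aFu_m\|_{X(\mathbb{R}^n)}}{\|u_m\|_{X(\mathbb{R}^n)}}=\frac{C_G}{b_m}\to\infty
\]
by condition \eqref{eq:two-classes-of-multipliers-are-different-0}, placing $a$ outside $\mathcal{M}^0_{X(\mathbb{R}^n)}$.

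The main subtlety is the first step, where one must pass from the essential supremum against $w_{G,b}$---which a priori ignores null sets and which has to cope with the fact that $E$ itself has infinite Lebesgue measure---to the honest supremum of a continuous function on $\mathbb{R}^n$. Both the empty interior of each $G_m$ and the recession of the slabs to infinity (ensuring $E$ is closed) are crucial here. Once that reduction is in hand, the remainder---Young's inequality applied to Schwartz test functions, and the translation invariance of $\phi\ast\chi_{G_m}$ under shifts by $(2m,0,\ldots,0)$---is routine.
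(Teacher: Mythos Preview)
Your proof is correct and follows essentially the same approach as the paper: both exploit the fact that $\mathbb{R}^n\setminus\bigcup_m(G_m\cup(-G_m))$ is open and dense (because each $G_m$ has empty interior and the translates sit in separated slabs), so that for any bounded continuous function the weighted $L^\infty$-norm equals the unweighted one, while the discontinuous test functions $\chi_{G_m}$ feel the weight and have norm $b_m\to 0$. The paper chooses $a=F\varrho_j$ for a mollifier $\varrho_j$ and uses a Lebesgue-point argument to secure $(\varrho_j\ast\chi_G)(x^{(0)})\ge 1/2$; your choice of the Gaussian makes $\phi=F^{-1}a$ strictly positive everywhere, so $C_G>0$ is immediate and the argument is slightly more streamlined, but the underlying mechanism is identical.
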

\begin{proof} 
Let $x^{(0)} \in G$ be a Lebesgue point of the function $\chi_G$ and let 
$\{\varrho_j\}_{j\in\mathbb{N}}$ be the sequence defined by 
\eqref{eq:mollification}. It follows from Lemma~\ref{le:Lebesgue} that there 
exists $j \in \mathbb{N}$ for which
\[
\int_{\mathbb{R}^n} |\chi_G(y) - \chi_G(x^{(0)})| \varrho_j(x^{(0)} - y)\, dy 
\le 
\frac12.
\]
Hence,
\[
\int_{\mathbb{R}^n} \varrho_j(x^{(0)} - y)\, dy 
- 
\int_{\mathbb{R}^n} \chi_G(y) \varrho_j(x^{(0)} - y)\, dy \le \frac12.
\]
Therefore, $1 - (\varrho_j \ast \chi_G)(x^{(0)}) \le 1/2$, whence
\begin{equation}\label{eq:two-classes-of-multipliers-are-different-4}
(\varrho_j \ast \chi_G)(x^{(0)}) \ge \frac12. 
\end{equation}
Let $a := F\varrho_j \in S(\mathbb{R}^n)$. Take any $u \in S(\mathbb{R}^n)$ 
with $\|u\|_{L^\infty(\mathbb{R}^n,w_{G,b})} \le 1$. 
The same argument as in the proof of Lemma~\ref{le:exotic-weight} shows that 
$|u(y)| \le 1$ for all $y \in \mathbb{R}^n$. Then by \cite[Theorem~7.8(b)]{R91},
\[
\left|(F^{-1}aF u)(x)\right| 
= 
|(\varrho_j \ast u)(x)| \le \int_{\mathbb{R}^n} \varrho_j(y)\, dy = 1 
\quad\mbox{for all}\quad
x  \in \mathbb{R}^n 
\]
and $\|F^{-1}aF u\|_{L^\infty(\mathbb{R}^n, w_{G,b})} \le 1$. 
Hence $a \in \mathcal{M}_{L^\infty(\mathbb{R}^n, w_{G,b})}$ and
$\|a\|_{\mathcal{M}_{L^\infty(\mathbb{R}^n, w_{G,b})}} \le 1$.

On the other hand, consider
\[
u_m :=  b_m^{-1}\,  \chi_{G_m} , \quad m \in \mathbb{N}.
\]
It is clear that $\|u_m\|_{L^\infty(\mathbb{R}^n,w_{G,b})} = 1$. 
Since $u_m\in L^1(\mathbb{R}^n)\subset S'(\mathbb{R}^n)$ and 
$a=F\varrho_j\in S(\mathbb{R}^n)$, it follows from \cite[Theorem~7.19(c)]{R91}
that 
$(\varrho_j\ast u_m)\widehat{\hspace{2mm}}
=\widehat{\varrho_j}\widehat{u}_m=aFu_m$. This equality and
\cite[Propositions~4.18, 4.20]{B11} imply that 
$F^{-1}aFu_m=\varrho_j\ast u_m\in C_0^\infty(\mathbb{R}^n)$ because
$\varrho_j\in C_0^\infty(\mathbb{R}^n)$ and $u_m\in L^\infty(\mathbb{R}^n)$
has compact support. Therefore,
\[
v_m
:= 
\|F^{-1}aF u_m\|_{L^\infty(\mathbb{R}^n, w_{G,b})}^{-1} 
F^{-1}aF u_m \in 
C_0^\infty(\mathbb{R}^n)\subset
S(\mathbb{R}^n) 
\]
and $\|v_m\|_{L^\infty(\mathbb{R}^n,w_{G,b})} = 1$.
Then, as above, $|v_m(x)| \le 1$ for all $x \in \mathbb{R}^n$
and $m\in\mathbb{N}$, i.e.
\begin{equation}\label{eq:two-classes-of-multipliers-are-different-5}
\left|F^{-1}aF u_m(x)\right| \le 
\|F^{-1}aF u_m\|_{L^\infty(\mathbb{R}^n, w_{G,b})}
\quad\mbox{for all}\quad
x  \in \mathbb{R}^n,
\
m\in\mathbb{N}.
\end{equation}
Let
\[
x^{(m)} :=  (2m, 0, \dots, 0) + x^{(0)},
\quad
m\in\mathbb{N}.
\]
Then, taking into account 
\eqref{eq:two-classes-of-multipliers-are-different-4}, 
we get for all $m\in\mathbb{N}$,
\begin{equation}\label{eq:two-classes-of-multipliers-are-different-6}
F^{-1}aF u_m\big(x^{(m)}\big) 
= 
b_m^{-1}  \left(\varrho_j \ast \chi_{G_m}\right) \big(x^{(m)}\big) 
= 
b_m^{-1}  \left(\varrho_j \ast \chi_{G}\right) \big(x^{(0)}\big) 
\ge 
\frac{1}{2b_m}.
\end{equation}
Now it follows from \eqref{eq:two-classes-of-multipliers-are-different-5},
\eqref{eq:two-classes-of-multipliers-are-different-6}
and \eqref{eq:two-classes-of-multipliers-are-different-0} that
\[
\|F^{-1}aF u_m\|_{L^\infty(\mathbb{R}^n,w_{G,b})} 
\ge 
\frac{1}{2b_m} \to \infty 
\quad \mbox{as} \quad m \to \infty ,
\]
while $\|u_m\|_{L^\infty(\mathbb{R}^n,w_{G,b})} = 1$. Hence 
$a \not\in \mathcal{M}^0_{L^\infty(\mathbb{R}^n, w_{G,b})}$.
\end{proof}
\subsection{Normed algebras of Fourier multipliers}
\begin{lemma}
Let $X(\mathbb{R}^n)$ be a Banach function space.
Then the set $\mathcal{M}^0_{X(\mathbb{R}^n)}$ is a normed algebra with
respect to the norm $\|\cdot\|_{\mathcal{M}^0_{X(\mathbb{R}^n)}}$ and
\[
\|ab\|_{\mathcal{M}^0_{X(\mathbb{R}^n)}} 
\le 
\|a\|_{\mathcal{M}^0_{X(\mathbb{R}^n)}}
\|b\|_{\mathcal{M}^0_{X(\mathbb{R}^n)}} 
\quad\mbox{for all}\quad a, b \in \mathcal{M}^0_{X(\mathbb{R}^n)}.
\]
\end{lemma}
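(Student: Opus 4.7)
The plan is to first verify that $\|\cdot\|_{\mathcal{M}^0_{X(\mathbb{R}^n)}}$ is a norm on the vector space $\mathcal{M}^0_{X(\mathbb{R}^n)}$, and then derive the submultiplicativity from the fact that the operator associated with the product symbol $ab$ factors through the operator associated with $b$, thanks to the $L^2$-theory of Fourier multipliers.

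First, I would check that $\mathcal{M}^0_{X(\mathbb{R}^n)}$ is a linear subspace of $L^\infty(\mathbb{R}^n)$ and that $\|\cdot\|_{\mathcal{M}^0_{X(\mathbb{R}^n)}}$ is subadditive and positively homogeneous; this is routine from linearity of $F^{\pm 1}$ on $L^2$ and sublinearity of the operator-norm supremum. The only genuinely nontrivial norm axiom is positive definiteness. To prove that $\|a\|_{\mathcal{M}^0_{X(\mathbb{R}^n)}}=0$ forces $a=0$ a.e., I would note that every $u\in C_0^\infty(\mathbb{R}^n)$ is bounded with compact support, so by axioms (A2) and (A4) it lies in $X(\mathbb{R}^n)$; trivially it also lies in $L^2(\mathbb{R}^n)$. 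Hence $F^{-1}aFu=0$ in $X(\mathbb{R}^n)$ for all $u\in C_0^\infty(\mathbb{R}^n)$, and because $X(\mathbb{R}^n)\hookrightarrow\mathcal{D}'(\mathbb{R}^n)$, this vanishing is a.e. Applying $F$ yields $a\widehat u=0$ in $L^2(\mathbb{R}^n)$ for all such $u$. Since $F(C_0^\infty(\mathbb{R}^n))$ is dense in $L^2(\mathbb{R}^n)$ (as $F$ is an isometry of $L^2$ and $C_0^\infty(\mathbb{R}^n)$ is dense in $L^2(\mathbb{R}^n)$), multiplication by the bounded function $a$ vanishes on a dense subset of $L^2(\mathbb{R}^n)$; testing against $\chi_E$ for arbitrary bounded measurable $E$ gives $a=0$ a.e.

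For submultiplicativity, given $a,b\in\mathcal{M}^0_{X(\mathbb{R}^n)}$, I would fix an arbitrary $u\in\big(L^2(\mathbb{R}^n)\cap X(\mathbb{R}^n)\big)\setminus\{0\}$ and set $v:=F^{-1}bFu$, where $F^{\pm 1}$ act on $L^2(\mathbb{R}^n)$. Because $b\in L^\infty(\mathbb{R}^n)$ and $Fu\in L^2(\mathbb{R}^n)$, the product $bFu$ lies in $L^2(\mathbb{R}^n)$, and hence $v\in L^2(\mathbb{R}^n)$ with $\|v\|_{L^2}\le\|b\|_\infty\|u\|_{L^2}$. Moreover, since $b\in\mathcal{M}^0_{X(\mathbb{R}^n)}$, the function $v$ lies in $X(\mathbb{R}^n)$ and $\|v\|_{X(\mathbb{R}^n)}\le\|b\|_{\mathcal{M}^0_{X(\mathbb{R}^n)}}\|u\|_{X(\mathbb{R}^n)}$. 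Thus $v\in L^2(\mathbb{R}^n)\cap X(\mathbb{R}^n)$, so $a$ may be applied: $F^{-1}aFv\in X(\mathbb{R}^n)$. The $L^2$-identity $F(F^{-1}bFu)=bFu$ yields $F^{-1}aFv=F^{-1}(ab)Fu$, and consequently
\[
\|F^{-1}(ab)Fu\|_{X(\mathbb{R}^n)}
=\|F^{-1}aFv\|_{X(\mathbb{R}^n)}
\le\|a\|_{\mathcal{M}^0_{X(\mathbb{R}^n)}}\|v\|_{X(\mathbb{R}^n)}
\le\|a\|_{\mathcal{M}^0_{X(\mathbb{R}^n)}}\|b\|_{\mathcal{M}^0_{X(\mathbb{R}^n)}}\|u\|_{X(\mathbb{R}^n)}.
\]
Since $ab\in L^\infty(\mathbb{R}^n)$, taking the supremum over admissible $u$ gives $ab\in\mathcal{M}^0_{X(\mathbb{R}^n)}$ with the required inequality.

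The main obstacle is conceptual rather than technical: one must check that the intermediate function $v=F^{-1}bFu$ lies simultaneously in $L^2(\mathbb{R}^n)$ and $X(\mathbb{R}^n)$, so that the operator attached to $a$ is applicable in its $\mathcal{M}^0$-definition. This is precisely where the choice to test only against $L^2\cap X$ (as opposed to the $\mathcal{M}$-definition, which tests against $S\cap X$) pays off: the class $L^2\cap X$ is stable under the action of any multiplier in $\mathcal{M}^0_{X(\mathbb{R}^n)}$, whereas $S\cap X$ need not be. This explains why the analogous submultiplicativity for $\mathcal{M}_{X(\mathbb{R}^n)}\cap L^\infty(\mathbb{R}^n)$ (proved separately) requires the additional hypothesis that the symbol be bounded, and why it is in general more delicate for $\mathcal{M}_{X(\mathbb{R}^n)}$ itself.
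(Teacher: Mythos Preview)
Your proof is correct and is exactly the straightforward verification the paper has in mind; the paper's own proof consists of the single sentence ``The proof is straightforward.'' Your key observation---that $L^2(\mathbb{R}^n)\cap X(\mathbb{R}^n)$ is invariant under $F^{-1}bF$ for any $b\in\mathcal{M}^0_{X(\mathbb{R}^n)}$, which permits the composition---is precisely the point, and your remark contrasting this with the $\mathcal{M}_{X(\mathbb{R}^n)}$ case anticipates the more delicate argument the paper gives in the subsequent Theorem~\ref{th:algebra-M-L-infty}.
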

\begin{proof}
The proof is straightforward.
\end{proof}
The proof of the following result requires a bit more effort.
\begin{theorem} \label{th:algebra-M-L-infty}
Let $X(\mathbb{R}^n)$ be a Banach function space. Then the set
$\mathcal{M}_{X(\mathbb{R}^n)}\cap L^\infty(\mathbb{R}^n)$ is a normed
algebra with respect to the norm $\|\cdot\|_{\mathcal{M}_{X(\mathbb{R}^n)}}$ 
and
\begin{equation}\label{eq:algebra-M-L-infty-1}
\|ab\|_{\mathcal{M}_{X(\mathbb{R}^n)}} 
\le 
\|a\|_{\mathcal{M}_{X(\mathbb{R}^n)}}
\|b\|_{\mathcal{M}_{X(\mathbb{R}^n)}} 
\quad\mbox{for all}\quad
a, b \in \mathcal{M}_{X(\mathbb{R}^n)}\cap L^\infty(\mathbb{R}^n).
\end{equation}
\end{theorem}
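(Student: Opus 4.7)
The plan is to combine the mollification technique developed in the proof of Theorem~\ref{th:main} with a limiting argument. Let $\{\psi_\varepsilon\}_{\varepsilon>0}$ denote the family of mollifiers introduced there, and set $a_\varepsilon := a\ast\psi_\varepsilon$ and $b_\varepsilon := b\ast\psi_\varepsilon$. The first key observation is that the portion of the proof of Theorem~\ref{th:main} which derives the inequality
\[
\|a\ast\psi_\varepsilon\|_{\mathcal{M}_{X(\mathbb{R}^n)}} \le \|a\|_{\mathcal{M}_{X(\mathbb{R}^n)}}
\]
from \eqref{eq:main-proof-14}, Lemma~\ref{le:NFP-X}, and the Fatou property of $X(\mathbb{R}^n)$ does \emph{not} invoke the weak doubling hypothesis; that assumption enters only afterwards, to obtain $L^\infty$-boundedness of $a\ast\psi_\varepsilon$ via Theorem~\ref{th:symbol-in-weighted-L1}. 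Hence $a_\varepsilon,b_\varepsilon\in\mathcal{M}_{X(\mathbb{R}^n)}$ with norms bounded by $\|a\|_{\mathcal{M}_{X(\mathbb{R}^n)}}$ and $\|b\|_{\mathcal{M}_{X(\mathbb{R}^n)}}$. Since $a,b\in L^\infty(\mathbb{R}^n)$, Young's inequality additionally gives $\|a_\varepsilon\|_{L^\infty}\le\|a\|_{L^\infty}$ and $\|b_\varepsilon\|_{L^\infty}\le\|b\|_{L^\infty}$, while $\partial^\alpha b_\varepsilon = b\ast\partial^\alpha\psi_\varepsilon$ shows that $a_\varepsilon,b_\varepsilon\in C^\infty(\mathbb{R}^n)$ with every derivative bounded.

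Next I would fix an arbitrary $u\in S(\mathbb{R}^n)\cap X(\mathbb{R}^n)$. Because $b_\varepsilon$ is smooth with bounded derivatives and $Fu\in S(\mathbb{R}^n)$, the product $b_\varepsilon Fu$ lies in $S(\mathbb{R}^n)$, so $v_\varepsilon:=F^{-1}(b_\varepsilon Fu)\in S(\mathbb{R}^n)$. By the first step, $v_\varepsilon$ lies in $X(\mathbb{R}^n)$ as well, with $\|v_\varepsilon\|_{X(\mathbb{R}^n)}\le\|b\|_{\mathcal{M}_{X(\mathbb{R}^n)}}\|u\|_{X(\mathbb{R}^n)}$, and hence $v_\varepsilon\in S\cap X$ qualifies as a test function for the multiplier $a_\varepsilon$. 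Because $a_\varepsilon b_\varepsilon Fu\in S(\mathbb{R}^n)$, the Fourier identity $F^{-1}(a_\varepsilon b_\varepsilon)Fu = F^{-1}a_\varepsilon F v_\varepsilon$ holds pointwise in $S(\mathbb{R}^n)$, and the definition of $\mathcal{M}_{X(\mathbb{R}^n)}$ yields
\[
\|F^{-1}(a_\varepsilon b_\varepsilon)Fu\|_{X(\mathbb{R}^n)} \le \|a\|_{\mathcal{M}_{X(\mathbb{R}^n)}}\|b\|_{\mathcal{M}_{X(\mathbb{R}^n)}}\|u\|_{X(\mathbb{R}^n)}.
\]

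To pass to the limit $\varepsilon\to 0$, I would use that standard mollifier theory gives $a_\varepsilon\to a$ and $b_\varepsilon\to b$ a.e.\ (at every common Lebesgue point), while the uniform bound $|(a_\varepsilon b_\varepsilon)Fu|\le\|a\|_{L^\infty}\|b\|_{L^\infty}|Fu|\in L^1(\mathbb{R}^n)$ lets the Lebesgue dominated convergence theorem conclude that $(a_\varepsilon b_\varepsilon)Fu\to(ab)Fu$ in $L^1(\mathbb{R}^n)$. Consequently $F^{-1}(a_\varepsilon b_\varepsilon)Fu\to F^{-1}((ab)Fu)$ uniformly on $\mathbb{R}^n$, and Fatou's lemma for Banach function spaces (\cite[Chap.~1, Lemma~1.5]{BS88}) then delivers $F^{-1}((ab)Fu)\in X(\mathbb{R}^n)$ together with
\[
\|F^{-1}((ab)Fu)\|_{X(\mathbb{R}^n)}\le\|a\|_{\mathcal{M}_{X(\mathbb{R}^n)}}\|b\|_{\mathcal{M}_{X(\mathbb{R}^n)}}\|u\|_{X(\mathbb{R}^n)}.
\]
Taking the supremum over $u\in(S(\mathbb{R}^n)\cap X(\mathbb{R}^n))\setminus\{0\}$ yields \eqref{eq:algebra-M-L-infty-1}.

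The main obstacle is ensuring that the two multipliers can be composed legitimately: the definition of $\mathcal{M}_{X(\mathbb{R}^n)}$ tests only against Schwartz inputs, so one needs $v_\varepsilon = F^{-1}(b_\varepsilon Fu)$ to be a Schwartz function, not merely a member of $X(\mathbb{R}^n)$. This is exactly what regularization provides, through the fact that $b_\varepsilon$ is $C^\infty$ with all derivatives bounded, and it is the reason a direct attempt without preliminary mollification would stall.
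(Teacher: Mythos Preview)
Your argument is correct, but the paper takes a more direct route that avoids mollifying the multipliers altogether. The paper observes that for $u\in S(\mathbb{R}^n)\cap X(\mathbb{R}^n)$ the function $v:=F^{-1}bFu=(F^{-1}b)\ast u$ already lies in $C^\infty(\mathbb{R}^n)\cap L^2(\mathbb{R}^n)\cap X(\mathbb{R}^n)$ (smoothness from $b\in S'$, $L^2$-membership from $b\in L^\infty$). It then sets $v_m:=\phi_m v\in C_0^\infty(\mathbb{R}^n)\subset S(\mathbb{R}^n)$ with cutoffs $\phi_m$, so that $\|v_m\|_X\le\|v\|_X$ and $v_m\to v$ in $L^2$; the argument of Theorem~\ref{th:M0} (apply $a\in L^\infty$ to pass to an a.e.\ convergent subsequence, then Fatou) yields $\|F^{-1}aFv\|_X\le\|a\|_{\mathcal{M}_X}\|v\|_X$, and taking $v=F^{-1}bFu$ finishes. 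Thus the paper never touches $a$ or $b$ themselves; it truncates the \emph{output} of $F^{-1}bF$ instead of smoothing the symbols.

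Your approach buys the composition step directly ($v_\varepsilon\in S$), at the cost of importing the substantial machinery of Subsection~4.2 (the derivation of $\|a\ast\psi_\varepsilon\|_{\mathcal{M}_X}\le\|a\|_{\mathcal{M}_X}$, which occupies roughly two pages). The paper's argument is more self-contained, needing only the elementary facts that $(F^{-1}b)\ast u\in C^\infty$ and the short Fatou step from Theorem~\ref{th:M0}. Both proofs ultimately pass to the limit via a.e.\ convergence and Fatou's lemma, but the paper's truncation is the lighter tool here.
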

\begin{proof} 
It is clear that $\mathcal{M}_{X(\mathbb{R}^n)}\cap L^\infty(\mathbb{R}^n)$ 
is a normed space, so one only needs to prove that 
$ab \in \mathcal{M}_{X(\mathbb{R}^n)}\cap L^\infty(\mathbb{R}^n)$ for all 
$a, b \in \mathcal{M}_{X(\mathbb{R}^n)}\cap L^\infty(\mathbb{R}^n)$
and that \eqref{eq:algebra-M-L-infty-1} holds.

Take any function $u \in S(\mathbb{R}^n)\cap X(\mathbb{R}^n)$. 
Then  $F^{-1}bF u \in X(\mathbb{R}^n)$.  
Since $u \in S(\mathbb{R}^n)\subset L^2(\mathbb{R}^n)$ and 
$b \in L^\infty(\mathbb{R}^n)$, we have $F^{-1}bF u \in L^2(\mathbb{R}^n)$.
On the other hand, in view of \cite[Theorem 7.19(a)]{R91}, 
$u \in S(\mathbb{R}^n)$ and $b \in S'(\mathbb{R}^n)$ imply that
$F^{-1}bF u = \left(F^{-1}b\right)\ast u \in C^\infty(\mathbb{R}^n)$.
Hence $F^{-1}bF$ maps $S(\mathbb{R}^n)\cap X(\mathbb{R}^n)$ into
$X(\mathbb{R}^n)\cap L^2(\mathbb{R}^n)\cap C^\infty(\mathbb{R}^n)$.

Now take a function
$v \in X(\mathbb{R}^n)\cap L^2(\mathbb{R}^n)\cap C^\infty(\mathbb{R}^n)$ 
and consider the sequence $v_m := \phi_m v$, where
$\phi_m \in C_0^\infty(\mathbb{R}^n)$, 
$0 \le \phi_m \le 1$, and $\phi_{m}(x) = 1$ for 
$|x| \le {m}$ and $m \in \mathbb{N}$. Since 
$|v_m| \le |v|$, it follows from axiom (A2) that 
$v_m \in X(\mathbb{R}^n)$ and 
$\|v_m\|_{X(\mathbb{R}^n)} \le \|v\|_{X(\mathbb{R}^n)}$ 
for all $m \in \mathbb{N}$.
Further, $v_m \in C_0^\infty(\mathbb{R}^n)$ and 
$\|v_m - v\|_{L^2(\mathbb{R}^n)} \to 0$ as 
$m \to \infty$. Then it follows as in the proof of 
Theorem~\ref{th:M0} that $F^{-1}aF v \in X(\mathbb{R}^n)$ and
\[
\|F^{-1}aF v\|_{X(\mathbb{R}^n)} 
\le 
\|a\|_{\mathcal{M}_{X(\mathbb{R}^n)}}\|v\|_{X(\mathbb{R}^n)} .
\]
Taking $v = F^{-1}bF u$, one gets 
$F^{-1}a bF u = F^{-1}aF (F^{-1}bF u) \in X(\mathbb{R}^n)$ and
\[
\|F^{-1}a bF u\|_{X(\mathbb{R}^n)} 
\le 
\|a\|_{\mathcal{M}_{X(\mathbb{R}^n)}}\|F^{-1}bF u\|_{X(\mathbb{R}^n)} 
\le
\|a\|_{\mathcal{M}_{X(\mathbb{R}^n)}} 
\|b\|_{\mathcal{M}_{X(\mathbb{R}^n)}} 
\|u\|_{X(\mathbb{R}^n)}  
\]
for all $u \in S(\mathbb{R}^n)\cap X(\mathbb{R}^n)$,
which immediately implies \eqref{eq:algebra-M-L-infty-1}.
\end{proof}
Theorem~\ref{th:algebra-M-L-infty} allows one to prove that if there exists 
a constant $D_X > 0$ such that
\begin{equation}\label{eq:with-any-constant}
\|a\|_{L^\infty(\mathbb{R}^n)} 
\le   
D_X \|a\|_{\mathcal{M}_{X(\mathbb{R}^n)}}
\quad\mbox{for all}\quad 
a \in\mathcal{M}_{X(\mathbb{R}^n)}\cap L^\infty(\mathbb{R}^n),
\end{equation}
then in fact
\begin{equation}\label{eq:with-constant-one}
\|a\|_{L^\infty(\mathbb{R}^n)} \le  \|a\|_{\mathcal{M}_{X(\mathbb{R}^n)}} 
\quad\mbox{for all}\quad 
a \in\mathcal{M}_{X(\mathbb{R}^n)}\cap L^\infty(\mathbb{R}^n).
\end{equation}
Indeed,  one can apply \eqref{eq:with-any-constant} 
and Theorem~\ref{th:algebra-M-L-infty} 
to the function $a^m$ with $m \in \mathbb{N}$ to get
\[
\|a\|_{L^\infty(\mathbb{R}^n)}^m
=
\|a^m\|_{L^\infty(\mathbb{R}^n)}
\le
D_X \|a^m\|_{\mathcal{M}_{X(\mathbb{R}^n)}}
\le
D_X \|a\|_{\mathcal{M}_{X(\mathbb{R}^n)}}^m .
\]
Taking $m \to \infty$ in the inequality
\[
\|a\|_{L^\infty(\mathbb{R}^n)} 
\le  
D_X^{1/m} \|a\|_{\mathcal{M}_{X(\mathbb{R}^n)}},
\]
one gets \eqref{eq:with-constant-one}. One can use this observation instead of 
Lemma~\ref{le:weak-doubling-property} to derive 
\eqref{eq:symbol-in-weighted-L1-0} from  \eqref{eq:symbol-in-weighted-L1-1} 
under the assumption that a Banach function space $X(\mathbb{R}^n)$ 
satisfies the weak doubling property. In particular, this implies that
\cite[Theorem~2.3]{BG98} holds with the constant $K_{p,C}=1$.
It is also clear that the implication 
\eqref{eq:with-any-constant} $\Rightarrow$ \eqref{eq:with-constant-one}
holds with $\mathcal{M}^0_{X(\mathbb{R}^n)}$ in place of 
$\mathcal{M}_{X(\mathbb{R}^n)}$.
\subsection{Normed spaces of Fourier multipliers are not complete in general}
\begin{theorem}
Let $Y(\mathbb{R})$ be a translation-invariant Banach function space and $w$ 
be one of the weights $w_1$ and $w_2$ in Corollary~\ref{co:w1w2}.
\begin{enumerate}
\item[{\rm (a)}]
The normed space $\mathcal{M}_{Y(\mathbb{R}, w)}$ is not complete with respect 
to the norm $\|\cdot\|_{\mathcal{M}_{Y(\mathbb{R}, w)}}$.

\item[{\rm (b)}]
The normed algebra $\mathcal{M}^0_{Y(\mathbb{R}, w)}$ is not complete with 
respect to the norm $\|\cdot\|_{\mathcal{M}^0_{Y(\mathbb{R}, w)}}$.
\end{enumerate}
\end{theorem}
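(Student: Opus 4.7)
The plan is to construct, in each case, a Cauchy sequence in the indicated space that fails to converge there, by taking Fourier transforms of truncations of a carefully chosen kernel $\kappa\in L^1(\mathbb{R},w)$ supported in $(-\infty,0]$. Setting $\kappa_n:=\kappa\chi_{[-n,0]}\in L^1(\mathbb{R})\cap L^1(\mathbb{R},w)\subset S'(\mathbb{R})$ gives $a_n:=F\kappa_n\in\mathcal{M}_{Y(\mathbb{R},w)}$ by Theorem~\ref{th:unbounded-multipliers}, and Corollary~\ref{co:w1w2} yields the key estimate $\|a_n-a_m\|_{\mathcal{M}_{Y(\mathbb{R},w)}}\le\|\kappa_n-\kappa_m\|_{L^1(\mathbb{R},w)}$, so dominated convergence of $\kappa_n$ to $\kappa$ in $L^1(\mathbb{R},w)$ makes $\{a_n\}$ Cauchy. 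The two parts differ in the choice of $\kappa$ that precludes a limit.

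For part (b), I will take $\kappa\in L^1(\mathbb{R},w)\cap S'(\mathbb{R})$ such that $F\kappa$ is unbounded -- the function $\kappa:=f_{\alpha-1}^-$ with $\alpha\in(0,1)$ from Corollary~\ref{co:examples-of-unbounded-multipliers} works, giving $F\kappa=k_\alpha^{-1}a_{-\alpha}$ with a non-integrable singularity at the origin. The Riemann--Lebesgue lemma makes each $a_n$ continuous and bounded, and Corollary~\ref{co:w1w2} applied to $u\in L^2(\mathbb{R})\cap Y(\mathbb{R},w)$ places $a_n$ in $\mathcal{M}^0_{Y(\mathbb{R},w)}$. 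If $\{a_n\}$ converged in $\mathcal{M}^0$ to some $b$, then also $a_n\to b$ in $\mathcal{M}$, while the above estimate gives $a_n\to F\kappa$ in $\mathcal{M}$; the triangle inequality in the normed space $\mathcal{M}_{Y(\mathbb{R},w)}$ forces $b=F\kappa$, contradicting $b\in\mathcal{M}^0\subseteq L^\infty(\mathbb{R})$ and the unboundedness of $F\kappa$.

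For part (a), I will instead take $\kappa\in L^1(\mathbb{R},w)\setminus S'(\mathbb{R})$: for $w=w_1$ the concrete choice $\kappa(x):=e^{-cx/2}\chi_{(-\infty,0]}(x)$ satisfies $\int|\kappa|w_1=2/c$ but grows like $e^{c|x|/2}$ as $x\to-\infty$, so does not pair with all Schwartz test functions; since $w_2$ coincides with $w_1$ on $(-\infty,0]$, the same kernel works for $w=w_2$. Suppose $a_n\to a$ in $\mathcal{M}_{Y(\mathbb{R},w)}$ for some $a\in S'(\mathbb{R})$. Then for every $u\in S(\mathbb{R})\cap Y(\mathbb{R},w)$ one would obtain $F^{-1}aFu=\lim F^{-1}a_nFu=\lim\kappa_n\ast u=\kappa\ast u$ as elements of $Y(\mathbb{R},w)$, the last limit again by Corollary~\ref{co:w1w2}. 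Taking $u\in C_c^\infty(\mathbb{R})$ with $C_u:=\int e^{cz/2}u(z)\,dz\ne 0$, a direct change of variables shows $(\kappa\ast u)(x)=C_u\,e^{-cx/2}$ for $x$ sufficiently negative, i.e., exponential growth at $-\infty$. On the other hand $F^{-1}aFu\in S'(\mathbb{R})$ whenever $a\in S'$ and $u\in S$, so the function representative of $F^{-1}aFu$ in $Y(\mathbb{R},w)$ must pair finitely with every Schwartz function; pairing with a smoothed Schwartz function of sub-exponential decay such as $e^{-|x|^{1/2}}$ must therefore converge, contradicting the exponential growth of $\kappa\ast u$ just computed.

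The main obstacle is the argument in part (a) that $\kappa\ast u$ cannot be a tempered distribution: one must exhibit a Schwartz test function whose decay is slower than $e^{c|x|/2}$, which is possible precisely because Schwartz functions are required only to decay faster than every polynomial, not faster than every exponential.
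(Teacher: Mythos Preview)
Your strategy coincides with the paper's: approximate a kernel $\kappa\in L^1(\mathbb{R},w)$ supported in $(-\infty,0]$ so that Corollary~\ref{co:w1w2} makes the Fourier transforms of the approximants Cauchy, with $\kappa$ chosen so that no admissible limit exists. The paper uses smooth cutoffs $\phi_k g_0$ with $g_0(x)=e^{-cx/2}\phi_0(x)$ in~(a) and exponential damping $e^{x/m}f^-_{\alpha-1}$ in~(b), whereas you use hard truncations $\kappa\chi_{[-n,0]}$ in both; this makes no essential difference, and part~(b) goes through as you describe.

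In part~(a) your contradiction step has a small gap. You write that since $F^{-1}aFu\in S'(\mathbb{R})$, its function representative ``must pair finitely with every Schwartz function''; but a priori you have only shown that the tempered distribution $F^{-1}aFu$ and the locally integrable function $\kappa\ast u$ agree on test functions in $C_c^\infty(\mathbb{R})$, and it is not automatic that the action of $F^{-1}aFu$ on a general $\psi\in S(\mathbb{R})$ is given by $\int(\kappa\ast u)\psi$. The cleanest fix is already in the paper's introduction: for $a\in S'(\mathbb{R})$ and $u\in S(\mathbb{R})$ one always has $F^{-1}aFu\in C_{\mathrm{poly}}^\infty(\mathbb{R})$, so $F^{-1}aFu$ is a polynomially bounded continuous function equal a.e.\ (hence everywhere) to the continuous function $\kappa\ast u$, directly contradicting the exponential growth you computed. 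Alternatively, the paper avoids this issue by working one level up: it shows $\langle F^{-1}a,\widetilde u\rangle=((F^{-1}a)\ast u)(0)=(\kappa\ast u)(0)=\langle\kappa,\widetilde u\rangle$ for every $u\in C_c^\infty(\mathbb{R})$, hence $F^{-1}a=\kappa$ in $\mathcal{D}'(\mathbb{R})$, which immediately contradicts $F^{-1}a\in S'(\mathbb{R})$ and $\kappa\notin S'(\mathbb{R})$.
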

\begin{proof}
(a) Consider the function $g_0(x) := e^{-cx/2}\phi_0(x)$, 
where the constant $c>0$ is from the definition of the weights
$w_1$ and $w_2$, and a function $\phi_0 \in C^\infty(\mathbb{R})$ is such 
that $\phi_0(x) = 0$ for $x \ge 0$ and  $\phi_0(x) = 1$ for $x \le -1$. 
It is easy to see that $g_0\in L^1(\mathbb{R},w)$, whence
it may be identified with the distribution in $\mathcal{D}'(\mathbb{R})$.
On the other hand, $g_0\notin S'(\mathbb{R})$ (cf. 
\cite[Chap.~7, Exercise~3]{R91}).

Consider $g_k(x) := \phi_k(x) g_0(x)$, where 
$\phi_{k} \in C_0^\infty(\mathbb{R})$,  $0 \le \phi_{k} \le 1$, and 
$\phi_{k}(x) = 1$ for  $|x| \le {k}$ and all $k\in\mathbb{N}$.
The Lebesgue dominated convergence theorem implies that
$\|g_{k} - g_0\|_{L^1(\mathbb{R}, w)} \to 0$ as ${k} \to \infty$. 
Hence $\{g_k\}_{k\in\mathbb{N}}$ is a Cauchy sequence in 
$L^1(\mathbb{R},w)$. Let $a_k :=  F g_k$ for $k \in \mathbb{N}$. 

It follows from 
\eqref{eq:unbounded-multipliers-4}--\eqref{eq:unbounded-multipliers-5} that
\[
\|a_k-a_m\|_{\mathcal{M}_{Y(\mathbb{R},w)}}
\le
\|F^{-1}(a_k-a_m)\|_{L^1(\mathbb{R},w)}
=
\|g_k-g_m\|_{L^1(\mathbb{R},w)}
\quad\mbox{for all}\quad k,m\in\mathbb{N}.
\]
Therefore, $\{a_k\}_{k \in \mathbb{N}}$ is a Cauchy sequence in 
$\mathcal{M}_{Y(\mathbb{R}, w)}$. Suppose it converges to a 
limit $a_0$ in $\mathcal{M}_{Y(\mathbb{R}, w)}$. Then the sequence
$F^{-1}a_{k}Fu = g_{k}\ast u$ converges to $F^{-1}a_0Fu = (F^{-1}a_0)\ast u$ in 
$Y(\mathbb{R}, w)$ as ${k} \to \infty$ for any function
$u \in C_0^\infty(\mathbb{R}) \subset S(\mathbb{R})\cap Y(\mathbb{R}, w)$.

On the other hand, Corollary \ref{co:w1w2} implies that $g_{k}\ast u$ converges 
to $g_0\ast u$ in $Y(\mathbb{R}, w)$. Hence $(F^{-1}a_0)\ast u = g_0\ast u$ for 
any $u \in C_0^\infty(\mathbb{R})$. Since $(F^{-1}a_0)\ast u = g_0\ast u$ are 
continuous functions (see, e.g., \cite[Theorem 6.30(b)]{R91}), one gets
\[
\langle F^{-1}a_0, \widetilde{u}\rangle 
= 
\left((F^{-1}a_0)\ast u\right)(0) 
= 
\left(g_0\ast u\right)(0) = \langle g_0, \widetilde{u}\rangle 
\quad\mbox{for all}\quad
u \in C_0^\infty(\mathbb{R}) .
\]
Hence the distributions $F^{-1}a_0 \in S'(\mathbb{R})$ and 
$g_0 \in \mathcal{D}'(\mathbb{R})\setminus S'(\mathbb{R})$ are equal to each 
other. This contradiction shows that $\{a_k\}_k \in \mathbb{N}$ does not 
converge to a limit in $\mathcal{M}_{Y(\mathbb{R}, w)}$.

(b) Consider the functions $g_{m}(x) := e^{x/m}\, f^-_{\alpha - 1}(x)$, 
$m \in \mathbb{N}$, where $f^-_{\alpha - 1}$ is the same as in the proof of 
Corollary~\ref{co:examples-of-unbounded-multipliers}(b). The Lebesgue dominated 
convergence theorem implies that
\[
\|g_m - f^-_{\alpha - 1}\|_{L^1(\mathbb{R}, w)} \to 0 
\quad\mbox{as}\quad m \to \infty,
\quad\quad
\|g_m-g_k\|_{L^1(\mathbb{R},w)}\to 0
\quad\mbox{as}\quad m,k\to\infty.
\]
Then it follows from  
\eqref{eq:unbounded-multipliers-4}--\eqref{eq:unbounded-multipliers-7}
and Corollary~\ref{co:examples-of-unbounded-multipliers}(b) that 
\begin{equation}\label{eq:non-complete-1}
\|a_{m} - a_{-\alpha}\|_{\mathcal{M}_{Y(\mathbb{R}, w)}}
\le
k_\alpha\|g_m-f^-_{\alpha-1}\|_{L^1(\mathbb{R},w)}
\to 0 
\quad\mbox{as}\quad {m} \to \infty
\end{equation}
and
\begin{equation}\label{eq:non-complete-2}
\|a_m-a_k\|_{\mathcal{M}_{Y(\mathbb{R},w)}^0}
\le 
k_\alpha\|g_m-g_k\|_{L^1(\mathbb{R},w)}\to 0
\quad\mbox{as}\quad m,k\to\infty,
\end{equation}
where 
\[
a_{m}(\xi) 
:= 
k_\alpha F g_{m}(\xi) 
=
k_\alpha F f^-_{\alpha - 1}(\xi + i/{m}) 
= 
(\xi + i/{m})^{-\alpha},
\]
$k_\alpha$ is the constant from the proof of 
Corollary~\ref{co:examples-of-unbounded-multipliers}(b), and 
\[
a_{-\alpha}(\xi) 
:= 
k_\alpha F f^-_{\alpha - 1}(\xi) 
= 
\lim_{\varepsilon \to 0+} (\xi + i\varepsilon)^{-\alpha}. 
\]
Since $\{g_m\}_{m\in\mathbb{N}}$ is convergent in $L^1(\mathbb{R},w)$,
it follows from \eqref{eq:non-complete-2} that $\{a_m\}_{m\in\mathbb{N}}$
is a Cauchy sequence in  $\mathcal{M}^0_{Y(\mathbb{R}, w)}$. If it had a 
limit there, then inequality \eqref{eq:multiplier-est} would imply that it 
converges to the same limit in $\mathcal{M}_{Y(\mathbb{R},w)}$. On the other
hand, we know from \eqref{eq:non-complete-1} that $\{a_m\}_{m\in\mathbb{N}}$
converges to $a_{-\alpha}\notin L^\infty(\mathbb{R})$. Hence
$\{a_m\}_{m\in\mathbb{N}}$ cannot converge to a limit in
$\mathcal{M}^0_{Y(\mathbb{R}, w)}$.
\end{proof}
\section{Fourier multipliers on reflection-invariant Banach function spaces}
\label{sec:reflection-invariant}
\subsection{Interpolation in Calder\'on products of Banach function spaces}
Let $X_0(\mathbb{R}^n)$ and $X_1(\mathbb{R}^n)$ be Banach function spaces and
$0<\theta<1$. The Calder\'on product $(X_0^{1-\theta}X_1^\theta)(\mathbb{R}^n)$
(see \cite[p.~123]{C64}) consists of all measurable functions $f$ such that 
a.e. pointwise inequality $|f|\le\lambda|f_0|^{1-\theta}|f_1|^\theta$ holds for 
some $\lambda>0$ and elements $f_j$ in $X_j(\mathbb{R}^n)$ with 
$\|f_j\|_{X_j(\mathbb{R}^n)}\le 1$ for $j=0,1$. The norm of $f$ in 
$(X_0^{1-\theta}X_1^\theta)(\mathbb{R}^n)$ 
is defined to be the infimum of all values $\lambda$ appearing in the
above inequality. 
We will need an interpolation theorem, which follows immediately
from \cite[Theorem~1]{Z67} and \cite[Chap.~1, Theorem~2.7]{BS88}.
\begin{theorem}\label{th:Zabreiko}
Let $X_0(\mathbb{R}^n)$ and $X_1(\mathbb{R}^n)$ be Banach function spaces. 
Let $A$ be a linear operator bounded on 
$X_0(\mathbb{R}^n)$ and $X_1(\mathbb{R}^n)$. Then $A$ is bounded on
$(X_0^{1-\theta}X_1^\theta)(\mathbb{R}^n)$ and
\[
\|A\|_{\mathcal{B}\left((X_0^{1-\theta}X_1^\theta)(\mathbb{R}^n)\right)}
\le 
\|A\|_{\mathcal{B}(X_0(\mathbb{R}^n))}^{1-\theta}
\|A\|_{\mathcal{B}(X_1(\mathbb{R}^n))}^\theta.
\]
\end{theorem}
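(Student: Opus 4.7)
The plan is to execute Calder\'on's complex interpolation argument in the Banach function space setting, using Lemma~\ref{le:NFP-X} in place of an explicit duality theory. Write $M_j:=\|A\|_{\mathcal{B}(X_j(\mathbb{R}^n))}$ and fix $f\in(X_0^{1-\theta}X_1^\theta)(\mathbb{R}^n)$ with $\|f\|_{(X_0^{1-\theta}X_1^\theta)(\mathbb{R}^n)}=1$. Given $\varepsilon>0$, the definition of the Calder\'on product furnishes nonnegative $f_0,f_1$ with $\|f_j\|_{X_j(\mathbb{R}^n)}\le 1$ and a measurable function $u$ with $|u|\le 1$ such that $f=u\cdot f_0^{1-\theta}f_1^\theta$ a.e., modulo a factor of $1+\varepsilon$.

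Next I would introduce the analytic family
\[
F_z(x):=(1+\varepsilon)\,u(x)\,f_0(x)^{1-z}f_1(x)^z,\qquad z\in\overline{S}:=\{z\in\mathbb{C}:0\le\operatorname{Re} z\le 1\},
\]
after the harmless regularization $f_j\mapsto f_j+\delta\varphi$ with a fixed positive $\varphi\in X_0(\mathbb{R}^n)\cap X_1(\mathbb{R}^n)$ to make the complex powers unambiguous (removing this regularization at the end by $\delta\to 0^+$). Then $F_\theta=f$ and the boundary bounds $\|F_{it}\|_{X_0(\mathbb{R}^n)}\le 1+\varepsilon$, $\|F_{1+it}\|_{X_1(\mathbb{R}^n)}\le 1+\varepsilon$ follow from axiom (A2). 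For an arbitrary simple compactly supported $g$ with $\|g\|_{(X_0^{1-\theta}X_1^\theta)'(\mathbb{R}^n)}\le 1$, build an analogous twin family $G_z:=w\cdot g_0^{1-z}g_1^z$ from a Calder\'on-product decomposition $g=w\cdot g_0^{1-\theta}g_1^\theta$ with $g_j\in X_j'(\mathbb{R}^n)$ and $\|g_j\|_{X_j'(\mathbb{R}^n)}\le 1$. The scalar
\[
\Phi(z):=\int_{\mathbb{R}^n}(AF_z)(x)\,G_z(x)\,dx
\]
is then analytic and bounded on $\overline{S}$; H\"older's inequality \cite[Chap.~1, Theorem~2.4]{BS88} on the two boundary lines gives $|\Phi(it)|\le M_0(1+\varepsilon)$ and $|\Phi(1+it)|\le M_1(1+\varepsilon)$, so the three lines lemma supplies
\[
\left|\int_{\mathbb{R}^n}(Af)(x)g(x)\,dx\right|=|\Phi(\theta)|\le M_0^{1-\theta}M_1^\theta(1+\varepsilon).
\]

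Taking the supremum over such $g$ via Lemma~\ref{le:NFP-X} applied to the Calder\'on product (itself a Banach function space) and letting $\varepsilon,\delta\to 0^+$ will yield the target estimate $\|A\|_{\mathcal{B}((X_0^{1-\theta}X_1^\theta)(\mathbb{R}^n))}\le M_0^{1-\theta}M_1^\theta$. I expect the main obstacle to be the construction of the test-side family $G_z$: it requires a Lozanovski\u\i-type decomposition of elements of $(X_0^{1-\theta}X_1^\theta)'(\mathbb{R}^n)$ as a Calder\'on product of $X_0'(\mathbb{R}^n)$ and $X_1'(\mathbb{R}^n)$, together with the fact that such decompositions furnish a norming set. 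This non-trivial ingredient is precisely what Zabreiko establishes in \cite[Theorem~1]{Z67}, while \cite[Chap.~1, Theorem~2.7]{BS88} secures the Fatou-type identification of the second associate space with the original one that legitimises the sup-over-$g$ representation of the Calder\'on-product norm; combining these two facts is exactly the path indicated in the paper.
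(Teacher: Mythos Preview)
The paper does not actually prove Theorem~\ref{th:Zabreiko}; it records it as an immediate consequence of \cite[Theorem~1]{Z67} together with \cite[Chap.~1, Theorem~2.7]{BS88}. In other words, Zabreiko's result \emph{is} the interpolation theorem for Calder\'on products (in a slightly more general lattice setting), and the Lorentz--Luxemburg theorem is invoked only to place Banach function spaces within Zabreiko's hypotheses. There is no argument to reconstruct.

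Your proposal instead sketches an independent proof via the three-lines lemma, in the spirit of Calder\'on's original complex method. That is a legitimate route, and with care (reducing first to simple functions so that $z\mapsto\Phi(z)$ is genuinely analytic and bounded on the strip) it can be made to work. But you have misread the role of the two citations. You describe \cite[Theorem~1]{Z67} as supplying the Lozanovski\u\i-type duality $(X_0^{1-\theta}X_1^\theta)'=(X_0')^{1-\theta}(X_1')^\theta$ needed to build your test family $G_z$; that duality is Lozanovski\u\i's 1969 result, not Zabreiko's, and in any case the paper is not using it here. Zabreiko's theorem already delivers the full operator bound $\|A\|_{\mathcal B(X_0^{1-\theta}X_1^\theta)}\le\|A\|_{\mathcal B(X_0)}^{1-\theta}\|A\|_{\mathcal B(X_1)}^\theta$ directly, with no need for the dual-side analytic family or the three-lines lemma on the user's part. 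So your sketch is a genuinely different (and much longer) argument, and your closing paragraph mischaracterises how the cited references combine.
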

The following result is contained in 
\cite[Theorem~5]{L69} in a slightly different  form. 
\begin{lemma}\label{le:Lozanovskii}
If $X(\mathbb{R}^n)$ is a Banach function space and
$X'(\mathbb{R}^n)$ is its associate space, then
\[
(X^{1/2}(X')^{1/2})(\mathbb{R}^n)=L^2(\mathbb{R}^n)
\]
with equality of the norms.
\end{lemma}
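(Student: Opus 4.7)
The plan is to establish two norm-matching inclusions. For the easy inclusion $(X^{1/2}(X')^{1/2})(\mathbb{R}^n)\hookrightarrow L^2(\mathbb{R}^n)$, I would take $f$ in the Calderón product and, by definition, pick $\lambda>0$ and $f_0\in X(\mathbb{R}^n)$, $f_1\in X'(\mathbb{R}^n)$ with $\|f_j\|\le 1$ such that $|f|\le\lambda|f_0|^{1/2}|f_1|^{1/2}$. Then by the pointwise Cauchy--Schwarz inequality and Hölder's inequality for Banach function spaces (\cite[Chap.~1, Theorem~2.4]{BS88}),
\[
\int_{\mathbb{R}^n}|f(x)|^2\,dx
\le \lambda^2\int_{\mathbb{R}^n}|f_0(x)||f_1(x)|\,dx
\le \lambda^2\|f_0\|_{X(\mathbb{R}^n)}\|f_1\|_{X'(\mathbb{R}^n)}\le\lambda^2,
\]
so $\|f\|_{L^2(\mathbb{R}^n)}\le\lambda$, and infimizing over admissible $\lambda$ gives $\|f\|_{L^2(\mathbb{R}^n)}\le\|f\|_{(X^{1/2}(X')^{1/2})(\mathbb{R}^n)}$.

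For the reverse inclusion $L^2(\mathbb{R}^n)\hookrightarrow (X^{1/2}(X')^{1/2})(\mathbb{R}^n)$ with norm at most $1$, the key tool is Lozanovskii's factorization theorem \cite[Theorem~5]{L69}: for every nonnegative $h\in L^1(\mathbb{R}^n)$ there exist nonnegative $g_0\in X(\mathbb{R}^n)$ and $g_1\in X'(\mathbb{R}^n)$ with $h=g_0g_1$ a.e.\ and $\|g_0\|_{X(\mathbb{R}^n)}\|g_1\|_{X'(\mathbb{R}^n)}=\|h\|_{L^1(\mathbb{R}^n)}$. Applying this to $h:=|f|^2\in L^1(\mathbb{R}^n)$, I obtain $|f|^2=g_0g_1$. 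If $f=0$ there is nothing to prove; otherwise $\alpha:=\|g_0\|_{X(\mathbb{R}^n)}>0$ and $\beta:=\|g_1\|_{X'(\mathbb{R}^n)}>0$, and setting $f_0:=g_0/\alpha$, $f_1:=g_1/\beta$, one has $\|f_0\|_{X(\mathbb{R}^n)}=\|f_1\|_{X'(\mathbb{R}^n)}=1$ and
\[
|f|=(\alpha\beta)^{1/2}|f_0|^{1/2}|f_1|^{1/2}=\|f\|_{L^2(\mathbb{R}^n)}|f_0|^{1/2}|f_1|^{1/2},
\]
so $\|f\|_{(X^{1/2}(X')^{1/2})(\mathbb{R}^n)}\le\|f\|_{L^2(\mathbb{R}^n)}$.

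The main obstacle is of course the factorization step, but since Lozanovskii's theorem is being invoked as an external input (and the statement explicitly points to \cite{L69}), the argument reduces to a clean bookkeeping exercise matching the Calderón-product definition to the factorization. The only subtlety is the "slightly different form" referred to in the statement: \cite{L69} phrases the result either in terms of $L^1$-factorization or in terms of the identity of Calderón products. I would therefore briefly indicate how the factorization version translates into the Calderón-product identity above (the computation just shown), so that the norms match exactly with constant $1$ in both directions. This yields $(X^{1/2}(X')^{1/2})(\mathbb{R}^n)=L^2(\mathbb{R}^n)$ with coincidence of norms.
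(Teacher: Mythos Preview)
Your argument is correct. The paper itself does not give a self-contained proof of this lemma: it simply records that the result is contained in \cite[Theorem~5]{L69}, observes that it is a special case of the Lozanovski{\u\i} duality formula $(X_0^{1-\theta}X_1^\theta)'=(X_0')^{1-\theta}(X_1')^\theta$, and refers the reader to Maligranda's book \cite[p.~185]{M89} for the details. Your two-inclusion argument---the embedding $(X^{1/2}(X')^{1/2})(\mathbb{R}^n)\hookrightarrow L^2(\mathbb{R}^n)$ via H\"older's inequality for Banach function spaces, and the reverse embedding via Lozanovski{\u\i}'s $L^1$-factorization theorem applied to $|f|^2$---is precisely the standard proof one finds in the references the paper cites, so you are not taking a different route but rather spelling out what the paper leaves to the literature.
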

The above lemma is a consequence of the more general Lozanovski{\u\i}'s formula 
\cite[Theorem~2]{L69} (see also \cite[Theorem~7.2]{CN03}):
\[
(X_0^{1-\theta}X_1^\theta)'(\mathbb{R}^n)=
((X_0')^{1-\theta}(X_1')^\theta)(\mathbb{R}^n), \quad
0<\theta<1,
\]
which is valid with equality of the norms. We refer to Maligranda's book 
\cite[p.~185]{M89} for the proof of Lozanovski{\u\i}'s 
Lemma~\ref{le:Lozanovskii}.
\begin{corollary}\label{co:interpolation}
Let $X(\mathbb{R}^n)$ be a Banach function space and 
$X'(\mathbb{R}^n)$ be its associate space. If $A$ is a linear operator 
bounded on $X(\mathbb{R}^n)$ and on $X'(\mathbb{R}^n)$, then $A$ is bounded
on $L^2(\mathbb{R}^n)$ and 
\[
\|A\|_{\mathcal{B}(L^2(\mathbb{R}^n))}
\le 
\|A\|_{\mathcal{B}(X(\mathbb{R}^n))}^{1/2}
\|A\|_{\mathcal{B}(X'(\mathbb{R}^n))}^{1/2}.
\]
\end{corollary}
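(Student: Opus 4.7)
The statement to prove is essentially a direct consequence of the two results immediately preceding it, so the plan is short. First I would apply Theorem~\ref{th:Zabreiko} with $X_0(\mathbb{R}^n) = X(\mathbb{R}^n)$, $X_1(\mathbb{R}^n) = X'(\mathbb{R}^n)$, and $\theta = 1/2$. The hypothesis that $A$ is bounded on both $X(\mathbb{R}^n)$ and $X'(\mathbb{R}^n)$ exactly matches the requirement of that theorem, so we conclude that $A$ is bounded on the Calder\'on product $(X^{1/2}(X')^{1/2})(\mathbb{R}^n)$ with
\[
\|A\|_{\mathcal{B}\left((X^{1/2}(X')^{1/2})(\mathbb{R}^n)\right)}
\le
\|A\|_{\mathcal{B}(X(\mathbb{R}^n))}^{1/2}
\|A\|_{\mathcal{B}(X'(\mathbb{R}^n))}^{1/2}.
\]

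Second, I would invoke Lemma~\ref{le:Lozanovskii} (Lozanovski{\u\i}'s formula in its simplest form), which asserts that $(X^{1/2}(X')^{1/2})(\mathbb{R}^n) = L^2(\mathbb{R}^n)$ with equality of norms. Substituting this identification into the previous inequality yields exactly the desired estimate
\[
\|A\|_{\mathcal{B}(L^2(\mathbb{R}^n))}
\le
\|A\|_{\mathcal{B}(X(\mathbb{R}^n))}^{1/2}
\|A\|_{\mathcal{B}(X'(\mathbb{R}^n))}^{1/2}.
\]

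There is no genuine obstacle here: the two ingredients have already been cited in the preceding paragraphs, and the only subtlety is noticing that the choice of exponent $\theta=1/2$ in the Calder\'on construction is precisely the one for which Lozanovski{\u\i}'s identity collapses the interpolation space to $L^2(\mathbb{R}^n)$. Consequently the proof will consist essentially of these two sentences and requires no further verification.
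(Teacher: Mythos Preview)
Your proposal is correct and matches the paper's own proof, which simply states that the result follows immediately from Theorem~\ref{th:Zabreiko} and Lemma~\ref{le:Lozanovskii}. You have spelled out precisely the two-step argument the authors intend.
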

This result follows immediately from Theorem~\ref{th:Zabreiko} and 
Lemma~\ref{le:Lozanovskii}.
\subsection{Fourier multipliers on reflexive reflection-invariant Banach
function spaces are bounded}
We say that a Banach function space $X(\mathbb{R}^n)$ is reflection-invariant
if $\|f\|_{X(\mathbb{R}^n)}=\|\widetilde{f}\|_{X(\mathbb{R}^n)}$
for every $f\in X(\mathbb{R}^n)$, where $\widetilde{f}$ denotes the reflection
of a function $f$ defined by $\widetilde{f}(x)=f(-x)$ for $x\in\mathbb{R}^n$.
\begin{lemma}\label{le:reflection-invariant-associate}
A Banach function space $X(\mathbb{R}^n)$ is reflection-invariant if and only
if its associate space $X'(\mathbb{R}^n)$ is reflection-invariant.
\end{lemma}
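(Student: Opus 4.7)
The plan is to mimic, essentially verbatim, the proof of Lemma~\ref{le:TI-X-and-Xprime}, replacing the translation operator $\tau_y$ with the reflection $f\mapsto\widetilde{f}$. The reflection is an involution (unlike a generic translation, where one has to be careful about signs), which actually makes the argument slightly cleaner.

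First, I would assume $X(\mathbb{R}^n)$ is reflection-invariant and show the same property for $X'(\mathbb{R}^n)$. Fix $g\in X'(\mathbb{R}^n)$ and any $f\in X(\mathbb{R}^n)$ with $\|f\|_{X(\mathbb{R}^n)}\le 1$. By assumption, $\|\widetilde{f}\|_{X(\mathbb{R}^n)} = \|f\|_{X(\mathbb{R}^n)}\le 1$, so by H\"older's inequality for Banach function spaces (\cite[Chap.~1, Theorem~2.4]{BS88}) the product $g\,\widetilde{f}$ lies in $L^1(\mathbb{R}^n)$. The change of variables $y=-x$ yields
\[
\int_{\mathbb{R}^n} g(x)\widetilde{f}(x)\,dx = \int_{\mathbb{R}^n} g(-y)f(y)\,dy = \int_{\mathbb{R}^n} \widetilde{g}(y)f(y)\,dy.
\]

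Next, I would invoke \cite[Chap.~1, Lemma~2.8]{BS88} to express the associate norm as a supremum over unit-ball elements of $X(\mathbb{R}^n)$, and observe that, because the reflection is an involution, the unit ball of $X(\mathbb{R}^n)$ is preserved under $f\mapsto\widetilde{f}$. Thus
\begin{align*}
\|\widetilde{g}\|_{X'(\mathbb{R}^n)}
&= \sup\left\{\left|\int_{\mathbb{R}^n}\widetilde{g}(y)f(y)\,dy\right| : f\in X(\mathbb{R}^n),\ \|f\|_{X(\mathbb{R}^n)}\le 1\right\}\\
&= \sup\left\{\left|\int_{\mathbb{R}^n} g(x)\widetilde{f}(x)\,dx\right| : f\in X(\mathbb{R}^n),\ \|f\|_{X(\mathbb{R}^n)}\le 1\right\}\\
&= \sup\left\{\left|\int_{\mathbb{R}^n} g(x)h(x)\,dx\right| : h\in X(\mathbb{R}^n),\ \|h\|_{X(\mathbb{R}^n)}\le 1\right\}
= \|g\|_{X'(\mathbb{R}^n)},
\end{align*}
which gives reflection-invariance of $X'(\mathbb{R}^n)$.

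For the converse, I would apply what has just been proved with $X'(\mathbb{R}^n)$ in place of $X(\mathbb{R}^n)$ to conclude that $X''(\mathbb{R}^n)$ is reflection-invariant, and then invoke the Lorentz--Luxemburg theorem \cite[Chap.~1, Theorem~2.7]{BS88}, which asserts $X''(\mathbb{R}^n)=X(\mathbb{R}^n)$ with equal norms. There is no real obstacle here; the only points to be mindful of are that the reflection preserves the unit ball of $X(\mathbb{R}^n)$ (which follows from it being an involution) and that the change of variables has Jacobian of absolute value one, so no constants intrude.
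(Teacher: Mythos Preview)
Your proof is correct and is exactly the argument the paper has in mind: the paper simply states that the lemma ``follows immediately from \cite[Chap.~1, Theorem~2.7 and Lemma~2.8]{BS88}'', which are precisely the Lorentz--Luxemburg theorem and the duality formula you invoke. You have merely written out the details (mirroring the proof of Lemma~\ref{le:TI-X-and-Xprime}), so there is nothing to compare.
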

This statement follows immediately from 
\cite[Chap.~1, Theorem~2.7 and Lemma~2.8]{BS88}.
\begin{lemma}\label{le:multiplier-reflection}
If a Banach function space $X(\mathbb{R}^n)$ is reflection-invariant, then 
\begin{align*}
\|a\|_{\mathcal{M}_{X(\mathbb{R}^n)}}
=
\|\widetilde{a}\|_{\mathcal{M}_{X(\mathbb{R}^n)}} 
&
\quad\mbox{for all}\quad
a\in\mathcal{M}_{X(\mathbb{R}^n)} , 
\\
\|a\|_{\mathcal{M}^0_{X(\mathbb{R}^n)}}
=
\|\widetilde{a}\|_{\mathcal{M}^0_{X(\mathbb{R}^n)}}
&
\quad\mbox{for all}\quad
a\in\mathcal{M}^0_{X(\mathbb{R}^n)}.
\end{align*}
\end{lemma}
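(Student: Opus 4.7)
The plan is to exploit the two intertwining identities $F\widetilde{u} = \widetilde{Fu}$ and $F^{-1}\widetilde{T} = \widetilde{F^{-1}T}$ (valid for $u\in S(\mathbb{R}^n)$ and $T\in S'(\mathbb{R}^n)$), combined with compatibility of reflection with the multiplication of a distribution by a polynomially bounded smooth function. Together these will yield the key identity
\[
F^{-1}\widetilde{a}\,F\widetilde{u} = \widetilde{F^{-1} a F u},
\]
after which the reflection-invariance of the $X(\mathbb{R}^n)$-norm finishes the proof in one step.

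First, I would verify $F\widetilde{u} = \widetilde{Fu}$ for $u \in S(\mathbb{R}^n)$ by the substitution $x \mapsto -x$ in the defining integral, and dually $F^{-1}(\widetilde{\varphi}) = \widetilde{F^{-1}\varphi}$ for $\varphi\in S(\mathbb{R}^n)$. This then extends to $S'(\mathbb{R}^n)$: for $T\in S'(\mathbb{R}^n)$ and $\varphi\in S(\mathbb{R}^n)$,
\[
\langle F^{-1}\widetilde{T},\varphi\rangle = \langle \widetilde{T}, F^{-1}\varphi\rangle = \langle T, \widetilde{F^{-1}\varphi}\rangle = \langle T, F^{-1}\widetilde{\varphi}\rangle = \langle \widetilde{F^{-1}T},\varphi\rangle,
\]
so $F^{-1}\widetilde{T} = \widetilde{F^{-1}T}$. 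Next, since $Fu\in S(\mathbb{R}^n)\subset C_{\rm poly}^\infty(\mathbb{R}^n)$, the product $a\,Fu$ is a well-defined tempered distribution, and the identity $\widetilde{a\,Fu} = \widetilde{a}\,\widetilde{Fu}$ follows directly from the definition of multiplication of $a\in S'(\mathbb{R}^n)$ by a smooth polynomially bounded function by pairing both sides with an arbitrary $\varphi\in S(\mathbb{R}^n)$.

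Chaining these three facts gives
\[
F^{-1}\widetilde{a}\,F\widetilde{u} = F^{-1}\bigl(\widetilde{a}\cdot\widetilde{Fu}\bigr) = F^{-1}\bigl(\widetilde{a\,Fu}\bigr) = \widetilde{F^{-1} a F u}.
\]
Because $X(\mathbb{R}^n)$ is reflection-invariant, $\|\widetilde{F^{-1}aFu}\|_{X(\mathbb{R}^n)} = \|F^{-1}aFu\|_{X(\mathbb{R}^n)}$ and $\|\widetilde{u}\|_{X(\mathbb{R}^n)} = \|u\|_{X(\mathbb{R}^n)}$. Since the map $u\mapsto\widetilde{u}$ is a bijection of $S(\mathbb{R}^n)\cap X(\mathbb{R}^n)$ onto itself (and likewise of $L^2(\mathbb{R}^n)\cap X(\mathbb{R}^n)$ onto itself), taking suprema in the respective definitions yields $\|\widetilde{a}\|_{\mathcal{M}_{X(\mathbb{R}^n)}} = \|a\|_{\mathcal{M}_{X(\mathbb{R}^n)}}$ and the analogous identity for $\mathcal{M}^0_{X(\mathbb{R}^n)}$.

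The main obstacle is purely bookkeeping at the distributional level: one must be careful that $\widetilde{a}$ belongs to $S'(\mathbb{R}^n)$, that multiplication by the smooth polynomially bounded function $Fu$ commutes with reflection, and that the inverse Fourier transform on $S'(\mathbb{R}^n)$ interchanges with reflection as claimed. None of these steps is deep, but they must be assembled in the right order so that the calculation above is valid as an equality in $S'(\mathbb{R}^n)$, after which the statement of the lemma is immediate.
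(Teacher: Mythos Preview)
Your proposal is correct and follows essentially the same route as the paper: the paper states the identity $F^{-1}\widetilde{a}Fu = (F^{-1}aF\widetilde{u})\,\widetilde{\ }$ (which is your identity $F^{-1}\widetilde{a}\,F\widetilde{u} = \widetilde{F^{-1} a F u}$ after relabeling $u\leftrightarrow\widetilde{u}$) and then takes the supremum using reflection-invariance and the bijectivity of $u\mapsto\widetilde{u}$, exactly as you do. The only difference is cosmetic: you spell out the distributional intertwining identities in detail, whereas the paper asserts the key formula directly.
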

\begin{proof}
If $u\in (S(\mathbb{R}^n)\cap X(\mathbb{R}^n))\setminus\{0\}$, then
$\widetilde{u}\in (S(\mathbb{R}^n)\cap X(\mathbb{R}^n))\setminus\{0\}$ and
\[
F^{-1}\widetilde{a}Fu
=
F^{-1}(a(Fu){\widetilde{\hspace{2mm}}}){\widetilde{\hspace{2mm}}}
=
(F^{-1}aF\widetilde{u}){\widetilde{\hspace{2mm}}}.
\]
Therefore, taking into account that $X(\mathbb{R}^n)$ is reflection-invariant,
we see that
\begin{align*}
\|\widetilde{a}\|_{\mathcal{M}_{X(\mathbb{R}^n)}}
&=
\sup\left\{
\frac{\|F^{-1}\widetilde{a}Fu\|_{X(\mathbb{R}^n)}}
{\|u\|_{X(\mathbb{R}^n)}}
\ :\
u\in (S(\mathbb{R}^n)\cap X(\mathbb{R}^n))\setminus\{0\}
\right\}
\\
&=
\sup\left\{
\frac{\|(F^{-1}aF\widetilde{u}){\widetilde{\hspace{2mm}}}\|_{X(\mathbb{R}^n)}}
{\|u\|_{X(\mathbb{R}^n)}}
\ :\
u\in (S(\mathbb{R}^n)\cap X(\mathbb{R}^n))\setminus\{0\}
\right\}
\\
&=
\sup\left\{
\frac{\|F^{-1}aF\widetilde{u}\|_{X(\mathbb{R}^n)}}
{\|\widetilde{u}\|_{X(\mathbb{R}^n)}}
\ :\
u\in (S(\mathbb{R}^n)\cap X(\mathbb{R}^n))\setminus\{0\}
\right\} \\
&=
\sup\left\{
\frac{\|F^{-1}aFu\|_{X(\mathbb{R}^n)}}
{\|u\|_{X(\mathbb{R}^n)}}
\ :\
u\in (S(\mathbb{R}^n)\cap X(\mathbb{R}^n))\setminus\{0\}
\right\}
\\
&=
\|a\|_{\mathcal{M}_{X(\mathbb{R}^n)}}.
\end{align*}
Replacing $S(\mathbb{R}^n)$ with $L^2(\mathbb{R}^n)$ one gets a proof for 
$\mathcal{M}^0_{X(\mathbb{R}^n)}$.
\end{proof}
\begin{lemma}\label{le:duality-0-embeddings}
Let $X(\mathbb{R}^n)$ be a Banach function space
and $X'(\mathbb{R}^n)$ be its associate space.
\begin{enumerate}
\item[(a)]
If $\widetilde{a}\in\mathcal{M}^0_{X'(\mathbb{R}^n)}$,
then $a\in\mathcal{M}^0_{X(\mathbb{R}^n)}$ and 
\[
\|a\|_{\mathcal{M}^0_{X(\mathbb{R}^n)}}\le 
\|\widetilde{a}\|_{\mathcal{M}^0_{X'(\mathbb{R}^n)}}.
\]

\item[(b)]
If $a\in\mathcal{M}^0_{X(\mathbb{R}^n)}$,
then $\widetilde{a}\in\mathcal{M}^0_{X'(\mathbb{R}^n)}$ and 
\[
\|a\|_{\mathcal{M}^0_{X(\mathbb{R}^n)}}\ge 
\|\widetilde{a}\|^0_{\mathcal{M}_{X'(\mathbb{R}^n)}}.
\]
\end{enumerate}
\end{lemma}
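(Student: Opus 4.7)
\textbf{Proof proposal for Lemma~\ref{le:duality-0-embeddings}.}
The plan rests on a single Plancherel-type identity: for any $a\in L^\infty(\mathbb{R}^n)$ and any $u,v\in L^2(\mathbb{R}^n)$,
\[
\int_{\mathbb{R}^n}(F^{-1}aFu)(x)\,v(x)\,dx
=\int_{\mathbb{R}^n}u(x)\,(F^{-1}\widetilde{a}Fv)(x)\,dx.
\]
I would verify this at the outset by applying twice the elementary identity $\int f\,g\,dx=\int Ff\cdot F^{-1}g\,d\xi$ (which holds for $f,g\in L^2$ by Plancherel), using $F^{-1}w(\xi)=(2\pi)^{-n}\widetilde{Fw}(\xi)$ and a change of variable $\xi\mapsto -\xi$. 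With this identity in hand, both parts of the lemma follow in the same way; the only difference is which of $X(\mathbb{R}^n)$ and $X'(\mathbb{R}^n)$ plays the role of underlying space, so the second part is literally the first with $X$ and $X'$ interchanged and $a$ replaced by $\widetilde{a}$.

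For part~(b), assume $a\in\mathcal{M}^0_{X(\mathbb{R}^n)}$, so that $a\in L^\infty(\mathbb{R}^n)$ and hence $\widetilde{a}\in L^\infty(\mathbb{R}^n)$. Fix $v\in L^2(\mathbb{R}^n)\cap X'(\mathbb{R}^n)$; then $F^{-1}\widetilde{a}Fv\in L^2(\mathbb{R}^n)\subset L^1_{\mathrm{loc}}(\mathbb{R}^n)$. I would apply Lemma~\ref{le:NFP-X} (with the roles of $X(\mathbb{R}^n)$ and $X'(\mathbb{R}^n)$ swapped, which is legitimate by the Lorentz--Luxemburg theorem used to prove that lemma) to write
\[
\|F^{-1}\widetilde{a}Fv\|_{X'(\mathbb{R}^n)}
=\sup\left\{\left|\int_{\mathbb{R}^n}(F^{-1}\widetilde{a}Fv)(x)\,s(x)\,dx\right|:\,s\in S_0(\mathbb{R}^n),\ \|s\|_{X(\mathbb{R}^n)}\le 1\right\}.
\]
Since each admissible $s$ lies in $L^2(\mathbb{R}^n)\cap X(\mathbb{R}^n)$, the duality identity converts the inner integral into $\int u_{\text{(swap)}}\cdot(F^{-1}aFs)(x)\,v(x)\,dx$; more precisely,
\[
\int_{\mathbb{R}^n}(F^{-1}\widetilde{a}Fv)\,s\,dx
=\int_{\mathbb{R}^n}v\cdot(F^{-1}aFs)\,dx,
\]
and H\"older's inequality for Banach function spaces (\cite[Chap.~1, Theorem~2.4]{BS88}) together with the hypothesis $\|F^{-1}aFs\|_{X(\mathbb{R}^n)}\le\|a\|_{\mathcal{M}^0_{X(\mathbb{R}^n)}}\|s\|_{X(\mathbb{R}^n)}$ yields the bound $\|v\|_{X'(\mathbb{R}^n)}\|a\|_{\mathcal{M}^0_{X(\mathbb{R}^n)}}$. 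Taking the supremum over $s$ gives simultaneously $F^{-1}\widetilde{a}Fv\in X'(\mathbb{R}^n)$ and the desired norm estimate, proving $\widetilde{a}\in\mathcal{M}^0_{X'(\mathbb{R}^n)}$ with $\|\widetilde{a}\|_{\mathcal{M}^0_{X'(\mathbb{R}^n)}}\le\|a\|_{\mathcal{M}^0_{X(\mathbb{R}^n)}}$.

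Part~(a) will be established by the mirror argument. Assuming $\widetilde{a}\in\mathcal{M}^0_{X'(\mathbb{R}^n)}$, so in particular $a\in L^\infty(\mathbb{R}^n)$, I take $u\in L^2(\mathbb{R}^n)\cap X(\mathbb{R}^n)$, invoke Lemma~\ref{le:NFP-X} directly to write $\|F^{-1}aFu\|_{X(\mathbb{R}^n)}$ as the supremum of $\left|\int(F^{-1}aFu)\,s\,dx\right|$ over $s\in S_0(\mathbb{R}^n)$ with $\|s\|_{X'(\mathbb{R}^n)}\le 1$, and rewrite this integral as $\int u\cdot(F^{-1}\widetilde{a}Fs)\,dx$ via the duality identity; H\"older and the hypothesis on $\widetilde{a}$ then deliver the bound $\|\widetilde{a}\|_{\mathcal{M}^0_{X'(\mathbb{R}^n)}}\|u\|_{X(\mathbb{R}^n)}$. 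The only point requiring care will be the verification of the Plancherel identity with the paper's Fourier convention; once that is pinned down, the remainder is purely bookkeeping with Lemma~\ref{le:NFP-X} and H\"older's inequality, and no additional hypothesis (such as reflection-invariance or reflexivity of $X(\mathbb{R}^n)$) enters.
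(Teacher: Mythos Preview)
Your proposal is correct and follows essentially the same approach as the paper: the same Plancherel-type duality identity for $u,v\in L^2(\mathbb{R}^n)$ and $a\in L^\infty(\mathbb{R}^n)$, the same use of Lemma~\ref{le:NFP-X} to express the norm as a supremum over $s\in S_0(\mathbb{R}^n)\subset L^2\cap X'$ (resp.\ $L^2\cap X$), and the same application of H\"older's inequality. The only cosmetic difference is that the paper proves~(a) directly and then deduces~(b) from~(a) via the Lorentz--Luxemburg identity $X''=X$, whereas you spell out the mirror argument for both parts.
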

\begin{proof}
If $u,v\in L^2(\mathbb{R}^n)$ and $a\in L^\infty(\mathbb{R}^n)$, then
\begin{align}
\int_{\mathbb{R}^n}(F^{-1}aFu)(x)v(x)\,dx
&=
\langle F^{-1}aFu,v\rangle
=
\langle aFu,F^{-1}v\rangle
\nonumber\\
&=
\langle a,Fu\cdot F^{-1}v\rangle
=
\langle a,F^{-1}\widetilde{u}\cdot F\widetilde{v}\rangle
\nonumber\\
&=
\langle 
a,(F^{-1}u){\widetilde{\hspace{2mm}}}\cdot(Fv){\widetilde{\hspace{2mm}}}
\rangle
=
\langle\widetilde{a},F^{-1}u\cdot Fv\rangle
\nonumber\\
&=
\langle\widetilde{a}Fv,F^{-1}u\rangle
=
\langle F^{-1}\widetilde{a}Fv,u\rangle
\nonumber\\
&=
\int_{\mathbb{R}^n}(F^{-1}\widetilde{a}Fv)(x)u(x)\,dx.
\label{eq:duality-0-embeddings}
\end{align}


(a) Take $u\in (L^2(\mathbb{R}^n)\cap X(\mathbb{R}^n))\setminus\{0\}$.
Then, in view of equality \eqref{eq:duality-0-embeddings},
Lemma~\ref{le:NFP-X},  H\"older's inequality (see 
\cite[Chap.~1, Theorem~2.4]{BS88}), and the embedding
$S_0(\mathbb{R}^n)\subset L^2(\mathbb{R}^n)\cap X'(\mathbb{R}^n)$, we obtain
\begin{align*}
\|F^{-1}aFu\|_{X(\mathbb{R}^n)}
&=
\sup\left\{
\frac{\displaystyle\left|\int_{\mathbb{R}^n}(F^{-1}aFu)(x)s(x)\,dx\right|}
{\|s\|_{X'(\mathbb{R}^n)}}\ :\
s\in S_0(\mathbb{R}^n)\setminus\{0\}
\right\}
\\
&=
\sup\left\{
\frac{\displaystyle\left|\int_{\mathbb{R}^n}
(F^{-1}\widetilde{a}Fs)(x)u(x)\,dx\right|}
{\|s\|_{X'(\mathbb{R}^n)}}\ :\ 
s\in S_0(\mathbb{R}^n)\setminus\{0\}
\right\}
\\
&\le 
\sup\left\{
\frac{
\|F^{-1}\widetilde{a}Fs\|_{X'(\mathbb{R}^n)}\|u\|_{X(\mathbb{R}^n)}}
{\|s\|_{X'(\mathbb{R}^n)}}\ :\
s\in S_0(\mathbb{R}^n)\setminus\{0\}
\right\}
\\
&\le 
\|u\|_{X(\mathbb{R}^n)}
\sup\left\{
\frac{\|F^{-1}\widetilde{a}Fv\|_{X'(\mathbb{R}^n)}}
{\|v\|_{X'(\mathbb{R}^n)}}\ :\
v\in (L^2(\mathbb{R}^n)\cap X'(\mathbb{R}^n))\setminus\{0\}
\right\}
\\
&= 
\|\widetilde{a}\|_{\mathcal{M}^0_{X(\mathbb{R}^n)}}
\|u\|_{X(\mathbb{R}^n)},
\end{align*}
whence
\[
\|a\|_{\mathcal{M}^0_{X(\mathbb{R}^n)}}
=
\sup\left\{
\frac{\|F^{-1}aFu\|_{X(\mathbb{R}^n)}}{\|u\|_{X(\mathbb{R}^n)}}
\ : \
u\in (L^2(\mathbb{R}^n)\cap X(\mathbb{R}^n))\setminus\{0\}
\right\}
\le
\|\widetilde{a}\|_{\mathcal{M}^0_{X(\mathbb{R}^n)}},
\]
which completes the proof of part (a).

(b) By the Lorentz-Luxemburg theorem (see \cite[Chap.~1, Theorem~2.7]{BS88}),
$X(\mathbb{R}^n)=X''(\mathbb{R}^n)$ with the equality of the norms. Then 
$(\widetilde{a}){\widetilde{\hspace{2mm}}}=a\in
\mathcal{M}^0_{X(\mathbb{R}^n)}=\mathcal{M}^0_{X''(\mathbb{R}^n)}$.
Hence, part~(b) follows from part~(a).
\end{proof}
\begin{lemma}\label{le:duality-embeddings}
Let $X(\mathbb{R}^n)$ be a Banach function space 
and $X'(\mathbb{R}^n)$ be its associate space.
\begin{enumerate}
\item[(a)]
Suppose the space $X(\mathbb{R}^n)$ satisfies the norm fundamental property.
If $\ \widetilde{a}\in\mathcal{M}_{X'(\mathbb{R}^n)}$,
then $a\in\mathcal{M}_{X(\mathbb{R}^n)}$ and 
\[
\|a\|_{\mathcal{M}_{X(\mathbb{R}^n)}}\le 
\|\widetilde{a}\|_{\mathcal{M}_{X'(\mathbb{R}^n)}}.
\]

\item[(b)]
Suppose the space $X'(\mathbb{R}^n)$ satisfies the norm fundamental property.
If $a\in\mathcal{M}_{X(\mathbb{R}^n)}$, then 
$\widetilde{a}\in\mathcal{M}_{X'(\mathbb{R}^n)}$ and 
\[
\|a\|_{\mathcal{M}_{X(\mathbb{R}^n)}}\ge 
\|\widetilde{a}\|_{\mathcal{M}_{X'(\mathbb{R}^n)}}.
\]
\end{enumerate}
\end{lemma}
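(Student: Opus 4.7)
The plan is to adapt the argument used in the proof of Lemma~\ref{le:duality-0-embeddings}, replacing the $L^2$-pairing by the Schwartz duality and replacing Lemma~\ref{le:NFP-X} (whose test functions are $s\in S_0(\mathbb{R}^n)$) by the norm fundamental property (whose test functions are $\psi\in C_0^\infty(\mathbb{R}^n)\subset S(\mathbb{R}^n)\cap X'(\mathbb{R}^n)$). The key ingredient is the distributional analogue of identity~\eqref{eq:duality-0-embeddings},
\[
\langle F^{-1}aFu,\psi\rangle
\;=\;
\langle F^{-1}\widetilde{a}F\psi,u\rangle
\qquad\mbox{for all}\quad u,\psi\in S(\mathbb{R}^n),\ a\in S'(\mathbb{R}^n),
\]
which I would verify by unfolding $\langle F^{-1}aFu,\psi\rangle=\langle a,(Fu)(F^{-1}\psi)\rangle=\langle\widetilde{a},((Fu)(F^{-1}\psi))^{\sim}\rangle$ and then using $F\widetilde{v}=(2\pi)^n F^{-1}v$ and $F^{-1}\widetilde{v}=(2\pi)^{-n}Fv$, so that the $(2\pi)^n$ factors cancel and $((Fu)(F^{-1}\psi))^{\sim}=(F\psi)(F^{-1}u)$.

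For part (a), fix $u\in(S(\mathbb{R}^n)\cap X(\mathbb{R}^n))\setminus\{0\}$ and take any $\psi\in C_0^\infty(\mathbb{R}^n)$. Being bounded with compact support, $\psi\in X'(\mathbb{R}^n)$ by axioms~(A2) and~(A4), while trivially $\psi\in S(\mathbb{R}^n)$. Since $\widetilde{a}\in\mathcal{M}_{X'(\mathbb{R}^n)}$, the function $F^{-1}\widetilde{a}F\psi$ (which belongs to $C^\infty_{\rm poly}(\mathbb{R}^n)$) lies in $X'(\mathbb{R}^n)$ with norm at most $\|\widetilde{a}\|_{\mathcal{M}_{X'}}\|\psi\|_{X'}$. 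Because $F^{-1}aFu\in C^\infty_{\rm poly}(\mathbb{R}^n)$ as well, both sides of the displayed identity are ordinary integrals, and H\"older's inequality yields
\[
\left|\int_{\mathbb{R}^n}(F^{-1}aFu)(x)\psi(x)\,dx\right|
\le
\|\widetilde{a}\|_{\mathcal{M}_{X'(\mathbb{R}^n)}}
\|u\|_{X(\mathbb{R}^n)}
\|\psi\|_{X'(\mathbb{R}^n)}.
\]
To pass from this test-function bound to a bound on the $X$-norm, I would truncate by cutoffs $\phi_N\in C_0^\infty(\mathbb{R}^n)$ with $0\le\phi_N\uparrow 1$ pointwise: the truncations $f_N:=\phi_N\cdot F^{-1}aFu$ are bounded with compact support and hence in $X(\mathbb{R}^n)$. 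For any $\psi\in C_0^\infty(\mathbb{R}^n)$ with $\|\psi\|_{X'}\le 1$, the product $\phi_N\psi\in C_0^\infty(\mathbb{R}^n)$ still satisfies $\|\phi_N\psi\|_{X'}\le 1$, so applying the previous estimate with $\phi_N\psi$ in place of $\psi$ yields $|\int f_N\psi|\le\|\widetilde{a}\|_{\mathcal{M}_{X'}}\|u\|_X$. The norm fundamental property of $X(\mathbb{R}^n)$ then gives $\|f_N\|_{X(\mathbb{R}^n)}\le\|\widetilde{a}\|_{\mathcal{M}_{X'}}\|u\|_X$, and the Fatou property~(A3) applied to $|f_N|\uparrow|F^{-1}aFu|$ yields $F^{-1}aFu\in X(\mathbb{R}^n)$ with the desired norm estimate; taking the supremum over $u$ completes part~(a).

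Part~(b) follows from part~(a) applied with $X(\mathbb{R}^n)$ replaced by $X'(\mathbb{R}^n)$, whose associate space is $X''(\mathbb{R}^n)=X(\mathbb{R}^n)$ (with equality of norms) by the Lorentz--Luxemburg theorem: setting $b:=\widetilde{a}$, so that $\widetilde{b}=a$, the hypothesis $a\in\mathcal{M}_{X(\mathbb{R}^n)}=\mathcal{M}_{X''(\mathbb{R}^n)}$ together with the NFP for $X'(\mathbb{R}^n)$ produces $b\in\mathcal{M}_{X'(\mathbb{R}^n)}$ with $\|b\|_{\mathcal{M}_{X'}}\le\|\widetilde{b}\|_{\mathcal{M}_X}$, which is the claimed inequality. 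The only delicate point is the distributional duality identity above; once that is in place, the rest is a standard approximation-and-Fatou argument with the same structure as in the proof of Lemma~\ref{le:duality-0-embeddings}.
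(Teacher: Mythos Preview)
Your proposal is correct and follows essentially the same route as the paper: the key distributional duality identity $\langle F^{-1}aFu,\psi\rangle=\langle F^{-1}\widetilde{a}F\psi,u\rangle$ for $u,\psi\in S(\mathbb{R}^n)$ (obtained exactly as you indicate, reinterpreting the chain from Lemma~\ref{le:duality-0-embeddings} in the $(S',S)$-duality), followed by the norm fundamental property and H\"older's inequality, with part~(b) reduced to part~(a) via the Lorentz--Luxemburg theorem. The only difference is that the paper applies the NFP directly to $F^{-1}aFu$ and reads off the norm bound, whereas you insert a cutoff-and-Fatou step to ensure the function to which NFP is applied is already known to lie in $X(\mathbb{R}^n)$; this is a harmless extra bit of care rather than a different argument.
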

\begin{proof}
The proof is similar to that of Lemma~\ref{le:duality-0-embeddings}.
Interpreting  $\langle\cdot, \cdot\rangle$ in \eqref{eq:duality-0-embeddings} 
as the $(S'(\mathbb{R}^n), S(\mathbb{R}^n))$ rather than
$(L^2(\mathbb{R}^n), L^2(\mathbb{R}^n))$ or 
$(L^\infty(\mathbb{R}^n), L^1(\mathbb{R}^n))$ duality, one gets for any
$u,v\in S(\mathbb{R}^n)$ and $a\in S'(\mathbb{R}^n)$,
\begin{equation}\label{eq:duality-embeddings}
\int_{\mathbb{R}^n}(F^{-1}aFu)(x)v(x)\,dx
=
\int_{\mathbb{R}^n}(F^{-1}\widetilde{a}Fv)(x)u(x)\,dx.
\end{equation}

(a) Take $u\in (S(\mathbb{R}^n)\cap X(\mathbb{R}^n))\setminus\{0\}$.
Then, in view of equality \eqref{eq:duality-embeddings},
Definition~\ref{def:NFP}, and H\"older's inequality (see 
\cite[Chap.~1, Theorem~2.4]{BS88}), we obtain
\begin{align*}
\|F^{-1}aFu\|_{X(\mathbb{R}^n)}
&=
\sup\left\{
\frac{\displaystyle\left|\int_{\mathbb{R}^n}(F^{-1}aFu)(x)\psi(x)\,dx\right|}
{\|\psi\|_{X'(\mathbb{R}^n)}}\ :\
\psi\in C_0^\infty(\mathbb{R}^n)\setminus\{0\}
\right\}
\\
&=
\sup\left\{
\frac{\displaystyle\left|\int_{\mathbb{R}^n}
(F^{-1}\widetilde{a}F\psi)(x)u(x)\,dx\right|}
{\|\psi\|_{X'(\mathbb{R}^n)}}\ :\ 
\psi\in C_0^\infty(\mathbb{R}^n)\setminus\{0\}
\right\}
\\
&\le 
\sup\left\{
\frac{
\|F^{-1}\widetilde{a}F\psi\|_{X'(\mathbb{R}^n)}\|u\|_{X(\mathbb{R}^n)}}
{\|\psi\|_{X'(\mathbb{R}^n)}}\ :\
\psi\in C_0^\infty(\mathbb{R}^n)\setminus\{0\}
\right\}
\\
&\le 
\|u\|_{X(\mathbb{R}^n)}
\sup\left\{
\frac{\|F^{-1}\widetilde{a}Fv\|_{X'(\mathbb{R}^n)}}
{\|v\|_{X'(\mathbb{R}^n)}}\ :\
v\in (S(\mathbb{R}^n)\cap X'(\mathbb{R}^n))\setminus\{0\}
\right\}
\\
&= 
\|\widetilde{a}\|_{\mathcal{M}_{X(\mathbb{R}^n)}}
\|u\|_{X(\mathbb{R}^n)},
\end{align*}
whence
\[
\|a\|_{\mathcal{M}_{X(\mathbb{R}^n)}}
=
\sup\left\{
\frac{\|F^{-1}aFu\|_{X(\mathbb{R}^n)}}{\|u\|_{X(\mathbb{R}^n)}}
\ : \
u\in (S(\mathbb{R}^n)\cap X(\mathbb{R}^n))\setminus\{0\}
\right\}
\le
\|\widetilde{a}\|_{\mathcal{M}_{X(\mathbb{R}^n)}},
\]
which completes the proof of part (a).

(b) By the Lorentz-Luxemburg theorem (see \cite[Chap.~1, Theorem~2.7]{BS88}),
$X(\mathbb{R}^n)=X''(\mathbb{R}^n)$ with the equality of the norms. Then 
$(\widetilde{a}){\widetilde{\hspace{2mm}}}=a\in
\mathcal{M}_{X(\mathbb{R}^n)}=\mathcal{M}_{X''(\mathbb{R}^n)}$.
Hence, part~(b) follows from part~(a).
\end{proof}
\begin{theorem}\label{th:duality-equality}
Let $X(\mathbb{R}^n)$ be a reflection-invariant Banach function space
and $X'(\mathbb{R}^n)$ be its associate space.
\begin{enumerate}
\item[(a)]
We have $\mathcal{M}^0_{X(\mathbb{R}^n)}=\mathcal{M}^0_{X'(\mathbb{R}^n)}$ and 
$\|a\|_{\mathcal{M}^0_{X(\mathbb{R}^n)}}=
\|a\|_{\mathcal{M}^0_{X'(\mathbb{R}^n)}}$ 
for all $a\in\mathcal{M}^0_{X(\mathbb{R}^n)}$.
\item[(b)]

If both $X(\mathbb{R}^n)$ and $X'(\mathbb{R}^n)$ satisfy
the norm fundamental property, then we have
$\mathcal{M}_{X(\mathbb{R}^n)}=\mathcal{M}_{X'(\mathbb{R}^n)}$ and 
$\|a\|_{\mathcal{M}_{X(\mathbb{R}^n)}}=\|a\|_{\mathcal{M}_{X'(\mathbb{R}^n)}}$ 
for all $a\in\mathcal{M}_{X(\mathbb{R}^n)}$.
\end{enumerate}
\end{theorem}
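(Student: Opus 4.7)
The plan is to combine the duality lemmas \ref{le:duality-0-embeddings} and \ref{le:duality-embeddings} with the reflection-invariance lemmas \ref{le:reflection-invariant-associate} and \ref{le:multiplier-reflection} to sandwich the two multiplier norms between each other. The whole argument is essentially bookkeeping: whenever a duality lemma introduces a reflection $\widetilde{a}$, the reflection-invariance lemma absorbs it.

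For part (a), I would begin by observing that since $X(\mathbb{R}^n)$ is reflection-invariant, Lemma~\ref{le:reflection-invariant-associate} gives that $X'(\mathbb{R}^n)$ is reflection-invariant as well, so Lemma~\ref{le:multiplier-reflection} applies to both spaces. Now take an arbitrary $a\in\mathcal{M}^0_{X(\mathbb{R}^n)}$. By Lemma~\ref{le:duality-0-embeddings}(b), we have $\widetilde{a}\in\mathcal{M}^0_{X'(\mathbb{R}^n)}$ with $\|\widetilde{a}\|_{\mathcal{M}^0_{X'(\mathbb{R}^n)}}\le\|a\|_{\mathcal{M}^0_{X(\mathbb{R}^n)}}$; applying Lemma~\ref{le:multiplier-reflection} to the reflection-invariant space $X'(\mathbb{R}^n)$ converts this into $a\in\mathcal{M}^0_{X'(\mathbb{R}^n)}$ with $\|a\|_{\mathcal{M}^0_{X'(\mathbb{R}^n)}}\le\|a\|_{\mathcal{M}^0_{X(\mathbb{R}^n)}}$. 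Conversely, given $a\in\mathcal{M}^0_{X'(\mathbb{R}^n)}$, reflection invariance yields $\widetilde{a}\in\mathcal{M}^0_{X'(\mathbb{R}^n)}$ with the same norm, after which Lemma~\ref{le:duality-0-embeddings}(a) delivers $a\in\mathcal{M}^0_{X(\mathbb{R}^n)}$ with $\|a\|_{\mathcal{M}^0_{X(\mathbb{R}^n)}}\le\|\widetilde{a}\|_{\mathcal{M}^0_{X'(\mathbb{R}^n)}}=\|a\|_{\mathcal{M}^0_{X'(\mathbb{R}^n)}}$. Sandwiching gives the claimed equality of sets and norms.

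For part (b), I would run the identical argument with Lemma~\ref{le:duality-embeddings} replacing Lemma~\ref{le:duality-0-embeddings}. The only extra point to verify is that the hypotheses of Lemma~\ref{le:duality-embeddings} are available in each direction: the implication from $\mathcal{M}_{X(\mathbb{R}^n)}$ to $\mathcal{M}_{X'(\mathbb{R}^n)}$ uses part~(b) of that lemma, which requires $X'(\mathbb{R}^n)$ to satisfy the norm fundamental property; the reverse implication uses part~(a), which requires the norm fundamental property for $X(\mathbb{R}^n)$. Both assumptions are exactly what the statement of Theorem~\ref{th:duality-equality}(b) provides.

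There is no real obstacle here; the statement is a clean consequence of the lemmas already proved. If anything, the only subtlety is making sure that when we invoke Lemma~\ref{le:duality-0-embeddings}(a) or Lemma~\ref{le:duality-embeddings}(a) in the reverse direction we are allowed to use it with the roles of $X(\mathbb{R}^n)$ and $X'(\mathbb{R}^n)$ exchanged, which is fine because the Lorentz-Luxemburg theorem identifies $X''(\mathbb{R}^n)$ with $X(\mathbb{R}^n)$ isometrically, so the duality statements are genuinely symmetric.
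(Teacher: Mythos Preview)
Your proposal is correct and follows essentially the same approach as the paper: combine Lemma~\ref{le:reflection-invariant-associate} and Lemma~\ref{le:multiplier-reflection} with the two parts of Lemma~\ref{le:duality-0-embeddings} (respectively Lemma~\ref{le:duality-embeddings}) to obtain the two opposite inequalities. The paper presents part~(b) explicitly and notes that part~(a) is almost identical, exactly as you describe.
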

\begin{proof}
We prove part (b). The proof of part (a) is almost exactly the same. By 
Lemma~\ref{le:reflection-invariant-associate}, both $X(\mathbb{R}^n)$
and $X'(\mathbb{R}^n)$ are reflection-invariant Banach function spaces.
If $a\in\mathcal{M}_{X'(\mathbb{R}^n)}$, then 
$\widetilde{a}\in\mathcal{M}_{X'(\mathbb{R}^n)}$ and
\begin{equation}\label{eq:duality-equality-1}
\|a\|_{\mathcal{M}_{X'(\mathbb{R}^n)}}
=
\|\widetilde{a}\|_{\mathcal{M}_{X'(\mathbb{R}^n)}}
\ge 
\|a\|_{\mathcal{M}_{X(\mathbb{R}^n)}}
\end{equation}
in view of Lemmas~\ref{le:multiplier-reflection} 
and~\ref{le:duality-embeddings}(a). On the other hand, if
$a\in\mathcal{M}_{X(\mathbb{R}^n)}$, then 
$\widetilde{a}\in\mathcal{M}_{X'(\mathbb{R}^n)}$ and
\begin{equation}\label{eq:duality-equality-2}
\|a\|_{\mathcal{M}_{X(\mathbb{R}^n)}}
\geq
\|\widetilde{a}\|_{\mathcal{M}_{X'(\mathbb{R}^n)}}
=
\|(\widetilde{a}){\widetilde{\hspace{2mm}}}
\|_{\mathcal{M}_{X'(\mathbb{R}^n)}}
=
\|a\|_{\mathcal{M}_{X'(\mathbb{R}^n)}}
\end{equation}
in view of Lemmas~\ref{le:duality-embeddings}(b) 
and~\ref{le:multiplier-reflection}. Combining inequalities 
\eqref{eq:duality-equality-1}--\eqref{eq:duality-equality-2},
we arrive at the desired result.
\end{proof}
Now we will show that one cannot drop the norm fundamental property in 
Theorem~\ref{th:duality-equality}(b).
\begin{theorem} \label{notL2a}
Suppose $G\subset [0,1]^n$ is a compact set of positive measure with empty 
interior, $b=\{b_m\}_{m\in\mathbb{N}}\subset(0,1)$ is a sequence satisfying
\eqref{eq:two-classes-of-multipliers-are-different-0}, and $w_{G,b}$ is
the weight given by 
\eqref{eq:two-classes-of-multipliers-are-different-1}--%
\eqref{eq:two-classes-of-multipliers-are-different-3}.
Then the reflection-invariant space $L^1(\mathbb{R}^n,w_{G,b}^{-1})$ 
does not satisfy the norm-fundamental property and there 
exists $a\in S(\mathbb{R}^n)$ such that 
$a\in \mathcal{M}_{L^\infty(\mathbb{R}^n,w_{G,b})}\setminus
\mathcal{M}_{L^1(\mathbb{R}^n,w_{G,b}^{-1})}$.
\end{theorem}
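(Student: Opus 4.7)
The weight $w_{G,b}$ is even by \eqref{eq:two-classes-of-multipliers-are-different-3}, so both $L^1(\mathbb{R}^n, w_{G,b}^{-1})$ and its associate space $L^\infty(\mathbb{R}^n, w_{G,b})$ (see \cite[Lemma~2.4]{KS14}) are reflection-invariant. To see that the norm fundamental property fails for $L^1(\mathbb{R}^n, w_{G,b}^{-1})$, consider $f := \chi_{G_1}$, for which $\|f\|_{L^1(\mathbb{R}^n, w_{G,b}^{-1})} = |G|/b_1$. Any $\psi \in C_0^\infty(\mathbb{R}^n)$ with $\|\psi\|_{L^\infty(\mathbb{R}^n, w_{G,b})} \le 1$ satisfies $|\psi|\, w_{G,b} \le 1$ a.e., so $|\psi| \le 1$ a.e. on the complement of $\bigcup_{m \in \mathbb{N}}\bigl(G_m \cup (-G_m)\bigr)$; since each $G_m$ has empty interior, this complement is open and dense, and the continuity of $\psi$ forces $|\psi(x)| \le 1$ for every $x \in \mathbb{R}^n$, by the same argument as in Lemma~\ref{le:exotic-weight}. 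Hence $\bigl|\int_{\mathbb{R}^n} f\psi\, dx\bigr| \le |G| < |G|/b_1 = \|f\|_{L^1(\mathbb{R}^n, w_{G,b}^{-1})}$ because $b_1 \in (0,1)$, and the norm fundamental property is violated.

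For the separation of multiplier classes, let $a := F\varrho_j \in S(\mathbb{R}^n)$ be the Schwartz symbol from the proof of Theorem~\ref{th:two-classes-of-multipliers-are-different}, so that $a \in \mathcal{M}_{L^\infty(\mathbb{R}^n, w_{G,b})} \setminus \mathcal{M}^0_{L^\infty(\mathbb{R}^n, w_{G,b})}$. Suppose, for a contradiction, that $a \in \mathcal{M}_{L^1(\mathbb{R}^n, w_{G,b}^{-1})}$. Since $a \in L^\infty(\mathbb{R}^n)$ and $L^1(\mathbb{R}^n, w_{G,b}^{-1})$ has absolutely continuous norm, Theorems~\ref{th:AC-implies-bounded-L2-approximation} and~\ref{th:M0} yield $a \in \mathcal{M}^0_{L^1(\mathbb{R}^n, w_{G,b}^{-1})}$. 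Applying Lemma~\ref{le:duality-0-embeddings}(b) with $X = L^1(\mathbb{R}^n, w_{G,b}^{-1})$, for which $X' = L^\infty(\mathbb{R}^n, w_{G,b})$, we obtain $\widetilde{a} \in \mathcal{M}^0_{L^\infty(\mathbb{R}^n, w_{G,b})}$, and then the reflection invariance of $L^\infty(\mathbb{R}^n, w_{G,b})$ together with Lemma~\ref{le:multiplier-reflection} gives $a \in \mathcal{M}^0_{L^\infty(\mathbb{R}^n, w_{G,b})}$, contradicting the choice of $a$.

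The conceptual point is that while both spaces are reflection-invariant, Theorem~\ref{th:duality-equality}(b) cannot be invoked to equalize their multiplier classes precisely because the norm fundamental property fails on the $L^1$ side. The only delicate step is the density-plus-continuity argument converting the a.e. bound $|\psi|\, w_{G,b} \le 1$ into the pointwise bound $|\psi| \le 1$, which is exactly where the empty-interior hypothesis on $G$ is essential; once this is in hand, every remaining implication is a direct application of results established earlier in the paper.
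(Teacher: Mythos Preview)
Your proof is correct and takes a genuinely different route from the paper's own argument. The paper proves $a\notin\mathcal{M}_{L^1(\mathbb{R}^n,w_{G,b}^{-1})}$ \emph{directly}: it chooses a different symbol $a=F\rho$ with $\rho\in C_0^\infty(\mathbb{R}^n)$ an even nonnegative bump equal to $1$ on $B(0,\sqrt{n+3})$, then exhibits explicit test functions $v_m\in C_0^\infty(\mathbb{R}^n)$ supported in the cube $Q_m=(2m-1,0,\dots,0)+[0,1]^n$ (where $w_{G,b}\equiv 1$) with $\|v_m\|_{L^1(\mathbb{R}^n,w_{G,b}^{-1})}\le 1$, and computes that $\rho\ast v_m$ is bounded below on $G_m$, so that $\|F^{-1}aFv_m\|_{L^1(\mathbb{R}^n,w_{G,b}^{-1})}\gtrsim b_m^{-1}|G|\to\infty$. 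You instead recycle the symbol $a=F\varrho_j$ from Theorem~\ref{th:two-classes-of-multipliers-are-different} and argue by contradiction through the duality machinery (Theorems~\ref{th:AC-implies-bounded-L2-approximation} and~\ref{th:M0}, Lemma~\ref{le:duality-0-embeddings}(b), Lemma~\ref{le:multiplier-reflection}), exploiting that $L^1(\mathbb{R}^n,w_{G,b}^{-1})$ has absolutely continuous norm. Your argument is more conceptual and neatly explains \emph{why} the example works---the failure is forced by duality once one knows $a\notin\mathcal{M}^0_{L^\infty(\mathbb{R}^n,w_{G,b})}$---whereas the paper's hands-on construction is self-contained and makes the quantitative blow-up visible. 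Both are short; the paper's choice of a fresh $\rho$ with larger support is unnecessary for your route, and your observation that $\varrho_j$ is even (so $\widetilde{a}=a$) would let you skip the appeal to Lemma~\ref{le:multiplier-reflection} altogether.
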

\begin{proof} 
It follows from \eqref{eq:two-classes-of-multipliers-are-different-3} that
$w_{G,b}(y)=w_{G,b}(-y)$ for all $y\in\mathbb{R}^n$. Therefore,
$L^1(\mathbb{R}^n, w_{G,b}^{-1})$ is a reflection-invariant Banach function 
space. One can prove, similarly to 
Corollary~\ref{co:L1-with-exotic-weight-fails-NFP}, that
$L^1(\mathbb{R}^n,w_{G,b}^{-1})$ does not satisfy the norm-fundamental 
property.

Let $\rho \in C_0^\infty(\mathbb{R}^n)$ be an even function such that 
$\rho \ge 0$ and $\rho(y) = 1$ when $|y| \le \sqrt{n + 3}$, and let
$a := F\rho$. Then $a \in S(\mathbb{R}^n)$ and $\widetilde{a} = a$. 

Take any $u \in S(\mathbb{R}^n)$ with 
$\|u\|_{L^\infty(\mathbb{R}^n,w_{G,b})} \le 1$. 
The same argument as in the proof of Lemma~\ref{le:exotic-weight} shows 
that $|u(y)| \le 1$ for all $y \in \mathbb{R}^n$. Then
\[
\left|(F^{-1}aF u)(x)\right| 
= 
|(\rho \ast u)(x)| \le \int_{\mathbb{R}^n} \rho(y)\, dy 
= 
\|\rho\|_{L^1(\mathbb{R}^n)} < \infty 
\quad\mbox{for all}\quad
x  \in \mathbb{R}^n 
\]
and 
$\|F^{-1}aF u\|_{L^\infty(\mathbb{R}^n, w_{G,b})} 
\le \|\rho\|_{L^1(\mathbb{R}^n)}$. Hence 
$a \in \mathcal{M}_{L^\infty(\mathbb{R}^n,w_{G,b})}$ and
\[
\|a\|_{\mathcal{M}_{L^\infty(\mathbb{R}^n,w_{G,b})}} 
\le \|\rho\|_{L^1(\mathbb{R}^n)}.
\]

On the other hand, consider $v_m \in C_0^\infty(\mathbb{R}^n)$ such that 
$v_m \ge 0$,  $v_m(y) = 1$ for $|y - y^{(m)}| \le 1/4$ and  
$v_m(y) = 0$ for $|y - y^{(m)}| \ge 1/2$, where 
$y^{(m)} := \left(2m - \frac12, \frac12, \dots, \frac12\right)$ is the 
centre of the cube $Q_m := (2m -1, 0, \dots, 0) + [0, 1]^n$, $m \in \mathbb{N}$.
Then $\operatorname{supp} v_m \subset Q_m$ and 
$w_{G,b}(x)=1$ for $x\in Q_m$, whence
\[
\|v_m\|_{L^1(\mathbb{R}^n, w_{G,b}^{-1}}) = \int_{Q_m} v_m(y)\, dy \le 1 .
\] 
Since the distance from any point of $G_m$ to any point of $Q_m$ is less than 
or equal to $\sqrt{2^2 + 1^2 + \cdots + 1^2} = \sqrt{n + 3}$, 
it follows from the definition of $\rho$ that for all $x \in G_m$,
\begin{align*}
|F^{-1}aF v_m(x)| 
&= 
|(\rho \ast v_m)(x)| = \int_{\mathbb{R}^n} v_m(y) \rho(x - y)\, dy 
\\
&= 
\int_{Q_m} v_m(y) \rho(x - y)\, dy 
=
\int_{Q_m} v_m(y)\, dy \ge 4^{-n} \Omega_n , 
\end{align*}
where $\Omega_n$ is the volume of the unit ball in $\mathbb{R}^n$. Hence
\[
\|F^{-1}aF v_m\|_{L^1(\mathbb{R}^n, w_{G,b}^{-1})} 
\ge 
b_m^{-1} \int_{G_m} |F^{-1}aF v_m(x)| \, dx 
\ge  
\frac{4^{-n} \Omega_n |G|}{b_m} \to \infty 
\quad \mbox{as} \quad m \to \infty ,
\]
while $\|v_m\|_{L^1(\mathbb{R}^n, w_{G,b}^{-1})} \le 1$. 
Hence $a \not\in \mathcal{M}_{L^1(\mathbb{R}^n, w_{G,b}^{-1})}$.
\end{proof}
For Banach spaces $E_0,E_1$ and a number $\theta\in(0,1)$, let 
$[E_0,E_1]_\theta$ denote the space obtained by the (lower) complex method of 
interpolation (see, e.g., \cite{C64} or \cite[Chap.~IV, \S 1.4]{KPS82}).
\begin{theorem}\label{th:XXprime-to-L-infinity}
Let $X(\mathbb{R}^n)$ be a Banach function space and $X'(\mathbb{R}^n)$ be its 
associate space. If 
$a \in \mathcal{M}^0_{X(\mathbb{R}^n)}\cap \mathcal{M}^0_{X'(\mathbb{R}^n)}$, 
then
\begin{equation}\label{eq:XXprime-to-L-infinity-1}
\|a\|_{L^\infty(\mathbb{R}^n)} 
\le 
\|a\|_{\mathcal{M}^0_{X(\mathbb{R}^n)}}^{1/2} 
\|a\|_{\mathcal{M}^0_{X'(\mathbb{R}^n)}}^{1/2}.
\end{equation}
\end{theorem}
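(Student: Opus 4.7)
The plan is to invoke Corollary~\ref{co:interpolation} for the Fourier multiplier operator $T_a u := F^{-1}(aFu)$, originally understood on $L^2(\mathbb{R}^n)$. Since the very definition of $\mathcal{M}^0$ forces $a\in L^\infty(\mathbb{R}^n)$, Plancherel's theorem immediately gives
\[
\|T_a\|_{\mathcal{B}(L^2(\mathbb{R}^n))} = \|a\|_{L^\infty(\mathbb{R}^n)},
\]
and the hypotheses $a\in\mathcal{M}^0_{X(\mathbb{R}^n)}\cap \mathcal{M}^0_{X'(\mathbb{R}^n)}$ translate directly into the restriction bounds $\|T_a u\|_{X(\mathbb{R}^n)}\le \|a\|_{\mathcal{M}^0_{X(\mathbb{R}^n)}}\|u\|_{X(\mathbb{R}^n)}$ for $u\in L^2(\mathbb{R}^n)\cap X(\mathbb{R}^n)$, and analogously with $X$ replaced by $X'$.

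The next step is to promote these restriction bounds to bounded linear operators on all of $X(\mathbb{R}^n)$ and $X'(\mathbb{R}^n)$, so that the interpolation corollary (whose hypothesis requires $A$ to be bounded on $X$ and on $X'$) can be invoked. I would produce the extensions by truncation: for $u\in X(\mathbb{R}^n)$ set $u_k := u\chi_{E_k}$ with $E_k := \{|u|\le k\}\cap\{|x|\le k\}$; then $u_k\in L^\infty$ with compact support (hence $u_k\in L^2\cap X$), $|u_k|\uparrow |u|$ pointwise, and axioms (A2)--(A3) give $\|u_k\|_{X(\mathbb{R}^n)}\uparrow \|u\|_{X(\mathbb{R}^n)}$. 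The uniform bound $\|T_a u_k\|_{X(\mathbb{R}^n)}\le \|a\|_{\mathcal{M}^0_{X(\mathbb{R}^n)}}\|u\|_{X(\mathbb{R}^n)}$, combined with the Fatou property, then lets one extract an order-theoretic (or a.e.) limit and obtain an extended operator $\widetilde T_a : X(\mathbb{R}^n) \to X(\mathbb{R}^n)$ with the same norm bound. The same construction performed in $X'(\mathbb{R}^n)$ yields the companion extension, and both extensions agree with the original $L^2$-multiplier on the common dense subspace $L^2(\mathbb{R}^n)\cap X(\mathbb{R}^n)\cap X'(\mathbb{R}^n)$, which contains $S_0(\mathbb{R}^n)$ and is therefore dense in $L^2(\mathbb{R}^n)$.

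With $T_a$ now bounded on both $X(\mathbb{R}^n)$ and $X'(\mathbb{R}^n)$, Corollary~\ref{co:interpolation} (which fuses the Calder\'on-product interpolation of Theorem~\ref{th:Zabreiko} with Lozanovski\u{\i}'s identity $X^{1/2}(X')^{1/2}(\mathbb{R}^n) = L^2(\mathbb{R}^n)$) delivers
\[
\|a\|_{L^\infty(\mathbb{R}^n)} = \|T_a\|_{\mathcal{B}(L^2(\mathbb{R}^n))}
\le \|T_a\|_{\mathcal{B}(X(\mathbb{R}^n))}^{1/2}\,\|T_a\|_{\mathcal{B}(X'(\mathbb{R}^n))}^{1/2}
\le \|a\|_{\mathcal{M}^0_{X(\mathbb{R}^n)}}^{1/2}\,\|a\|_{\mathcal{M}^0_{X'(\mathbb{R}^n)}}^{1/2},
\]
which is exactly the asserted inequality~\eqref{eq:XXprime-to-L-infinity-1}. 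I expect the principal technical difficulty to be precisely the extension step: without an absolute-continuity-of-norm or density assumption, $L^2(\mathbb{R}^n)\cap X(\mathbb{R}^n)$ need not be dense in $X(\mathbb{R}^n)$, so the simple ``extension by continuity'' fails, and one must rely on the Fatou property together with a careful verification that the two extensions agree on their common domain in order to apply the Calder\'on-product interpolation theorem in its stated form.
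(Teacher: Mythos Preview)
Your overall strategy is right, and you have correctly identified the crux: extending $T_a$ from $L^2\cap X$ to all of $X$ (and from $L^2\cap X'$ to all of $X'$) so that Corollary~\ref{co:interpolation} applies. The gap is precisely in your proposed extension. The truncations $u_k=u\chi_{E_k}$ do satisfy $|u_k|\uparrow|u|$, but the Fatou property (Axiom~(A3), or the Fatou lemma \cite[Chap.~1, Lemma~1.5]{BS88}) only yields information once you know $T_a u_k$ converges \emph{almost everywhere}. There is no reason for this: $T_a$ is a nonlocal, non-positive operator, so monotonicity of $|u_k|$ gives no control on pointwise behaviour of $T_a u_k$; and since $\|u_k-u_j\|_{X(\mathbb{R}^n)}$ need not tend to zero (think $X=L^\infty$), the sequence $(T_a u_k)$ need not be Cauchy in $X(\mathbb{R}^n)$ either. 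Uniform boundedness of $\|T_a u_k\|_{X(\mathbb{R}^n)}$ alone does not produce a limit in a general Banach function space.

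The paper sidesteps this by \emph{not} extending to all of $X(\mathbb{R}^n)$. Instead, it passes to the closures $\overline{X}(\mathbb{R}^n)$ and $\overline{X'}(\mathbb{R}^n)$ of $X(\mathbb{R}^n)\cap X'(\mathbb{R}^n)$ in $X(\mathbb{R}^n)$ and $X'(\mathbb{R}^n)$, where extension by continuity is automatic (since $X\cap X'\subset L^2$ by Lozanovski\u{\i}). The key observation is that the complex interpolation space does not change under this replacement: $[\overline{X},\overline{X'}]_{1/2}=[X,X']_{1/2}=L^2(\mathbb{R}^n)$, because complex interpolation only sees the closure of the intersection. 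One then interpolates the pair of maps $\overline{X}\to X$ and $\overline{X'}\to X'$ to obtain the $L^2$ bound. So the paper uses the complex method (via \cite[Chap.~IV]{KPS82}) rather than Corollary~\ref{co:interpolation} as stated, precisely because the latter demands boundedness on the full spaces, which is unavailable without further hypotheses.
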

\begin{proof} 
Since $L^2(\mathbb{R}^n) = (X^{1/2}(X')^{1/2})(\mathbb{R}^n)$ (see 
Lemma~\ref{le:Lozanovskii}) and $L^2(\mathbb{R}^n)$ has absolutely continuous 
norm, by \cite[Chap.~IV, Theorem~1.14]{KPS82}, we have
$L^2(\mathbb{R}^n) = [X(\mathbb{R}^n), X'(\mathbb{R}^n)]_{1/2}$
with equality of the norms. 

Let $\overline{X}(\mathbb{R}^n)$ and $\overline{X'}(\mathbb{R}^n)$ denote the 
closures of $X(\mathbb{R}^n)\cap X'(\mathbb{R}^n)$ in the spaces 
$X(\mathbb{R}^n)$ and $X'(\mathbb{R}^n)$ respectively. Then 
$L^2(\mathbb{R}^n) = [\overline{X}(\mathbb{R}^n), 
\overline{X'}(\mathbb{R}^n)]_{1/2}$ with equality of the norms
(see the discussion after the proof of  \cite[Chap.~IV, Theorem~1.3]{KPS82}). 

By \cite[Chap.~IV, Theorem~1.3]{KPS82},  
$X(\mathbb{R}^n)\cap X'(\mathbb{R}^n) \subset 
[X(\mathbb{R}^n), X'(\mathbb{R}^n)]_{1/2} = L^2(\mathbb{R}^n)$. 
Since
$a \in \mathcal{M}^0_{X(\mathbb{R}^n)}\cap \mathcal{M}^0_{X'(\mathbb{R}^n)}$, 
the operator $W_a:f\mapsto F^{-1}aFf$, defined initially on 
$X(\mathbb{R}^n)\cap X'(\mathbb{R}^n)$,
can be extended to bounded linear operators
\[
W_a : \overline{X}(\mathbb{R}^n) \to X(\mathbb{R}^n),
\quad
W_a : \overline{X'}(\mathbb{R}^n) \to X'(\mathbb{R}^n). 
\]
Then, by the interpolation theorem for the complex method of interpolation
(see, e.g., \cite[Chap.~IV, Theorem~1.2]{KPS82}),
\begin{align}
\|W_a\|_{\mathcal{B}(L^2(\mathbb{R}^n))} 
&=  
\|W_a\|_{\mathcal{B}
\left([\overline{X}(\mathbb{R}^n), \overline{X'}(\mathbb{R}^n)]_{1/2}, 
[X(\mathbb{R}^n), X'(\mathbb{R}^n)]_{1/2}\right)} 
\notag\\
&\le 
\|W_a\|_{\mathcal{B}
\left(\overline{X}(\mathbb{R}^n), X(\mathbb{R}^n)\right)}^{1/2} 
\|W_a\|_{\mathcal{B}
\left(\overline{X'}(\mathbb{R}^n), X'(\mathbb{R}^n)\right)}^{1/2}.
\label{eq:XXprime-to-L-infinity-2}
\end{align}
Since $X(\mathbb{R}^n)\cap X'(\mathbb{R}^n)\subset 
L^2(\mathbb{R}^n)\cap \overline{X}(\mathbb{R}^n)$
and 
$X(\mathbb{R}^n)\cap X'(\mathbb{R}^n)\subset 
L^2(\mathbb{R}^n)\cap \overline{X'}(\mathbb{R}^n)$,
we conclude that 
$L^2(\mathbb{R}^n)\cap \overline{X}(\mathbb{R}^n)$
is dense in $\overline{X}(\mathbb{R}^n)$ and 
$L^2(\mathbb{R}^n)\cap \overline{X'}(\mathbb{R}^n)$
is dense in $\overline{X'}(\mathbb{R}^n)$. Hence
\begin{align}
\|W_a\|_{\mathcal{B}
\left(\overline{X}(\mathbb{R}^n), X(\mathbb{R}^n)\right)}
&=
\sup\left\{
\frac{\|F^{-1}aFu\|_{X(\mathbb{R}^n)}}{\|u\|_{X(\mathbb{R}^n)}}
\ :\ u\in (L^2(\mathbb{R}^n) \cap \overline{X}(\mathbb{R}^n))\setminus\{0\}
\right\}
\notag\\
&\le 
\|a\|_{\mathcal{M}^0_{X(\mathbb{R}^n)}}
\label{eq:XXprime-to-L-infinity-3}
\end{align}
and
\begin{align}
\|W_a\|_{\mathcal{B}
\left(\overline{X'}(\mathbb{R}^n), X'(\mathbb{R}^n)\right)}
&=
\sup\left\{
\frac{\|F^{-1}aFu\|_{X'(\mathbb{R}^n)}}{\|u\|_{X'(\mathbb{R}^n)}}
\ :\ u\in (L^2(\mathbb{R}^n) \cap \overline{X'}(\mathbb{R}^n))\setminus\{0\}
\right\}
\notag\\
&\le 
\|a\|_{\mathcal{M}^0_{X'(\mathbb{R}^n)}}.
\label{eq:XXprime-to-L-infinity-4}
\end{align}
It is well known (see, e.g., \cite[Theorem~2.5.10]{G14-classical}) that
$\mathcal{M}_{L^2(\mathbb{R}^n)}=L^\infty(\mathbb{R}^n)$ and
\begin{equation}\label{eq:XXprime-to-L-infinity-5}
\|W_a\|_{\mathcal{B}(L^2(\mathbb{R}^n))}
=
\|a\|_{\mathcal{M}_{L^2(\mathbb{R}^n)}}
=
\|a\|_{L^\infty(\mathbb{R}^n)}.
\end{equation}
Combining 
\eqref{eq:XXprime-to-L-infinity-2}--\eqref{eq:XXprime-to-L-infinity-5}, 
we arrive at \eqref{eq:XXprime-to-L-infinity-1}.
\end{proof}
We are now we are in a position to prove the main result of this section.
\begin{theorem}\label{th:bounded-multipliers-reflection-invariant}
Let $X(\mathbb{R}^n)$ be a  reflection-invariant Banach function
space. 
\begin{enumerate}
\item[(a)]
If $a\in\mathcal{M}^0_{X(\mathbb{R}^n)}$, then 
\begin{equation}\label{eq:bounded-multipliers-reflection-invariant-1-0}
\|a\|_{L^\infty(\mathbb{R}^n)} 
\le 
\|a\|_{\mathcal{M}^0_{X(\mathbb{R}^n)}}.
\end{equation}
\item[(b)]
If $X(\mathbb{R}^n)$ is reflexive and 
$a\in\mathcal{M}_{X(\mathbb{R}^n)}\subset S'(\mathbb{R}^n)$, then
$a\in L^\infty(\mathbb{R}^n)$ and
\begin{equation}\label{eq:bounded-multipliers-reflection-invariant-1}
\|a\|_{L^\infty(\mathbb{R}^n)}\le\|a\|_{\mathcal{M}_{X(\mathbb{R}^n)}}.
\end{equation}
\end{enumerate}
The constant $1$ in the right-hand sides of 
\eqref{eq:bounded-multipliers-reflection-invariant-1-0} and
\eqref{eq:bounded-multipliers-reflection-invariant-1} is best 
possible.
\end{theorem}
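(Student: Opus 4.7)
For part (a), the reflection-invariance together with Theorem~\ref{th:duality-equality}(a) yields $\|a\|_{\mathcal{M}^0_{X'(\mathbb{R}^n)}} = \|a\|_{\mathcal{M}^0_{X(\mathbb{R}^n)}}$, so $a \in \mathcal{M}^0_{X(\mathbb{R}^n)} \cap \mathcal{M}^0_{X'(\mathbb{R}^n)}$; feeding this into Theorem~\ref{th:XXprime-to-L-infinity} gives~\eqref{eq:bounded-multipliers-reflection-invariant-1-0} immediately. Sharpness of the constant $1$ in both~\eqref{eq:bounded-multipliers-reflection-invariant-1-0} and~\eqref{eq:bounded-multipliers-reflection-invariant-1} will follow by testing with $a \equiv 1$, exactly as at the end of the proof of Theorem~\ref{th:main}.

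For part (b), the plan is to reduce to the identity $\mathcal{M}_{L^2(\mathbb{R}^n)} = L^\infty(\mathbb{R}^n)$ via mollification and Calder\'on product interpolation. The first step is to harvest the consequences of reflexivity: a reflexive Banach function space and its associate both have absolutely continuous norm (a standard fact from~\cite[Chap.~1]{BS88}), so by Theorem~\ref{th:AC-implies-bounded-L2-approximation} they both have the bounded $L^2$-approximation property and by Corollary~\ref{co:NFP-associate-separable} they both satisfy the norm fundamental property. Combined with reflection-invariance, Theorem~\ref{th:duality-equality}(b) then produces $\mathcal{M}_{X(\mathbb{R}^n)} = \mathcal{M}_{X'(\mathbb{R}^n)}$ with equal norms.

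Next I would mollify $a$ via $a_\varepsilon := a \ast \psi_\varepsilon \in C_{\rm poly}^\infty(\mathbb{R}^n)$ as in Section~\ref{sec:proof}. The portion of the proof of Theorem~\ref{th:main} yielding $\|a_\varepsilon\|_{\mathcal{M}_{X(\mathbb{R}^n)}} \le \|a\|_{\mathcal{M}_{X(\mathbb{R}^n)}}$ uses only Lemma~\ref{le:NFP-X} and Fatou's lemma, not the weak doubling property, so it applies here verbatim; duality then gives the same bound in $\mathcal{M}_{X'(\mathbb{R}^n)}$. Because $a_\varepsilon \in C_{\rm poly}^\infty(\mathbb{R}^n)$, the multiplier operator $W_{a_\varepsilon} u := F^{-1} a_\varepsilon F u$ maps $S(\mathbb{R}^n)$ into itself, and $C_0^\infty(\mathbb{R}^n)$ is dense in both $X(\mathbb{R}^n)$ and $X'(\mathbb{R}^n)$ by absolute continuity, so $W_{a_\varepsilon}$ extends consistently to a single linear operator bounded on each space. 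Corollary~\ref{co:interpolation}---which rests on Theorem~\ref{th:Zabreiko} and Lozanovski{\u\i}'s identity in Lemma~\ref{le:Lozanovskii}---then extends $W_{a_\varepsilon}$ to $L^2(\mathbb{R}^n)$ with
\[
\|W_{a_\varepsilon}\|_{\mathcal{B}(L^2(\mathbb{R}^n))}
\le
\|a_\varepsilon\|_{\mathcal{M}_{X(\mathbb{R}^n)}}^{1/2}
\|a_\varepsilon\|_{\mathcal{M}_{X'(\mathbb{R}^n)}}^{1/2}
\le
\|a\|_{\mathcal{M}_{X(\mathbb{R}^n)}}.
\]
Since $\mathcal{M}_{L^2(\mathbb{R}^n)} = L^\infty(\mathbb{R}^n)$ with equality of norms (e.g.,~\cite[Theorem~2.5.10]{G14-classical}), this forces $a_\varepsilon \in L^\infty(\mathbb{R}^n)$ with $\|a_\varepsilon\|_{L^\infty(\mathbb{R}^n)} \le \|a\|_{\mathcal{M}_{X(\mathbb{R}^n)}}$ uniformly in $\varepsilon > 0$.

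To conclude, I would let $\varepsilon \to 0$: one has $a_\varepsilon \to a$ in $S'(\mathbb{R}^n)$, as in the final step of the proof of Theorem~\ref{th:main}, and since $\{a_\varepsilon\}$ is norm-bounded in $L^\infty(\mathbb{R}^n)$, a Banach--Alaoglu argument produces a weak-$\ast$ limit $\tilde a \in L^\infty(\mathbb{R}^n)$ which agrees with $a$ in $S'(\mathbb{R}^n)$ and satisfies $\|\tilde a\|_{L^\infty(\mathbb{R}^n)} \le \|a\|_{\mathcal{M}_{X(\mathbb{R}^n)}}$, giving~\eqref{eq:bounded-multipliers-reflection-invariant-1}. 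The most delicate point to verify will be that the two extensions of $W_{a_\varepsilon}$ from $S(\mathbb{R}^n)$ by continuity into $X(\mathbb{R}^n)$ and into $X'(\mathbb{R}^n)$ do define the same linear operator on their common domain, so that Corollary~\ref{co:interpolation} legitimately applies; this is precisely where the simultaneous density of $C_0^\infty(\mathbb{R}^n)$ in both spaces, furnished by absolute continuity of both norms, plays its essential role.
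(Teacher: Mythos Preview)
Your argument for part~(a) and the sharpness claim match the paper exactly.

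For part~(b), your approach is correct but introduces an unnecessary mollification detour. The paper proceeds more directly: after establishing (exactly as you do) that reflexivity gives absolutely continuous norm and the norm fundamental property for both $X(\mathbb{R}^n)$ and $X'(\mathbb{R}^n)$, and hence $\mathcal{M}_{X(\mathbb{R}^n)} = \mathcal{M}_{X'(\mathbb{R}^n)}$ with equal norms via Theorem~\ref{th:duality-equality}(b), it applies Corollary~\ref{co:interpolation} \emph{directly to $W_a$ itself}. Since $a \in \mathcal{M}_{X(\mathbb{R}^n)} \cap \mathcal{M}_{X'(\mathbb{R}^n)}$, the operator $W_a$ defined on $C_0^\infty(\mathbb{R}^n)$ extends by density to a bounded operator on each of $X(\mathbb{R}^n)$ and $X'(\mathbb{R}^n)$, interpolation gives $\|W_a\|_{\mathcal{B}(L^2(\mathbb{R}^n))} \le \|a\|_{\mathcal{M}_{X(\mathbb{R}^n)}}$, and the identification $\|W_a\|_{\mathcal{B}(L^2(\mathbb{R}^n))} = \|a\|_{L^\infty(\mathbb{R}^n)}$ finishes the proof in one stroke. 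Your mollification $a_\varepsilon$, the uniform bound $\|a_\varepsilon\|_{\mathcal{M}_{X(\mathbb{R}^n)}} \le \|a\|_{\mathcal{M}_{X(\mathbb{R}^n)}}$ borrowed from Section~\ref{sec:proof}, and the weak-$\ast$ limit argument are all valid, but they are simply not needed here---the consistency issue you flag as ``the most delicate point'' for $W_{a_\varepsilon}$ is no easier than for $W_a$ itself, so the mollification buys nothing. The paper's route is shorter and avoids the limit passage entirely.
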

\begin{proof}
Part (a) follows from Theorems \ref{th:duality-equality}(a) 
and~\ref{th:XXprime-to-L-infinity}.

(b) By \cite[Chap.~1, Corollary~4.4]{BS88}, a Banach function space 
$X(\mathbb{R}^n)$ is reflexive if and only if both $X(\mathbb{R}^n)$ and
$X'(\mathbb{R}^n)$ have absolutely continuous norm. Then both $X(\mathbb{R}^n)$ 
and $X'(\mathbb{R}^n)$ satisfy the norm fundamental property (see 
Corollary~\ref{co:NFP-associate-separable}).
If $a\in\mathcal{M}_{X(\mathbb{R}^n)}$, then 
$a\in\mathcal{M}_{X'(\mathbb{R}^n)}$ and 
\begin{equation}\label{eq:bounded-multipliers-reflection-invariant-2}
\|a\|_{\mathcal{M}_{X(\mathbb{R}^n)}}=\|a\|_{\mathcal{M}_{X'(\mathbb{R}^n)}}
\end{equation}
in view of Theorem \ref{th:duality-equality}(b). It follows from 
\cite[Lemma~2.12(b)]{KS14}, that the set $C_0^\infty(\mathbb{R}^n)$
is dense in the spaces $X(\mathbb{R}^n)$ and $X'(\mathbb{R}^n)$.
Hence the convolution operator $W_a:u\mapsto F^{-1}aFu$ defined initially
on $C_0^\infty(\mathbb{R}^n)$ extends to a bounded linear operator on both
$X(\mathbb{R}^n)$ and $X'(\mathbb{R}^n)$. Moreover,
\begin{equation}\label{eq:bounded-multipliers-reflection-invariant-3}
\|W_a\|_{\mathcal{B}(X(\mathbb{R}^n))}
=
\|a\|_{\mathcal{M}_{X(\mathbb{R}^n)}},
\quad
\|W_a\|_{\mathcal{B}(X'(\mathbb{R}^n))}
=
\|a\|_{\mathcal{M}_{X'(\mathbb{R}^n)}}.
\end{equation}
By Corollary~\ref{co:interpolation}, $W_a$ is bounded on $L^2(\mathbb{R}^n)$
and
\begin{equation}\label{eq:bounded-multipliers-reflection-invariant-4}
\|W_a\|_{\mathcal{B}(L^2(\mathbb{R}^n))}
\le
\|W_a\|_{\mathcal{B}(X(\mathbb{R}^n))}^{1/2}
\|W_a\|_{\mathcal{B}(X'(\mathbb{R}^n))}^{1/2}.
\end{equation}
It is well known (see, e.g., \cite[Theorem~2.5.10]{G14-classical}) that
$\mathcal{M}_{L^2(\mathbb{R}^n)}=L^\infty(\mathbb{R}^n)$ and
\begin{equation}\label{eq:bounded-multipliers-reflection-invariant-5}
\|W_a\|_{\mathcal{B}(L^2(\mathbb{R}^n))}
=
\|a\|_{\mathcal{M}_{L^2(\mathbb{R}^n)}}
=
\|a\|_{L^\infty(\mathbb{R}^n)}.
\end{equation}
Combining 
\eqref{eq:bounded-multipliers-reflection-invariant-2}--%
\eqref{eq:bounded-multipliers-reflection-invariant-5},
we see that
\[
\|a\|_{L^\infty(\mathbb{R}^n)}
\le 
\|a\|_{\mathcal{M}_{X(\mathbb{R}^n)}}^{1/2}
\|a\|_{\mathcal{M}_{X'(\mathbb{R}^n)}}^{1/2}
=
\|a\|_{\mathcal{M}_{X(\mathbb{R}^n)}}^{1/2}
\|a\|_{\mathcal{M}_{X(\mathbb{R}^n)}}^{1/2}
=
\|a\|_{\mathcal{M}_{X(\mathbb{R}^n)}},
\]
which completes the proof of 
\eqref{eq:bounded-multipliers-reflection-invariant-1}.

Suppose now that there exists a constant $D_X > 0$ such that
$\|a\|_{L^\infty(\mathbb{R}^n)} \le D_X \|a\|_{\mathcal{M}_{X(\mathbb{R}^n)}}$
for all $a \in\mathcal{M}_{X(\mathbb{R}^n)}$.
Then taking $a \equiv 1$, one gets $D_X \ge 1$. So, the constant 
$D_X = 1$ in \eqref{eq:bounded-multipliers-reflection-invariant-1}
is best possible. The same can be proved similarly for 
\eqref{eq:bounded-multipliers-reflection-invariant-1-0}.
\end{proof}
Unfortunately, we have not been able to answer the following question.
\begin{question} 
Can one drop the reflexivity requirement in 
Theorem~\ref{th:bounded-multipliers-reflection-invariant}(b)?
\end{question}
\section{Concluding remarks}\label{subsec:Lofstrom}
Let $Y(\mathbb{R})$ be a translation-invariant Banach function space. We have 
seen that for subexponentially growing weights $w$ like \eqref{2sided}, 
$Y(\mathbb{R}, w)$ has the weak doubling property in view of 
Lemma~\ref{le:subexponential-growth-2}, and hence 
$\mathcal{M}_{Y(\mathbb{R}, w)} \subseteq L^\infty(\mathbb{R})$
according to Theorem~\ref{th:main}. This inclusion holds also for 
reflexive transaltion-invariant Banach function spaces $Y(\mathbb{R})$ and
symmetric weights $w = \widetilde{w}$ that may grow arbitrarily fast 
(see Theorem \ref{th:bounded-multipliers-reflection-invariant}).
On the other hand, $\mathcal{M}_{Y(\mathbb{R}, w)}$ may contain unbounded 
functions if $w$ grows at least exponentially as $x \to +\infty$ 
and decays to $0$ exponentially as $x \to -\infty$ 
(see  Corollaries~\ref{co:w1w2} and~\ref{co:examples-of-unbounded-multipliers}). 
It is natural to ask whether there are any unbounded Fourier multipliers in the 
case of  weights like
\begin{equation}\label{eq:superexponential-example}
w(x) 
= 
\left\{\begin{array}{cl}
\exp{\left( |x|^{\alpha_1}\right)} ,   &  x < 0 ,  
\\
\exp{\left(x^{\alpha_2}\right)} ,   &  x \ge 0 ,
\end{array}\right. \quad   \alpha_1, \alpha_2 > 1 .
\end{equation}
It turns out that there are no non-trivial Fourier multipliers in the case of 
weights like \eqref{eq:superexponential-example} and, more generally, of 
weights on $\mathbb{R}^n$ that grow superexponentially in all directions: 
$\mathcal{M}_{Y(\mathbb{R}^n,w)}=\mathbb{C}$. This fact was observed first 
by L\"ofstr\"om \cite{L83} in the case $Y(\mathbb{R}^n)=L^p(\mathbb{R}^n)$ 
with $1\le p\le\infty$. For the convenience of the reader, we present here 
a slightly modified argument from \cite{L83} in the case of arbitrary
translation-invariant Banach function spaces.
\begin{theorem}[{(Cf. \cite[p.~93]{L83})}]
\label{th:Lofstrom2}
Suppose $X(\mathbb{R}^n)$ is a Banach function space such that for every 
$x_0 \in \mathbb{R}^n\setminus\{0\}$ there exist $\varepsilon > 0$ and a 
sequence $\{x_k\}_{k\in\mathbb{N}} \subset \mathbb{R}^n$ satisfying the 
condition
\begin{equation}\label{eq:Lofstrom2-1}
\frac
{\|\chi_{B(x_k, \rho)}\|_{X(\mathbb{R}^n)}}
{\|\chi_{B(x_k - x_0, \varepsilon)}\|_{X(\mathbb{R}^n)}} 
\to \infty \ \ \mbox{\rm as } \ k \to \infty
\end{equation}
for every $\rho \in (0, \varepsilon]$. If $\kappa$ is a distribution such that
\begin{equation}\label{eq:Lofstrom2-2}
\|\kappa\ast f\|_{X(\mathbb{R}^n)} \le C \|f\|_{X(\mathbb{R}^n)} 
\quad \mbox{for all}\quad 
f \in C_0^\infty(\mathbb{R}^n),
\end{equation}
with some constant $C > 0$, then $\kappa = c \delta$ with some constant 
$c \in \mathbb{C}$, where $\delta$ is the Dirac measure.
\end{theorem}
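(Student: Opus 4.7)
The plan is to prove the theorem in two stages: first, conclude from the hypothesis \eqref{eq:Lofstrom2-1} and the boundedness \eqref{eq:Lofstrom2-2} that $\operatorname{supp}\kappa \subseteq \{0\}$; and second, exploit modulation of test functions to rule out all Dirac derivatives of positive order, leaving only $\kappa = c\delta$.

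For the first stage, suppose for contradiction that $x_0 \in \operatorname{supp}\kappa$ with $x_0 \neq 0$, and let $\varepsilon > 0$ and $\{x_k\}_{k\in\mathbb{N}} \subset \mathbb{R}^n$ be as in \eqref{eq:Lofstrom2-1}. A standard mollification argument (if $\kappa \ast \eta$ were to vanish in a neighbourhood of $x_0$ for every small-support mollifier $\eta$, then $\kappa$ itself would vanish there) allows one to choose $\eta \in C_0^\infty(\mathbb{R}^n)$ with $\operatorname{supp}\eta \subset B(0,\delta)$ for any prescribed $\delta > 0$, so that the smooth function $g := \kappa \ast \eta$ satisfies $g(x_1) \neq 0$ for some $x_1$ with $|x_1 - x_0| < \varepsilon/4$. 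Fix such $\eta$ and $x_1$ with $\delta < \varepsilon/4$, and pick $r \in (0,\varepsilon]$ so that $|g| \ge |g(x_1)|/2$ on $B(x_1, r)$. Setting $f_k(x) := \eta(x - (x_k - x_1))$, we have $f_k \in C_0^\infty(\mathbb{R}^n)$ with support contained in $B(x_k - x_1, \delta) \subset B(x_k - x_0, \varepsilon)$, whence
\[
\|f_k\|_{X(\mathbb{R}^n)} \le \|\eta\|_{L^\infty(\mathbb{R}^n)}\,\|\chi_{B(x_k - x_0, \varepsilon)}\|_{X(\mathbb{R}^n)};
\]
a direct computation gives $\kappa \ast f_k = \tau_{x_k - x_1} g$, so by axiom (A2),
\[
\|\kappa \ast f_k\|_{X(\mathbb{R}^n)} \ge \tfrac12 |g(x_1)|\,\|\chi_{B(x_k, r)}\|_{X(\mathbb{R}^n)}.
\]
Combining with \eqref{eq:Lofstrom2-2} shows that $\|\chi_{B(x_k, r)}\|_{X(\mathbb{R}^n)}/\|\chi_{B(x_k - x_0, \varepsilon)}\|_{X(\mathbb{R}^n)}$ stays bounded in $k$, contradicting \eqref{eq:Lofstrom2-1}.

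Once $\operatorname{supp}\kappa \subseteq \{0\}$ is established, the classical structure theorem for distributions supported at a point yields $\kappa = \sum_{|\alpha|\le N} c_\alpha \partial^\alpha \delta$ for some finite $N$ and constants $c_\alpha \in \mathbb{C}$. Set $N := \max\{|\alpha| : c_\alpha \neq 0\}$ and suppose for contradiction $N \ge 1$. Fix $\phi \in C_0^\infty(\mathbb{R}^n)$ with $\phi \ge 0$ and $\phi \not\equiv 0$, and consider the modulated test functions $f_\xi(x) := e^{i\xi \cdot x} \phi(x) \in C_0^\infty(\mathbb{R}^n)$. Since $|f_\xi| = |\phi|$, one has $\|f_\xi\|_{X(\mathbb{R}^n)} = \|\phi\|_{X(\mathbb{R}^n)}$ for every $\xi \in \mathbb{R}^n$. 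Expanding $\partial^\alpha f_\xi$ by the Leibniz rule gives
\[
(\kappa \ast f_\xi)(x) = \sum_\alpha c_\alpha \, \partial^\alpha f_\xi(x) = e^{i\xi \cdot x}\bigl(\phi(x) Q(\xi) + R(\xi; x)\bigr),
\]
where $Q(\xi) := \sum_{|\alpha| = N} c_\alpha (i\xi)^\alpha$ is a nonzero homogeneous polynomial of degree $N$ in $\xi$, and $R(\xi; x)$ is a polynomial in $\xi$ of degree at most $N - 1$ whose coefficients are $C^\infty$ functions supported in $\operatorname{supp}\phi$. Choose $\xi_0 \in \mathbb{R}^n$ with $Q(\xi_0) \neq 0$ and $c > 0$ small enough that $A_c := \{x : \phi(x) \ge c\}$ has positive finite measure. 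For $t > 0$ sufficiently large, $|(\kappa \ast f_{t\xi_0})(x)| \ge \tfrac12 c\, t^N |Q(\xi_0)|$ on $A_c$, so axioms (A1), (A2), (A4) give
\[
\|\kappa \ast f_{t\xi_0}\|_{X(\mathbb{R}^n)} \ge \tfrac12 c\, t^N |Q(\xi_0)|\,\|\chi_{A_c}\|_{X(\mathbb{R}^n)} \longrightarrow \infty \quad \text{as } t\to\infty,
\]
while $\|f_{t\xi_0}\|_{X(\mathbb{R}^n)} = \|\phi\|_{X(\mathbb{R}^n)}$ is fixed. This contradicts \eqref{eq:Lofstrom2-2}, forcing $N = 0$ and hence $\kappa = c_0\,\delta$.

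The main subtlety lies in Step 1, where several size constraints (the support of $\eta$, the distance $|x_1 - x_0|$, and the radius $r$ at which $|g|$ is bounded below) must all fit within the single number $\varepsilon$ supplied by the hypothesis; this is what dictates the choice $\delta < \varepsilon/4$ and $|x_1 - x_0| < \varepsilon/4$ so that $\operatorname{supp} f_k \subset B(x_k - x_0, \varepsilon)$ is guaranteed. Step 2 is essentially automatic once one notices that multiplication by $e^{i\xi \cdot x}$ is an isometry on any Banach function space --- a consequence of $\|f\|_{X(\mathbb{R}^n)} = \rho(|f|)$ --- which converts the polynomial growth in $\xi$ of $|\kappa \ast f_\xi|$ into genuine blowup of the ratio $\|\kappa \ast f_\xi\|_{X(\mathbb{R}^n)}/\|f_\xi\|_{X(\mathbb{R}^n)}$.
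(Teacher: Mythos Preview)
Your proof is correct and follows essentially the same strategy as the paper's. In Step~1 both arguments translate a fixed test function whose convolution with $\kappa$ is bounded below near $x_0$, then compare the resulting lower bound $\|\chi_{B(x_k,\rho)}\|_X$ against the upper bound $\|\chi_{B(x_k-x_0,\varepsilon)}\|_X$ coming from \eqref{eq:Lofstrom2-2}; the paper picks $f$ with $\operatorname{supp}\widetilde{f}\subseteq B(x_0,\varepsilon)$ and $(\kappa\ast f)(0)=1$ directly, whereas you reach the same situation via a mollifier, which forces the extra $\varepsilon/4$ bookkeeping but is equivalent. In Step~2 the paper merely asserts that ``it is easy to see'' that \eqref{eq:Lofstrom2-2} rules out derivatives of $\delta$, while you supply an explicit modulation argument using $f_\xi=e^{i\xi\cdot x}\phi$; this is a clean way to fill in that detail and exploits precisely the fact that $\|\cdot\|_X$ depends only on $|f|$.
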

\begin{proof}
Take any $x_0 \in \mathbb{R}^n\setminus\{0\}$. Suppose 
$x_0 \in \operatorname{supp} \kappa$. Then there exists
$f \in C_0^\infty(\mathbb{R}^n)$ such that 
$\operatorname{supp} \widetilde{f} \subseteq B(x_0, \varepsilon)$ and
\[
\kappa\ast f(0) = \big\langle \kappa, \widetilde{f}\big\rangle = 1 .
\]
Since $\kappa\ast f$ is continuous (see, e.g., \cite[Theorem 6.30(b)]{R91}), 
there exists $\rho  \in (0, \varepsilon]$ such that $|\kappa\ast f(x)| > 1/2$ 
for all $x \in B(0, \rho)$. Then
\[
\left|
\big(\kappa\ast \left(\tau_{x_k}f\right)\big)(x + x_k)
\right| 
= 
\left|
\big(\tau_{-x_k}\left(\kappa\ast \left(\tau_{x_k}f\right)\right)\big)(x)
\right| 
=
|\kappa\ast f(x)| > \frac12
\]
for all $x \in B(0, \rho)$. Hence it follows from \eqref{eq:Lofstrom2-2} that
\begin{align*}
\frac12\, \|\chi_{B(x_k, \rho)}\|_{X(\mathbb{R}^n)} 
&\le 
\left\|\chi_{B(x_k, \rho)}
\big(\kappa\ast \left(\tau_{x_k}f\right)\big)
\right\|_{X(\mathbb{R}^n)}
\le 
\left\|\kappa\ast \left(\tau_{x_k}f\right)\right\|_{X(\mathbb{R}^n)} 
\\
&\le 
C \|\tau_{x_k}f\|_{X(\mathbb{R}^n)} \le C\|f\|_{L^\infty(\mathbb{R}^n)} 
\|\chi_{B(x_k - x_0, \varepsilon)}\|_{X(\mathbb{R}^n)} ,
\end{align*}
since 
$\operatorname{supp} \left(\tau_{x_k}f\right) 
= 
x_k + \operatorname{supp} f \subseteq x_k + B(-x_0, \varepsilon) 
= 
B(x_k - x_0, \varepsilon)$. 
So,
\[
\frac{\|\chi_{B(x_k, \rho)}\|_{X(\mathbb{R}^n)}}
{\|\chi_{B(x_k - x_0, \varepsilon)}\|_{X(\mathbb{R}^n)}} 
\le 
2 C\|f\|_{L^\infty(\mathbb{R}^n)} 
\quad\mbox{for all}\quad
k \in \mathbb{N} ,
\]
which contradicts \eqref{eq:Lofstrom2-1}. This means that 
$x_0 \in \mathbb{R}^n\setminus\{0\}$ cannot belong to 
the support of $\kappa$, i.e.
$\operatorname{supp} \kappa = \{0\}$. Hence $\kappa$ is a linear combination 
of $\delta$ and its partial derivatives 
(see, e.g., \cite[Theorems 6.24(d) and 6.25]{R91}).
It is easy to see that then \eqref{eq:Lofstrom2-2} implies the equality 
$\kappa = c \delta$ with some constant $c \in \mathbb{C}$.
\end{proof}
\begin{theorem}[{(Cf. \cite[p.~91-93]{L83})}]
\label{th:Lofstrom2-for-weighted-TI-spaces}
Let $Y(\mathbb{R}^n)$ be a translation-invariant Banach function space
and $w:\mathbb{R}^n\to[0,\infty]$ be a weight such that 
$w \in Y_{\mathrm{loc}}(\mathbb{R}^n)$ and 
$1/w  \in Y'_{\mathrm{loc}}(\mathbb{R}^n)$. Suppose for every 
$x_0 \in \mathbb{R}^n\setminus\{0\}$ there exist $\varepsilon > 0$ and a 
sequence $\{x_k\}_{k\in\mathbb{N}}\subset\mathbb{R}^n$ satisfying the 
condition
\begin{equation}\label{eq:Lofstrom2-for-weighted-TI-spaces}
\inf_{|x| \le \varepsilon, \ |y| \le \varepsilon}
\ 
\frac{w(x_k + x)}{w(x_k - x_0 + y)} \to \infty 
\quad \mbox{as} \quad k \to \infty .
\end{equation}
Then $\mathcal{M}_{Y(\mathbb{R}^n, w)}=\mathbb{C}$.
\end{theorem}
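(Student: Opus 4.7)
The plan is to reduce this statement directly to Theorem~\ref{th:Lofstrom2} applied with $X(\mathbb{R}^n) = Y(\mathbb{R}^n,w)$.

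First, given $a \in \mathcal{M}_{Y(\mathbb{R}^n,w)}$, I set $\kappa := F^{-1}a \in S'(\mathbb{R}^n)$. Because $w \in Y_{\mathrm{loc}}(\mathbb{R}^n)$, every $f \in C_0^\infty(\mathbb{R}^n)$ has compact support and therefore belongs to $S(\mathbb{R}^n) \cap Y(\mathbb{R}^n,w)$; since $F^{-1}aFf = \kappa \ast f$ in this setting, the definition of $\|a\|_{\mathcal{M}_{Y(\mathbb{R}^n,w)}}$ immediately yields
\[
\|\kappa \ast f\|_{Y(\mathbb{R}^n,w)}
\le
\|a\|_{\mathcal{M}_{Y(\mathbb{R}^n,w)}} \|f\|_{Y(\mathbb{R}^n,w)}
\quad\mbox{for all}\quad f \in C_0^\infty(\mathbb{R}^n),
\]
which is precisely hypothesis \eqref{eq:Lofstrom2-2} of Theorem~\ref{th:Lofstrom2}.

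Next, I would verify condition \eqref{eq:Lofstrom2-1} for $X = Y(\mathbb{R}^n,w)$. Fix $x_0 \in \mathbb{R}^n\setminus\{0\}$ and take $\varepsilon > 0$ and $\{x_k\}_{k\in\mathbb{N}}$ supplied by \eqref{eq:Lofstrom2-for-weighted-TI-spaces}. For every $\rho \in (0,\varepsilon]$, the lattice property (A2) together with the identity $\|\chi_{B(z,r)}\|_{Y(\mathbb{R}^n,w)} = \|w\chi_{B(z,r)}\|_{Y(\mathbb{R}^n)}$ gives the lower bound
\[
\|\chi_{B(x_k,\rho)}\|_{Y(\mathbb{R}^n,w)}
\ge
\left(\operatornamewithlimits{ess\,inf}_{|x|\le\varepsilon} w(x_k+x)\right)
\|\chi_{B(x_k,\rho)}\|_{Y(\mathbb{R}^n)}
\]
and the upper bound
\[
\|\chi_{B(x_k-x_0,\varepsilon)}\|_{Y(\mathbb{R}^n,w)}
\le
\left(\operatornamewithlimits{ess\,sup}_{|y|\le\varepsilon} w(x_k-x_0+y)\right)
\|\chi_{B(x_k-x_0,\varepsilon)}\|_{Y(\mathbb{R}^n)}.
\]
Since $Y(\mathbb{R}^n)$ is translation-invariant, the unweighted norms are independent of $k$, namely $\|\chi_{B(0,\rho)}\|_{Y(\mathbb{R}^n)}$ and $\|\chi_{B(0,\varepsilon)}\|_{Y(\mathbb{R}^n)}$, both positive and finite. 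Taking the ratio and invoking hypothesis \eqref{eq:Lofstrom2-for-weighted-TI-spaces} (interpreted in the essential sense), I conclude that the quotient in \eqref{eq:Lofstrom2-1} tends to infinity as $k \to \infty$, as required.

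Theorem~\ref{th:Lofstrom2} then forces $\kappa = c\delta$ for some $c \in \mathbb{C}$, and applying $F$ gives $a \equiv c$; the reverse inclusion $\mathbb{C} \subseteq \mathcal{M}_{Y(\mathbb{R}^n,w)}$ is trivial. The only technical delicacy is the last paragraph's manipulation of essential extrema of $w$ over small balls: the hypothesis \eqref{eq:Lofstrom2-for-weighted-TI-spaces} must be read so that these extrema are eventually finite and positive with the stated ratio blowing up, which is the natural interpretation in the presence of a genuine weight satisfying $w, 1/w \in L^1_{\mathrm{loc}}$ of the appropriate spaces. Once this convention is fixed, the argument is a mechanical reduction to the general Theorem~\ref{th:Lofstrom2}.
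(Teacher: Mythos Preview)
Your proof is correct and follows essentially the same route as the paper's: reduce to Theorem~\ref{th:Lofstrom2} by verifying condition~\eqref{eq:Lofstrom2-1} via the factorisation $\|\chi_B\|_{Y(\mathbb{R}^n,w)} = \|w\chi_B\|_{Y(\mathbb{R}^n)}$ and bounding the weight by its extrema over the balls. Your handling of the unweighted ratio is in fact slightly cleaner than the paper's: you observe directly that translation invariance makes $\|\chi_{B(x_k,\rho)}\|_{Y(\mathbb{R}^n)}$ and $\|\chi_{B(x_k-x_0,\varepsilon)}\|_{Y(\mathbb{R}^n)}$ independent of $k$, whereas the paper cites Lemma~\ref{le:TI-balls}(a) to bound this ratio below by an explicit constant $C(\rho,\varepsilon)$, which is unnecessary overhead here.
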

\begin{proof}
Since $Y(\mathbb{R}^n)$ is translation-invariant, it follows from
Lemma~\ref{le:TI-balls}(a) that there exist constants $C_1,C_2>0$ such that
for all $k\in\mathbb{N}$ and all $\rho\in(0,\varepsilon]$ one has
\[
\frac{\|\chi_{B(x_k, \rho)}\|_{Y(\mathbb{R}^n)}}
{\|\chi_{B(x_k - x_0, \varepsilon)}\|_{Y(\mathbb{R}^n)}}
\ge 
\frac{C_1\min\{1,\rho^n\}}{C_2\max\{1,\varepsilon^n\}}=:C(\rho,\varepsilon).
\]
Hence
\begin{align*}
\frac{\|\chi_{B(x_k, \rho)}\|_{X(\mathbb{R}^n)}}
{\|\chi_{B(x_k - x_0, \varepsilon)}\|_{X(\mathbb{R}^n)}} 
&= 
\frac{\|w\chi_{B(x_k, \rho)}\|_{Y(\mathbb{R}^n)}}
{\|w\chi_{B(x_k - x_0, \varepsilon)}\|_{Y(\mathbb{R}^n)}} 
\ge 
\frac{\displaystyle\inf_{|x| \le \varepsilon} w(x_k + x)}
{\displaystyle\sup_{|y| \le \varepsilon} w(x_k - x_0 + y)} 
\frac{\|\chi_{B(x_k, \rho)}\|_{Y(\mathbb{R}^n)}}
{\|\chi_{B(x_k - x_0, \varepsilon)}\|_{Y(\mathbb{R}^n)}} 
\\
&\ge
C(\rho,\varepsilon) 
\inf_{|x| \le \varepsilon, \ |y| \le \varepsilon}\ 
\frac{w(x_k + x)}{w(x_k - x_0 + y)}
\to \infty \quad\mbox{as} \quad k \to \infty.
\end{align*}
Thus, the conditions of Theorem~\ref{th:Lofstrom2} are satisfied
for $X(\mathbb{R}^n)=Y(\mathbb{R}^n,w)$. 

If $a\in\mathcal{M}_{Y(\mathbb{R},w)}$, then 
\[
\|F^{-1}a\ast u\|_{Y(\mathbb{R}^n,w)}
=
\|F^{-1}aFu\|_{Y(\mathbb{R}^n,w)}
\le
\|a\|_{\mathcal{M}_{Y(\mathbb{R}^n,w)}}\|u\|_{Y(\mathbb{R}^n,w)}
\]
for all $u\in C_0^\infty(\mathbb{R}^n)$. By Theorem~\ref{th:Lofstrom2},
$F^{-1}a=c\delta$. Therefore $a\in\mathbb{C}$.
\end{proof}
\begin{corollary}
Let $Y(\mathbb{R})$ be a translation-invariant Banach function space and
$w:\mathbb{R}\to(0,\infty)$ be the weight given by 
\eqref{eq:superexponential-example}. Then 
$\mathcal{M}_{Y(\mathbb{R},w)}=\mathbb{C}$.
\end{corollary}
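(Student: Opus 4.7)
The plan is to deduce the corollary as a direct instance of Theorem~\ref{th:Lofstrom2-for-weighted-TI-spaces}. The local integrability hypotheses are automatic: the weight $w$ is continuous on $\mathbb{R}$ (both pieces agree at $0$ with value $1$), so $w$ and $1/w$ are locally bounded, and the argument given at the end of the proof of Corollary~\ref{co:w1w2}(b) yields $w \in Y_{\mathrm{loc}}(\mathbb{R})$ and $1/w \in Y'_{\mathrm{loc}}(\mathbb{R})$ for any translation-invariant Banach function space $Y(\mathbb{R})$.

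The substantive step is to verify the superexponential growth condition \eqref{eq:Lofstrom2-for-weighted-TI-spaces} for every $x_0 \in \mathbb{R}\setminus\{0\}$. I would fix $\varepsilon \in (0, |x_0|/4)$ and take $x_k := k\,\mathrm{sgn}(x_0)$. For $k$ sufficiently large and all $|x|,|y|\le\varepsilon$, the points $x_k+x$ and $x_k-x_0+y$ lie on the same side of the origin as $x_0$, so $w$ is evaluated along a single branch of the definition \eqref{eq:superexponential-example}. Writing $\alpha:=\alpha_2$ when $x_0>0$ and $\alpha:=\alpha_1$ when $x_0<0$, a direct calculation gives
\[
|x_k+x|-|x_k-x_0+y| \;=\; |x_0| - (x-y)\operatorname{sgn}(x_0)\operatorname{sgn}(x_0) \;\ge\; |x_0|-2\varepsilon \;>\;0,
\]
so both arguments tend to $+\infty$ in absolute value and the difference of the absolute values is bounded below by a positive constant.

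The mean value theorem applied to $t\mapsto t^{\alpha}$ on the interval with endpoints $|x_k-x_0+y|$ and $|x_k+x|$ then yields
\[
|x_k+x|^{\alpha} - |x_k-x_0+y|^{\alpha}
\;\ge\; \alpha\,(k-|x_0|-\varepsilon)^{\alpha-1}\,(|x_0|-2\varepsilon),
\]
which diverges to $+\infty$ as $k\to\infty$ because $\alpha>1$. Exponentiating and taking the infimum over $|x|,|y|\le\varepsilon$ gives \eqref{eq:Lofstrom2-for-weighted-TI-spaces}, and Theorem~\ref{th:Lofstrom2-for-weighted-TI-spaces} delivers $\mathcal{M}_{Y(\mathbb{R},w)}=\mathbb{C}$. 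There is no essential obstacle: the only bookkeeping is to choose the sign of $x_k$ so that both arguments of $w$ evaluate on the same branch, which is exactly where the assumption $\alpha_1,\alpha_2>1$ is used through the factor $(k-|x_0|-\varepsilon)^{\alpha-1}\to\infty$.
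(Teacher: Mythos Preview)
Your approach is essentially the same as the paper's: verify condition \eqref{eq:Lofstrom2-for-weighted-TI-spaces} by choosing $x_k$ tending to infinity in the direction of $x_0$ and applying the mean value theorem to $t\mapsto t^{\alpha}$ to show the exponent difference diverges; the paper picks $\varepsilon=|x_0|/3$ and $x_k=(k+1)x_0$, you pick $\varepsilon<|x_0|/4$ and $x_k=k\operatorname{sgn}(x_0)$, but this is cosmetic. One small slip: your displayed identity $|x_k+x|-|x_k-x_0+y|=|x_0|-(x-y)\operatorname{sgn}(x_0)\operatorname{sgn}(x_0)$ is not quite right (the double sign factor is just $1$, and for $x_0>0$ the correct expression is $|x_0|+(x-y)$), but the lower bound $\ge|x_0|-2\varepsilon>0$ you use is valid in both cases, so the argument goes through.
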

\begin{proof}
Let $\alpha > 1$ and $\epsilon > 0$. Using the mean value theorem, one gets
\[
(\rho + \epsilon)^\alpha - \rho^\alpha 
\ge 
\alpha \rho^{\alpha - 1}\epsilon \to +\infty \ \mbox{ as } \ 
\rho \to +\infty .
\]
Let $w$ be the weight defined by \eqref{eq:superexponential-example}. Take any 
$x_0 \in \mathbb{R}\setminus\{0\}$. Let $\varepsilon := |x_0|/3$, 
$x_k := (k + 1)x_0$, $k \in \mathbb{N}$. If $x_0 < 0$, then it follows from 
the above that
\begin{align*}
\inf_{|x| \le \varepsilon, \ |y| \le \varepsilon}
\ 
\frac{w(x_k + x)}{w(x_k - x_0 + y)} 
&\ge 
\frac{\exp{\left(\big((k + 1 - 1/3) |x_0|\big)^{\alpha_1}\right)}}
{\exp{\left(\big((k + 1/3) |x_0|\big)^{\alpha_1}\right)}} 
\\
& = 
\exp\left(
\big((k + 2/3)^{\alpha_1} - (k + 1/3)^{\alpha_1}\big) |x_0|^{\alpha_1}
\right)
\to \infty 
\quad \mbox{as} \quad k \to \infty,
\end{align*}
that is, condition \eqref{eq:Lofstrom2-for-weighted-TI-spaces} is satisfied.
Similarly, it can be shown that it is also satisfied if $x_0>0$.
Since $w\in L_{\rm loc}^\infty(\mathbb{R})\subset Y_{\rm loc}(\mathbb{R})$
and $1/w\in L_{\rm loc}^\infty(\mathbb{R})\subset Y_{\rm loc}'(\mathbb{R})$,
it remains to apply Theorem~\ref{th:Lofstrom2-for-weighted-TI-spaces}.
\end{proof}
It might be instructive to contrast the above nonexistence results of 
non-trivial Fourier multipliers with the following statements.
\begin{theorem}\label{th:nontrivial-multipliers}
Let $Y(\mathbb{R}^n)$ be a translation-invariant Banach function space
and $w$ be a weight such that $w \in Y_{\mathrm{loc}}(\mathbb{R}^n)$ and
$1/w  \in Y'_{\mathrm{loc}}(\mathbb{R}^n)$. Suppose there exist $R > 0$, 
$\varepsilon > 0$ and $C_\varepsilon >0$ such that
\begin{equation}\label{eq:nontrivial-multipliers-1}
\frac{w(x + y)}{w(x)} \le C_\varepsilon 
\quad\mbox{for all}\quad 
|x| \ge R, \ |y| \le \varepsilon.
\end{equation} 
Then there exists a constant
$C > 0$ such that for any $\kappa \in L^\infty(\mathbb{R}^n)$
with $\operatorname{supp} \kappa \subseteq B(0, \varepsilon)$ 
and any $f \in Y(\mathbb{R}^n, w)$ one has
\begin{equation}\label{eq:nontrivial-multipliers-2}
\|\kappa\ast f\|_{Y(\mathbb{R}^n, w)} 
\le 
C\|\kappa\|_{L^\infty(\mathbb{R}^n)} 
\|f\|_{Y(\mathbb{R}^n, w)}.
\end{equation}
\end{theorem}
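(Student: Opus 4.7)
The plan is to split the problem into a ``far'' region, where the weight condition \eqref{eq:nontrivial-multipliers-1} delivers a Young-type inequality, and a ``near'' region, where we rely on the local assumptions $w\in Y_{\mathrm{loc}}(\mathbb{R}^n)$ and $1/w\in Y'_{\mathrm{loc}}(\mathbb{R}^n)$. Concretely, for $f\in Y(\mathbb{R}^n,w)$ I would write
\[
\|\kappa\ast f\|_{Y(\mathbb{R}^n,w)}
\le
\left\|\chi_{B(0,R+2\varepsilon)}w(\kappa\ast f)\right\|_{Y(\mathbb{R}^n)}
+
\left\|\chi_{\mathbb{R}^n\setminus B(0,R+2\varepsilon)}w(\kappa\ast f)\right\|_{Y(\mathbb{R}^n)}
\]
and estimate each summand separately.

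For the far term, I would note that when $|x|\ge R+2\varepsilon$ and $y\in B(0,\varepsilon)=\operatorname{supp}\kappa$ one has $|x-y|\ge R+\varepsilon\ge R$; applying \eqref{eq:nontrivial-multipliers-1} with $x$ replaced by $x-y$ yields the pointwise estimate $w(x)\le C_\varepsilon w(x-y)$, hence
\[
w(x)|(\kappa\ast f)(x)|
\le
C_\varepsilon\int_{\mathbb{R}^n}|\kappa(y)|\,w(x-y)|f(x-y)|\,dy
=
C_\varepsilon\big(|\kappa|\ast(w|f|)\big)(x).
\]
Invoking Theorem~\ref{th:Lofstrom} with $w=w^*\equiv 1$ and $\Omega=\mathbb{R}^n$ (which is applicable because $Y(\mathbb{R}^n)$ is translation-invariant), one obtains the convolution inequality $\||\kappa|\ast(w|f|)\|_{Y(\mathbb{R}^n)}\le\|\kappa\|_{L^1(\mathbb{R}^n)}\|f\|_{Y(\mathbb{R}^n,w)}$, and then $\|\kappa\|_{L^1(\mathbb{R}^n)}\le\|\kappa\|_{L^\infty(\mathbb{R}^n)}\,|B(0,\varepsilon)|$.

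For the near term, the key observation is that $\chi_{B(0,R+2\varepsilon)}(\kappa\ast f)=\chi_{B(0,R+2\varepsilon)}(\kappa\ast f_1)$ where $f_1:=f\chi_{B(0,R+3\varepsilon)}$, because $\operatorname{supp}\kappa\subseteq B(0,\varepsilon)$. Then I would use H\"older's inequality for Banach function spaces together with $w\in Y_{\mathrm{loc}}(\mathbb{R}^n)$, $1/w\in Y'_{\mathrm{loc}}(\mathbb{R}^n)$ and \cite[Lemma~2.4]{KS14} to get
\[
\|f_1\|_{L^1(\mathbb{R}^n)}
\le
\|f_1\|_{Y(\mathbb{R}^n,w)}\,\left\|w^{-1}\chi_{B(0,R+3\varepsilon)}\right\|_{Y'(\mathbb{R}^n)}
\le
C_1\|f\|_{Y(\mathbb{R}^n,w)},
\]
whence $\|\kappa\ast f_1\|_{L^\infty(\mathbb{R}^n)}\le\|\kappa\|_{L^\infty(\mathbb{R}^n)}C_1\|f\|_{Y(\mathbb{R}^n,w)}$. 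Since $\operatorname{supp}(\kappa\ast f_1)\subseteq B(0,R+4\varepsilon)$, axiom (A2) gives
\[
\|\kappa\ast f_1\|_{Y(\mathbb{R}^n,w)}
\le
\|\kappa\ast f_1\|_{L^\infty(\mathbb{R}^n)}\left\|\chi_{B(0,R+4\varepsilon)}\right\|_{Y(\mathbb{R}^n,w)}
=:
\|\kappa\ast f_1\|_{L^\infty(\mathbb{R}^n)}C_2,
\]
with $C_2<\infty$ because $w\in Y_{\mathrm{loc}}(\mathbb{R}^n)$. Adding the two estimates gives \eqref{eq:nontrivial-multipliers-2} with $C=C_1C_2+C_\varepsilon|B(0,\varepsilon)|$.

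The main obstacle is the near region: condition \eqref{eq:nontrivial-multipliers-1} says nothing about the local behaviour of $w$, so no comparison of $w(x)$ with $w(x-y)$ is available for small $|x|$. The trick that resolves this is to exploit that the range of integration in $\kappa\ast f$ is localized, which converts the pointwise bound into a genuine $L^1\to L^\infty$ estimate on a bounded set, where the finiteness of $\|\chi_{B(0,R+4\varepsilon)}\|_{Y(\mathbb{R}^n,w)}$ and $\|w^{-1}\chi_{B(0,R+3\varepsilon)}\|_{Y'(\mathbb{R}^n)}$ is precisely what the standing local hypotheses on $w$ guarantee.
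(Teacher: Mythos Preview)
Your proof is correct and follows essentially the same approach as the paper's: both split $\kappa\ast f$ into a ``near'' piece handled via the local hypotheses on $w$ (axiom (A5)/H\"older plus $w\in Y_{\mathrm{loc}}$, $1/w\in Y'_{\mathrm{loc}}$) and a ``far'' piece handled via the weight condition \eqref{eq:nontrivial-multipliers-1} and a Young-type estimate. The only cosmetic differences are the cutoff radius ($R+2\varepsilon$ versus the paper's $R+\varepsilon$) and that you invoke Theorem~\ref{th:Lofstrom} with $w=w^*\equiv 1$ for the far term, whereas the paper reproves that duality estimate inline.
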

\begin{proof}
Since $w \in Y_{\mathrm{loc}}(\mathbb{R}^n)$ and 
$1/w  \in Y'_{\mathrm{loc}}(\mathbb{R}^n)$, we see that
$Y(\mathbb{R}^n,w)$ is a Banach function space and $Y'(\mathbb{R}^n,w^{-1})$
is its associate space in view of \cite[Lemma~2.4]{KS14}.

If $|x| \le R + \varepsilon$, then
\begin{equation}\label{eq:nontrivial-multipliers-3}
\kappa\ast f(x) 
= 
\int_{\mathbb{R}^n} \kappa(x - y) f(y)\, dy 
= 
\int_{B(0, R + 2\varepsilon)} \kappa(x - y) f(y)\, dy ,
\end{equation}
since $\operatorname{supp}\kappa \subseteq B(0, \varepsilon)$.
 
If $|x| > R + \varepsilon$, then
\begin{align}
\kappa\ast f(x) 
&= 
\int_{\mathbb{R}^n} \kappa(y) f(x - y)\, dy 
= 
\int_{B(0, \varepsilon)} \kappa(y) f(x - y)\, dy 
\nonumber \\
&= 
\int_{B(0, \varepsilon)} 
\kappa(y) \chi_{\mathbb{R}^n\setminus B(0, R)}(x - y) f(x - y)\,dy .
\label{eq:nontrivial-multipliers-4}
\end{align}

Further, axiom (A5) implies the existence of a constant 
$C_{R,\varepsilon} > 0$ such that 
\begin{equation}\label{eq:nontrivial-multipliers-5}
\int_{B(0, R + 2\varepsilon)}  |f(y)|\, dy 
\le 
C_{R,\varepsilon} \|f\|_{Y(\mathbb{R}^n, w)} 
\quad\mbox{for all}\quad
f \in Y(\mathbb{R}^n, w). 
\end{equation}

It is clear that
\begin{equation}\label{eq:nontrivial-multipliers-6}
\|\kappa\ast f\|_{Y(\mathbb{R}^n, w)} 
\le 
\|\chi_{B(0, R + \varepsilon)}\kappa\ast f\|_{Y(\mathbb{R}^n, w)}  
+ 
\|\chi_{\mathbb{R}^n\setminus B(0, R + \varepsilon)}
\kappa\ast f\|_{Y(\mathbb{R}^n, w)} .
\end{equation}
It follows from \eqref{eq:nontrivial-multipliers-3}, 
\eqref{eq:nontrivial-multipliers-5}, and axiom (A4) that
\begin{align}
\|\chi_{B(0, R + \varepsilon)}\kappa\ast f\|_{Y(\mathbb{R}^n, w)} 
&\le
\left\|
\chi_{B(0, R + \varepsilon)}
\|\kappa\|_{L^\infty(\mathbb{R}^n)} 
\|f\|_{L^1(B(0, R + 2\varepsilon))}
\right\|_{Y(\mathbb{R}^n, w)} 
\nonumber \\
&\le 
C_{R,\varepsilon}
\|\kappa\|_{L^\infty(\mathbb{R}^n)} \|f\|_{Y(\mathbb{R}^n, w)} 
\left\|\chi_{B(0, R + \varepsilon)}\right\|_{Y(\mathbb{R}^n, w)}  
\nonumber \\
&=: 
C'_{R,\varepsilon}
\|\kappa\|_{L^\infty(\mathbb{R}^n)} \|f\|_{Y(\mathbb{R}^n, w)} .
\label{eq:nontrivial-multipliers-7}
\end{align}

Taking into account that $Y(\mathbb{R}^n)$ is translation-invariant and using 
\eqref{eq:nontrivial-multipliers-1}, one gets for all $y \in B(0, \varepsilon)$,
\begin{align}
&
\left\|
\tau_y \left(\chi_{\mathbb{R}^n\setminus B(0, R)} f\right)
\right\|_{Y(\mathbb{R}^n, w)}  
=
\left\|
w\tau_y \left(\chi_{\mathbb{R}^n\setminus B(0, R)} f\right)
\right\|_{Y(\mathbb{R}^n)} 
\nonumber\\
&\hspace{1cm}
= 
\left\|
\tau_y \left(
(\tau_{-y}w) \left(\chi_{\mathbb{R}^n\setminus B(0, R)} f\right)
\right)
\right\|_{Y(\mathbb{R}^n)} 
=
\left\|(
\tau_{-y}w) \left(\chi_{\mathbb{R}^n\setminus B(0, R)} f\right)
\right\|_{Y(\mathbb{R}^n)} 
\nonumber \\
&\hspace{1cm}
\le 
C_\varepsilon \left\|
w \left(\chi_{\mathbb{R}^n\setminus B(0, R)} f\right)
\right\|_{Y(\mathbb{R}^n)}  
= 
C_\varepsilon 
\left\|
\chi_{\mathbb{R}^n\setminus B(0, R)} f
\right\|_{Y(\mathbb{R}^n, w)}.
\label{eq:nontrivial-multipliers-8}
\end{align}

Using \eqref{eq:nontrivial-multipliers-4}, \eqref{eq:nontrivial-multipliers-8} 
and H\"older's inequality for Banach function spaces
(see \cite[Chap. 1, Theorem 2.4]{BS88}),
and taking into account that $Y(\mathbb{R}^n)$ is translation-invariant, 
one gets for all  $g \in Y'(\mathbb{R}^n,w^{-1})$,
\begin{align*}
& 
\left|
\int_{\mathbb{R}^n} 
(\chi_{\mathbb{R}^n\setminus B(0, R + \varepsilon)} \kappa\ast f)(x) g(x)\, dx
\right| 
\\
& \hspace{1cm}
\le 
\int_{\mathbb{R}^n} \chi_{\mathbb{R}^n\setminus B(0, R + \varepsilon)}(x)
\left(
\int_{B(0, \varepsilon)} |\kappa(y)| 
\left|\chi_{\mathbb{R}^n\setminus B(0, R)} f\right|(x - y)\, dy
\right) |g(x)|\, dx 
\\
& \hspace{1cm}
\le
\int_{B(0, \varepsilon)} |\kappa(y)| 
\left(
\int_{\mathbb{R}^n} 
\left|\chi_{\mathbb{R}^n\setminus B(0, R)} f\right|(x - y) |g(x)|\, dx
\right)\, dy 
\\
& \hspace{1cm}
\le 
\int_{B(0, \varepsilon)} |\kappa(y)|  
\left\|
\tau_y \left(\chi_{\mathbb{R}^n\setminus B(0, R)} f\right)
\right\|_{Y(\mathbb{R}^n, w)} 
\|g\|_{Y'(\mathbb{R}^n, w^{-1})}\, dy
\\
& \hspace{1cm}
\le 
C_\varepsilon \|\chi_{\mathbb{R}^n\setminus B(0, R)} f\|_{Y(\mathbb{R}^n, w)} 
\left\|g\right\|_{Y'(\mathbb{R}^n, w^{-1})}
\int_{B(0, \varepsilon)} |\kappa(y)| \, dy 
\\
& \hspace{1cm}
\le 
C_\varepsilon \|\kappa\|_{L^1(B(0, \varepsilon))} 
\|f\|_{Y(\mathbb{R}^n, w)} \|g\|_{Y'(\mathbb{R}^n, w^{-1})}.
\end{align*}
By \cite[Chap.~1, Theorem~2.7 and Lemma~2.8]{BS88},
the above inequality implies that
\begin{align}\label{eq:nontrivial-multipliers-9}
& 
\|
\chi_{\mathbb{R}^n\setminus B(0, R + \varepsilon)} \kappa\ast f
\|_{Y(\mathbb{R}^n,w)} 
\nonumber \\
&\hspace{1cm}
=
\sup\left\{\left|
\int_{\mathbb{R}^n} 
(\chi_{\mathbb{R}^n\setminus B(0, R + \varepsilon)} \kappa\ast f)(x) g(x)\, dx
\right|\ : \
g\in Y'(\mathbb{R}^n, w^{-1}), \
\|g\|_{Y'(\mathbb{R}^n, w^{-1})}\le 1
\right\} 
\nonumber \\
&\hspace{1cm}
\le 
C_\varepsilon \|\kappa\|_{L^1(B(0, \varepsilon))} 
\|f\|_{Y(\mathbb{R}^n, w)} 
\le 
C_\varepsilon |B(0, 1)| \varepsilon^n 
\|\kappa\|_{L^\infty(\mathbb{R}^n)} 
\|f\|_{Y(\mathbb{R}^n, w)} .
\end{align}
Combining \eqref{eq:nontrivial-multipliers-6}, 
\eqref{eq:nontrivial-multipliers-7}, and 
\eqref{eq:nontrivial-multipliers-9}, one gets 
\eqref{eq:nontrivial-multipliers-2} with 
$C = C'_{R,\varepsilon} +C_\varepsilon |B(0, 1)| \varepsilon^n$.
\end{proof}
\begin{corollary}
Let $c>0$, $0<\alpha\le 1$, and $w(x)=\exp(c|x|^\alpha)$ for 
$x\in\mathbb{R}^n$. If $Y(\mathbb{R}^n)$ is a translation-invariant 
Banach function space, then there exist non-trivial Fourier multipliers in 
$\mathcal{M}_{Y(\mathbb{R}^n,w)}$.
\end{corollary}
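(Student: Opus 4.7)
The plan is to reduce the statement to Theorem~\ref{th:nontrivial-multipliers} by verifying that the weight $w(x) = \exp(c|x|^\alpha)$ with $0 < \alpha \le 1$ satisfies the growth condition~\eqref{eq:nontrivial-multipliers-1}, and then producing Fourier multipliers as Fourier transforms of compactly supported bounded functions.

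First, I would check that $w \in L^\infty_{\mathrm{loc}}(\mathbb{R}^n) \subseteq Y_{\mathrm{loc}}(\mathbb{R}^n)$ and $1/w \in L^\infty_{\mathrm{loc}}(\mathbb{R}^n) \subseteq Y'_{\mathrm{loc}}(\mathbb{R}^n)$, so that $Y(\mathbb{R}^n, w)$ is a bona fide Banach function space. Next, the core computation is to bound the ratio $w(x+y)/w(x) = \exp\bigl(c(|x+y|^\alpha - |x|^\alpha)\bigr)$. For $\alpha = 1$ the triangle inequality gives $|x+y|-|x| \le |y| \le \varepsilon$, hence $w(x+y)/w(x) \le e^{c\varepsilon}$ for all $x,y\in\mathbb{R}^n$ with $|y|\le\varepsilon$. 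For $0 < \alpha < 1$ I would fix $\varepsilon > 0$, set $R := 2\varepsilon$, and use the mean value theorem applied to the function $t \mapsto t^\alpha$ on the interval $[|x|-|y|, |x|+|y|]$: since $\alpha t^{\alpha-1}$ is decreasing in $t$ and $|x| - |y| \ge R/2$ whenever $|x|\ge R$ and $|y|\le\varepsilon$, one obtains
\[
\bigl||x+y|^\alpha - |x|^\alpha\bigr| \le \alpha\,(R/2)^{\alpha-1}\,|y| \le \alpha\,(R/2)^{\alpha-1}\varepsilon,
\]
which gives $w(x+y)/w(x) \le C_\varepsilon := \exp\bigl(c\alpha(R/2)^{\alpha-1}\varepsilon\bigr)$. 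So~\eqref{eq:nontrivial-multipliers-1} holds in both cases.

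Once this growth estimate is in place, Theorem~\ref{th:nontrivial-multipliers} applies and yields a constant $C > 0$ such that every $\kappa \in L^\infty(\mathbb{R}^n)$ with $\operatorname{supp}\kappa \subseteq B(0,\varepsilon)$ satisfies
\[
\|\kappa \ast f\|_{Y(\mathbb{R}^n, w)} \le C\|\kappa\|_{L^\infty(\mathbb{R}^n)} \|f\|_{Y(\mathbb{R}^n, w)}
\quad\mbox{for all}\quad f\in Y(\mathbb{R}^n, w).
\]
Then I would set $a := F\kappa$. Since $\kappa \in L^1(\mathbb{R}^n)\cap L^\infty(\mathbb{R}^n)$ has compact support, $a$ is a bounded continuous function (in fact the restriction of an entire function of exponential type by Paley--Wiener), so in particular $a \in S'(\mathbb{R}^n)$. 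For any $u \in S(\mathbb{R}^n) \cap Y(\mathbb{R}^n, w)$ the identity $F^{-1}aFu = \kappa \ast u$ holds pointwise, so the above convolution bound gives $\|F^{-1}aFu\|_{Y(\mathbb{R}^n, w)} \le C\|\kappa\|_{L^\infty(\mathbb{R}^n)} \|u\|_{Y(\mathbb{R}^n, w)}$, whence $a \in \mathcal{M}_{Y(\mathbb{R}^n, w)}$.

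Finally, to see that these multipliers are non-trivial (i.e., not the constants $\mathbb{C}$), take any concrete non-constant choice such as $\kappa = \chi_{B(0,\varepsilon/2)}$. Its Fourier transform is a non-constant entire function (a constant multiplier would arise only from $\kappa = c\delta$, which is not an element of $L^\infty(\mathbb{R}^n)$). Hence $a = F\kappa \in \mathcal{M}_{Y(\mathbb{R}^n, w)} \setminus \mathbb{C}$, as required. The main obstacle is the first step: producing a clean uniform bound on $|x+y|^\alpha - |x|^\alpha$ for $|x|$ bounded away from $0$ when $\alpha < 1$, but this is handled by the mean value theorem argument above.
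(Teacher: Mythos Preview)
Your proof is correct and follows essentially the same route as the paper: verify condition~\eqref{eq:nontrivial-multipliers-1} via the mean value theorem and then invoke Theorem~\ref{th:nontrivial-multipliers} with a compactly supported kernel. The only cosmetic differences are that the paper handles all $\alpha\in(0,1]$ at once by noting $M(\alpha,\varepsilon):=\max_{\rho\ge 0}\bigl((\rho+\varepsilon)^\alpha-\rho^\alpha\bigr)<\infty$ (so the bound holds for all $x$, not just $|x|\ge R$), and it takes $\kappa=\varrho_j$ rather than a characteristic function.
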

\begin{proof}
Fix $\varepsilon >0$. Then for all $x \in \mathbb{R}^n$ and 
$|y| \le \varepsilon$,
\[
\frac{w(x + y)}{w(x)} 
\le 
\frac{\exp\left(c (|x| + \varepsilon)^\alpha\right)}{\exp\left(c |x|^\alpha\right)}
 =
\exp\left(c \big((|x| + \varepsilon)^\alpha - |x|^\alpha\big)\right) 
\le \exp\left(c M(\alpha, \varepsilon)\right),
\]
where, by the mean value theorem,
\[
M(\alpha, \varepsilon) := 
\max_{0 \le \rho < \infty} 
\left((\rho + \varepsilon)^\alpha - \rho^\alpha\right) 
< +\infty .
\]
Then $w$ satisfies condition \eqref{eq:nontrivial-multipliers-1}.
There exists $j\in\mathbb{N}$ such that the function 
$\varrho_j\in C_0^\infty(\mathbb{R}^n)$ given by \eqref{eq:mollification} 
satisfies $\operatorname{supp}\varrho_j\subseteq B(0,\varepsilon)$.
Put $a:=F\varrho_j$. By Theorem~\ref{th:nontrivial-multipliers}, 
there exists a constant $C>0$ such that for all 
$u\in S(\mathbb{R}^n)\cap Y(\mathbb{R}^n,w)$, one has
\[
\|F^{-1}aFu\|_{Y(\mathbb{R}^n,w)}=\|\varrho_j\ast u\|_{Y(\mathbb{R}^n,w)}
\le 
C\|\varrho_j\|_{L^\infty(\mathbb{R}^n)}\|u\|_{Y(\mathbb{R}^n,w)}.
\]
Therefore, $a\in \mathcal{M}_{Y(\mathbb{R}^n,w)}$.
\end{proof}

\affiliationone{%
Alexei Karlovich\\
Centro de Matem\'atica e Aplica\c{c}\~oes,\\
Departamento de Matem\'atica,\\
Faculdade de Ci\^encias e Tecnologia,\\
Universidade Nova de Lisboa,\\
Quinta da Torre,\\
2829--516 Caparica,\\
Portugal
\email{oyk@fct.unl.pt}}
\affiliationtwo{%
Eugene Shargorodsky\\
Department of Mathematics\\
King's College London\\
Strand, London WC2R 2LS\\
United Kingdom
\email{eugene.shargorodsky@kcl.ac.uk}}
\end{document}